\theoremstyle{definition}
\newtheorem{mydef}{Definition}[section]
\newtheorem{lem}[mydef]{Lemma}
\newtheorem{thm}[mydef]{Theorem}
\newtheorem{conjecture}[mydef]{Conjecture}
\newtheorem{cor}[mydef]{Corollary}
\newtheorem{claim}[mydef]{Claim}
\newtheorem{question}[mydef]{Question}
\newtheorem{hypothesis}[mydef]{Hypothesis}
\newtheorem{prop}[mydef]{Proposition}
\newtheorem{defin}[mydef]{Definition}
\newtheorem{example}[mydef]{Example}
\newtheorem{remark}[mydef]{Remark}
\newtheorem{notation}[mydef]{Notation}
\newtheorem{fact}[mydef]{Fact}
\newcommand{\fct}[2]{{}^{#1}#2}
\newcommand{\footnotei}[1]{}
\newcommand{\ba}{\bar{a}}
\newcommand{\bb}{\bar{b}}
\newcommand{\bigN}{\widehat{N}}
\newcommand{\bigf}{\widehat{f}}
\newcommand{\Ksatpp}[2]{{#1}^{#2\text{-sat}}}
\newcommand{\Ksatp}[1]{\Ksatpp{K}{#1}}
\renewcommand{\bigg}{\widehat{g}}
\newcommand{\dom}[1]{\text{dom}(#1)}
\newcommand{\ran}[1]{\text{ran}(#1)}
\newcommand{\cf}[1]{\text{cf} (#1)}
\newcommand{\seq}[1]{\langle #1 \rangle}
\newcommand{\rest}{\upharpoonright}
\newcommand{\s}{\mathfrak{s}}
\newcommand{\is}{\mathfrak{i}}
\def\lta{<}
\def\lea{\le}
\def\gea{\ge}
\def\lee{\preceq}
\def\ltu{\lta_{\text{univ}}}
\def\leu{\lea_{\text{univ}}}
\newcommand{\K}{K}
\newbox\noforkbox \newdimen\forklinewidth
\noforkbox\hbox{\lower 2pt\box1\lower
2pt\box0\relax}
\def\unionstick{\mathop{\copy\noforkbox}\limits}
\newcommand{\nf}{\unionstick}
\newcommand{\nfs}[4]{#2 \nf_{#1}^{#4} #3}
\def\1nf{\unionstick^{(1)}}
\def\2nf{\unionstick^{(2)}}
\def\3nf{\unionstick^{(3)}}
\newcommand{\tp}{\text{tp}}
\newcommand{\gtp}{\text{gtp}}
\newcommand{\gS}{\text{gS}}
\newcommand{\hanf}[1]{h (#1)}
\newcommand{\Axfr}{\text{AxFri}_1}
\newcommand{\goodp}{\text{good}^+}
\newcommand{\Eat}{E_{\text{at}}}
\newcommand{\cl}{\text{cl}}
\newcommand{\pre}{\text{pre}}
\newcommand{\F}{\mathcal{F}}
\newcommand{\LS}{\text{LS}}
\title[Categoricity in universal classes: part I]{Shelah's eventual categoricity conjecture in universal classes: part I}
\date{\today\\
AMS 2010 Subject Classification: Primary 03C48. Secondary: 03C45, 03C52, 03C55, 03C75, 03E55.}
\keywords{Abstract elementary classes; Categoricity; Amalgamation; Forking; Independence; Classification theory; Superstability; Universal classes; Intersection property; Prime models}
\author{Sebastien Vasey}
\email{sebv@cmu.edu}
\urladdr{http://math.cmu.edu/\textasciitilde svasey/}
\address{Department of Mathematical Sciences, Carnegie Mellon University, Pittsburgh, Pennsylvania, USA}
\thanks{This material is based upon work done while the author was supported by the Swiss National Science Foundation under Grant No.\ 155136.}
\begin{document}

\begin{abstract}
  We prove:

  \begin{thm}\label{abstract-thm-0}
  Let $K$ be a universal class. If $K$ is categorical in cardinals of arbitrarily high cofinality, then $\K$ is categorical on a tail of cardinals.
\end{thm}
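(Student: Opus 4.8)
The plan is to run Shelah's good-frame program for abstract elementary classes, leveraging the structural features that a universal class enjoys — above all the \emph{intersection property} — to upgrade categoricity in one sufficiently large cardinal into categoricity on a tail. Throughout I view $\K$ as an AEC ordered by substructure, with $\LS(\K) = |\tau(\K)| + \aleph_0$. The observation that drives everything is: because $\K$ is closed under substructures, for every $M \in \K$ and every $A \subseteq |M|$ the substructure of $M$ generated by $A$ again lies in $\K$, so $\K$ has canonical closures $\cl^M(A)$ and hence the intersection property. I would first record the consequences of this that will stand in for syntax: canonical prime (even minimal) models over sets, once amalgamation is around, and enough locality for Galois types to be manageable.

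The second stage is to establish the structural backbone in a final segment of cardinals $[\chi, \infty)$: amalgamation, joint embedding, no maximal models, stability, superstability, and tameness. For amalgamation and no maximal models one uses categoricity together with Ehrenfeucht--Mostowski models and the intersection property — a failure of $\mu$-amalgamation codes many pairwise non-isomorphic models in some larger cardinal, and we are free to pick a categoricity cardinal above it; this is exactly where the hypothesis that categoricity holds cofinally, in cardinals of unbounded cofinality, is spent. Then categoricity in a $\lambda$ with $\cf{\lambda}$ large forces the model of size $\lambda$ to be saturated, and the usual no-long-$\mu$-splitting-chains and uniqueness-of-limit-models argument yields superstability above some $\chi$; tameness I would extract from the intersection property together with categoricity. \textbf{This stage is the crux.} Everything after it is comparatively mechanical, but extracting amalgamation, superstability, and tameness simultaneously from bare categoricity — in ZFC, with only cofinally-many cofinalities available — is precisely where the peculiarities of universal classes have to be pushed hard, and where the cofinality hypothesis earns its keep.

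With the backbone in hand, I would build a good $\lambda$-frame $\s$ on $\K_\lambda$ for $\lambda$ above the threshold — a local independence relation with existence, extension, uniqueness, symmetry, and local character — and use tameness to take it type-full. Here the intersection property pays a second dividend: canonical prime models over independent sets let me verify the higher-dimensional uniqueness and existence ($(\lambda, n)$-amalgamation) conditions, so $\s$ is excellent (equivalently, $\omega$-successful with primes). Excellence is the genuinely universal-class-specific input; a general AEC needs further hypotheses to reach this point.

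Finally, from a type-full, excellent, tame good frame categoricity transfers in both directions: upward by a Grossberg--VanDieren-style argument (a saturated model in $\lambda$ forces saturated, hence isomorphic, models in every larger cardinal), and downward by Shelah's downward categoricity transfer for AECs with amalgamation, with superstability and the frame playing the role of the ``successor cardinal'' hypothesis. Since the theorem's hypothesis supplies at least one — indeed cofinally many — categoricity cardinals above $\chi$, we conclude that $\K$ is categorical in every cardinal $\geq \chi$, that is, on a tail, as claimed.
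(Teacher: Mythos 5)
Your outline matches the paper's high-level strategy (intersections give tameness, primes, and weak amalgamation; a good frame is built and upgraded; categoricity is transferred), but the step you yourself flag as the crux is not actually supplied, and the argument you sketch for it would fail. You propose to get amalgamation on a tail directly from categoricity by arguing that a failure of $\mu$-amalgamation ``codes many pairwise non-isomorphic models in some larger cardinal,'' to be killed by a categoricity cardinal above it. No such ZFC implication is known: the classical result of this shape (few models in $\lambda^+$ implies $\lambda$-amalgamation) needs $2^{\lambda} < 2^{\lambda^{+}}$, and ``categoricity implies eventual amalgamation'' is precisely Grossberg's conjecture, which is open even for tame AECs and even for universal classes. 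The paper's route is different and this difference is its main contribution: the cofinality hypothesis is spent on Shelah's Conclusion IV.2.12.(1) to get that $K$ is eventually syntactically characterizable, whence Shelah's Chapter IV machinery produces a good $\lambda$-frame --- and hence $\lambda$-amalgamation --- in a \emph{single} categoricity cardinal $\lambda$; amalgamation is then transferred \emph{upward} from that one cardinal using tameness together with weak amalgamation (which holds because the class admits intersections), via the type-extension-property argument of Theorem \ref{thm-closure} and Theorem \ref{ap-frame-transfer}. Relatedly, your claim that tameness comes from ``the intersection property together with categoricity'' is off: admitting intersections does not give tameness (the Baldwin--Shelah examples admit intersections and are non-tame); what is used is the \emph{uniqueness} of the isomorphism between generated substructures witnessing equality of Galois types, and no categoricity is needed for that.

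The second gap is in the final transfer. Grossberg--VanDieren's upward theorem needs categoricity in $\lambda$ \emph{and} $\lambda^{+}$, and passing from categoricity in $\lambda$ to categoricity in $\lambda^{+}$ is not a formality that ``excellence'' hands you; it is the content of the uni-dimensionality analysis. The paper shows that if $K_\lambda$ is not weakly uni-dimensional, orthogonality calculus produces a type $p$ such that the class $K_{\neg^{\ast} p}$ of models over which $p$ has a unique extension carries a good $\lambda$-frame; since that class inherits tameness and weak amalgamation from the existence of primes, the frame extends and $K_{\neg^{\ast} p}$ has arbitrarily large models, yielding non-saturated models in every cardinal and contradicting categoricity. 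Weak uni-dimensionality then gives categoricity in $\lambda^{+}$, and only then does the Grossberg--VanDieren transfer apply. Your proposal replaces this entire mechanism with ``categoricity transfers in both directions'' from the frame, which begs the question. To repair the proposal you would need to (i) replace the EM-coding step by the eventual-syntactic-characterizability route to a frame in one cardinal plus the upward amalgamation transfer, and (ii) add the $K_{\neg^{\ast} p}$ / weak uni-dimensionality argument for the successor step.
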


The proof stems from ideas of Adi Jarden and Will Boney, and also relies on a deep result of Shelah. As opposed to previous works, the argument is in ZFC and does not use the assumption of categoricity in a successor cardinal. The argument generalizes to abstract elementary classes (AECs) that satisfy a locality property and where certain prime models exist. Moreover assuming amalgamation we can give an explicit bound on the Hanf number and get rid of the cofinality restrictions:

\begin{thm}\label{prime-caract-0}
  Let $K$ be an AEC with amalgamation. Assume that $K$ is fully $\LS (K)$-tame and short and has primes over sets of the form $M \cup \{a\}$. Write $H_2 := \beth_{\left(2^{\beth_{\left(2^{\LS (\K)}\right)^+}}\right)^+}$. If $K$ is categorical in a $\lambda > H_2$, then $K$ is categorical in all $\lambda' \ge H_2$.
\end{thm}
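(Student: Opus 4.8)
The plan is to run the ``tameness $\Rightarrow$ global independence $\Rightarrow$ categoricity transfer'' program, using the existence of prime models to obtain the upward transfer in ZFC and without categoricity in a successor. Write $H_1 := \beth_{\left(2^{\LS (\K)}\right)^+}$, so that $H_2 = \beth_{\left(2^{H_1}\right)^+}$; the two nested Hanf operations reflect the shape of the argument: categoricity in $\lambda$ first buys good local behaviour above $H_1$ and a good independence relation based around $H_1$, and Shelah's categoricity-transfer machinery, when fed a frame based at $H_1$, draws its conclusions only from the Hanf number of $H_1$ upward --- whence the statement is for $\lambda' \ge H_2$ rather than $\lambda' \ge H_1$.

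First I would extract structure from categoricity in $\lambda > H_2$. Since $\lambda > H_1$ and $\K$ has amalgamation, the known structure theory of categorical AECs above the Hanf number gives that $\K$ has no maximal models, is stable in every $\mu \in [H_1, \lambda)$, has a saturated model of size $\lambda$ (which by categoricity is the model of size $\lambda$), and is \emph{superstable} on $[H_1, \lambda)$ --- equivalently, has no long splitting chains and uniqueness of limit models there. Tameness and shortness are not yet used. Next, following the ideas of Boney and Jarden, I would globalize this: full $\LS (\K)$-tameness and shortness together with amalgamation let one localize a nonsplitting/coheir independence relation and upgrade its local character, extension, uniqueness and symmetry so that they hold over all of $[H_1, \infty)$, producing a type-full good $[H_1, \infty)$-frame $\s$ on $\K$. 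The hypothesis that $\K$ has primes over sets of the form $M \cup \{a\}$ says exactly that $\s$ has prime models over its basic types, so $\s$ is a good frame \emph{with primes}.

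With $\s$ in hand, the categoricity transfer splits in two. \emph{Downward}, suppose $\mu \in [H_2, \lambda)$ and some $M \in \K_\mu$ is not saturated; then $M$ omits a $\s$-type over a smaller submodel, and using stability to bound the type space together with tameness and shortness one inflates $M$, by an omitting-types construction, to a non-saturated model in $\K_\lambda$, contradicting that the model of size $\lambda$ is saturated. Hence $\K$ is categorical in every $\mu \in [H_2, \lambda)$. \emph{Upward} --- where the primes hypothesis and the ``deep result of Shelah'' cited in the introduction are indispensable --- one uses prime extensions in $\s$ to build, over increasing chains, canonical models and thereby shows by induction on $\mu > \lambda$ that every model in $\K_\mu$ is $\mu$-saturated; since saturated models of a fixed size are unique under amalgamation, $\K$ is categorical in each such $\mu$. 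Together with categoricity in $\lambda$ itself, this yields categoricity in all $\lambda' \ge H_2$.

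The hard part is the upward transfer. Superstability by itself only pushes $\lambda$-saturation up an increasing chain; boosting that to full $\mu$-saturation for $\mu > \lambda$ --- equivalently, showing the frame $\s$ is ``successful'' enough to support an excellence-style amalgamation of its prime models --- is precisely what Shelah's theorem supplies, and the role of the primes assumption is to make that theorem applicable here in ZFC. A secondary obstacle is the bookkeeping in the globalization step: one must verify that the upgraded independence relation really satisfies symmetry and local character at every cardinal in $[H_1, \infty)$, not merely on a tail, since a single failure would break $\s$ and with it both transfers.
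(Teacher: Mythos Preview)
Your downward half and the frame construction are broadly in the right spirit, but the upward transfer has a genuine gap: the mechanism you describe (``prime extensions to build canonical models over increasing chains, then induction on $\mu$ to show every model is $\mu$-saturated'') is not a proof. Nothing in the hypothesis ``primes over $M \cup \{a\}$'' lets you amalgamate prime models in an excellence-style way or push saturation up cardinal by cardinal; that would need $n$-dimensional amalgamation or something like it, which you do not have. You also misidentify the ``deep result of Shelah'': the Chapter~IV machinery (building a good frame from categoricity without amalgamation) is used only for the \emph{other} theorem in the abstract. The present theorem assumes amalgamation and does not touch Chapter~IV at all.

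The paper's upward transfer runs through \emph{weak uni-dimensionality} and orthogonality. After obtaining a type-full successful $\goodp$ frame at a categoricity cardinal (here $H_2$, once downward transfer gives categoricity there), the argument is by contrapositive: if $K_{H_2}$ were not weakly uni-dimensional, Shelah's orthogonality calculus (Chapter~III) produces a nonalgebraic $p$ such that the restricted frame on $K_{\neg^\ast p}$ (models where $p$ has a unique extension) is again a good $H_2$-frame. Primes enter here, not to build models directly, but to show that $K_{\neg^\ast p}$ inherits tameness and weak amalgamation from $K$; then the amalgamation-transfer and frame-extension results give $K_{\neg^\ast p}$ arbitrarily large models, hence non-saturated models of every size above $H_2$, contradicting categoricity in $\lambda$. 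So $K_{H_2}$ is weakly uni-dimensional, whence categorical in $H_2^+$, and Grossberg--VanDieren finishes. Your outline is missing the entire $K_{\neg^\ast p}$/orthogonality step, which is where both the primes hypothesis and the actual work live.
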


\end{abstract}

\maketitle

\tableofcontents

\section{Introduction}

Morley's categoricity theorem \cite{morley-cip} states that a first-order countable theory that is categorical in some uncountable cardinal must be categorical in all uncountable cardinals. The result motivated much of the development of first-order classification theory (it was later generalized by Shelah \cite{sh31} to uncountable theories). 

Toward developing a classification theory for non-elementary classes, one can ask whether there is such a result for infinitary logics, e.g. for an $L_{\omega_1, \omega}$ sentence. In 1971, Keisler proved \cite[Section 23]{kei71} a generalization of Morley's theorem to this framework assuming in addition that the model in the categoricity cardinal is sequentially homogeneous. Unfortunately Shelah later observed using an example of Marcus \cite{marcus-counterexample} that Keisler's assumption does not follow from categoricity. Still in the late seventies Shelah proposed the following far-reaching conjecture:

\begin{conjecture}[Open problem D.(3a) in \cite{shelahfobook}]\label{categ-conj}
  If $L$ is a countable language and $\psi \in L_{\omega_1, \omega}$ is categorical in one $\lambda \ge \beth_{\omega_1}$, then it is categorical in all $\lambda' \ge \beth_{\omega_1}$.
\end{conjecture}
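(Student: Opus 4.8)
The statement is Shelah's categoricity conjecture for $L_{\omega_1, \omega}$, and at the level of generality written here (countable language, threshold $\beth_{\omega_1}$, no amalgamation or tameness assumed, in ZFC) it is an open problem; what follows is a plan of attack rather than a complete argument. The first move is to pass from the syntactic object $\psi$ to an abstract elementary class. Fix a countable fragment $\mathbf{F} \subseteq L_{\omega_1, \omega}$ with $\psi \in \mathbf{F}$ and set $\K := (\mathrm{Mod}(\psi), \prec_{\mathbf{F}})$, where $\prec_{\mathbf{F}}$ denotes $\mathbf{F}$-elementary substructure. Then $\K$ is an AEC with $\LS (\K) = \aleph_0$, and any categoricity transfer for $\K$ immediately yields the conclusion for $\psi$. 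The threshold $\beth_{\omega_1}$ is not incidental: it is exactly the Hanf number governing existence and non-categoricity for such classes, hence the natural floor above which to run any transfer machinery. Note at the outset that neither Theorem~\ref{abstract-thm-0} nor Theorem~\ref{prime-caract-0} applies directly, since $\K$ need be neither universal nor equipped a priori with amalgamation, tameness, shortness, or primes.

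The core of the plan is therefore to \emph{manufacture} the structural hypotheses of Theorem~\ref{prime-caract-0} out of categoricity alone. Using categoricity in $\lambda \ge \beth_{\omega_1}$ together with the deep result of Shelah alluded to in the abstract, one first extracts local superstable behaviour: stability in cofinally many cardinals below $\lambda$, and on a suitable interval a good $\lambda$-frame on a sub-AEC $\K' \subseteq \K$. The good-frame machinery is designed precisely to supply, in a bounded interval of cardinals, a well-behaved independence notion together with amalgamation \emph{within that interval} and a canonical choice of prime models over sets $M \cup \{a\}$ — the last of these being what one would feed into the ``primes'' hypothesis of Theorem~\ref{prime-caract-0}. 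The delicate part is to upgrade this local good frame to the \emph{global} data the transfer theorem demands: amalgamation on a tail of cardinals, and full tameness and shortness.

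For tameness I would try to invoke a categoricity-to-tameness argument in the spirit of Grossberg--VanDieren and Boney, deriving locality of Galois types from the frame and from categoricity rather than from large cardinals; for amalgamation, the idea is to propagate the interval-amalgamation furnished by the good frame upward by an excellence/extension argument. Once global amalgamation, full tameness and shortness, and primes over $M \cup \{a\}$ are secured above $\beth_{\omega_1}$, Theorem~\ref{prime-caract-0} (or, in the special case where $\K$ happens to be universal, Theorem~\ref{abstract-thm-0}) closes the loop and transfers categoricity to all $\lambda' \ge \beth_{\omega_1}$.

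The main obstacle, and the reason the conjecture remains open at this generality, is exactly this upgrade step: in ZFC and \emph{without} assuming amalgamation, deriving even eventual amalgamation and tameness from categoricity for an arbitrary $L_{\omega_1, \omega}$ sentence is not known. Shelah's excellence theory accomplishes the analogous task in the $\aleph_0$-stable case, and successor-categoricity or weak-GCH variants are available, but the ZFC, non-successor, no-amalgamation case stated here is precisely where the bridge from the local good frame to global tameness/amalgamation is missing. I would expect essentially all of the genuine difficulty to concentrate there, with the final categoricity transfer being comparatively routine once the structural hypotheses are in hand.
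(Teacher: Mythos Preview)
The statement you were asked to prove is Conjecture~\ref{categ-conj}, which the paper explicitly presents as an \emph{open problem} (Shelah's categoricity conjecture for $L_{\omega_1,\omega}$) and does not prove; there is therefore no proof in the paper to compare against. You recognized this correctly and offered a plan of attack rather than a proof, which is the appropriate response.

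Your outline is sensible and broadly consonant with the paper's philosophy: pass to an AEC, extract a good frame from categoricity via Shelah's deep results, and then try to upgrade to the global hypotheses (amalgamation, tameness/shortness, primes) needed for a transfer theorem like Theorem~\ref{prime-caract-0}. You also correctly locate the genuine obstruction: the upgrade from a local good frame to global amalgamation and tameness, in ZFC and without further hypotheses, is precisely what is not known. One small correction: the good-frame machinery does \emph{not} automatically supply prime models over $M \cup \{a\}$; in the paper primes are an additional hypothesis (satisfied, e.g., in classes admitting intersections, see Remark~\ref{inter-prime}), and Question~\ref{weakly-primes} asks whether primes can be obtained from the other hypotheses. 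So your plan has one more gap than you indicated, but this does not change the overall assessment that the conjecture remains open.
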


This has now become the central test problem in classification theory for non-elementary classes. Shelah alone has more than 2000 pages of approximations (for example \cite{sh48, sh87a, sh87b, makkaishelah, sh394, sh576, shelahaecbook, shelahaecbook2}). Shelah's results led him to introduce a semantic framework encompassing several different infinitary logics and algebraic classes \cite{sh88}: abstract elementary classes (AECs). In this framework, we can state an eventual version of the conjecture\footnote{The statement here appears in \cite[Conjecture N.4.2]{shelahaecbook}.}:

\begin{conjecture}[Shelah's eventual categoricity conjecture for AECs]
  An AEC that is categorical in a high-enough cardinal is categorical on a tail of cardinals.
\end{conjecture}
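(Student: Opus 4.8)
The plan is to reduce the general conjecture to the structural setting already resolved by Theorem~\ref{prime-caract-0}. Given an AEC $\K$ categorical in a sufficiently large $\lambda$, I would aim to show that, after discarding the small models and passing to the tail $\K_{\ge \mu}$ for a suitable $\mu$, the class acquires amalgamation, becomes fully $\LS(\K)$-tame and short, and admits primes over sets of the form $M \cup \{a\}$. Once those four hypotheses hold on a tail interval, applying Theorem~\ref{prime-caract-0} to $\K_{\ge \mu}$ (with the L\"owenheim--Skolem number recomputed for the restricted class) would deliver categoricity in all sufficiently large cardinals, which is exactly the conclusion sought. So the genuine work is to manufacture amalgamation, locality of Galois types, and prime models out of categoricity alone, in ZFC and without assuming categoricity at a successor.

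First I would try to extract \emph{amalgamation}. The standard entry point is Shelah's presentation theorem, which realizes $\K$ as a pseudo-elementary class and thereby supplies Hanf-number bounds (this is the source of the tower of $\beth$'s appearing in $H_2$). One then argues that categoricity far above these Hanf numbers forces amalgamation on a sub-interval of cardinals and tries to propagate it upward through the categoricity cardinal. The difficulty is that the classical derivation of amalgamation from categoricity at a successor passes through an instance of the weak continuum hypothesis (weak diamond), so obtaining amalgamation on a tail in pure ZFC, with no successor hypothesis, is the crux of the whole problem.

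Next I would address \emph{tameness and shortness} together with the \emph{existence of primes}. Here the plan is to use categoricity to force an eventual form of superstability (no long splitting chains and uniqueness of limit models), and then to build a well-behaved global independence notion on the tail; from such a notion one expects to read off locality of Galois types and to construct prime models over $M \cup \{a\}$ of the kind feeding into Theorem~\ref{prime-caract-0}. This part parallels the ideas of Jarden and Boney invoked for Theorem~\ref{abstract-thm-0}, and is where a deep structural input (the result of Shelah cited in the abstract) seems unavoidable.

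The hard part will be the simultaneous, hypothesis-free production of amalgamation and tameness: these are precisely the properties that the cleaner Theorems~\ref{abstract-thm-0} and~\ref{prime-caract-0} either assume outright or buy only for universal classes, where closure under intersections and substructures supplies the missing structure for free. In full generality both remain genuinely open, so I expect a complete proof to require either new ZFC techniques for transferring amalgamation across the categoricity cardinal or a further structural restriction on the class --- exactly the gap that the present paper narrows, via the universal-class hypothesis, rather than closes.
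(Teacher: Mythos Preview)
The statement you are addressing is a \emph{conjecture} in the paper, not a theorem: the paper offers no proof of it and explicitly treats it as open (the paper proves only the special cases recorded in Theorems~\ref{abstract-thm-0} and~\ref{prime-caract-0}). So there is no ``paper's own proof'' to compare your proposal against.

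Your proposal is not a proof either, and you say as much in your final paragraph. Your outline --- reduce to Theorem~\ref{prime-caract-0} by manufacturing amalgamation, full tameness and shortness, and primes from categoricity alone --- is a reasonable description of what \emph{would} suffice, and you correctly identify the genuine obstructions: (i) amalgamation from categoricity in ZFC without a successor hypothesis is open (this is Grossberg's conjecture, discussed around Corollary~\ref{tame-ap-0}); (ii) tameness from categoricity is open (the Grossberg--VanDieren conjecture, mentioned at the start of Section~\ref{tameness-sec}); (iii) primes from categoricity is open (Question~\ref{weakly-primes}). The paper does not resolve any of these in general; it sidesteps them by assuming the class is universal (giving tameness, shortness, primes, and weak amalgamation for free) or by assuming amalgamation and primes outright.

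So your assessment of the landscape is accurate, but what you have written is a survey of the obstacles rather than a proof attempt with a specific gap to diagnose. If the task was to prove the conjecture, the honest answer is that neither you nor the paper does so.
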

\begin{remark}\label{high-enough-rmk}
  A more precise statement is that there should be a function $\mu \mapsto \lambda_\mu$ such that every AEC $K$ categorical in some $\lambda \ge \lambda_{\LS (K)}$ is categorical in all $\lambda' \ge \lambda_{\LS (K)}$. By a similar argument as for the existence of Hanf numbers \cite{hanf-number} (see \cite[Conclusion 15.13]{baldwinbook09}), Shelah's eventual categoricity conjecture for AECs is equivalent to the statement that an AEC categorical in \emph{unboundedly many} cardinals is categorical on a tail of cardinals. We will use this equivalence freely. Note that Theorem \ref{prime-caract-0} gives an \emph{explicit}\footnote{We thank John Baldwin for helpful conversation on the topic.} bound for $\lambda_\mu$, so proves a stronger statement than just Shelah's eventual categoricity conjecture for universal classes with amalgamation\footnote{We are not sure how to make the distinction precise. Maybe one can call the \emph{computable} eventual categoricity conjecture the statement that has the additional requirement that $\mu \mapsto \lambda_\mu$ be computable, where computable can be defined as in \cite{blsh992}. Note that in Shelah's original categoricity conjecture, $\lambda_{\mu}$ is $\beth_{(2^{\mu})^+}$, see \cite[6.14.(3)]{sh702}.}.
\end{remark}

Positive results are known in less general frameworks: For homogeneous model theory by Lessmann \cite{lessmann2000} and more generally for $\aleph_0$-tame\footnote{Tameness is a locality property for orbital types introduced by Grossberg and VanDieren in \cite{tamenessone}.} simple finitary AECs by Hyttinen and Kesälä \cite{categ-finit} (note that these results apply only to countable languages). In uncountable languages, Grossberg and VanDieren proved the conjecture in tame AECs categorical in a successor cardinal \cite{tamenesstwo, tamenessthree}. Later Will Boney pointed out that tameness follows\footnote{Recently Boney and Unger \cite{lc-tame-v3-toappear} established that the statement ``all AECs are tame'' is in fact \emph{equivalent} to a large cardinal axioms (the existence of a proper class of almost strongly compact cardinals). This result does not however say anything on the consistency strength of Shelah's eventual categoricity conjecture.} from large cardinals \cite{tamelc-jsl}, a result that (as pointed out in \cite{ct-accessible-v4-toappear}) can also be derived from a 25 year old theorem of Makkai and Paré (\cite[Theorem 5.5.1]{makkai-pare}). A combination of this gives that statements much stronger than Shelah's categoricity conjecture for a successor hold if there exists a proper class of strongly compact cardinals.

The question of whether categoricity in a sufficiently high \emph{limit} cardinal implies categoricity on a tail remains open (even in tame AECs). The central tool there is the notion of a \emph{good $\lambda$-frame}, a local axiomatization of forking which is the main concept in \cite{shelahaecbook}. After developing the theory of good $\lambda$-frames over several hundreds of pages, Shelah claims to be able to prove the following (see \cite[Discussion III.12.40]{shelahaecbook}, a proof should appear in \cite{sh842}):

\begin{claim}\label{shelah-xxx}
Assume that $2^{\theta} < 2^{\theta^+}$ for all cardinals $\theta$. Let $K$ be an AEC such that there is an $\omega$-successful\footnote{See Appendix \ref{good-frame-appendix} for a definition of good frames and the related technical terms.} $\goodp$ $\lambda$-frame with underlying class $K_\lambda$. If $K$ is categorical in $\lambda$ and in some $\mu > \lambda^{+\omega}$, then $\K$ is categorical in \emph{all} $\mu > \lambda^{+\omega}$. 
\end{claim}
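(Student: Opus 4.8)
The plan is to climb the tower of frames supplied by $\omega$-successfulness and then to work inside the resulting class of saturated models. Let $\mathfrak{s}$ denote the given $\omega$-successful $\goodp$ $\lambda$-frame, with underlying class $K_\lambda$. Iterating the ``$+$''-construction, for each $n < \omega$ one obtains a good $\lambda^{+n}$-frame $\mathfrak{s}^{+n}$ whose underlying class is the family of $\lambda^{+n}$-saturated members of $K_{\lambda^{+n}}$; that $\mathfrak{s}$ is $\goodp$ is exactly what makes these underlying classes cohere along the tower and keeps each $\mathfrak{s}^{+n}$ $\omega$-successful and $\goodp$. This step relies heavily on the hypothesis $2^{\theta} < 2^{\theta^+}$, via weak-diamond constructions. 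Goodness of $\mathfrak{s}$ and categoricity of $K$ in $\lambda$ together yield stability and amalgamation throughout $[\lambda, \lambda^{+\omega})$, so the $K$-model of size $\lambda^{+n}$ is saturated and unique; hence $K$ is categorical in $\lambda^{+n}$ for every $n$. At the limit, using that an $\omega$-successful $\goodp$ frame makes the union of an increasing $\omega$-chain of saturated models saturated (a superlimit at $\lambda^{+\omega}$), one gets categoricity of $K$ in $\lambda^{+\omega}$ and assembles from the $\mathfrak{s}^{+n}$ a good $\lambda^{+\omega}$-frame $\mathfrak{s}^{+\omega}$.

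Let $K^{\ast}$ be the class of $\lambda^{+\omega}$-saturated models of $K$ of size $\ge \lambda^{+\omega}$, with the inherited strong-substructure relation. The next step is to show that $K^{\ast}$ is an AEC with $\LS(K^{\ast}) = \lambda^{+\omega}$ enjoying amalgamation, joint embedding and no maximal models, and that $\mathfrak{s}^{+\omega}$ is a good $\lambda^{+\omega}$-frame on $K^{\ast}_{\lambda^{+\omega}}$, itself $\omega$-successful and $\goodp$; again the $\goodp$ hypothesis and the weak-GCH assumption do the heavy lifting. By the first paragraph $K^{\ast}$ is categorical in $\lambda^{+\omega}$, and it is categorical in $\mu$ as well: since $\mu > \lambda^{+\omega}$ sits above the relevant Hanf bound, categoricity of $K$ in $\mu$ together with amalgamation forces the $K$-model of size $\mu$ to be $\lambda^{+\omega}$-saturated, hence to belong to $K^{\ast}$.

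With $K^{\ast}$ in place, I would carry out an upward categoricity transfer: applying the construction of the first two paragraphs to $\mathfrak{s}^{+\omega}$ and iterating along the ordinals, one gets by transfinite induction that $K^{\ast}$ is categorical in $\lambda^{+\omega\cdot\alpha}$ for every $\alpha$ (frame propagation at successors, the superlimit property at limits), hence in unboundedly many cardinals above $\lambda^{+\omega}$, hence — by the Hanf-number argument for AECs recalled in Remark \ref{high-enough-rmk} — on a tail of cardinals. It then remains to transfer categoricity from $K^{\ast}$ back to $K$ and to cover the bounded interval $(\lambda^{+\omega}, \mu)$; this is where categoricity of $K$ in the given $\mu$ is genuinely used, together with amalgamation in $[\lambda, \infty)$ and the stability (indeed superstability-like behavior) of $K$ above $\lambda^{+\omega}$ coming from the frames, to conclude that $K$ and $K^{\ast}$ have the same models of each size $> \lambda^{+\omega}$. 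Categoricity of $K$ in every $\mu' > \lambda^{+\omega}$ follows.

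The principal obstacle is the middle phase: proving that the class of $\lambda^{+\omega}$-saturated models really is a well-behaved AEC carrying a good $\lambda^{+\omega}$-frame, and that this frame propagates up the ordinals without degenerating. This is the deepest and least routine part — essentially the content of \cite[Chapter III]{shelahaecbook}, a full proof of the claim being promised in \cite{sh842} — and the pervasive appeal to $2^{\theta} < 2^{\theta^+}$, both for successfulness and for amalgamating saturated models, appears essential to this line of argument.
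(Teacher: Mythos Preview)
The paper does not prove Claim \ref{shelah-xxx}; it is explicitly presented as an unpublished result of Shelah (see the sentence introducing the claim and \cite[Discussion III.12.40]{shelahaecbook}, with a full proof promised in \cite{sh842}) and used throughout as a black box. So there is no proof in the paper to compare against. The paper does, however, speculate about Shelah's method in Section \ref{categ-transfer-sec}: the author suggests the crux is to show, using the weak GCH, that whenever $K_\lambda$ is not weakly uni-dimensional the class $K_{\neg^\ast p}$ of Fact \ref{not-unidim-frame} has arbitrarily large models, producing non-saturated models in every cardinal and thereby contradicting categoricity above $\lambda^{+\omega}$. Your outline does not touch this uni-dimensionality/orthogonality route at all; instead you iterate the successor-frame construction transfinitely and work inside the class of saturated models. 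That is in the spirit of \cite[Chapter III]{shelahaecbook}, but it is a different organizing idea from the one the paper attributes to Shelah.

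On the substance of your outline, two points deserve flagging. First, your appeal to Remark \ref{high-enough-rmk} yields categoricity only on a \emph{tail}, not in every $\mu' > \lambda^{+\omega}$; you acknowledge the bounded interval $(\lambda^{+\omega}, \mu)$ must be handled separately, but the sketch you give for that step (``$K$ and $K^\ast$ have the same models of each size $> \lambda^{+\omega}$'') is precisely what needs to be proven and does not follow from the ingredients you list. Second, the claim that goodness of $\mathfrak{s}$ and categoricity in $\lambda$ yield amalgamation throughout $[\lambda, \lambda^{+\omega})$ is not automatic: a good $\lambda$-frame gives only $\lambda$-amalgamation, and pushing amalgamation upward is exactly what the successor-frame machinery plus weak GCH are needed for, so this should be folded into the inductive step rather than stated as a preliminary. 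You correctly identify the frame-at-$\lambda^{+\omega}$ construction as the deepest part, and your honest pointer to \cite{sh842} is appropriate given that no published proof exists.
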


Assuming amalgamation and Claim \ref{shelah-xxx}, Shelah obtains the eventual categoricity conjecture \cite[Theorem IV.7.12]{shelahaecbook} (or see \cite[Section 11]{downward-categ-tame-apal} for an exposition):

\begin{fact}\label{ap-categ}
  Assume Claim \ref{shelah-xxx} and $2^{\theta} < 2^{\theta^+}$ for all cardinals $\theta$. Then an AEC with amalgamation categorical in \emph{some} $\lambda \ge \hanf{\aleph_{\LS (\K)^+}}$ is categorical in \emph{all} $\lambda' \ge \hanf{\aleph_{\LS (\K)^+}}$.
\end{fact}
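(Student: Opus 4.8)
The plan is to follow the architecture of Shelah's proof (\cite[Chapter IV]{shelahaecbook}): from categoricity and amalgamation I would manufacture, at some cardinal $\mu$ strictly below $H_1 := \hanf{\aleph_{\LS (\K)^+}}$, an $\omega$-successful $\goodp$ good $\mu$-frame with underlying class $\K_\mu$; apply Claim \ref{shelah-xxx} to it; and then observe that the conclusion of the Claim already covers every cardinal $\ge H_1$.

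First I would record the standard structural consequences of categoricity above the Hanf number in an AEC with amalgamation (see \cite{sh394} and \cite[Chapter IV]{shelahaecbook}): joint embedding, no maximal models, stability in every $\theta \in [\LS (\K), \lambda)$, and the superstability and locality properties that hold below a categoricity cardinal. The reason for taking $H_1 = \hanf{\aleph_{\LS (\K)^+}}$ is precisely that these properties are then guaranteed to appear, and to persist upward, at a cardinal $\mu$ lying well below $H_1$.

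Next -- this is the substantial step -- I would invoke Shelah's construction of a good frame: there is $\mu$ with $\LS (\K) \le \mu < H_1$ such that $\K$ is categorical in $\mu$ and there is a good $\mu$-frame whose underlying class is $\K_\mu$. Upgrading this frame is where the hypothesis $2^\theta < 2^{\theta^+}$ enters essentially: through the weak diamond it yields density of minimal types, hence goodness of the successor frame, i.e.\ that the frame is \emph{successful}; iterating $\omega$ times (each step using the pertinent instance of weak GCH) gives $\omega$-successfulness; and the analysis of the non-forking-based amalgamation together with the existence of primes over the relevant configurations gives the $\goodp$ property. Arranging that the underlying class is all of $\K_\mu$ and that categoricity descends from $\lambda$ to $\mu$ -- both needed so that the hypotheses of Claim \ref{shelah-xxx} hold literally -- is part of this bookkeeping; once the $\omega$-successful frame is in hand, categoricity transfers down through $[\mu, \mu^{+\omega}]$ and $\mu$ can be chosen so that the Claim applies at the right level. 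All of this is \cite[Chapters III--IV]{shelahaecbook}; \cite[Section 11]{downward-categ-tame-apal} gives an exposition.

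Finally I would feed the $\omega$-successful $\goodp$ $\mu$-frame to Claim \ref{shelah-xxx}. Since $\mu < H_1$ and $H_1$ is a limit cardinal of uncountable cofinality, $\mu^{+n} < H_1$ for every $n < \omega$, so $\mu^{+\omega} < H_1 \le \lambda$; thus $\K$ is categorical in $\mu$ and in a cardinal $> \mu^{+\omega}$, and the Claim gives that $\K$ is categorical in all $\mu' > \mu^{+\omega}$. As $H_1 > \mu^{+\omega}$, every $\lambda' \ge H_1$ satisfies $\lambda' > \mu^{+\omega}$, so $\K$ is categorical in all $\lambda' \ge H_1$, as required. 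The main obstacle is clearly the middle step: extracting an $\omega$-successful $\goodp$ $\mu$-frame on $\K_\mu$ from categoricity and amalgamation -- this is where weak GCH is genuinely indispensable, where the underlying class of each successive frame must be kept equal to $\K_\mu$, and where categoricity must be propagated down to $\mu$. The first and last steps are, respectively, routine appeals to known categoricity-transfer results and elementary cardinal arithmetic about the Hanf number.
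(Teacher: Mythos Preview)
Your proposal is correct and follows essentially the same architecture as the argument the paper cites: the paper does not give its own proof of this Fact but refers to \cite[Theorem IV.7.12]{shelahaecbook} (with an exposition in \cite[Section 11]{downward-categ-tame-apal}), and your three-step outline --- extract an $\omega$-successful $\goodp$ $\mu$-frame on $\K_\mu$ for some $\mu$ below the Hanf bound using weak GCH, then feed it to Claim \ref{shelah-xxx}, then do the cardinal arithmetic --- is exactly that architecture. One cosmetic point: you write $H_1$ for $\hanf{\aleph_{\LS(\K)^+}}$, but in the paper $H_1$ is reserved for $\hanf{\LS(\K)}$ (Notation \ref{hanf-notation}), so you should use a different symbol to avoid a clash.
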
 

Here and throughout the rest of this paper, we use the notation from \cite[Chapter 14]{baldwinbook09}:

\begin{notation}\label{hanf-notation}
  For $\theta$ an infinite cardinal, let $\hanf{\theta} := \beth_{\left(2^{\theta}\right)^+}$. For a fixed AEC $\K$, write $H_1 := \hanf{\LS (\K)}$, $H_2 := \hanf{H_1}$.
\end{notation}

Note that Fact \ref{ap-categ} applies in particular to homogeneous model theory and finitary AECs with uncountable language (the latter case could not previously be dealt with).

Now a conjecture of Grossberg made in 1986 (see Grossberg \cite[Conjecture 2.3]{grossberg2002}) is that categoricity of an AEC in a high-enough cardinal should imply amalgamation (above a certain Hanf number). This is especially relevant considering that all the positive results above assume amalgamation. In the presence of large cardinals, Grossberg's conjecture is known to be true (This was first pointed out by Will Boney for general AECs, see \cite[Theorem 4.3]{tamelc-jsl} and the discussion around Theorem 7.6 there. The key is that the proofs in \cite[Proposition 1.13]{makkaishelah} or the stronger \cite{kosh362} which are for classes of models of an $L_{\kappa, \omega}$ sentence, $\kappa$ a large cardinal, carry over to AECs $K$ with $\LS (K) < \kappa$). In recent years it has been shown that several results that could be proven using large cardinals can be proven using just the model-theoretic assumption of tameness or shortness (see all of the above papers on tameness and for example \cite{sv-infinitary-stability-afml,bv-sat-v3}). Thus one can ask whether tameness suffices to get amalgamation from categoricity. In general, this is not known. The only approximation is a result of Adi Jarden \cite{jarden-tameness-apal} discussed more at length in Section \ref{ap-sec}. Our contribution is a weak version of amalgamation which one can assume alongside tameness to prove Grossberg's conjecture:

\textbf{Corollary \ref{tame-ap-0}.}
  Let $K$ be tame AEC categorical in unboundedly many cardinals. If $K$ is eventually syntactically characterizable\footnote{A technical condition discussed more at length in Section \ref{ap-sec}.} and has weak amalgamation (see Definition \ref{weak-ap-def}), then there exists $\lambda$ such that $K_{\ge \lambda}$ has amalgamation. 

The proof uses a deep result of Shelah showing that a categorical AEC is well-behaved in a specific cardinal, then uses tameness and weak amalgamation to transfer the good behavior up.

We apply our result to \emph{universal classes}. Universal classes were introduced by Shelah in \cite{sh300-orig} as an important framework where he thought finding dividing lines should be possible\footnote{We were told by Rami Grossberg that another motivation was to study certain non first-order classes of modules.}. For many years, Shelah has claimed a main gap theorem for these classes but the full proof has not appeared in print. The most recent version is Chapter V of \cite{shelahaecbook2} which contains hundreds of pages of approximations. The methods used are stability theory inside a model (averages) as well as combinatorial tools to build many models. Here we show that universal classes are tame\footnote{This uses an argument of Will Boney.} (in fact fully $(<\aleph_0)$-tame and short) and have weak amalgamation. Moreover Shelah has shown\footnote{In fact, Shelah asserts that the cofinality restriction is not necessary but Will Boney and the author have found a gap in Shelah's argument, and Shelah's fix has not yet been published. See the beginning of Section \ref{ap-sec}.} that categoricity in cardinals of arbitrarily high cofinality implies that the class is eventually syntactically characterizable. Thus combining Corollary \ref{tame-ap-0} and Fact \ref{ap-categ} we can already prove Theorem \ref{abstract-thm-0} assuming the weak generalized continuum hypothesis and Claim \ref{shelah-xxx}. If the universal class is categorical in unboundedly many successor cardinals, we can use \cite{tamenessthree} instead to get a categoricity transfer in ZFC.

By relying on Shelah's analysis of frames in \cite[Chapter III]{shelahaecbook} as well as the frame transfer theorems in \cite{ext-frame-jml, tame-frames-revisited-v6-toappear}, we can also prove that Claim \ref{shelah-xxx} holds in ZFC for universal classes (this uses the proof of Corollary \ref{tame-ap-0}). We deduce Theorem \ref{abstract-thm-0} in the abstract (see Corollary \ref{abstract-thm-0-proof}). Note that the result also holds in uncountable languages.

Our results apply to a more general context than universal classes: fully tame and short AECs with amalgamation which have a prime model over every set of the form $M \cup \{a\}$ for $M$ a model (this is Theorem \ref{prime-caract-0} in the abstract, see Theorem \ref{categ-nice-ap} for a proof). Note that existence of prime models over sets of the form $M \cup \{a\}$ already played a crucial role in the proof of the categoricity transfer theorem for excellent classes of models of an $L_{\omega_1, \omega}$ sentences \cite[Theorem 5.9]{sh87b} (in fact, our proof works also in this setting). Since in that case we are assuming amalgamation, there is no cofinality restrictions and the Hanf number can be explicitly computed.

Theorem \ref{prime-caract-0} shows that, at least assuming amalgamation, tameness and shortness, the existence of primes is the \emph{only} obstacle. Since amalgamation and full tameness and shortness follow from large cardinals \cite{tamelc-jsl}, we obtain:

\begin{thm}\label{strongly-compact-thm}
  Let $K$ be an AEC and let $\kappa > \LS (K)$ be strongly compact. Assume that in $K_{\ge \kappa}$ there are prime models over sets of the form $M \cup \{a\}$. If $K$ is categorical in a $\lambda > \hanf{\hanf{\kappa}}$, then $K$ is categorical in all $\lambda' \ge \hanf{\hanf{\kappa}}$.
\end{thm}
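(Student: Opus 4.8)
The plan is to deduce this directly from Theorem~\ref{prime-caract-0}, applied not to $K$ itself but to the restricted class $K' := K_{\ge \kappa}$. First I would recall the standard fact (see e.g.\ \cite{baldwinbook09}) that for any $\mu \ge \LS (K)$ the class $K_{\ge \mu}$ is again an AEC, with $\LS (K_{\ge \mu}) = \mu$; in particular $K'$ is an AEC with $\LS (K') = \kappa$. Since $\kappa > \LS (K)$ is strongly compact, amalgamation, tameness and shortness come for free from the large cardinal: by \cite{tamelc-jsl}, $K$ is fully $(<\kappa)$-tame and short and $K_{\ge \kappa}$ has amalgamation. Because every member of $K'$ has size $\ge \kappa$, full $(<\kappa)$-tameness and shortness of $K$ is inherited as full $\kappa$-tameness and shortness of $K'$: if two types over some $M \in K'$ differ, they already differ over some $N \le M$ of size $<\kappa$, hence over any $N' \le M$ of size $\kappa$ containing $N$, and such an $N'$ lies in $K'$ (the variable-restriction part of shortness is inherited without change since it involves no models). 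As $\kappa = \LS (K')$, this says $K'$ is fully $\LS (K')$-tame and short. Finally, the hypothesis that in $K_{\ge \kappa}$ there are prime models over sets of the form $M \cup \{a\}$ is exactly the statement that $K'$ has primes over such sets.

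Next I would transfer the categoricity hypothesis and identify the relevant Hanf number. One has $\hanf{\hanf{\kappa}} \ge \hanf{\kappa} \ge \kappa = \LS (K')$, so any model of size $\lambda > \hanf{\hanf{\kappa}}$ lies in $K'$ if and only if it lies in $K$; thus $K$ is categorical in $\lambda$ exactly when $K'$ is. Applying Notation~\ref{hanf-notation} to $K'$, since $\LS (K') = \kappa$ we get $H_1 = \hanf{\kappa}$ and $H_2 = \hanf{H_1} = \hanf{\hanf{\kappa}}$, precisely the bound in the statement. Hence $K'$ is an AEC with amalgamation, fully $\LS (K')$-tame and short, with primes over sets of the form $M \cup \{a\}$, and categorical in some $\lambda > H_2$. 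Theorem~\ref{prime-caract-0} then gives that $K'$ is categorical in all $\lambda' \ge H_2 = \hanf{\hanf{\kappa}}$. Since the models of $K'$ of size $\lambda' \ge \kappa$ are precisely the models of $K$ of size $\lambda'$, it follows that $K$ is categorical in all $\lambda' \ge \hanf{\hanf{\kappa}}$.

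I do not expect a serious obstacle: once Theorem~\ref{prime-caract-0} is in hand, the argument is essentially bookkeeping. The two points that genuinely require care are the passage between $K$ and $K_{\ge \kappa}$ — checking that the large-cardinal results of \cite{tamelc-jsl} deliver amalgamation and full tameness and shortness in the precise form needed for the restricted class, and that $\LS(K_{\ge\kappa}) = \kappa$ — and making sure the prime-existence and categoricity hypotheses are stated for, or descend automatically to, $K_{\ge\kappa}$. One should also note that the non-vacuity of ``categorical in all $\lambda' \ge \hanf{\hanf{\kappa}}$'' (that is, existence of arbitrarily large models) is part of the conclusion of Theorem~\ref{prime-caract-0}, so nothing additional is needed here.
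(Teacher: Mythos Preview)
Your proposal is correct and matches the paper's approach exactly: the paper simply states that Theorem~\ref{strongly-compact-thm} follows from Theorem~\ref{prime-caract-0} together with the fact from \cite{tamelc-jsl} that a strongly compact $\kappa > \LS(K)$ yields amalgamation and full $(<\kappa)$-tameness and shortness, and your write-up carefully fills in the bookkeeping (passing to $K_{\ge\kappa}$, identifying $H_2 = \hanf{\hanf{\kappa}}$, and checking the hypotheses transfer).
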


When $\K$ is a universal class, we can replace the strongly compact with a measurable (Theorem \ref{measurable-thm}). 

There remains one question: can the conclusion of Theorem \ref{abstract-thm-0} be obtained from only categoricity in a single cardinal (without cofinality restriction)? We answer positively in a sequel\footnote{Since the initial circulation of this paper (in June 2015), there have been several other improvements: Question \ref{tame-shortness-q} has been answered positively \cite{categ-primes-v6-toappear} and the categoricity threshold of Theorem \ref{prime-caract-0} has been improved from $H_2$ to $H_1$ \cite{downward-categ-tame-apal}. All these improvements rely on the results of this paper.} \cite{categ-universal-2-v3-toappear}. Here, let us note that Theorem \ref{abstract-thm-0} generalizes to fully tame and short AECs with primes, but universal classes have better properties (as demonstrated by Shelah in \cite[Chapter V]{shelahaecbook2}), so there is still room for improvement\footnote{In fact, we had claimed in an earlier version of this work to be able to prove the full categoricity conjecture for universal classes but our argument contained an error.}.

The paper is organized as follows. In Section \ref{inter-sec}, we recall the definition of universal classes and more generally of AECs which admit intersections (a notion introduced by Baldwin and Shelah in \cite{non-locality}), give examples, and prove some basic properties. In Section \ref{tameness-sec}, we prove that universal classes are fully $(<\aleph_0)$-tame and short. In Section \ref{ap-sec} we give conditions under which amalgamation follows from categoricity (in more general classes than universal classes). In Section \ref{categ-transfer-sec}, we prove a categoricity transfer in universal classes that have amalgamation and more generally in fully tame and short AECs with primes and amalgamation.

To avoid cluttering the paper, we have written the technical definitions and results on independence needed for the paper (but not crucial to a conceptual understanding) in Appendix \ref{good-frame-appendix}. In Appendix \ref{proof-appendix}, we prove Fact \ref{not-unidim-frame}, a result of Shelah which is crucial to our argument but whose proof is only implicit in Shelah's book. In Appendix \ref{indep-sec}, we give some properties of independence in AECs which admit intersections.

A word on the background needed to read this paper: we assume familiarity with a basic text on AECs such as \cite{baldwinbook09} or \cite{grossbergbook} and refer the reader to the preliminaries of \cite{sv-infinitary-stability-afml} for more details and motivations on the notation and concepts used here. Familiarity with good frames \cite[Chapter II]{shelahaecbook} would be helpful, although the basics are reviewed in Appendix \ref{good-frame-appendix}. The proof of the two theorems in the abstract relies on the construction of a good frame in \cite{ss-tame-jsl}, and more generally\footnote{Although since this paper has first been made public, an improvement has been published that avoids dealing with global independence relation, see \cite{categ-primes-v6-toappear} (we have kept the original proof to avoid changing history and also because Appendix \ref{good-frame-appendix} is useful in other contexts).} on the study of global independence relations in \cite{indep-aec-apal}. Some material from Chapter III of \cite{shelahaecbook} is implicitly used there. To get amalgamation and prove Theorem \ref{abstract-thm-0}, the hard arguments of \cite[Chapter IV]{shelahaecbook} are used. However we do not rely on them once amalgamation has been obtained (so for example Theorem \ref{prime-caract-0} does \emph{not} rely on \cite[Chapter IV]{shelahaecbook}). Finally, let us note that a more leisurely overview of the proof of Theorem \ref{abstract-thm-0} will appear in \cite{bv-survey-v4-toappear}. We have also written a short outline of the proof in \cite{lazy-univ-v2}.

This paper was written while working on a Ph.D.\ thesis under the direction of Rami Grossberg at Carnegie Mellon University and I would like to thank Professor Grossberg for his guidance and assistance in my research in general and in this work specifically. I thank Will Boney for pointing me to AECs which admit intersections, for helpful conversations, and for good feedback. I thank John Baldwin, Adi Jarden, and the referee for useful feedback that greatly helped me improve the presentation of this paper. 

\section{AECs which admit intersections}\label{inter-sec}

Recall:

\begin{defin}[\cite{sh300-orig}]
  A class of structures $K$ is \emph{universal} if:
  
  \begin{enumerate}
    \item It is a class of $L$-structures for a fixed language $L = L (K)$, closed under isomorphisms.
    \item If $\seq{M_i : i < \delta}$ is $\subseteq$-increasing in $K$, then $\bigcup_{i < \delta} M_i \in K$.
    \item If $M \in K$ and $M_0 \subseteq M$, then $M_0 \in K$.
  \end{enumerate}
\end{defin}
\begin{example} \
\begin{enumerate}
  \item The class of models of a universal $L_{\lambda, \omega}$ theory (that is, a set of sentences of the form $\forall x_0 \ldots \forall x_{n - 1} \psi$, with $\psi$ a quantifier-free $L_{\lambda, \omega}$-formula) is universal.
  \item Not all elementary classes are universal but some universal classes are not elementary (locally finite groups are one example).
  \item Coloring classes \cite{coloring-classes-jsl} are universal classes. This shows that the behavior of amalgamation is non-trivial even in universal classes: some coloring classes can have amalgamation up to $\beth_\alpha$ for some $\alpha < \LS (K)^+$ and fail to have it above $\beth_{\LS (\K)^+}$. Other universal classes with non-trivial amalgamation spectrum appear in \cite{locally-finite-aec-toappear}.
  \item If $K$ is a universal class in a countable vocabulary with:
    \begin{enumerate}
    \item Arbitrarily large models.
    \item Joint embedding.
    \item Disjoint amalgamation (see Definition \ref{ap-def}).
    \end{enumerate}

    Then $K$ is a finitary abstract elementary class in the sense of Hyttinen and Kesälä \cite[Definition 2.9]{finitary-aec}. We do not know whether $K$ is always simple (in the sense of \cite[Definition 6.1]{categ-finit}. In any case, the arguments of Hyttinen and Kesälä only deal with countable languages.
\end{enumerate}
\end{example}

Universal classes are abstract elementary classes: 

\begin{defin}[Definition 1.2 in \cite{sh88}]\label{aec-def}
  An \emph{abstract elementary class} (AEC for short) is a pair $(K, \lea)$, where:

  \begin{enumerate}
    \item $K$ is a class of $L$-structured, for some fixed language $L = L(K)$. 
    \item $\lea$ is a partial order (that is, a reflexive and transitive relation) on $K$.
    \item $(K, \lea)$ respects isomorphisms: If $M \lea N$ are in $K$ and $f: N \cong N'$, then $f[M] \lea N'$. 
    \item If $M \lea N$, then $M \subseteq N$. 
    \item Coherence: If $M_0, M_1, M_2 \in K$ satisfy $M_0 \lea M_2$, $M_1 \lea M_2$, and $M_0 \subseteq M_1$, then $M_0 \lea M_1$;
    \item Tarski-Vaught axioms: Suppose $\delta$ is a limit ordinal and $\seq{M_i \in K : i < \delta}$ is an increasing chain. Then:

        \begin{enumerate}

            \item $M_\delta := \bigcup_{i < \delta} M_i \in K$ and $M_0 \lea M_\delta$.
            \item\label{smoothness-axiom} If there is some $N \in K$ so that for all $i < \delta$ we have $M_i \lea N$, then we also have $M_\delta \lea N$.

        \end{enumerate}

    \item Löwenheim-Skolem-Tarski axiom: There exists a cardinal $\lambda \ge |L(K)| + \aleph_0$ such that for any $M \in K$ and $A \subseteq |M|$, there is some $M_0 \lea M$ such that $A \subseteq |M_0|$ and $\|M_0\| \le |A| + \lambda$. We write $\LS (K)$ for the minimal such cardinal.
\end{enumerate}

  We often will not distinguish between the class $K$ and the pair $(K, \lea)$.
\end{defin}

\begin{remark}
  If $K$ is a universal class, then $(K, \subseteq)$ is an AEC with $\LS (K) = |L (K)| + \aleph_0$. We will use this fact freely. Note that $K$ may have finite models, and it is the case in several examples, see \cite{locally-finite-aec-toappear}.
\end{remark}

We now recall the definition of AECs that admit intersections, a notion introduced by Baldwin and Shelah. It is interesting to note that Baldwin and Shelah thought of admitting intersections as a weak version of amalgamation (see the conclusion of \cite{non-locality}).

\begin{defin}[Definition 1.2 in \cite{non-locality}]
  Let $K$ be an AEC.

  \begin{enumerate}
    \item Let $N \in K$ and let $A \subseteq |N|$. $N$ \emph{admits intersections over $A$} if there is $M_0 \lea N$ such that $|M_0| = \bigcap \{M \lea N \mid A \subseteq |M|\}$. $N$ \emph{admits intersections} if it admits intersections over all $A \subseteq |N|$.
    \item $K$ \emph{admits intersections} if $N$ admits intersections for all $N \in K$.
  \end{enumerate}
\end{defin}

\begin{example} \
  \begin{enumerate}
    \item If $K$ is a universal class, then $K$ admits intersections (and see Remark \ref{univ-class-closure}).
    \item If $K$ is a class of models of a first-order theory, then when $(K, \subseteq)$ admits intersections has been characterized by Rabin \cite{rabin-inter}.
    \item Several classes appearing in algebra admit intersections. For example \cite[Example 1.15]{grossberg2002}, let $K$ be the class of algebraically closed valued fields (we code the value group with an additional predicate), ordered by $F_1 \lea F_2$ if and only if $F_1$ is a subfield of $F_2$, the value groups are the same, and the valuations coincide on $F_1$. Then $K$ admits intersections. Again, $K$ is not universal (as it is not closed under substructure).
    \item The examples in \cite{non-locality} admit intersections. Since they are not $(<\aleph_0)$-tame, they cannot be universal classes (see Theorem \ref{pseudo-univ-tame}).
    \item The Hart-Shelah example \cite{hs-example, bk-hs} admits intersections but is also not $(<\aleph_0)$-tame.
    \item If $\mathcal{C}$ is a quasiminimal excellent pregeometry class (see \cite{zil05, quasimin}) then the AEC $K$ that it induces admits intersections and is categorical in every uncountable cardinal. Moreover it will be fully tame and short (at least assuming the existence of large cardinals). However it need not be finitary (take $\mathcal{C}$ to be the class of pseudo-exponential fields \cite[Theorem 2]{kirby-note-axiom}).
  \end{enumerate}
\end{example}

In the rest of this section, we give several equivalent definitions of admitting intersections and deduce some properties of these classes. All throughout this paper, we assume:

\begin{hypothesis}
  $K$ is an AEC.
\end{hypothesis}

\begin{defin}
  Let $M \in K$ and let $A \subseteq |M|$ be a set. $M$ is \emph{minimal over $A$} if whenever $M' \lea M$ contains $A$, then $M' = M$. $M$ is \emph{minimal over $A$ in $N$} if in addition $M \lea N$.
\end{defin}

\begin{defin}
  Let $N \in K$. We say $\F$ is a \emph{set of Skolem functions for $N$} if:

  \begin{enumerate}
    \item $\F$ is a non-empty set, and each element $f$ of $\F$ is a function from $N^{n}$ to $N$, for some $n < \omega$.
    \item For all $A \subseteq |N|$, $M := \F[A] := \bigcup\{ f[A] \mid f \in \F\}$ is such that $M \lea N$ and contains $A$.
  \end{enumerate}
\end{defin}

\begin{remark}
The proof of Shelah's presentation theorem (see \cite[Lemma I.1.7]{shelahaecbook}) gives that for each $N \in K$, there is $\F$ a set of Skolem functions for $N$ with $|\F| \le \LS (K)$.
\end{remark}

\begin{thm}\label{inter-charact}
  Let $K$ be an AEC and let $N \in K$. The following are equivalent:

  \begin{enumerate}
    \item\label{inter-charact-1} $N$ admits intersections.
    \item\label{inter-charact-2} There is an operator $\cl := \cl^N : \mathcal{P}(|N|) \rightarrow \mathcal{P} (|N|)$ such that for all $A, B \subseteq |N|$ and all $M \lea N$:

      \begin{enumerate}
        \item\label{cl-a} $\cl (A) \lea N$.
        \item\label{cl-b} $A \subseteq \cl (A)$.
        \item\label{cl-c} $A \subseteq B$ implies $\cl (A) \subseteq \cl (B)$.
        \item\label{cl-d} $\cl (M) = M$.
      \end{enumerate}
    \item\label{inter-charact-3} For each $A \subseteq |N|$, there is a unique minimal model over $A$ in $N$.
    \item\label{inter-charact-4} There is a set $\F$ of Skolem functions for $N$ such that:

      \begin{enumerate}
        \item $|\F| \le \LS (K)$.
        \item For all $M \lea N$, we have $\F[M] = M$.
      \end{enumerate}
  \end{enumerate}

  Moreover the operator $\cl^N : \mathcal{P} (|N|) \rightarrow \mathcal{P} (|N|)$ with the properties in (\ref{inter-charact-2}) is unique and if it exists then it has the following characterizations:

  \begin{itemize}
    \item $\cl^N (A) = \bigcap \{M \lea N \mid A \subseteq |M|\}$.
    \item $\cl^N (A) = \F[A]$, for any set of Skolem functions $\F$ for $N$ such that $\F[M] = M$ for all $M \lea N$.
    \item $\cl^N (A)$ is the unique minimal model over $A$ in $N$.
  \end{itemize}
\end{thm}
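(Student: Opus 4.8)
The plan is to prove the chain of implications $(\ref{inter-charact-1}) \Rightarrow (\ref{inter-charact-3}) \Rightarrow (\ref{inter-charact-4}) \Rightarrow (\ref{inter-charact-2}) \Rightarrow (\ref{inter-charact-1})$, verifying the claimed characterizations of $\cl^N$ along the way. For $(\ref{inter-charact-1}) \Rightarrow (\ref{inter-charact-3})$: given $A \subseteq |N|$, admitting intersections over $A$ produces $M_0 \lea N$ with $|M_0| = \bigcap \{M \lea N \mid A \subseteq |M|\}$. Since $A \subseteq |M|$ for every $M \lea N$ containing $A$ (and $N$ itself is such a model), we get $A \subseteq |M_0|$. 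If $M' \lea N$ contains $A$, then $M'$ is one of the models in the intersection, so $|M_0| \subseteq |M'|$, and by coherence $M_0 \lea M'$; thus $M_0$ is the (unique) minimal model over $A$ in $N$. This also pins down the first claimed characterization of $\cl^N$. For $(\ref{inter-charact-3}) \Rightarrow (\ref{inter-charact-4})$: start from the set $\F_0$ of Skolem functions of size $\le \LS(K)$ given by the presentation theorem remark. The obstacle is that $\F_0[M]$ need not equal $M$; I would fix this by closing off. Given $A \subseteq |N|$, define $\cl(A)$ to be the unique minimal model over $A$ in $N$. Then $\F_0[A] \lea N$ contains $A$, so $\cl(A) \lea \F_0[A]$; iterating $\omega$ times and using that $\F_0[\cl(A)]$ is a model containing $A$, one sees $\cl(A)$ is the union of the $\omega$-iteration $A \subseteq \F_0[A] \subseteq \F_0[\F_0[A]] \subseteq \cdots$. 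To represent $\cl$ itself by Skolem functions, take for each $f \in \F_0$ of arity $n$ and each $k < \omega$ a function computing the $k$-th iterate; since $\LS(K) \cdot \aleph_0 = \LS(K)$, the resulting set $\F$ still has size $\le \LS(K)$, and by construction $\F[A] = \cl(A)$ is a model over $A$, while for $M \lea N$ minimality of $M$ over its own universe gives $\F[M] = \cl(M) = M$.

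For $(\ref{inter-charact-4}) \Rightarrow (\ref{inter-charact-2})$: set $\cl(A) := \F[A]$. Property (\ref{cl-a}) is the definition of Skolem functions, (\ref{cl-b}) follows since $\F[A] \supseteq A$ (the identity-like behavior; more carefully, $A \subseteq \F[A]$ because $\F[A] \lea N$ contains $A$ by the Skolem function axiom), (\ref{cl-c}) is monotonicity of $A \mapsto \bigcup_{f\in\F} f[A]$, and (\ref{cl-d}) is the hypothesis $\F[M] = M$. Finally $(\ref{inter-charact-2}) \Rightarrow (\ref{inter-charact-1})$: given $A \subseteq |N|$, I claim $M_0 := \cl(A)$ witnesses admitting intersections over $A$. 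By (\ref{cl-a}), $M_0 \lea N$; I must show $|M_0| = \bigcap\{M \lea N \mid A \subseteq |M|\}$. One inclusion: any $M \lea N$ with $A \subseteq |M|$ satisfies $\cl(A) \subseteq \cl(M) = M$ by (\ref{cl-c}) and (\ref{cl-d}), so $|M_0| \subseteq \bigcap$. Other inclusion: $M_0$ itself is one of the models $M \lea N$ containing $A$ (using (\ref{cl-b})), so $\bigcap \subseteq |M_0|$.

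For the "Moreover" part, uniqueness of $\cl^N$ follows because any operator with properties (\ref{cl-a})--(\ref{cl-d}) must satisfy $\cl(A) = \bigcap\{M \lea N \mid A \subseteq |M|\}$: the argument just given for $(\ref{inter-charact-2}) \Rightarrow (\ref{inter-charact-1})$ shows $\cl(A)$ equals this intersection on the nose. The other two characterizations then follow from the constructions in the respective implications: in $(\ref{inter-charact-3}) \Rightarrow (\ref{inter-charact-4})$ we built $\F$ with $\F[A] = $ the minimal model over $A$ in $N$, and combining with uniqueness gives $\cl^N(A) = \F[A]$ for any such $\F$ and $\cl^N(A) = $ the unique minimal model over $A$. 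I expect the main obstacle to be the construction in $(\ref{inter-charact-3}) \Rightarrow (\ref{inter-charact-4})$: getting a set of Skolem functions that is both \emph{idempotent on submodels} and still of cardinality $\le \LS(K)$ requires the $\omega$-iteration trick and a careful bookkeeping argument to stay within the cardinality bound, whereas all other steps are essentially unwinding definitions and applying coherence.
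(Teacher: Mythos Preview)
Your chain $(\ref{inter-charact-1}) \Rightarrow (\ref{inter-charact-3}) \Rightarrow (\ref{inter-charact-4}) \Rightarrow (\ref{inter-charact-2}) \Rightarrow (\ref{inter-charact-1})$ is fine in three of the four steps, and those three are essentially what the paper does (though in a different cyclic order). The gap is in $(\ref{inter-charact-3}) \Rightarrow (\ref{inter-charact-4})$.

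Your plan there is to start from an arbitrary set $\F_0$ of Skolem functions (as in the presentation theorem) and then ``iterate $\omega$ times'' to force idempotence on submodels. But this does not work: for an \emph{arbitrary} $\F_0$ there is no reason whatsoever that $\F_0[A] \subseteq \cl(A)$, so the chain $A \subseteq \F_0[A] \subseteq \F_0[\F_0[A]] \subseteq \cdots$ and its union $A_\omega$ need only \emph{contain} $\cl(A)$, not equal it. Concretely, $\F_0$ could include a function sending every tuple to some fixed element $b \in |N|$ outside $\cl(A)$; then already $b \in \F_0[A] \setminus \cl(A)$. The sentence ``using that $\F_0[\cl(A)]$ is a model containing $A$, one sees $\cl(A)$ is the union of the $\omega$-iteration'' does not establish the needed inclusion $A_\omega \subseteq \cl(A)$. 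For the same reason, the resulting family $\F$ (the compositional closure of $\F_0$) will generally fail $\F[M] = M$ for $M \lea N$: nothing prevents $\F_0[M] \supsetneq M$.

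The paper's construction avoids this by building $\F$ \emph{directly from the minimal models over finite sets}, not from a pre-existing $\F_0$. For each finite tuple $\ba$ from $N$, enumerate the unique minimal model $M_{\ba}$ over $\ran{\ba}$ (which has size $\le \LS(K)$) as $\seq{b_i^{\ba} : i < \LS(K)}$, and set $f_i^n(\ba) := b_i^{\ba}$. Then $\F[A] = \bigcup_{\ba \in \fct{<\omega}{A}} M_{\ba}$; using minimality and coherence one checks this directed union is itself the minimal model over $A$. In particular $\F[M] = M$ for $M \lea N$ falls out immediately, and no iteration is needed. This is the missing idea you should supply in place of the $\omega$-iteration argument.
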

\begin{proof} \
  \begin{itemize}
    \item \underline{(\ref{inter-charact-1}) implies (\ref{inter-charact-2})}: Let $\cl^N (A) := \bigcap \{M \lea N \mid A \subseteq |M|\}$. Even without hypotheses on $N$, (\ref{cl-b}), (\ref{cl-c}), and (\ref{cl-d}) are satisfied. Since $N$ admits intersections, (\ref{cl-a}) is also satisfied.
    \item \underline{(\ref{inter-charact-2}) implies (\ref{inter-charact-3})}: Let $A \subseteq |N|$. Let $\cl$ be as given by (\ref{inter-charact-2}). Let $M := \cl (A)$. By (\ref{cl-a}), $M \lea N$. By (\ref{cl-b}), $A \subseteq |M|$. Moreover if $M' \lea N$ contains $A$, then by (\ref{cl-c}), $|M| \subseteq |\cl(M')|$ but by (\ref{cl-d}), $\cl(M') = M'$. Thus by coherence and (\ref{cl-a}) $M \lea M'$. This shows both that $M$ is minimal over $A$ and that it is unique.
    \item \underline{(\ref{inter-charact-3}) implies (\ref{inter-charact-4})}:
      We slightly change the proof of \cite[Lemma I.1.7]{shelahaecbook} as follows: Let $\chi := \LS (K)$. For each $\ba \in \fct{<\omega}{|N|}$, let $\seq{b_i^{\ba} : i < \chi}$ be an enumeration (possibly with repetitions) of the unique minimal model over $\text{ran}(\ba)$ in $N$. For each $n < \omega$ and $i < \chi$, we let $f_i^n : N^n \rightarrow N$ be $f_i^n (\ba) := b_i^{\ba}$. Let $\F := \{f_i^n \mid i < \chi, n < \omega\}$. Then $|\F| \le \LS (K)$ and if $A \subseteq |N|$, we claim that $\F[A]$ is minimal over $A$ in $N$. This shows in particular that $\F$ is as required.
      
      Let $M := \F[A]$. By definition, $M = \bigcup_{\ba \in \fct{<\omega}{|A|}} \F[\text{ran}(\ba)]$. Now if $\ba \in \fct{<\omega}{A}$, $M_{\ba} := \F[\text{ran}(\ba)] = \{b_i^{\ba} : i < \chi\}$ is the unique minimal model over $\text{ran} (\ba)$ in $N$. Thus if $\text{ran}(\ba) \subseteq \text{ran} (\bb)$, we must have (by coherence) $M_{\ba} \lea M_{\bb}$. It follows that $M \in K$ and by the axioms of AECs also $M \lea N$. Of course, $M$ contains $A$. Now if $M' \lea M$ contains $A$, then for all $\ba \in \fct{<\omega}{A}$, $\ba \in \fct{<\omega}{|M'|}$, so as $M_{\ba}$ is minimal over $\text{ran}(\ba)$, $M_{\ba} \lea M'$. It follows that $M \lea M'$ so $M = M'$.
    \item \underline{(\ref{inter-charact-4}) implies (\ref{inter-charact-1})}:
      Let $\F$ be as given by (\ref{inter-charact-4}). Let $A \subseteq |N|$. Let $M := \F[A]$. By definition of Skolem functions, $M$ contains $A$ and $M \lea N$. We claim that $M = \bigcap \{M' \lea N \mid A \subseteq |M'|\}$. Indeed, if $M' \lea N$ contains $A$, then by the hypothesis on $\F$, $M = \F[A] \subseteq \F[M'] = M'$. 
  \end{itemize}

  The moreover part follows from the arguments above.
\end{proof}

\begin{defin}
  For $N \in K$ let $\cl^N : \mathcal{P} (|N|) \rightarrow \mathcal{P} (|N|)$ be defined by $\cl^N (A) := \bigcap \{M \lea N \mid A \subseteq |M|\}$.
\end{defin}
\begin{remark}\label{univ-class-closure}
  If $K$ is a universal class, then one can take the set $\F$ of Skolem functions in (\ref{inter-charact-4}) to consist of all the functions of $N$. Thus $\cl^N (A)$ is just the closure of $A$ under the functions of $N$.
\end{remark}

Theorem \ref{inter-charact} allows us to deduce several properties of the operator $\cl^N$.

\begin{prop}\label{cl-props} 
  Let $M \lea N \in K$ and let $A, B \subseteq |N|$.
      \begin{enumerate}
        \item Invariance: If $f: N \cong N'$, then $f[\cl^N (A)] = \cl^{N'} (f[A])$.
        \item Monotonicity 1: $A \subseteq \cl^N (A)$.
        \item Monotonicity 2: $A \subseteq B$ implies $\cl^N (A) \subseteq \cl^N (B)$.
        \item Monotonicity 3: If $A \subseteq |M|$, then $\cl^N (A) \subseteq \cl^M (A)$. Moreover if $N$ admits intersections over $A$, then $M$ admits intersections over $A$ and $\cl^N (A) = \cl^M (A)$.
        \item Idempotence: $\cl^N(M) = M$.
        \item Finite character: If $N$ admits intersections, then if $B \subseteq \cl^N (A)$ is finite, there exists a finite $A_0 \subseteq A$ such that $B \subseteq \cl^N (A_0)$.

      \end{enumerate}
\end{prop}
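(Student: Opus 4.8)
The plan is to read off all six properties directly from the definition $\cl^N(A) = \bigcap \{M \lea N \mid A \subseteq |M|\}$, using only the AEC axioms (chiefly coherence and invariance under isomorphism) together with Theorem \ref{inter-charact} for the last item.

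For \emph{Invariance}, an isomorphism $f : N \cong N'$ carries $\{M \lea N \mid A \subseteq |M|\}$ bijectively onto $\{M' \lea N' \mid f[A] \subseteq |M'|\}$ (because AECs respect isomorphisms and $f$ preserves inclusion), and taking the $f$-image of a family of subsets of $|N|$ commutes with intersection; this gives the claimed equality. \emph{Monotonicity 1} and \emph{Idempotence} are immediate: every model in the defining intersection contains $A$, and $M$ itself is one of the models in $\{M' \lea N \mid M \subseteq |M'|\}$, so $\cl^N(M) \subseteq |M| \subseteq \cl^N(M)$. \emph{Monotonicity 2} holds because enlarging $A$ shrinks the family of models being intersected and hence enlarges the intersection.

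For \emph{Monotonicity 3}, transitivity of $\lea$ shows that any $M' \lea M$ containing $A$ also satisfies $M' \lea N$, so the family defining $\cl^M(A)$ is contained in the one defining $\cl^N(A)$; hence $\cl^N(A) \subseteq \cl^M(A)$. If in addition $N$ admits intersections over $A$, then $\cl^N(A)$ is a model with $\cl^N(A) \lea N$; since $A \subseteq |M|$ forces $\cl^N(A) \subseteq |M|$, coherence gives $\cl^N(A) \lea M$. Thus $\cl^N(A)$ is a model $\lea M$ containing $A$, which simultaneously witnesses that $M$ admits intersections over $A$ and yields $\cl^M(A) \subseteq \cl^N(A)$, so equality holds.

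For \emph{Finite character}, I would invoke the ``moreover'' clause of Theorem \ref{inter-charact}: since $N$ admits intersections there is a \emph{single} set $\F$ of Skolem functions for $N$ with $\cl^N(A) = \F[A]$ for every $A \subseteq |N|$. Given a finite $B \subseteq \cl^N(A) = \F[A]$, each $b \in B$ equals $f_b(\ba_b)$ for some $f_b \in \F$ and a finite tuple $\ba_b$ from $A$; setting $A_0 := \bigcup_{b \in B} \ran{\ba_b}$, a finite subset of $A$, we get $B \subseteq \F[A_0] = \cl^N(A_0)$. None of the six parts presents a genuine obstacle — the proposition is a routine unwinding of definitions — but the point requiring the most care is precisely this last one, namely that the same $\F$ must compute $\cl^N$ on all subsets of $|N|$, which is why Theorem \ref{inter-charact} (rather than a bare intersection argument) is the right tool.
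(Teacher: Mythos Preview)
Your proof is correct and follows essentially the same approach as the paper: the paper's proof is the single sentence ``Straightforward given Theorem~\ref{inter-charact}: For finite character, use the characterization in terms of Skolem functions. For monotonicity 3, let $M_0 := \cl^N(A)$\ldots by coherence, $M_0 \lea M$, and by minimality, $M_0 = \cl^M(A)$,'' and you have simply spelled out in full the details that the paper leaves implicit, including the elementary verifications of invariance, the two monotonicities, and idempotence directly from the intersection definition.
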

\begin{proof}
Straightforward given Theorem \ref{inter-charact}: For finite character, use the characterization in terms of Skolem functions. For monotonicity 3, let $M_0 := \cl^N (A)$. We have $M_0 \lea N$ since $N$ admits intersections over $A$. Since $M \lea N$ contains $A$, we must have $|M_0| \subseteq |M|$. By coherence, $M_0 \lea M$, and by minimality, $M_0 = \cl^M (A)$.
\end{proof}

Note in particular the following:

\begin{cor} \
  \begin{enumerate}
    \item Assume that for every $M \in K$ and every $A \subseteq |M|$, there is $N \gea M$ such that $N$ admits intersections over $A$. Then $K$ admits intersections.
    \item $N \in K$ admits intersections if and only if it admits intersections over every finite $A \subseteq |N|$.
  \end{enumerate}
\end{cor}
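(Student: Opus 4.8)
The plan is to reduce both parts to Theorem~\ref{inter-charact} and Proposition~\ref{cl-props}; no new ideas are needed.

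\textbf{Part (1).} This should follow immediately from the ``moreover'' clause of Monotonicity~3 in Proposition~\ref{cl-props}. Fix $N' \in K$ and $A \subseteq |N'|$. Applying the hypothesis with $M := N'$, we obtain $N \gea N'$ that admits intersections over $A$. Since $N' \lea N$ and $A \subseteq |N'|$, Monotonicity~3 gives that $N'$ admits intersections over $A$ (and even $\cl^{N'}(A) = \cl^N(A)$). As $A \subseteq |N'|$ was arbitrary, $N'$ admits intersections; as $N' \in K$ was arbitrary, $K$ admits intersections.

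\textbf{Part (2).} Left-to-right is trivial, so assume $N$ admits intersections over every finite subset of $|N|$ and fix an arbitrary $A \subseteq |N|$; the goal is to show that $\cl^N(A) = \bigcap\{M \lea N \mid A \subseteq |M|\}$ is the universe of a $\lea$-submodel of $N$. For each $\ba \in \fct{<\omega}{A}$, set $M_{\ba} := \cl^N(\ran{\ba})$; the finite-intersection hypothesis makes $M_{\ba}$ a model with $M_{\ba} \lea N$. By Monotonicity~2 together with coherence, the family $\{M_{\ba} : \ba \in \fct{<\omega}{A}\}$ is $\lea$-directed: given $\ba, \bb$, the concatenation $\bc := \ba {}^\frown \bb$ has $\ran{\ba} \cup \ran{\bb} \subseteq \ran{\bc}$, hence $M_{\ba} \lea M_{\bc}$ and $M_{\bb} \lea M_{\bc}$. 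Therefore, exactly as in the proof of the implication (3)$\Rightarrow$(4) of Theorem~\ref{inter-charact} (closure of AECs under directed unions of $\lea$-submodels of a fixed model), $M := \bigcup_{\ba \in \fct{<\omega}{A}} M_{\ba}$ satisfies $M \lea N$; and $A \subseteq |M|$ since each $a \in A$ lies in $M_{(a)}$ by Monotonicity~1. Finally, if $M' \lea N$ contains $A$, then $\ran{\ba} \subseteq |M'|$ for every $\ba \in \fct{<\omega}{A}$, so $|M_{\ba}| = \cl^N(\ran{\ba}) \subseteq |M'|$ and hence $|M| \subseteq |M'|$. Combined with $M \lea N$ and $A \subseteq |M|$, this gives $|M| = \bigcap\{M' \lea N \mid A \subseteq |M'|\}$, so $N$ admits intersections over $A$; as $A$ was arbitrary, $N$ admits intersections.

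I do not expect a real obstacle: everything is bookkeeping on top of Theorem~\ref{inter-charact} and Proposition~\ref{cl-props}. The only points worth checking are the applicability of the ``moreover'' part of Monotonicity~3 in Part~(1) and, in Part~(2), the directedness of $\{M_{\ba}\}$ and the standard fact that a directed union of $\lea$-submodels of $N$ is again $\lea N$ --- precisely the step used without comment inside the proof of Theorem~\ref{inter-charact}. An alternative, even quicker route for Part~(2) is to observe that the chain (1)$\Rightarrow$(2)$\Rightarrow$(3)$\Rightarrow$(4)$\Rightarrow$(1) in Theorem~\ref{inter-charact} only uses the finite-set instances of the hypothesis in its first three implications --- note that (cl-b), (cl-c), (cl-d) hold unconditionally and the Skolem-function construction in (3)$\Rightarrow$(4) involves only finite tuples --- while (4)$\Rightarrow$(1) then delivers admitting intersections over all subsets.
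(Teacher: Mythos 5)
Your proof is correct and follows essentially the same route as the paper: part (1) is exactly the ``moreover'' clause of Monotonicity~3, and part (2) spells out the directed-union-over-finite-tuples argument that the paper invokes by citing the proof of Theorem~\ref{inter-charact} (your alternative remark, that the chain of implications there only ever uses finite-set instances, is precisely the intended reading). No gaps.
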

\begin{proof} \
  \begin{enumerate}
    \item By Monotonicity 3.
    \item By the proof of Theorem \ref{inter-charact}.
  \end{enumerate}
\end{proof}
\begin{remark}
  The second result is implicit in the discussion after Remark 4.3 in \cite{non-locality}.
\end{remark}

Before stating the next proposition, we recall that any AECs admits a semantic notion of types. This was first introduced in \cite[Definition II.1.9]{sh300-orig}. We use the notation of \cite[Definition 2.16]{sv-infinitary-stability-afml}.

\begin{defin}[Galois types]\label{gtp-def} \
  \begin{enumerate}
    \item Let $K^3$ be the set of triples of the form $(\bb, A, N)$, where $N \in K$, $A \subseteq |N|$, and $\bb$ is a sequence of elements from $N$.
    \item For $(\bb_1, A_1, N_1), (\bb_2, A_2, N_2) \in K^3$, we write $(\bb_1, A_1, N_1)\Eat (\bb_2, A_2, N_2)$ if $A := A_1 = A_2$, and there exists $f_\ell : N_\ell \xrightarrow[A]{} N$ such that $f_1 (\bb_1) = f_2 (\bb_2)$. We call $\Eat$ \emph{atomic equivalence of triples} and say that two triples are \emph{atomically equivalent}.
    \item Note that $\Eat$ is a symmetric and reflexive relation on $K^3$. We let $E$ be the transitive closure of $\Eat$.
    \item For $(\bb, A, N) \in K^3$, let $\gtp (\bb / A; N) := [(\bb, A, N)]_E$. We call such an equivalence class a \emph{Galois type}.
    \item For $p = \gtp (\bb / A; N)$ a Galois type, define\footnote{It is easy to check that this does not depend on the choice of representatives.} $\ell (p) := \ell (\bb)$ and $\dom{p} := A$.
    \item We say a Galois types $p = \gtp (\bb / A; N)$ is \emph{algebraic} if $\bb \in \fct{\ell (\bb)}{A}$ (it is easy to check this does not depend on the choice of representatives). We mostly use this when $\ell (p) = 1$.
    \item For $N \in K$, $A \subseteq |N|$, and $\alpha$ an ordinal, we let $\gS^\alpha (A; N) := \{\gtp (\bb / A; N) \mid \bb \in \fct{\alpha}{|N|}\}$. When $\alpha = 1$, we omit it. For $M \in K$, we write $\gS^\alpha (M)$ for $\bigcup_{M' \gea M} \gS^\alpha (M; M')$. We similarly define $\gS^{<\infty} (M)$, etc.
  \end{enumerate}
\end{defin}

We can go on to define the restriction of a type (if $A_0 \subseteq \dom{p}$, $I \subseteq \ell (p)$, we will write $p^I \rest A_0$ when the realizing sequence is restricted to $I$ and the domain is restricted to $A_0$), the image of a type under an isomorphism, or what it means for a type to be realized. 

The next result says that in AECs admitting intersections, equality of Galois types is witnessed by an isomorphism. This can be seen as a weak version of amalgamation (see Section \ref{ap-sec}).

\begin{prop}\label{isom-eq-gtp}
  Assume $K$ admits intersections. Then $\gtp (\ba_1 / A; N_1) = \gtp (\ba_2 / A; N_2)$ if and only if there exists $f: \cl^{N_1} (A \ba_1) \cong_A \cl^{N_2} (A \ba_2)$ such that $f (\ba_1) = \ba_2$.
\end{prop}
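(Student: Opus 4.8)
The plan is to prove the two directions separately, with the "if" direction being essentially trivial and the "only if" direction being the substantive one. For the "if" direction: given $f : \cl^{N_1}(A\ba_1) \cong_A \cl^{N_2}(A\ba_2)$ with $f(\ba_1) = \ba_2$, I would simply amalgamate over this isomorphism. More precisely, set $M_\ell := \cl^{N_\ell}(A\ba_\ell) \lea N_\ell$ (legitimate by Theorem \ref{inter-charact}(\ref{inter-charact-1})). Then $(\ba_1, A, N_1) \Eat (\ba_1, A, M_1)$ via the coherence axiom (the identity embedding $M_1 \to N_1$), and similarly $(\ba_2, A, N_2) \Eat (\ba_2, A, M_2)$; finally $f$ itself witnesses $(\ba_1, A, M_1) \Eat (\ba_2, A, M_2)$, since $f : M_1 \xrightarrow[A]{} M_2$ (it is onto, hence in particular a $\lea$-embedding fixing $A$) and $f(\ba_1) = \ba_2$. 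Chaining these through $\Eat$ (and its symmetry) gives $\gtp(\ba_1/A; N_1) = \gtp(\ba_2/A; N_2)$.

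For the "only if" direction, the key observation is that $E$ is the transitive closure of $\Eat$, so it suffices to prove the statement when $\gtp(\ba_1/A; N_1) = \gtp(\ba_2/A; N_2)$ is witnessed by a \emph{single} step $\Eat$, and then show the resulting property (existence of an isomorphism of closures matching the tuples) is itself transitive and symmetric — i.e. it defines an equivalence relation on $K^3$ that contains $\Eat$, hence contains $E$. Symmetry is clear (use $f^{-1}$). For the single-step case: suppose there are $f_\ell : N_\ell \xrightarrow[A]{} N$ with $f_1(\ba_1) = f_2(\ba_2) =: \bb$. By Proposition \ref{cl-props}(1) (invariance) and (4) (monotonicity 3), $f_\ell[\cl^{N_\ell}(A\ba_\ell)] = \cl^{f_\ell[N_\ell]}(A\bb) = \cl^N(A\bb)$, where the last equality uses that $f_\ell[N_\ell] \lea N$ admits intersections over $A\bb$ (it does, since $N_\ell \cong f_\ell[N_\ell]$ and $N_\ell$ admits intersections) together with the "moreover" clause of monotonicity 3, giving $\cl^{f_\ell[N_\ell]}(A\bb) = \cl^N(A\bb)$. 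Hence $f_\ell \rest \cl^{N_\ell}(A\ba_\ell)$ is an isomorphism onto the common model $\cl^N(A\bb)$, fixing $A$ and sending $\ba_\ell$ to $\bb$. Then $f := (f_2 \rest \cl^{N_2}(A\ba_2))^{-1} \circ (f_1 \rest \cl^{N_1}(A\ba_1))$ is the desired isomorphism. For transitivity one patches two such isomorphisms together in the obvious way.

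The main obstacle — really the only place any care is needed — is justifying that $\cl^{f_\ell[N_\ell]}(A\bb) = \cl^N(A\bb)$, i.e. that taking closures inside the small model $f_\ell[N_\ell]$ gives the same set as taking closures inside the ambient $N$. This is exactly the content of the "moreover" part of Monotonicity 3 in Proposition \ref{cl-props}, so the work has already been done; one just has to check its hypothesis, namely that $N$ admits intersections over $A\bb$ (which follows from the global hypothesis that $K$ admits intersections) and that $f_\ell[N_\ell] \lea N$ contains $A\bb$ (immediate, since $f_\ell$ fixes $A$ and $\bb \in \ran{f_\ell}$). Everything else is routine bookkeeping with the definition of $\Eat$ and the coherence axiom. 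I would present the argument by first reducing to the one-step case via the "equivalence relation containing $\Eat$" trick, then handling that case as above; alternatively, since $N$ is fixed throughout and everything is happening inside $N$, one can argue more directly that $\gtp$-equality implies realization of one tuple by the other inside a common extension and extract the isomorphism from a single amalgam, but the transitive-closure reduction is cleanest.
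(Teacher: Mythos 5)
Your proof is correct and follows essentially the same route as the paper: reduce to the case of atomic equivalence (composing isomorphisms for the general case), then use invariance and the ``moreover'' clause of Monotonicity 3 to identify $f_\ell[\cl^{N_\ell}(A\ba_\ell)]$ with $\cl^N(A\bb)$ and compose $(f_2\rest M_2)^{-1}\circ(f_1\rest M_1)$. The only difference is that you spell out the easy direction and the verification of Monotonicity 3's hypotheses in more detail than the paper does.
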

\begin{proof} \
Let $M_1 := \cl^{N_1} (A \ba_1)$, $M_2 := \cl^{N_2} (A \ba_2)$. Since $N_\ell$ admits intersections, we have $M_\ell \lea N_\ell$, $\ell = 1,2$ so the right to left direction follows. Now assume $\gtp (\ba_1 / A; N_1) = \gtp (\ba_2 / A; N_2)$. It suffices to prove the result when the equality is atomic (then we can compose the isomorphisms in the general case). So let $N \in K$ and $f_\ell : N_\ell \xrightarrow[A]{} N$ witness atomic equality, i.e.\ $f_1 (\ba_1) = f_2 (\ba_2)$. By invariance and monotonicity 3, $f_\ell[M_\ell] = \cl^{f[N_\ell]}(A f_\ell (\ba_\ell)) = \cl^{N}(A f_\ell (\ba_\ell))$. Since $f_1 (\ba_1) = f_2 (\ba_2)$, we must have that $f_1[M_1] = f_2[M_2]$. Thus $f := (f_2 \rest M_2)^{-1} \circ (f_1 \rest M_1)$ is as desired.
\end{proof}

\begin{remark}
  Proposition \ref{isom-eq-gtp} was already observed (without proof) in \cite[Lemma 1.3]{non-locality}. Baldwin and Shelah also assert that $E = \Eat$ (see Definition \ref{gtp-def}), but this does not seem to follow.
\end{remark}

Before ending this section, we point out a technical disadvantage of the definition of admitting intersection. The notion is not closed under the tail of the AEC: If $K$ admits intersections and $\lambda$ is a cardinal, then it is not clear that $K_{\ge \lambda}$ admits intersections. Thus we will work with a slightly weaker definition:

\begin{defin}\label{k-m-def}
  For $K$ an AEC and $M \in K$, let $K_M$ be the AEC defined by adding constant symbols for the elements of $M$ and requiring that $M$ embeds inside every model of $K_M$. That is, $L(K_M) = L (K) \cup \{c_a \mid a \in |M|\}$, where the $c_a$'s are new constant symbols, and 

  $$
  K_M := \{(N, c_a^N)_{a \in |M|} \mid N \in K \text{ and }a \mapsto c_a^N \text{ is a }K\text{-embedding from } M \text{ into } N\}
  $$

  We order $K_M$ by $(N_1, c_a^N)_{a \in |M|} \lea (N_2, c_a^{N_2})$ if and only if $N_1 \lea N_2$ and $c_a^{N_1} = c_a^{N_2}$ for all $a \in |M|$.
\end{defin} 

\begin{defin}\label{local-def}
  For $P$ a property of AECs and $M \in K$, $K$ \emph{has $P$ above $M$} if $K_M$ has $P$. $K$ \emph{locally has $P$} if it has $P$ above every $M \in K$.
\end{defin}
\begin{remark}
  $K$ locally admits intersections if and only if for every $M \lea N$ in $K$ and every $A \subseteq |N|$ \emph{which contains $M$}, $\cl^N (A) \lea N$.
\end{remark}
\begin{remark}\label{local-rmk}
  If $K$ locally has $P$, then for every cardinal $\lambda$, $K_{\ge \lambda}$ locally has $P$.
\end{remark}

\section{Universal classes are fully tame and short}\label{tameness-sec}

In this section, we show that universal classes are fully $(<\aleph_0)$-tame and short. The basic argument for Theorem \ref{pseudo-univ-tame} is due to Will Boney and will also appear in \cite{tameness-groups}. 

Note that it is impossible to extend this result to AECs which admits intersections: \cite{non-locality} gives several counterexamples. One could hope that showing that categoricity in a high-enough cardinal implies tameness (a conjecture of Grossberg and VanDieren, see \cite[Conjecture 1.5]{tamenessthree}) is easier in AECs which admits intersections, but we have been unable to make progress in that direction and leave it to future work.

The key of the argument for tameness of universal classes is that the isomorphism characterizing the equality of Galois type is unique. We abstract this feature into a definition:

\begin{defin}
  $K$ is \emph{pseudo-universal} if it admits intersections and for any $N_1$, $N_2$, $\ba_1$, $\ba_2$, if $\gtp (\ba_1 / \emptyset; N_1) = \gtp (\ba_2 / \emptyset; N_2)$ and $f, g: \cl^{N_1} (\ba_1) \cong \cl^{N_2} (\ba_2)$ are such that $f (\ba_1) = g (\ba_1) =  \ba_2$, then $f = g$.
\end{defin}
\begin{example}\label{pseudouniv-example} \
  \begin{enumerate}
    \item\label{pseudouniv-1} In universal classes, $\cl^N (A)$ is just the substructure of $N$ generated by $A$ (see Remark \ref{univ-class-closure}). Thus universal classes are pseudo-universal.
    \item Let $K$ be the class of groups in the language containing only the multiplication symbol. Then $K$ is not a universal class but it is  pseudo-universal (the inverse function and the unit constant are first-order definable).
    \item We show below that pseudo-universal classes are $(<\aleph_0)$-tame, hence the AECs in \cite{non-locality} admit intersections but are not pseudo-universal.
    \item More simply, the class $\K$ of algebraically closed fields is elementary (hence $(<\aleph_0)$-tame), admits intersections, but is not pseudo-universal. Indeed, let $M$ be the algebraic closure of $\mathbb{Q}$ and let $x$ be transcendental. Let $N$ be the algebraic closure of $M \cup \{x\}$. Then there exists two different automorphisms of $N$ that fix $M \cup \{x\}$: the identity and one that sends $\sqrt{x}$ to $-\sqrt{x}$.
  \end{enumerate}
\end{example}

We quickly recall the definitions of tameness and shortness: Tameness as a property of AECs was introduced by Grossberg and VanDieren in \cite{tamenessone} (it was isolated from a proof in \cite{sh394}). It says that Galois types are determined by small restrictions of their domain. Shortness\footnote{What we call ``shortness'' is called ``type shortness'' by Boney, but in this paper we never write the ``type''.} was introduced by Will Boney in \cite[Definition 3.3]{tamelc-jsl}. It says that Galois types are determined by restrictions to small length. We will use the notation of \cite[Definition 2.22]{sv-infinitary-stability-afml}.

\begin{defin}
  Let $\kappa$ be an infinite cardinal.

  \begin{enumerate}
    \item $K$ is \emph{$(<\kappa)$-tame} if for any $M \in K$ and any $p \neq q$ in $\gS (M)$, there exists $A \subseteq |M|$ with $|A| < \kappa$ such that $p \rest A \neq q \rest A$.
    \item $K$ is \emph{fully $(<\kappa)$-tame and short} if for any $M \in K$, any ordinal $\alpha$, and any $p \neq q$ in $\gS^{\alpha} (M)$, there exists $I \subseteq \alpha$ and $A \subseteq |M|$ such that $|I| + |A| < \kappa$ and $p^I \rest A \neq q^I \rest A$.
    \item $\kappa$-tame means $(<\kappa^+)$-tame, similarly for fully $\kappa$-tame and short.
  \end{enumerate}
\end{defin}

\begin{defin}
  Let $\ba_\ell \in \fct{\alpha}{|N_\ell|}$ and let $\kappa$ be an infinite cardinal. We write $(\ba_1, N_1) \equiv_{<\kappa} (\ba_2, N_2)$ if for every $I \subseteq \alpha$ of size less than $\kappa$, $\gtp (\ba_1 \rest I / \emptyset; N_1) = \gtp (\ba_2 \rest I / \emptyset; N_2)$.
\end{defin}

The next proposition says roughly that it is enough to show shortness for types over the empty set. This appears already as \cite[Theorem 3.5]{tamelc-jsl}. We repeat the argument here for convenience.

\begin{prop}\label{short-tame}
  Let $\kappa$ be an infinite cardinal. Assume that for every $\alpha$, $N_\ell \in K$, $\ba_\ell \in \fct{\alpha}{|N_\ell|}$, $\ell = 1,2$, we have that $(\ba_1, N_1) \equiv_{<\kappa} (\ba_2, N_2)$ implies $\gtp (\ba_1 / \emptyset; N_1) = \gtp (\ba_2 / \emptyset; N_2)$. Then $K$ is fully $(<\kappa)$-tame and short.
\end{prop}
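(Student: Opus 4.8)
The plan is to reduce shortness over an arbitrary model $M$ to shortness over the empty set, which is exactly the hypothesis. Suppose $p \neq q$ are in $\gS^\alpha(M)$; I want to produce $I \subseteq \alpha$ and $A \subseteq |M|$ with $|I| + |A| < \kappa$ witnessing $p^I \rest A \neq q^I \rest A$. Write $p = \gtp(\ba_1/M; N_1)$ and $q = \gtp(\ba_2/M; N_2)$, and enumerate $|M| = \seq{m_i : i < \beta}$ for $\beta = \|M\|$. The idea is to fold the parameters $M$ into the realizing sequences: consider the concatenated sequences $\bb_1 := \ba_1 \frown \seq{m_i : i < \beta}$ (computed inside $N_1$, using that $M \lea N_1$) and $\bb_2 := \ba_2 \frown \seq{m_i : i < \beta}$ (inside $N_2$). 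Since $p \neq q$, there is \emph{no} isomorphism over $M$ sending $\ba_1$ to $\ba_2$; equivalently $\gtp(\bb_1/\emptyset; N_1) \neq \gtp(\bb_2/\emptyset; N_2)$, because an equality of the latter would give an automorphism-type identification over $\emptyset$ matching up the $m_i$'s with themselves and hence fixing $M$, contradicting $p \neq q$. (Here one uses the standard fact that $\gtp(\cdot/M;\cdot)$ can be read off from $\gtp(\cdot \frown \text{(enumeration of } M)/\emptyset;\cdot)$ — this is routine type-manipulation and holds in any AEC.)

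Next, apply the hypothesis of the proposition to the pair $(\bb_1, N_1)$, $(\bb_2, N_2)$ with index set $\alpha' := \alpha + \beta$ (the length of the $\bb_\ell$). Since $\gtp(\bb_1/\emptyset; N_1) \neq \gtp(\bb_2/\emptyset; N_2)$, the contrapositive of the assumption gives that $(\bb_1, N_1) \not\equiv_{<\kappa} (\bb_2, N_2)$, i.e.\ there is some $J \subseteq \alpha'$ with $|J| < \kappa$ such that $\gtp(\bb_1 \rest J/\emptyset; N_1) \neq \gtp(\bb_2 \rest J/\emptyset; N_2)$. Now split $J$ according to the two blocks of the concatenation: let $I := J \cap \alpha \subseteq \alpha$ and let $A := \{m_i : i \in J, i \geq \alpha\} \subseteq |M|$ (reindexing $\alpha + \beta$ so that the second block corresponds to elements of $M$). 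Then $|I| + |A| \leq |J| < \kappa$, and $\bb_\ell \rest J$ is, up to the fixed reordering, the sequence $\ba_\ell \rest I$ together with the parameters $A$; unwinding the definitions, $\gtp(\bb_1 \rest J/\emptyset; N_1) \neq \gtp(\bb_2 \rest J/\emptyset; N_2)$ translates exactly to $p^I \rest A \neq q^I \rest A$. This gives the desired witness, so $K$ is fully $(<\kappa)$-tame and short.

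The only real point requiring care — and the main obstacle — is the translation in the first paragraph between Galois types over a model $M$ and Galois types over $\emptyset$ of the concatenated sequences: one must check that an amalgam witnessing equality of the $\emptyset$-types restricts to an amalgam over $M$ (and conversely), respecting that the $m_i$-coordinates are mapped identically. This is where the transitive-closure definition of $E$ versus $\Eat$ could in principle cause friction, but it does not: the argument only needs the equivalence of the two ways of coding, which passes through $\Eat$ chains coordinate by coordinate. No use of admitting intersections or pseudo-universality is needed here — the proposition is purely about reducing shortness to the $\emptyset$ case, exactly as in \cite[Theorem 3.5]{tamelc-jsl}.
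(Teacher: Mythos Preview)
Your proposal is correct and is essentially the contrapositive of the paper's own argument: the paper assumes $p^I \rest A = q^I \rest A$ for all small $I, A$, concatenates $\bb := \ba_\ell \frown (\text{enumeration of } M)$, deduces $(\ba_1\bb, N_1) \equiv_{<\kappa} (\ba_2\bb, N_2)$, invokes the hypothesis to get $\gtp(\ba_1\bb/\emptyset; N_1) = \gtp(\ba_2\bb/\emptyset; N_2)$, and translates back to $p = q$. You run the same steps in the reverse direction, splitting the witnessing index set $J$ into its $\alpha$-block and its $M$-block; the core translation lemma (types over $M$ versus types over $\emptyset$ with $M$ folded into the sequence) is the same in both, and you correctly flag it as the only point needing care.
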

\begin{proof}
  Let $\beta$ be an ordinal, $M \in K$, $p, q \in \gS^{\beta} (M)$. Assume that $p^I \rest A = q^I \rest A$ for all $I \subseteq \beta$ and $A \subseteq |M|$ of size less than $\kappa$. Say $p = \gtp (\ba_1 / M; N_1)$, $q = \gtp (\ba_2 / M; N_2)$. Let $\bb$ be an enumeration of $|M|$ and let $p' := \gtp (\ba_1 \bb / \emptyset; N_1)$, $q' := \gtp (\ba_2 \bb / \emptyset; N_2)$. By assumption, $(p')^{I'} = (q')^{I'}$ for all $I'$ of size less than $\kappa$. In other words, $(\ba_1 \bb, N_1) \equiv_{<\kappa} (\ba_2 \bb, N_2)$. Therefore by our locality assumption $p' = q'$, and from the definition of Galois types this implies that $p = q$.
\end{proof}
\begin{remark}\label{loc-pseudouniv-rmk}
  By a similar argument, we can show that pseudo-universal classes are locally pseudo-universal (recall Definition \ref{local-def}).
\end{remark}

\begin{thm}\label{pseudo-univ-tame}
  If $K$ is pseudo-universal, then $K$ is fully $(<\aleph_0)$-tame and short.
\end{thm}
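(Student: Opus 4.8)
The plan is to first apply Proposition \ref{short-tame}: it reduces the statement to showing that for every ordinal $\alpha$ and all $N_\ell \in K$, $\ba_\ell \in \fct{\alpha}{|N_\ell|}$ ($\ell = 1,2$) with $(\ba_1, N_1) \equiv_{<\aleph_0} (\ba_2, N_2)$ — that is, $\gtp (\ba_1 \rest I / \emptyset; N_1) = \gtp (\ba_2 \rest I / \emptyset; N_2)$ for every \emph{finite} $I \subseteq \alpha$ — one has $\gtp (\ba_1 / \emptyset; N_1) = \gtp (\ba_2 / \emptyset; N_2)$. By Proposition \ref{isom-eq-gtp}, what we must produce is an isomorphism $\cl^{N_1} (\ba_1) \cong \cl^{N_2} (\ba_2)$ sending $\ba_1$ to $\ba_2$. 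For each finite $I \subseteq \alpha$, the same proposition applied to the hypothesis yields an isomorphism $f_I : \cl^{N_1} (\ba_1 \rest I) \cong \cl^{N_2} (\ba_2 \rest I)$ with $f_I (\ba_1 \rest I) = \ba_2 \rest I$, and pseudo-universality says this $f_I$ is \emph{unique}. The strategy is then to glue the $f_I$'s into the desired isomorphism.

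The key step is coherence: for finite $I \subseteq J \subseteq \alpha$, one should check $f_J \rest \cl^{N_1} (\ba_1 \rest I) = f_I$. Here Monotonicity 2 and coherence give $\cl^{N_1} (\ba_1 \rest I) \lea \cl^{N_1} (\ba_1 \rest J)$, and combining Invariance with Monotonicity 3 (from Proposition \ref{cl-props}) shows that $f_J$ maps $\cl^{N_1} (\ba_1 \rest I)$ onto $\cl^{N_2} (\ba_2 \rest I)$; since $f_J$ sends $\ba_1 \rest J$ to $\ba_2 \rest J$ it sends $\ba_1 \rest I$ to $\ba_2 \rest I$. Thus $f_J \rest \cl^{N_1} (\ba_1 \rest I)$ and $f_I$ are two isomorphisms $\cl^{N_1} (\ba_1 \rest I) \cong \cl^{N_2} (\ba_2 \rest I)$ each carrying $\ba_1 \rest I$ to $\ba_2 \rest I$, and the uniqueness clause of pseudo-universality forces them to be equal. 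I expect this to be the crux of the argument and the only place pseudo-universality is genuinely used: in the algebraically closed fields example of Example \ref{pseudouniv-example}, uniqueness fails and with it coherence, so the argument must break there.

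Finally I would set $f := \bigcup_{I \subseteq \alpha \text{ finite}} f_I$. Finite character (Proposition \ref{cl-props}) gives $|\cl^{N_1} (\ba_1)| = \bigcup_I |\cl^{N_1} (\ba_1 \rest I)|$ and likewise on the $N_2$ side, so coherence makes $f$ a well-defined bijection $|\cl^{N_1} (\ba_1)| \to |\cl^{N_2} (\ba_2)|$. Because $L (K)$ is finitary, any instance of a function or relation symbol involves only finitely many elements, hence lies inside some $\cl^{N_1} (\ba_1 \rest I)$ on which $f$ coincides with the isomorphism $f_I$; therefore $f$ is an isomorphism. As $f$ agrees with $f_{\{i\}}$ on the $i$-th coordinate we get $f (\ba_1) = \ba_2$, and Proposition \ref{isom-eq-gtp} yields $\gtp (\ba_1 / \emptyset; N_1) = \gtp (\ba_2 / \emptyset; N_2)$. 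The remaining verifications — that the relevant closures are $\lea$-substructures (using that $K$ admits intersections), injectivity of $f$, and the coordinatewise claim — are routine bookkeeping.
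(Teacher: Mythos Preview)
Your proof is correct and follows essentially the same approach as the paper: reduce via Proposition \ref{short-tame}, build the local isomorphisms $f_I$ using Proposition \ref{isom-eq-gtp}, invoke pseudo-universality for uniqueness to obtain coherence, and glue via finite character of $\cl$. Your coherence verification (that $f_J$ restricts to an isomorphism $\cl^{N_1}(\ba_1 \rest I) \cong \cl^{N_2}(\ba_2 \rest I)$ before appealing to uniqueness) is in fact more explicit than the paper's, which simply asserts $f_I \subseteq f_J$ directly from uniqueness.
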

\begin{proof}
  Let $N_\ell \in K$, $\ba_\ell \in \fct{\alpha}{|N_\ell|}$, $\ell = 1,2$. Assume that $(\ba_1, N_1) \equiv_{<\aleph_0} (\ba_2, N_2)$. We show that $\gtp (\ba_1 / \emptyset; N_1) = \gtp (\ba_2 / \emptyset; N_1)$, which is enough by Proposition \ref{short-tame}. Let $M_\ell := \cl^{N_\ell} (\ran{\ba_\ell})$.

  For each finite $I \subseteq \alpha$, let $M_{\ell, I} := \cl^{N_\ell} (\ran{\ba_\ell \rest I})$. By definition of $\equiv_{<\aleph_0}$, for each finite $I \subseteq \alpha$, $\gtp (\ba_1 \rest I / \emptyset; N_1) = \gtp (\ba_2 \rest I / \emptyset; N_2)$. Therefore (because $K$ admits intersections) there exists $f_I : M_{1, I} \cong M_{2, I}$ such that $f_I (\ba_1 \rest I) = \ba_2 \rest I$. Moreover by definition of pseudo-universal, $f_I$ is unique with that property. This means in particular that if $I \subseteq J \subseteq \alpha$ are both finite, we must have $f_I \subseteq f_J$. By finite character of the closure operator, $M_\ell = \bigcup_{I \in [\alpha]^{<\aleph_0}} M_{\ell, I}$ and so letting $f := \bigcup_{I \in [\alpha]^{<\aleph_0}} f_I$, we have that $f: M_1 \cong M_2$ and $f (\ba_1) = \ba_2$. This witnesses that $\gtp (\ba_1 / \emptyset; M_1) = \gtp (\ba_2 / \emptyset; M_2)$ and so (since $M_\ell \lea N_\ell$), $\gtp (\ba_1 / \emptyset; N_1) = \gtp (\ba_2 / \emptyset; N_2)$.
\end{proof}
\begin{remark}
  If $K$ is a universal class (i.e.\ not ``pseudo''), then the proof of Theorem \ref{pseudo-univ-tame} (together with Remark \ref{univ-class-closure}) shows that Galois types are the same as quantifier-free types. That is, $\gtp (\ba_1 / A; N_1) = \gtp (\ba_2 / A; N_2)$ if and only if $\tp_{\text{qf}} (\ba_1 / A; N_1) = \tp_{\text{qf}} (\ba_2 / A; N_2)$, where $\tp_{\text{qf}} (\ba / A; N)$ denotes the quantifier-free types of $\ba$ over $A$ computed in $N$.
\end{remark}
We can localize Theorem \ref{pseudo-univ-tame} to obtain more generally (Remark \ref{loc-pseudouniv-rmk}):

\begin{cor}\label{loc-pseudo-univ-tame}
  If $K$ is locally pseudo-universal, then $K$ is fully $\LS (K)$-tame and short.
\end{cor}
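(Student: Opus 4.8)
The plan is to reduce Corollary~\ref{loc-pseudo-univ-tame} to Theorem~\ref{pseudo-univ-tame} applied to the relativized classes $K_M$. First I would recall (Remark~\ref{loc-pseudouniv-rmk}) that if $K$ is pseudo-universal then it is locally pseudo-universal; hence if $K$ is already \emph{locally} pseudo-universal, then for every $M \in K$ the class $K_M$ is pseudo-universal. By Theorem~\ref{pseudo-univ-tame} applied to $K_M$, the class $K_M$ is fully $(<\aleph_0)$-tame and short. The remaining work is purely bookkeeping: translate ``$K_M$ is fully $(<\aleph_0)$-tame and short'' back into a statement about $K$ itself, and check that the bound on the size of the witnessing set becomes $\LS(K)$ rather than $\aleph_0$.

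The key step is the correspondence between Galois types in $K_M$ and Galois types in $K$. A model of $K_M$ is a pair $(N, (c_a^N)_{a\in|M|})$ where $a\mapsto c_a^N$ is a $K$-embedding of $M$ into $N$; after identifying $M$ with its image we may regard $N\in K$ together with a fixed copy of $M$ inside it. Under this identification, a Galois type $\gtp(\bar b / A ; N)$ in $K_M$ (where now $A$ is a subset of the universe of the $K_M$-model, which includes the constants naming $|M|$) corresponds to a Galois type over $A\cup|M|$ in $K$. Thus a type $p\in \gS^\alpha(N')$ over a model $N'$ in $K$ (so $|M|\subseteq|N'|$, say; in general one works above a given $M\lea N'$) splits into the ``$K_M$-part'' over $\emptyset$ together with the part over $|M|$. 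I would verify this translation carefully: equality of Galois types in $K_M$ is equality of the corresponding Galois types over domains enlarged by $|M|$ in $K$, and the syntactic length-$\alpha$ restrictions $p^I$ and domain restrictions $p\rest A$ behave compatibly on both sides, since restricting a $K_M$-type to a subdomain $A_0$ corresponds to restricting the $K$-type to $A_0\cup|M|$.

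Given this, suppose $p\neq q$ in $\gS^\alpha(N')$ for some $N'\in K$; fix $M\lea N'$ and regard $(N',(a)_{a\in|M|})$ as a $K_M$-model. The corresponding $K_M$-types $p^\star\neq q^\star$ differ, so by full $(<\aleph_0)$-tameness and shortness of $K_M$ there are finite $I\subseteq\alpha$ and finite $A^\star\subseteq|N'|$ with $(p^\star)^I\rest A^\star\neq(q^\star)^I\rest A^\star$. Translating back, $p^I\rest(A^\star\cup|M|)\neq q^I\rest(A^\star\cup|M|)$, and $|I|+|A^\star\cup|M||\le\aleph_0+\|M\|\le\aleph_0+\LS(K)=\LS(K)$ (using $\|M\|\ge\LS(K)$ may fail for small $M$, but $|I|+|A^\star|+\|M\|<\LS(K)^+$ always holds, and one can also reduce to $M$ of size $\le\LS(K)$ by first taking a Löwenheim--Skolem submodel containing the relevant finite sets). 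I expect the only genuinely delicate point to be precisely this size estimate together with the choice of which $M$ to relativize over: one must ensure that the domain of the original type $N'$ is handled uniformly and that the witnessing set has size $<\LS(K)^+$, i.e. $\le\LS(K)$. Once the type-translation dictionary is set up correctly, the bound follows, and combined with Remark~\ref{local-rmk} the statement holds. The rest — checking the axioms of $K_M$ as an AEC, and that $\LS(K_M)=\LS(K)$ — is routine and already implicit in Definition~\ref{k-m-def}.
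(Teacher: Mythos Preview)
Your proposal is correct and follows essentially the same route as the paper: pick $M_0 \lea N'$ of size $\LS(K)$, note that $K_{M_0}$ is pseudo-universal (this is simply the \emph{definition} of ``locally pseudo-universal'', not a consequence of Remark~\ref{loc-pseudouniv-rmk}, which goes in the other direction), apply Theorem~\ref{pseudo-univ-tame} to $K_{M_0}$, and translate the resulting $(<\aleph_0)$-tameness and shortness back to $K$ with the set $A^\star \cup |M_0|$ of size at most $\LS(K)$. The paper's proof is slightly more streamlined in that it argues by contrapositive and fixes the size of $M_0$ at the outset rather than discussing the size estimate afterward, and your closing appeal to Remark~\ref{local-rmk} is unnecessary, but the substance is identical.
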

\begin{proof}
  Let $M \in K$ and let $p, q \in \gS^{\alpha} (M)$. Assume that $p^I \rest A = q^I \rest A$ for all $A \subseteq |M|$ of at most size $\LS (K)$ and all $I \subseteq \alpha$ of size at most $\LS (K)$. We want to see that $p = q$. Without loss of generality $\|M\| \ge \LS (K)$. Let $M_0 \lea M$ have size $\LS (K)$. We know that $p^I \rest M_0 = q^I \rest M_0$ for all $I \subseteq \alpha$ of size at most $\LS (K)$. Since $K$ is locally pseudo-universal, $K_{M_0}$ (see Definition \ref{k-m-def}) is pseudo-universal. By Theorem \ref{pseudo-univ-tame}, $K_{M_0}$ is fully $(<\aleph_0)$-tame and short. Translating to $K$, this means that for any $N \gea M_0$, any $p', q' \in \gS^{\beta} (N)$, if $(p')^I \rest (|M_0| \cup A) = (q')^I \rest (|M_0| \cup A)$ for all finite $I$ and $A$, then $p' = q'$. Setting $N, p', q'$ to stand for $M, p, q$, we get that $p = q$, as desired.
\end{proof}

\section{Amalgamation from categoricity}\label{ap-sec}

We investigate how to get amalgamation from categoricity in tame AECs admitting intersections. In what follows, we will often use Remark \ref{high-enough-rmk} without comments. Recall:

\begin{defin}\label{ap-def}
  An AEC $K$ has amalgamation if for any $M_0 \lea M_\ell$, $\ell = 1,2$, there exists $N \in K$ and embeddings $f_\ell: M_\ell \xrightarrow[M_0]{} N$. We say that $K$ has $\lambda$-amalgamation if this holds for the models in $K_\lambda$. We define similarly \emph{disjoint amalgamation}, where we require in addition that $f_1[M_1] \cap f_2[M_2] = M_0$.
\end{defin}

We will use the concept of a good $\lambda$-frame, a notion of forking for types of length one over models of size $\lambda$, see \cite[Definition II.2.1]{shelahaecbook} or Appendix \ref{good-frame-appendix}. The following claim is a deep result of Shelah which says that good $\lambda$-frames exist in categorical classes. 

\begin{claim}\label{shelah-claim}
  If $K$ is categorical in unboundedly many cardinals, then there exists a categoricity cardinal $\lambda \ge \LS (K)$ such that $K$ has a good $\lambda$-frame (i.e.\ there exists a good $\lambda$-frame $\s$ such that $K_{\s} = K_\lambda$). In particular, $K$ has $\lambda$-amalgamation.
\end{claim}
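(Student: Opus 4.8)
This is a theorem of Shelah, and I would prove it by assembling results from \cite[Chapters II--IV]{shelahaecbook}; conceptually, the role of categoricity is to destroy the ``many models''/non-structure side of Shelah's dichotomies, leaving enough structure in a single well-chosen cardinal to run his construction of a good frame. The first step is to locate such a cardinal. Using categoricity in unboundedly many cardinals together with Shelah's presentation theorem and Ehrenfeucht--Mostowski models over linear orders, one gets that $K$ has arbitrarily large models, joint embedding, and no maximal models in a tail of cardinals; and categoricity in a high-enough cardinal (above $H_1$) forces amalgamation in a corresponding interval below it, since otherwise the non-structure theorems of \cite[Chapter IV]{shelahaecbook} would manufacture too many models in the categoricity cardinal. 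Iterating these observations and using that categoricity recurs arbitrarily high, one produces a categoricity cardinal $\lambda \ge H_1$ such that $K_{[\LS(K),\lambda]}$ has amalgamation, joint embedding and no maximal models, $K$ is categorical in $\lambda$, and there are further categoricity cardinals above $\lambda$.

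The next step is superstability in $\lambda$. Amalgamation plus categoricity in $\lambda$ gives $\lambda$-stability at once; categoricity in arbitrarily large cardinals above $\lambda$ then upgrades this to the superstability-type hypotheses Shelah needs --- uniqueness of limit models of size $\lambda$ (equivalently, a superlimit model in $\lambda$, or solvability), and the local character of $\lambda$-splitting --- again by the analysis of \cite[Chapter IV]{shelahaecbook}, in the same spirit as the limit-model work of Grossberg and VanDieren. With amalgamation, no maximal models, stability and superstability in $\lambda$ in hand, one finally defines nonforking on $K_\lambda$ via $\mu$-splitting for $\mu < \lambda$ (equivalently via minimal/basic types, which in the categorical setting are just the non-algebraic ones) and checks the axioms of a good $\lambda$-frame: invariance and monotonicity are formal; existence, extension and uniqueness come from the superstability hypotheses; local character and continuity from the limit-model analysis; and symmetry, the delicate axiom, from categoricity together with amalgamation. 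This is exactly the construction of \cite[Chapters II--IV]{shelahaecbook} --- or, localized to the single cardinal $\lambda$, the ZFC constructions of \cite{ss-tame-jsl} and \cite{indep-aec-apal} --- and its output $\s$ has underlying class $K_\s = K_\lambda$, as required. The final assertion is immediate, since $\lambda$-amalgamation is built into the definition of a good $\lambda$-frame.

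I expect the genuinely hard part to be everything up to and including superstability: extracting, in ZFC and from categoricity in unboundedly many cardinals alone, one cardinal $\lambda$ in which $K$ simultaneously has amalgamation and is superstable. This is where Shelah's deep machinery --- Ehrenfeucht--Mostowski models, the two-cardinal and unidimensionality analysis, and the non-structure theorems ruling out pathologies --- is indispensable, and is the reason the statement is quoted rather than reproved here. By contrast, once this structural input is available, verifying the good $\lambda$-frame axioms is comparatively routine bookkeeping.
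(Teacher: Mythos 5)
This statement is labelled a \emph{Claim} in the paper precisely because it is not proved there, and your proposal treats as routine the one step that is actually in doubt. The paper records that the statement is only implicit in \cite[Chapter IV]{shelahaecbook}, and that Boney and the author found a gap in a key part of Shelah's argument (see the discussion following Claim \ref{shelah-claim} and \cite{categ-infinitary-v2}); as of the writing of the paper no public fix exists. Concretely, the route through Chapter IV does not go the way you sketch (``categoricity above $H_1$ forces amalgamation below it via non-structure, then build the frame from superstability''). It goes through the notion of being \emph{eventually syntactically characterizable} (Definition \ref{event-synt-def}): one must first show that for every $\theta$ there is a tail of $K$ on which $M \lea N$ implies $M \lee_{L_{\infty,\theta}} N$, and only then does Shelah's construction of a good frame in a categoricity cardinal go through (Fact \ref{good-frame-fact}). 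The gap is exactly in the assertion that categoricity in unboundedly many cardinals suffices for eventual syntactic characterizability (\cite[Conclusion IV.2.14]{shelahaecbook}); what is actually known is the weaker Fact \ref{event-synt-fact}, which needs categoricity in cardinals of \emph{arbitrarily high cofinality}. This is the sole reason for the cofinality restriction in Theorem \ref{abstract-thm-0}, so the distinction is not cosmetic.

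Your second and third paragraphs --- building a good $\lambda$-frame once amalgamation, no maximal models, and superstability in $\lambda$ are available --- do match the machinery the paper actually uses later (Facts \ref{shelah-vi} and \ref{frame-construct}, and the constructions of \cite{ss-tame-jsl} and \cite{indep-aec-apal}), though note those constructions additionally assume tameness, which does not come for free at this stage of the argument. But the ``genuinely hard part'' you defer to Shelah is not merely hard: in the generality of the Claim as stated (unboundedly many categoricity cardinals, no cofinality hypothesis, no tameness), no complete published proof exists. A correct answer here should either reproduce the paper's stance --- state the Claim as unproven, and prove only the conjunction of Facts \ref{event-synt-fact} and \ref{good-frame-fact} under the stronger cofinality hypothesis --- or explicitly flag that the first step of your outline is the open point rather than an application of existing non-structure theorems.
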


The statement is implicit in Chapter IV of \cite{shelahaecbook}, but in June 2015 Will Boney and the author identified a gap in a key part of Shelah's proof \cite{categ-infinitary-v2}. In September 2016, Shelah communicated a fix to the author, which should appear as an online revision of Sh:734. As of December 2016, Shelah's fix has not yet been made public.

The key notion in the proof of Claim \ref{shelah-claim} is: 

\begin{defin}[Definition 2.1 in \cite{categ-infinitary-v2}]\label{event-synt-def}
  An AEC $K$ is \emph{$L_{\infty, \theta}$-syntactically characterizable} if whenever $M, N \in K$, if $M \lea N$ then $M \lee_{L_{\infty, \theta}} N$. We say that $K$ is \emph{eventually syntactically characterizable} if for every infinite cardinal $\theta$, there exists $\lambda$ such that $K_{\ge \lambda}$ is $L_{\infty, \theta}$-syntactically characterizable.
\end{defin}
\begin{remark}
  Using that saturated models are model-homogeneous, it is easy to see that any AEC \emph{with amalgamation} categorical in a proper class of cardinals is eventually syntactically characterizable \cite[Proposition 1.3]{categ-infinitary-v2}.
\end{remark}

The problematic part of Shelah's proof is a claim that an AEC categorical in unboundedly many cardinals is eventually syntactically characterizable (see \cite[Conclusion IV.2.14]{shelahaecbook}). However the following weakening is true: 

\begin{fact}[Conclusion IV.2.12.(1) in \cite{shelahaecbook}]\label{event-synt-fact} 
   If $K$ is categorical in cardinals of arbitrarily high cofinality (that is, for every $\theta$ there exists $\lambda$ such that $K$ is categorical in $\lambda$ and $\cf{\lambda} \ge \theta$), then $K$ is eventually syntactically characterizable.
\end{fact}

From an eventually syntactically characterizable AEC that is categorical in unboundedly many cardinals, Shelah's proof of Claim \ref{shelah-claim} goes through:

\begin{fact}[Theorem 2.12 in \cite{categ-infinitary-v2}]\label{good-frame-fact}
  If $K$ is eventually syntactically characterizable and categorical in unboundedly many cardinals, then there exists a categoricity cardinal $\lambda \ge \LS (K)$ such that $K$ has a good $\lambda$-frame.
\end{fact}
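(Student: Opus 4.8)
The plan is to follow Shelah's construction of a good $\lambda$-frame from categoricity in \cite[Chapter IV]{shelahaecbook}, isolating precisely the single step where eventual syntactic characterizability is needed: this is the place where \cite{categ-infinitary-v2} located a gap in Shelah's assertion that categoricity in unboundedly many cardinals already yields syntactic characterizability, so here we simply assume it and run the remainder of the argument. Concretely, since categoricity in unboundedly many cardinals is inherited by $K_{\ge \lambda_0}$ for any $\lambda_0$ (Remark \ref{high-enough-rmk}) and the good-frame axioms only refer to $K_\lambda$, I would first apply eventual syntactic characterizability with a large enough $\theta$ (on the order of a Hanf number, so that Ehrenfeucht--Mostowski arguments are available) to obtain $\lambda_0$ with $K_{\ge \lambda_0}$ being $L_{\infty,\theta}$-syntactically characterizable, and work inside $K_{\ge \lambda_0}$. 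The payoff is that $M \lea N$ now implies $M \lee_{L_{\infty,\theta}} N$, so Galois types are controlled by $L_{\infty,\theta}$-formulas, Ehrenfeucht--Mostowski models behave as in the elementary case, and --- crucially --- increasing unions of saturated models of a fixed cardinality are again saturated.

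Next I would use categoricity (now witnessed in unboundedly many cardinals above $\lambda_0$) together with this syntactic control to establish, on a tail of cardinals, the standard prerequisites for frame-building: amalgamation, joint embedding, no maximal models, stability, and the superstability-flavored continuity just mentioned. The clean way to package this --- and the real content of the step --- is to show that $K_{\ge \lambda_0}$ is \emph{solvable} in some categoricity cardinal $\lambda \ge \lambda_0$ in the sense of \cite[Chapter IV]{shelahaecbook}: there is a ``super-limit'' Ehrenfeucht--Mostowski template in $\lambda$ witnessing a strong, frame-friendly form of categoricity. It is precisely categoricity \emph{together with} syntactic characterizability, rather than categoricity alone, that produces solvability.

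Given solvability in $\lambda$, I would invoke Shelah's construction of the frame $\s$: take the basic types to be the appropriate nonalgebraic types over models of $K_\lambda$, and define nonforking as non-$\mu$-splitting over limit models for a suitable $\mu < \lambda$. Invariance, monotonicity, and continuity are then formal; stability bounds the number of basic types over each model and gives local character; uniqueness of nonsplitting extensions comes from categoricity together with uniqueness of limit models; and solvability is what powers the existence of extensions and, via the symmetry argument, symmetry. The outcome is a good $\lambda$-frame $\s$ with $K_\s = K_\lambda$, and since $K_\lambda$ is unchanged by the passage to $K_{\ge \lambda_0}$, this is a good $\lambda$-frame for the original $K$ with $\lambda$ a categoricity cardinal $\ge \LS(K)$, as required.

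The hard part is not any individual axiom but the interlocking deep combinatorics of \cite[Chapter IV]{shelahaecbook} --- deriving solvability from categoricity, and then symmetry of the resulting independence relation. The one genuinely delicate point, and exactly the point this theorem is designed to route around, is that categoricity by itself does not evidently give eventual syntactic characterizability (Shelah's \cite[Conclusion IV.2.14]{shelahaecbook}), which is why it appears in the hypothesis; I expect the bulk of a full proof to consist of checking that Shelah's arguments go through verbatim with ``categorical'' replaced throughout by ``categorical and eventually syntactically characterizable.''
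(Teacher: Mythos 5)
The first thing to note is that the paper does not prove this statement at all: it is quoted as a black box, Fact \ref{good-frame-fact}, with the proof residing in \cite[Theorem 2.12]{categ-infinitary-v2}, so there is no in-paper argument to compare yours against. Your outline does match the strategy of the cited proof --- pass to $K_{\ge\lambda_0}$ where $L_{\infty,\theta}$-syntactic characterizability holds, combine it with categoricity and Ehrenfeucht--Mostowski models to obtain (weak) solvability in some categoricity cardinal $\lambda$, and then run Shelah's construction of a good $\lambda$-frame from solvability in \cite[Chapter IV]{shelahaecbook} --- and you correctly locate the role of the hypothesis as plugging the gap in Shelah's \cite[Conclusion IV.2.14]{shelahaecbook}.

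That said, as a proof your text is only a roadmap: every substantive step is asserted rather than established. In particular, (i) the implication ``categoricity plus $L_{\infty,\theta}$-characterizability implies solvability'' is precisely the content of the cited theorem, and the real work there is an audit verifying that Shelah's arguments in \cite[Section IV.2]{shelahaecbook} use categoricity only through the syntactic consequence you are assuming --- this is not a verbatim substitution of hypotheses, since one must track exactly where the flawed Conclusion IV.2.14 is invoked downstream; (ii) the frame construction from solvability, especially symmetry and the extension property, is a long stretch of \cite[Chapter IV]{shelahaecbook} that you invoke wholesale; and (iii) some intermediate claims are stated as if they followed formally from $M \lee_{L_{\infty,\theta}} N$ when they do not (for instance, that increasing unions of saturated models of a fixed cardinality are saturated is not a consequence of syntactic characterizability alone; it requires the superstability-type analysis). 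None of this makes your outline wrong, but it means the proposal cannot be certified as a proof: it is an accurate description of where the proof lives rather than the proof itself, which is also how the paper under review treats the statement.
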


Thus it is reasonable to assume that we have a good $\lambda$-frame, and we want to transfer amalgamation above it. Our inspiration is a recent result of Adi Jarden, presented at a talk in South Korea in the Summer of 2014.

\begin{fact}[Corollary 7.16 in \cite{jarden-tameness-apal}]
  Assume $K$ has a good $\lambda$-frame where the class of uniqueness triples satisfies the existence property and $K$ is strongly $\lambda$-tame, then $K$ has $\lambda^+$-amalgamation.
\end{fact}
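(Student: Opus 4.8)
The plan is to lift the good $\lambda$-frame up one cardinal, obtaining $\lambda^+$-amalgamation as the \emph{existence} property of a nonforking amalgamation relation on $K_{\lambda^+}$. First I would unpack the hypotheses. The existence property for uniqueness triples is exactly the assertion that $\s$ is \emph{weakly successful}, and it is standard (see \cite[Chapter III]{shelahaecbook}, or the work of Jarden--Shelah) that a weakly successful good $\lambda$-frame carries a canonical nonforking amalgamation relation $\mathrm{NF}$ on quadruples from $K_\lambda$, enjoying monotonicity, symmetry, \emph{existence} (any span $M_1 \gea M_0 \lea M_2$ in $K_\lambda$ embeds, over $M_0$, into some $M_3 \in K_\lambda$ with $\mathrm{NF}(M_0, M_1, M_2, M_3)$), \emph{uniqueness} (two $\mathrm{NF}$-amalgams of the same span can be matched up by an isomorphism respecting the two sides), and \emph{long transitivity} along continuous $\lea$-increasing chains; moreover $\mathrm{NF}$ refines $\s$-nonforking. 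In particular $K$ already has $\lambda$-amalgamation, which is the base case.

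Next I would define a relation $\mathrm{NF}^\ast$ on $K_{\lambda^+}$ by declaring $\mathrm{NF}^\ast(M_0, M_1, M_2, M_3)$ to hold when every quadruple of $\lambda$-sized approximations ($N_j \lea M_j$ with $N_0 \lea N_1, N_2$ and $N_1, N_2 \lea N_3$) can be completed inside $M_3$ to an $\mathrm{NF}$-amalgam; the theorem then reduces to showing that $\mathrm{NF}^\ast$ has the existence property, since that trivially yields $\lambda^+$-amalgamation. To prove $\mathrm{NF}^\ast$-existence, resolve the given $M_0 \lea M_1$, $M_0 \lea M_2$ in $K_{\lambda^+}$ as compatible continuous $\lea$-increasing unions $M_i = \bigcup_{\alpha < \lambda^+} M_i^\alpha$ with $M_i^\alpha \in K_\lambda$ and $M_0^\alpha \lea M_1^\alpha, M_2^\alpha$ (Löwenheim--Skolem--Tarski plus coherence), and build by induction on $\alpha$ a continuous chain $\seq{N^\alpha : \alpha < \lambda^+}$ in $K_\lambda$ with coherent $\lea$-embeddings $f_i^\alpha : M_i^\alpha \to N^\alpha$ fixing $M_0^\alpha$, maintaining that $N^\alpha$ is an $\mathrm{NF}$-amalgam of $f_1^\alpha[M_1^\alpha]$ and $f_2^\alpha[M_2^\alpha]$ over $M_0^\alpha$. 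Limits are handled by unions, using the Tarski--Vaught axioms and the continuity of $\mathrm{NF}$; at a successor one enlarges the amalgam first along $M_1$ and then along $M_2$ by $\mathrm{NF}$-existence, recovering the invariant via monotonicity and long transitivity. Taking unions at the top produces embeddings $M_i \to N := \bigcup_\alpha N^\alpha \in K_{\lambda^+}$.

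The main obstacle is the successor step, and it is the only place strong $\lambda$-tameness enters. Extending $f_1$ over $M_1^{\alpha+1}$ by a free amalgamation pins down the image of the new piece $M_0^{\alpha+1}\setminus M_0^\alpha$, and extending $f_2$ over $M_2^{\alpha+1}$ by a second, \emph{a priori independent}, free amalgamation pins down another image of that same piece; the two maps are forced to agree only on $M_0^\alpha$, whereas coherence of the final pair of embeddings demands agreement on all of $M_0^{\alpha+1}$. The remedy is that both newly-introduced pieces realize \emph{nonforking} extensions over $M_0^\alpha$ of the relevant types, so by uniqueness of nonforking extensions the two images can be conjugated onto one another; but to know that these local reconciliations cohere up the whole $\lambda^+$-chain — equivalently, that $\mathrm{NF}^\ast$ inherits uniqueness from $\mathrm{NF}$, which is what makes the successor step legitimate rather than merely possible — one must detect equality of Galois types over the $\lambda^+$-sized models from their restrictions to $\lambda$-sized submodels. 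That is precisely strong $\lambda$-tameness. So I expect the real work to be establishing uniqueness of $\mathrm{NF}^\ast$ from tameness; the resolution-and-gluing is routine chain-chasing once that is in hand.
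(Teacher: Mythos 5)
This statement is quoted from Jarden's paper and is not proved in the present text, so there is no in-paper proof to match line by line; the closest analogues are Theorem \ref{thm-closure} and Theorem \ref{ap-frame-transfer}, which establish a variant by a strategy quite different from yours. More importantly, your construction has a genuine gap at the successor step. You resolve $M_0 \lea M_1, M_2$ into $\lambda$-sized chains and try to maintain embeddings $f_1^\alpha, f_2^\alpha$ into a common $N^\alpha$ agreeing on $M_0^\alpha$. At stage $\alpha+1$ the two extensions must be made to agree on all of $M_0^{\alpha+1}$, and your proposed remedy is that ``both newly-introduced pieces realize nonforking extensions over $M_0^\alpha$ of the relevant types,'' so that $\mathrm{NF}$-uniqueness conjugates one image of $M_0^{\alpha+1}$ onto the other. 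This is false as stated: $M_0^{\alpha+1}$ and $M_1^\alpha$ are two \emph{prescribed} submodels of $M_1^{\alpha+1}$ containing $M_0^\alpha$, and nothing forces $\mathrm{NF}(M_0^\alpha, M_0^{\alpha+1}, M_1^\alpha, M_1^{\alpha+1})$ (already in the first-order superstable case two submodels of a model need not be independent over their common base). Without that independence, $\mathrm{NF}$-uniqueness does not apply and the two images of $M_0^{\alpha+1}$ cannot be reconciled. Strong $\lambda$-tameness cannot repair this either: the coherence failure is between two $\lambda$-sized amalgams over a $\lambda$-sized base, whereas strong tameness concerns Galois types over $\lambda^+$-sized models; and if the invariant were maintained at every stage, coherence of the union would be automatic, leaving nothing for tameness to do. Your closing remark that the real work is ``uniqueness of $\mathrm{NF}^\ast$'' also points the wrong way: only \emph{existence} of $\mathrm{NF}^\ast$-amalgams is needed for amalgamation, and that is exactly what the sketch fails to deliver.

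The actual mechanism is different: one works directly with the $\lambda^+$-sized models and amalgamates one small piece at a time. The existence property for uniqueness triples (equivalently, the relation $\mathrm{NF}$ on $K_\lambda$) is used, via a directed-system argument as in \cite{ext-frame-jml}, to show that (basic) types over models of size $\lambda^+$ have the \emph{type extension property}; strong $\lambda$-tameness is then used at each of the $\lambda^+$ steps to convert abstract equality of Galois types into \emph{atomic} equality, i.e., into an actual embedding of the next piece of $M_1$ into an extension of $M_2$; iterating $\lambda^+$ times yields the amalgam. This is precisely the content of Theorem \ref{thm-closure} (type extension plus $E = \Eat^-$ implies amalgamation) and Theorem \ref{ap-frame-transfer}; the point made in Section \ref{ap-sec}, following Boney, is that once one argues this way the existence property for uniqueness triples can be dropped and strong tameness weakened to weak amalgamation plus tameness. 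If you insist on the chain-of-$\mathrm{NF}$-amalgams approach, you would need to choose the resolutions adaptively so that $M_0^{\alpha+1}$ is $\mathrm{NF}$-independent from $M_1^\alpha$ and from $M_2^\alpha$ over $M_0^\alpha$, which requires local character of $\mathrm{NF}$ along $\lambda^+$-chains and essentially amounts to redeveloping Shelah's lifting of $\mathrm{NF}$ to $\lambda^+$ --- considerably more machinery than weak successfulness plus strong tameness as you have deployed them.
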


We will not give the definition of the class of uniqueness triples here (but see Definition \ref{dom-def} and Fact \ref{uq-domin}). It suffices to say that they are a version of domination for good frames. As for strong tameness, it is a variation of tameness relevant when amalgamation fails to hold. Recall that $\lambda$-tameness asks for two types that are equal on all their restrictions of size $\lambda$ to be equal. The strong version asks them to be \emph{atomically equal}, i.e.\ there is a map witnessing it that amalgamates the two models in which the types are computed, see Definition \ref{gtp-def}. Jarden's result is interesting, since it shows that tameness, a locality property that we see as quite mild compared to assuming amalgamation, can be of some use to proving amalgamation. The downside is that we have to ask for a strengthened version.

While Jarden proved much more than $\lambda^+$-amalgamation, it has been pointed out by Will Boney (in a private communication) that if one only wants amalgamation, the hypothesis that uniqueness triples satisfy the existence property is not necessary. The reason is that the argument of \cite{ext-frame-jml} can be used to transfer enough of the good frame to $\lambda^+$ so that the extension property holds, and the extension property implies amalgamation. 

We make the argument precise here and also show that less than strong tameness is needed (in particular, it suffices to assume tameness and that the AEC admits intersections). We first fix some notation.

\begin{defin}\label{gtp-def-0} \
  Let $\lambda \ge \LS (K)$.
  \begin{enumerate}
  \item $K^{3, 1}$ is the set of triples $(a, M, N)$ such that $M \lea N$ and $a \in N$. $K_\lambda^{3, 1}$ is the set of such triples where the models are in $K_\lambda$ (the difference with Definition \ref{gtp-def} is that we require the base to be a model and the sequence $\bb$ to have length one).
  \item We say $(a_1, M_1, N_1) \in K^{3, 1}$ \emph{atomically extends} $(a_0, M_0, N_0) \in K^{3,1}$ if $M_1 \gea M_0$ and $(a_1, M_0, N_1)\Eat (a_0, M_0, N_0)$ (recall Definition \ref{gtp-def})
  \item We say $M \in K_\lambda$ has the \emph{type extension property} if for any $N \gea M$ in $K_\lambda$ and any $p \in \gS (M)$, there exists $q \in \gS (N)$ extending $p$. 
  \item We say $M$ has the \emph{strong type extension property} if for any $N \gea M$, whenever $(a, M, M') \in K_\lambda^{3, 1}$, there exists $(b, N, N') \in K_\lambda^{3, 1}$ atomically extending $(a, M, M')$.
  \end{enumerate}

  We say \emph{$K_\lambda$ has the [strong] type extension property} (or \emph{$K$ has the [strong] type extension property in $\lambda$}) if every $M \in K_\lambda$ has it.
\end{defin}
\begin{remark}
  It is well-known (see for example \cite{grossbergbook}) that if $K$ has amalgamation, then $E = \Eat$. Similarly if $\lambda \ge \LS (K)$ and $K$ has $\lambda$-amalgamation, then $E \rest K_\lambda^{3, 1} = \Eat \rest K_\lambda^{3, 1}$. Moreover, $K$ has $\lambda$-amalgamation if and only if $K_\lambda$ has the strong type extension property.
\end{remark}

We think of the type extension property as saying that amalgamation cannot fail because there are ``fundamentally incompatible'' elements in the two models we want to amalgamate. Rather, the reason amalgamation fails is because we simply ``do not have enough models'' to witness that two types are equal in one step. It would be useful to formalize this intuition but so far we have failed to do so. 

We are interested in conditions implying that the type extension property (not the strong one) is enough to get amalgamation. For this, it turns out that it is enough to require that the AEC admits intersections. However we can even require a weaker condition:

\begin{defin}[Weak atomic equivalence]
  Let $(a_\ell, M, N_\ell) \in K_\lambda^{3, 1}$, $\ell = 1,2$. We say $(a_1, M, N_1) \Eat^- (a_2, M, N_2)$ (in words, $(a_1, M, N_1)$ and $(a_2, M, N_2)$ are \emph{weakly atomically equivalent}) if for $\ell = 1,2$, there exists $N_\ell' \lea N_\ell$ containing $a_\ell$ and $M$ such that $(a_\ell, M, N_\ell') \Eat (a_{3 - \ell}, M, N_{3 - \ell})$.
\end{defin}
\begin{defin}\label{weak-ap-def}
  $K$ has \emph{weak amalgamation} if $E \rest K^{3,1} = \Eat^- \rest K^{3,1}$, i.e.\ equivalence of triples is the same as \emph{weak} atomic equivalence of triples. Similarly define what it means for $K$ to have weak $\lambda$-amalgamation.
\end{defin}
\begin{remark}
  $K$ has weak amalgamation if and only if whenever $\gtp (a_1 / M; N_1) = \gtp (a_2 / M; N_2)$, there exists $N_1' \lea N_1$ containing $a_1$ and $M$ and there exists $N \gea N_2$ and $f: N_1' \xrightarrow[M]{} N$ so that $f (a_1) = a_2$.
\end{remark}
\begin{remark}\label{inter-implies-weak-ap}
  If $K$ locally admits intersections, $(a_\ell, M, N_\ell) \in K_\lambda^{3, 1}$, $\ell = 1,2$ and $(a_1, M, N_1) E (a_2, M, N_2)$, then by Proposition \ref{cl-props}, $N_\ell' := \cl^{N_\ell} (|M| \cup \{a_\ell\})$ witnesses that $(a_1, M, N_1)\Eat^- (a_2, M, N_2)$. Thus in that case, $E \rest K_\lambda^{3, 1} = \Eat^- \rest K_\lambda^{3, 1}$, so $K$ has weak amalgamation.
\end{remark}

Intuitively, weak amalgamation requires only that points that have the same Galois types can be amalgamated. The key result is:

\begin{thm}\label{thm-closure}
  Let $K$ be an AEC and $\lambda \ge \LS (K)$. Assume $K_\lambda$ has the type extension property. The following are equivalent:

  \begin{enumerate}
    \item $K$ has $\lambda$-amalgamation.
    \item $E \rest K_\lambda^{3, 1} = \Eat \rest K_\lambda^{3, 1}$ (i.e.\ equivalence of triples is the same as atomic equivalence of triples).
    \item $K$ has weak $\lambda$-amalgamation (i.e.\ equivalence of triples is the same as \emph{weak} atomic equivalence of triples).
  \end{enumerate}

  In particular, if $K$ admits intersections and has the type extension property, then it has amalgamation.
\end{thm}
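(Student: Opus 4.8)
The plan is to prove the chain of implications $(1) \Rightarrow (2) \Rightarrow (3) \Rightarrow (1)$, and the last implications require the type extension hypothesis. The implication $(1) \Rightarrow (2)$ is the standard folklore fact recorded in the Remark after Definition \ref{gtp-def-0}: if $K$ has $\lambda$-amalgamation, one shows by induction on the length of an $E$-chain that $\Eat \rest K_\lambda^{3,1}$ is already transitive, using amalgamation to glue together two consecutive atomic-equivalence witnesses into one. The implication $(2) \Rightarrow (3)$ is immediate, since $\Eat \rest K_\lambda^{3,1} \subseteq \Eat^- \rest K_\lambda^{3,1} \subseteq E \rest K_\lambda^{3,1}$ always, so if the two ends coincide all three relations coincide.

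The heart of the argument is $(3) \Rightarrow (1)$. So assume $K$ has weak $\lambda$-amalgamation and that $K_\lambda$ has the type extension property; I want to produce, given $M_0 \lea M_\ell$ ($\ell = 1,2$) in $K_\lambda$, a common amalgam. First I would reduce to the strong type extension property: by the Remark after Definition \ref{gtp-def-0}, $\lambda$-amalgamation is equivalent to every $M \in K_\lambda$ having the strong type extension property, i.e.\ given $N \gea M$ in $K_\lambda$ and $(a, M, M') \in K_\lambda^{3,1}$, we must find $(b, N, N') \in K_\lambda^{3,1}$ atomically extending $(a, M, M')$. By the (plain) type extension property applied to the Galois type $p := \gtp(a/M; M') \in \gS(M)$, there is $q \in \gS(N)$ extending $p$; write $q = \gtp(b/N; N')$ with $(b, N, N') \in K_\lambda^{3,1}$ (using the Löwenheim–Skolem axiom to keep $N'$ of size $\lambda$). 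Now $q \rest M = p$ means $(b, M, N') \mathrel{E} (a, M, M')$, so $(b, M, N') \mathrel{E \rest K_\lambda^{3,1}} (a, M, M')$. By weak $\lambda$-amalgamation this gives $(b, M, N') \mathrel{\Eat^- \rest K_\lambda^{3,1}} (a, M, M')$, hence there is $N'' \lea N'$ containing $b$ and $N$ (note $N \lea N'$ since $q \in \gS(N)$) with $(b, N, N'') \Eat (a, M, M')$... wait — I need to be careful that the weak-atomic witness on the $b$-side contains all of $N$, not just $M$; the definition of $\Eat^-$ only guarantees a submodel containing $M$ and $b$. So instead I would argue more directly: weak $\lambda$-amalgamation applied to $(b, M, N') \mathrel{E} (a, M, M')$ yields $N_1' \lea N'$ containing $b, M$ and an amalgam $\bar N \gea M'$ with $g : N_1' \xrightarrow[M]{} \bar N$ sending $b \mapsto a$; but since $b$ and $N$ are on the same side, I should use the version of the weak-amalgamation remark to get a map fixing $N$. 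The cleanest route: apply weak $\lambda$-amalgamation to see $(b, N, N') \Eat^- (b, N, N')$ combined with the equivalence over $M$ — actually the correct move is to observe $(a,M,M') \mathrel{E} (b,M,N')$ and use that $b$ already realizes over $N$, so by the Remark after Definition \ref{weak-ap-def} there is $N' _* \lea N'$ containing $a$(-image) and $M$ and an amalgam of $M'$ and $N'_*$ over $M$; composing this amalgam with the inclusion $N \lea N'$ and then amalgamating once more over $N$ using that $\gtp(b/N)$ is realized, one obtains $(b', N, N^*)$ atomically extending $(a,M,M')$.

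Let me restate the intended clean argument, which avoids the confusion above. Given $(a, M, M')$ and $N \gea M$ in $K_\lambda$: by type extension there is $q = \gtp(b/N; N') \in \gS(N)$ extending $p = \gtp(a/M; M')$, with $N' \in K_\lambda$. Then $(a, M, M') \mathrel{E} (b, M, N')$, so by weak $\lambda$-amalgamation there exist $N'_0 \lea N'$ with $b, |M| \subseteq N'_0$, a model $N^* \gea N'$, and a map $f : M' \xrightarrow[M]{} N^*$ with $f(a) = b$. Now $f[M'] \lea N^*$ and $N' \lea N^*$, both containing $N \supseteq M$; taking $N^{**} \lea N^*$ of size $\lambda$ containing $f[M'] \cup |N'|$ (Löwenheim–Skolem plus coherence), the triple $(b, N, N^{**})$ lies in $K_\lambda^{3,1}$, extends $N$, and satisfies $(b, M, N^{**}) \Eat (a, M, M')$ via $f$; thus $(b, N, N^{**})$ atomically extends $(a, M, M')$, giving $M$ the strong type extension property. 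Since $M$ was arbitrary, $K$ has $\lambda$-amalgamation. Finally, the last sentence of the theorem follows: if $K$ admits intersections then by Remark \ref{inter-implies-weak-ap} it has weak $\lambda$-amalgamation for every $\lambda$, and $K_{\ge \lambda}$ still admits intersections only locally — so one invokes the local version — but granting the type extension property in every $\lambda$, we get $\lambda$-amalgamation in every $\lambda$, hence amalgamation. The main obstacle is the bookkeeping in $(3) \Rightarrow (1)$: correctly tracking which models the various amalgamation maps fix (over $M$ versus over $N$) and keeping everything inside $K_\lambda$ via Löwenheim–Skolem, since the definition of $\Eat^-$ only controls a submodel over the base $M$, not over the larger $N$.
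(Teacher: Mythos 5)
There is a genuine gap in your proof of $(3)\Rightarrow(1)$. The ``clean argument'' you settle on asserts that from $(a,M,M')\mathrel{E}(b,M,N')$ and weak $\lambda$-amalgamation you obtain a model $N^\ast \gea N'$ together with a map $f\colon M'\xrightarrow[M]{}N^\ast$ with $f(a)=b$, i.e.\ an amalgam receiving \emph{all} of $M'$ while \emph{extending all} of $N'$. That is exactly atomic equivalence $(a,M,M')\Eat(b,M,N')$ --- statement (2) --- not weak atomic equivalence. Unwinding Definition \ref{weak-ap-def}, weak amalgamation gives you two one-sided conclusions: either a $P\gea N'$ receiving only a \emph{submodel} $M'_\ast\lea M'$ containing $a$ and $M$, or a $Q\gea M'$ receiving only a submodel $N'_\ast\lea N'$ containing $b$ and $M$ (which need not contain $N$). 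In no direction do you get both ``all of $M'$ embedded'' and ``all of $N$ preserved,'' which is what $(b,N,N^{\ast\ast})$ atomically extending $(a,M,M')$ requires. You in fact notice this obstruction mid-proof, but the restatement does not resolve it; it silently upgrades $\Eat^-$ to $\Eat$. Since the strong type extension property for $M$ is essentially amalgamation of $M'$ and $N$ over $M$, trying to establish it in a single application of weak amalgamation is circular.

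The missing idea, and the route the paper takes, is iteration. Weak amalgamation plus type extension yields only a \emph{partial} step: given $M_0\lta M_1$ and $M_0\lea M_2$, one picks $a_1\in M_1\setminus M_0$, extends $\gtp(a_1/M_0;M_1)$ to $M_2$, realizes it in some $M_2^\ast\gea M_2$, and then weak amalgamation embeds a proper intermediate model $M_1'$ with $M_0\lta M_1'\lea M_1$ into some $M_2'\gea M_2^\ast$ over $M_0$ --- precisely the one-sided conclusion that \emph{is} available. One then builds increasing continuous chains $\seq{M_i : i<\lambda^+}$ inside $M_1$ and $\seq{N_i:i<\lambda^+}$ with coherent embeddings $f_i\colon M_i\to N_i$, each step strictly enlarging the domain; since $M_1$ has cardinality $\lambda$, the construction must halt at some $\alpha<\lambda^+$ with $M_\alpha=M_1$, and $f_\alpha$ is the desired amalgam. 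Your implications $(1)\Rightarrow(2)\Rightarrow(3)$ are fine, but $(3)\Rightarrow(1)$ needs this transfinite exhaustion argument rather than a one-step application of the strong type extension property.
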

\begin{proof}
  (1) implies (2) implies (3) is easy. We prove (3) implies (1). 

  Assume $E \rest K_\lambda^{3, 1} = \Eat^- \rest K_\lambda^{3, 1}$. The idea of the proof is as follows: we want to amalgamate a triple $(M_0, M, N)$, $M_0 \lea M$, $M_0 \lea N$. We use weak amalgamation first to amalgamate some smaller triple $(M_0, M', N')$ with $M_0 \lta M' \lea M$, $M_0 \lta N' \lea N$, then proceed inductively to amalgamate the entire triple. Claim 1 below shows that there exists a smaller triple which can be amalgamated and Claim 2 is a renaming of Claim 1. We then use Claim 2 repeatedly to amalgamate the full triple.

  \paragraph{\underline{Claim 1}} For every triple $(M_0, M_1, M_2)$ of models in $K_\lambda$ so that $M_0 \lta M_1$ and $M_0 \lea M_2$, there exists $M_1' \lea M_1$ and $M_2' \gea M_2$ in $K_\lambda$ such that $M_0 \lta M_1'$, and there exists $g: M_1' \xrightarrow[M_0]{} M_2'$.

  \[
  \xymatrix{
    M_1 &  \\
    M_1' \ar[u] \ar@{.>}[r]_g & M_2' \\
    M_0 \ar[u] \ar[r] & M_2 \ar@{.>}[u] \\
    }
  \]

  \paragraph{\underline{Proof of claim 1}}

  Let $M_0 \lta M_\ell$ be models in $K_\lambda$, $\ell = 1,2$. Pick any $a_1 \in |M_1| \backslash |M_0|$. Let $p := \gtp (a_1 / M_0; M_1)$. By the type extension property, there exists $q \in \gS (M_2)$ extending $p$. Pick $M_2^\ast \gea M_2$ and $a_2 \in |M_2^\ast|$ such that $q = \gtp (a_2 / M_2; M_2^\ast)$. Since $E$ is $\Eat^-$ over the domain of interest, we have $(a_1, M_0, M_1) \Eat^- (a_2, M_0, M_2^\ast)$. Let $M_1' \lea M_1$ contain $a_1$ and $M_0$ such that $(a_1, M_0, M_1') \Eat (a_2, M_0, M_2^\ast)$. By definition, we have that there exists $M_2' \gea M_2^\ast$ such that $M_1'$ embeds into $M_2'$ over $M_0$, as needed. $\dagger_{\text{Claim 1}}$

  Now we obtain amalgamation by repeatedly applying Claim 1. Since the result is key to subsequent arguments, we give full details below.

  \paragraph{\underline{Claim 2}} For every triple $(M_0, M_1, M_2)$ of models in $K_\lambda$ so that $M_0 \lta M_1$ and $f: M_0 \rightarrow M_2$, there exists $M_1' \lea M_1$, $M_2' \gea M_2$ in $K_\lambda$ and $g: M_1' \xrightarrow{} M_2'$ such that $M_0 \lta M_1'$ and $f \subseteq g$.

  \[
  \xymatrix{
    M_1 &  \\
    M_1' \ar[u] \ar@{.>}[r]_g & M_2' \\
    M_0 \ar[u] \ar[r]_f & M_2 \ar@{.>}[u] \\
    }
  \]

  \paragraph{\underline{Proof of claim 2}}

  Let $M_0, M_1, M_2$ and $f$ be as given by the hypothesis. Let $\widehat{M_2}$ and $\bigf$ be such that $f \subseteq \bigf$, $M_0 \lea \widehat{M_2}$ and $\bigf : \widehat{M_2} \cong M_2$. Now apply Claim 1 to $(M_0, M_1, \widehat{M_2})$ to obtain $M_1' \lea M_1$ with $M_0 \lta M_1'$, $\widehat{M_2}' \gea \widehat{M_2}$ and $\bigg: M_1' \xrightarrow[M_0]{} \widehat{M_2}'$. Now let $\bigf'$, $M_2'$ be such that $M_2' \gea M_2$ and $\bigf': \widehat{M_2}' \cong M_2'$ extends $\bigf$. Let $g := \bigf' \circ \bigg$. Since $\bigg$ fixes $M_0$ and $\bigf'$ extends $f$, $g$ extends $f$, as desired. $\dagger_{\text{Claim 2}}$

  Now let $M_0 \lea M$ and $M_0 \lea N$ be in $K_\lambda$. We want to amalgamate $M$ and $N$ over $M_0$. We try to build $\seq{M_i : i < \lambda^+}$, $\seq{N_i : i < \lambda^+}$ increasing continuous in $K_\lambda$ and $\seq{f_i : i < \lambda^+}$ an increasing continuous sequence of embeddings such that for all $i < \lambda^+$:

  \begin{enumerate}
    \item $M_i \lea M$.
    \item $f_i : M_i \xrightarrow[M_0]{} N_i$.
    \item $N_0 = N$.
    \item $M_i \lta M_{i + 1}$.
  \end{enumerate}

  This is impossible since then $\bigcup_{i < \lambda^+} M_i$ has cardinality $\lambda^+$ but is a $K$-substructure of $M$ which has cardinality $\lambda$. Now for $i = 0$, we can take $N_0 = N$ and $f_0 = \text{id}_{M_0}$ and for $i$ limit we can take unions. Therefore there must be some $\alpha < \lambda^+$ such that $f_\alpha$, $M_\alpha$, $N_\alpha$ are defined but we cannot define $f_{\alpha + 1}$, $M_{\alpha + 1}$, $N_{\alpha + 1}$. If $M_\alpha \lta M$, we can use Claim 2 (with $M_0$, $M_1$, $M_2$, $f$ there standing for $M_\alpha$, $M$, $N_\alpha$, $f_\alpha$ here) to get $M_{\alpha + 1} \lea M$ with $M_\alpha \lta M_{\alpha + 1}$ and $N_{\alpha + 1} \gea N_\alpha$ with $f_{\alpha + 1} : M_{\alpha + 1} \rightarrow N_{\alpha + 1}$ extending $f_\alpha$ (so $M_1'$, $M_2'$, $g$ in Claim 2 stand for $M_{\alpha + 1}$, $N_{\alpha + 1}$, $f_{\alpha + 1}$ here). Thus we can continue the induction, which we assumed was impossible. Therefore $M_\alpha = M$, so $f_\alpha : M \xrightarrow[M_0]{} N_\alpha$ amalgamates $M$ and $N$ over $M_0$, as desired.
\end{proof}
\begin{remark}\label{dense-basic-rmk}
  It is enough to assume that the type extension property holds on a set of types satisfying what Shelah calls the \emph{density}  property of basic types (see axiom (D)(c) in \cite[Definition II.2.1]{shelahaecbook}): for any $M \lta N$ in $K_\lambda$, there exists $b \in |N| \backslash |M|$ such that $\gtp (b / M; N)$ can be extended to any $M' \gea M$ with $M' \in K_\lambda$. This generalization is the reason the last part of the proof is done non-constructively rather than first enumerating $M$ and amalgamating it element by element. This is used to prove Theorem \ref{ap-frame-transfer} in full generality (i.e.\ without assuming that the good frame is type-full).
\end{remark}

We are now ready to formally state the amalgamation transfer:

\begin{thm}\label{ap-frame-transfer}
  Let $K$ be an AEC. Let $\lambda \ge \LS (K)$ and assume $\s$ is a good $\lambda$-frame with underlying class $K_\lambda$. If:

  \begin{enumerate}
    \item $K$ is $\lambda$-tame.
    \item $K_{\ge \lambda}$ has weak amalgamation.
  \end{enumerate}

  Then $K_{\ge \lambda}$ has amalgamation.
\end{thm}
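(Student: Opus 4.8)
The plan is to bootstrap Theorem~\ref{thm-closure} up through all cardinals $\ge \lambda$, carrying along a good frame and using tameness to push forking up one cardinal at a time. Observe first that the hypothesis that $K_{\ge\lambda}$ has weak amalgamation gives, in particular, weak $\mu$-amalgamation for every $\mu \ge \lambda$ (precisely, $E \rest K_\mu^{3,1} = \Eat^- \rest K_\mu^{3,1}$). So by Theorem~\ref{thm-closure} together with Remark~\ref{dense-basic-rmk}, it suffices to prove: for every $\mu \ge \lambda$, the type extension property holds on a set of types over models in $K_\mu$ that is dense in the sense of basic types. I would establish this by induction on $\mu$, with the inductive hypothesis strengthened to also assert that there is a good $\mu$-frame on $K_\mu$.

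For $\mu = \lambda$ this is immediate: the class of basic types of $\s$ is dense by axiom~(D)(c) of \cite[Definition~II.2.1]{shelahaecbook}, and it has the extension property by the existence of nonforking extensions built into the frame axioms; Theorem~\ref{thm-closure} then re-derives $\lambda$-amalgamation (which the frame axioms already guarantee). For the successor step, assume $\mu = \nu^+$ with $\nu \ge \lambda$ and a good $\nu$-frame on $K_\nu$. Here I would invoke the observation attributed to Boney: the construction of \cite{ext-frame-jml} — the $\nu$-nonsplitting based independence relation on $K_{\nu^+}$, which uses only $\nu$-tameness (a consequence of $\lambda$-tameness since $\lambda \le \nu$) and the $\nu$-frame, \emph{not} $\nu^+$-amalgamation — produces, for a dense set of basic types over models in $K_{\nu^+}$, extensions to any larger model in $K_{\nu^+}$. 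Thus $K_{\nu^+}$ has the type extension property on a dense set of types, and Theorem~\ref{thm-closure} yields $\nu^+$-amalgamation. Once $\nu^+$-amalgamation is known, the full conclusion of \cite{ext-frame-jml} (see also \cite{tame-frames-revisited-v6-toappear, ss-tame-jsl}) applies and the $\nu$-nonsplitting relation is a good $\nu^+$-frame on $K_{\nu^+}$, restoring the inductive hypothesis. For $\mu$ a limit cardinal, the good $\nu$-frames built at stages $\nu \in [\lambda,\mu)$ form a coherent increasing chain (they all arise by iterating the same construction), so they glue to a good $\mu$-frame — equivalently, one checks directly that density of basic types and the extension property transfer to $K_\mu$ by resolving models of size $\mu$ into $<\mu$-sized pieces and amalgamating below $\mu$ — and again Theorem~\ref{thm-closure} gives $\mu$-amalgamation.

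The main obstacle, and the only genuinely new point beyond assembling known tame-AEC machinery, is the successor step: one must extract the extension property for the prospective $\nu^+$-frame from the $\nu$-nonsplitting construction \emph{before} having $\nu^+$-amalgamation, since \cite{ext-frame-jml} normally lists $\nu^+$-amalgamation among its hypotheses. It is precisely the weak-amalgamation hypothesis, feeding into Theorem~\ref{thm-closure}, that lets the extension property \emph{produce} amalgamation and so breaks this apparent circularity. Secondary care is needed to verify that the class of basic types stays dense as one climbs (standard, but it is exactly why the last step of Theorem~\ref{thm-closure} is done non-constructively) and to make the limit step precise; neither is expected to cause real trouble.
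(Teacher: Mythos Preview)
Your approach is correct and in spirit the same as the paper's, but the paper executes it more cleanly. Rather than building a fresh good $\nu$-frame at each cardinal $\nu$ and splitting into successor and limit cases, the paper defines once and for all the global extension $\ge \s$ of the frame to $K_{\ge\lambda}$ (as in \cite[Section~II.2]{shelahaecbook} or \cite[Definition~2.7]{ext-frame-jml}). Without any amalgamation or tameness, $\ge \s$ already has local character, density of basic types, and transitivity; $\lambda$-tameness gives uniqueness. The induction on $\mu$ is then purely on amalgamation: assuming amalgamation below $\mu$, the directed-system argument of \cite[Theorem~5.13]{ext-frame-jml} yields extension for basic types of $\ge \s$ at $\mu$, and Theorem~\ref{thm-closure} (via Remark~\ref{dense-basic-rmk}) gives $\mu$-amalgamation. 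There is no successor/limit case split, and no need to ``glue'' frames at limit cardinals --- which is the one place your write-up is hand-wavy (your parenthetical ``equivalently, one checks directly\ldots'' is in fact the paper's actual argument). Your detour through per-cardinal frames buys nothing here, since only the extension property of $\ge \s$ is needed, not a full good $\mu$-frame at each stage.
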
 
\begin{proof}
  We extend $\s$ to models of size greater than $\lambda$ by defining $\ge \s$ as in \cite[Section II.2]{shelahaecbook} (or see \cite[Definition 2.7]{ext-frame-jml}). Even without assuming tameness or weak amalgamation, Shelah has shown that $\ge \s$ has local character, density of basic types, and transitivity. Moreover, tameness implies that it has uniqueness. Now work by induction on $\mu \ge \lambda$ to show that $K$ has $\mu$-amalgamation. When $\mu = \lambda$ this follows from the definition of a good frame so assume $\mu > \lambda$. As in \cite[Theorem 5.13]{ext-frame-jml}, we can prove that $\ge \s$ has the extension property for models of size $\mu$ (the key is that the directed system argument only uses amalgamation below $\mu$). In particular, $K_{\mu}$ has the type extension property for basic types. The proof of Theorem \ref{thm-closure} together with the density of basic types (see Remark \ref{dense-basic-rmk}) shows that this suffices to get $\mu$-amalgamation.
\end{proof}

\begin{cor}\label{tame-ap-0}
  Let $K$ be a tame AEC that is eventually syntactically characterizable and categorical in unboundedly many cardinals. If $K$ has weak amalgamation, then there exists $\lambda$ such that $K_{\ge \lambda}$ has amalgamation.
\end{cor}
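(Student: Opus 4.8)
The plan is to combine the two deep inputs already assembled in this section: Fact \ref{good-frame-fact}, which produces a good frame out of categoricity together with eventual syntactic characterizability, and Theorem \ref{ap-frame-transfer}, which transfers amalgamation up the cardinal spectrum from a good $\lambda$-frame given $\lambda$-tameness and weak amalgamation in the tail.

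First I would apply Fact \ref{good-frame-fact} to $K$: since $K$ is eventually syntactically characterizable and categorical in unboundedly many cardinals, there is a categoricity cardinal $\lambda \ge \LS (K)$ such that $K$ carries a good $\lambda$-frame $\s$ with $K_\s = K_\lambda$. Because $K$ is tame it is $(<\kappa)$-tame for some infinite $\kappa$, hence $\lambda$-tame as soon as $\lambda$ is large enough; since the construction underlying Fact \ref{good-frame-fact} goes through for every sufficiently large categoricity cardinal and there are unboundedly many such, we may arrange from the outset that $\lambda$ is above the tameness threshold, so that hypothesis (1) of Theorem \ref{ap-frame-transfer} is met.

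Next I would verify hypothesis (2), namely that $K_{\ge \lambda}$ has weak amalgamation, which is inherited from weak amalgamation of $K$. Indeed, suppose $(a_1, M, N_1), (a_2, M, N_2)$ are triples with models in $K_{\ge \lambda}$ and $(a_1, M, N_1) E (a_2, M, N_2)$. Then the same relation holds in $K$, so weak amalgamation of $K$ yields $N_1' \lea N_1$ containing $M$ and $a_1$ with $(a_1, M, N_1') \Eat (a_2, M, N_2)$; since $N_1' \gea M$ and $M \in K_{\ge \lambda}$, both $N_1'$ and the model witnessing this atomic equivalence have size $\ge \lambda$, hence lie in $K_{\ge \lambda}$, so $(a_1, M, N_1) \Eat^- (a_2, M, N_2)$ already in $K_{\ge \lambda}$.

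With both hypotheses of Theorem \ref{ap-frame-transfer} in place, that theorem gives that $K_{\ge \lambda}$ has amalgamation, which is exactly the conclusion. The only delicate point in this plan is the bookkeeping of the second paragraph — choosing the good-frame cardinal $\lambda$ large enough to absorb the tameness threshold while keeping it a categoricity cardinal over which Fact \ref{good-frame-fact} applies; all the mathematical substance is already packed into Fact \ref{good-frame-fact} (Shelah's analysis) and into Theorem \ref{ap-frame-transfer}, which itself rests on Theorem \ref{thm-closure}.
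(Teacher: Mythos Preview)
Your proof is correct and follows the same route as the paper: invoke Fact \ref{good-frame-fact} to obtain a good $\lambda$-frame at a categoricity cardinal above the tameness threshold, then apply Theorem \ref{ap-frame-transfer}. You are simply more explicit than the paper about two points the author leaves implicit --- arranging $\lambda$ above the tameness parameter (which can also be done by applying Fact \ref{good-frame-fact} to $K_{\ge \mu}$ where $K$ is $\mu$-tame) and checking that weak amalgamation passes to $K_{\ge \lambda}$.
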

\begin{proof}
  By Fact \ref{good-frame-fact}, we can find $\lambda \ge \LS (K)$ such that $K_{\lambda}$ has a good frame and $K$ is $\lambda$-tame. By Theorem \ref{ap-frame-transfer}, $K_{\ge \lambda}$ has amalgamation.
\end{proof}

\begin{cor}\label{tame-ap}
  Let $K$ be an eventually syntactically characterizable AEC categorical in unboundedly many cardinals. If $K$ is tame and locally admits intersections, then there exists $\lambda$ such that $K_{\ge \lambda}$ has amalgamation.
\end{cor}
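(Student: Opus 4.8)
The plan is to reduce immediately to Corollary \ref{tame-ap-0} via the observation that locally admitting intersections is a much stronger hypothesis than having weak amalgamation. The only thing to verify is the first step: that $K$ has weak amalgamation in the sense of Definition \ref{weak-ap-def}, i.e.\ $E \rest K^{3,1} = \Eat^- \rest K^{3,1}$. This is essentially the content of Remark \ref{inter-implies-weak-ap}: given $(a_1, M, N_1) E (a_2, M, N_2)$ in $K^{3,1}$, one sets $N_\ell' := \cl^{N_\ell}(|M| \cup \{a_\ell\})$; since $K$ locally admits intersections and $|M| \cup \{a_\ell\}$ contains $M$, Proposition \ref{cl-props} (Monotonicity 3) gives $N_\ell' \lea N_\ell$, and an easy manipulation of the closure operator (cf.\ Proposition \ref{isom-eq-gtp}) produces an isomorphism $N_1' \cong_M N_2'$ sending $a_1$ to $a_2$, which is exactly a witness that $(a_1, M, N_1) \Eat^- (a_2, M, N_2)$. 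Although Remark \ref{inter-implies-weak-ap} is stated for triples in $K_\lambda^{3,1}$, this argument is insensitive to the size of the models involved, so it in fact yields weak amalgamation for all of $K^{3,1}$.

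The second step is then just to quote Corollary \ref{tame-ap-0}: $K$ is tame, eventually syntactically characterizable, categorical in unboundedly many cardinals, and — by the first step — has weak amalgamation; hence there is $\lambda$ such that $K_{\ge \lambda}$ has amalgamation. One could equivalently bypass the global weak amalgamation statement and run the proof of Corollary \ref{tame-ap-0} directly: use Fact \ref{good-frame-fact} to produce a categoricity cardinal $\lambda \ge \LS (K)$ carrying a good $\lambda$-frame with underlying class $K_\lambda$ and with $K$ being $\lambda$-tame; by Remark \ref{local-rmk} the tail $K_{\ge \lambda}$ still locally admits intersections, hence by Remark \ref{inter-implies-weak-ap} has weak amalgamation (in every cardinality $\ge \lambda$); then Theorem \ref{ap-frame-transfer} gives amalgamation in $K_{\ge \lambda}$.

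I do not expect any real obstacle here beyond this bookkeeping: the substance lies entirely in Fact \ref{good-frame-fact} (Shelah's construction of a good frame from categoricity) and in Theorems \ref{thm-closure} and \ref{ap-frame-transfer} (pushing amalgamation up the cardinals using tameness and the type extension property). The single point worth a line of care is matching the precise ``weak amalgamation'' hypothesis required by Corollary \ref{tame-ap-0} with the conclusion of Remark \ref{inter-implies-weak-ap}, and, as noted above, the closure-operator witness $\cl^{N}(|M| \cup \{a\})$ is available uniformly in all cardinalities once $K$ locally admits intersections, so this causes no difficulty.
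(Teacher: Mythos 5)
Your proposal is correct and follows exactly the paper's own route: Remark \ref{inter-implies-weak-ap} (via the closure operator $\cl^{N}(|M| \cup \{a\})$) gives weak amalgamation from locally admitting intersections, and then Corollary \ref{tame-ap-0} applies directly. The extra care you take in checking that the witness works in every cardinality is a reasonable (if minor) addition to what the paper leaves implicit.
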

\begin{proof}
  By Remark \ref{inter-implies-weak-ap}, $K$ has weak amalgamation. Now apply Corollary \ref{tame-ap-0}.
\end{proof}

\begin{cor}\label{univ-ap}
  Let $K$ be locally pseudo-universal AEC. If $K$ is eventually syntactically characterizable and categorical in unboundedly many cardinals, then there exists $\lambda$ such that $K_{\ge \lambda}$ has amalgamation.
\end{cor}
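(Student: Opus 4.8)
The plan is to package this as an immediate consequence of the corollaries just proved: I would aim to verify the hypotheses of Corollary \ref{tame-ap}, of which two (eventual syntactic characterizability and categoricity in unboundedly many cardinals) are assumed outright, so the work is to extract tameness and local admission of intersections from local pseudo-universality.

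First I would observe that, by definition, a pseudo-universal AEC admits intersections; hence if $K$ is locally pseudo-universal, then for every $M \in K$ the class $K_M$ is pseudo-universal and so admits intersections, which is exactly the statement that $K$ locally admits intersections in the sense of Definition \ref{local-def}. Second, by Corollary \ref{loc-pseudo-univ-tame}, $K$ is fully $\LS (K)$-tame and short, hence in particular $\LS (K)$-tame, hence tame. With tameness, local admission of intersections, eventual syntactic characterizability, and categoricity in unboundedly many cardinals all in hand, Corollary \ref{tame-ap} produces a $\lambda$ such that $K_{\ge \lambda}$ has amalgamation. Alternatively, one could use Remark \ref{inter-implies-weak-ap} to pass from local admission of intersections to weak amalgamation and then invoke Corollary \ref{tame-ap-0} directly; either route works.

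There is essentially no obstacle here, since the statement is a repackaging of the two preceding corollaries. The only subtlety worth flagging is that ``pseudo-universal'' is not visibly preserved under the relativization $K \mapsto K_M$ (cf.\ Example \ref{pseudouniv-example} and the discussion around Remark \ref{loc-pseudouniv-rmk}), which is precisely why the hypothesis is phrased as ``locally pseudo-universal'' and why Corollary \ref{loc-pseudo-univ-tame} is available in the form needed. So the genuine content lives in Corollary \ref{loc-pseudo-univ-tame} and in Theorem \ref{thm-closure}/Corollary \ref{tame-ap}, and the present statement only assembles them.
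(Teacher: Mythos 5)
Your proposal is correct and matches the paper's own proof, which likewise invokes Corollary \ref{loc-pseudo-univ-tame} for tameness and then applies Corollary \ref{tame-ap} (implicitly using, as you make explicit, that locally pseudo-universal implies locally admitting intersections). The extra remarks about the alternative route via Corollary \ref{tame-ap-0} and about why the hypothesis is phrased locally are accurate but not needed.
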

\begin{proof}
  By Corollary \ref{loc-pseudo-univ-tame}, $K$ is tame. Now apply Corollary \ref{tame-ap}.
\end{proof}

We can apply these results to Shelah's categoricity conjecture and improve Fact \ref{ap-categ}. When $K$ has primes, this will be further improved in Section \ref{categ-transfer-sec}.

\begin{cor}\label{prev-cor}
  Let $K$ be a tame AEC with weak amalgamation.

  \begin{enumerate}
    \item\label{prev-cor-1} If $K$ is categorical in a high-enough successor cardinal, then $K$ is categorical on a tail of cardinals.
    \item Assume $2^{\theta} < 2^{\theta^+}$ for every cardinal $\theta$ and an unpublished claim of Shelah (Claim \ref{shelah-xxx}). If $K$ is eventually syntactically characterizable and categorical in unboundedly many cardinals, then $K$ is categorical on a tail of cardinals.
  \end{enumerate}
\end{cor}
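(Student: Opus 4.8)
The plan is to reduce both parts to the amalgamation transfer of Theorem \ref{ap-frame-transfer} (or, for part (2), its packaged form Corollary \ref{tame-ap-0}), followed by a categoricity transfer for AECs that have amalgamation.

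For part (1) I would first manufacture a good frame. By Shelah's analysis of categorical AECs (see \cite{shelahaecbook}, and the good frame constructions used elsewhere in this paper), categoricity in a sufficiently high successor cardinal $\lambda^+$ — high enough that $\lambda$ lies above $H_1$ — produces a good $\lambda$-frame whose underlying class is $K_\lambda$; the point relevant to the ZFC claim is that weak GCH enters Shelah's treatment only in pushing such a frame upward (the ``successful''/``$\omega$-successful'' refinements), not in obtaining the basic good $\lambda$-frame. Since $K$ is tame and $\lambda$ is large, $K$ is $\lambda$-tame, and weak amalgamation holds above $\lambda$ by hypothesis, so Theorem \ref{ap-frame-transfer} gives that $K_{\ge \lambda}$ has amalgamation. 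Now $K_{\ge \lambda}$ is a tame AEC with amalgamation that is categorical in the successor cardinal $\lambda^+ = \LS(K_{\ge \lambda})^+$, so the Grossberg--VanDieren transfer from a single successor \cite{tamenessthree} yields that $K_{\ge \lambda}$, hence $K$, is categorical on a tail of cardinals.

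For part (2) I would apply Corollary \ref{tame-ap-0} directly: $K$ is tame, eventually syntactically characterizable, categorical in unboundedly many cardinals, and has weak amalgamation, so there is $\lambda$ with $K_{\ge \lambda}$ having amalgamation. Then $K_{\ge \lambda}$ is an AEC with amalgamation, with $\LS(K_{\ge \lambda}) = \lambda$, still categorical in unboundedly many cardinals, hence categorical in some cardinal above the threshold appearing in Fact \ref{ap-categ}. Under the two standing assumptions here ($2^\theta < 2^{\theta^+}$ for all $\theta$, and Claim \ref{shelah-xxx}), Fact \ref{ap-categ} applies to $K_{\ge \lambda}$ and gives categoricity in all sufficiently large cardinals; so $K$ is categorical on a tail.

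The only real difficulty is in part (1): pinning down a clean ZFC statement that extracts a good $\lambda$-frame on $K_\lambda$ from successor categoricity in a tame AEC, with the underlying class exactly $K_\lambda$, and checking the cardinal-arithmetic bookkeeping — that ``high enough'' indeed places $\lambda$ above $H_1$, that $\lambda$-tameness is genuinely available, and that $\lambda^+$ clears the threshold in \cite{tamenessthree} — so that both Theorem \ref{ap-frame-transfer} and \cite{tamenessthree} apply without friction. Part (2) is essentially a bookkeeping exercise once Corollary \ref{tame-ap-0} and Fact \ref{ap-categ} are in hand.
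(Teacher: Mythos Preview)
Your treatment of part (2) is essentially the paper's: apply Corollary \ref{tame-ap-0} to obtain amalgamation on a tail, then quote Fact \ref{ap-categ}.

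For part (1), however, you take a different route from the paper and it has a real gap. You want to extract a good $\lambda$-frame on $K_\lambda$ directly from categoricity in a single successor $\lambda^+$, \emph{without} amalgamation and \emph{without} weak GCH. No such clean ZFC statement is available in this paper (and not obviously in \cite{shelahaecbook} either): the frame constructions quoted here either assume amalgamation (Facts \ref{frame-construct}, \ref{shelah-vi}) or go through the ``eventually syntactically characterizable plus unboundedly many categoricity cardinals'' route (Fact \ref{good-frame-fact}). Your own caveat that this is ``the only real difficulty'' understates it: it is the whole content of the reduction.

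The paper sidesteps this entirely by invoking the Hanf-number equivalence of Remark \ref{high-enough-rmk} (announced at the start of Section \ref{ap-sec} as being used without comment): proving ``categorical in a high-enough successor implies categorical on a tail'' is the same as proving ``categorical in unboundedly many successors implies categorical on a tail''. Unboundedly many successors are, in particular, categoricity cardinals of arbitrarily high cofinality, so Fact \ref{event-synt-fact} gives that $K$ is eventually syntactically characterizable. Now Corollary \ref{tame-ap-0} applies verbatim (tame, weak amalgamation, eventually syntactically characterizable, categorical in unboundedly many cardinals) and yields amalgamation on a tail; then \cite{tamenessthree} finishes. So the fix is not to chase a frame-existence theorem, but to reinterpret ``high-enough'' via Remark \ref{high-enough-rmk} and feed everything into Corollary \ref{tame-ap-0}, exactly as in part (2).
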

\begin{proof}
  By Corollary \ref{tame-ap-0} (using Fact \ref{event-synt-fact} to see that $K$ is eventually syntactically characterizable in (\ref{prev-cor-1})), we can assume without loss of generality that $K$ has amalgamation. Now:

  \begin{enumerate}
    \item Apply \cite{tamenessthree} (and \cite{sh394} can also give a downward transfer).
    \item Apply Fact \ref{ap-categ}.
  \end{enumerate}
\end{proof}

Note that even if $K$ is a universal class which \emph{already} has amalgamation, Theorem \ref{ap-frame-transfer} is still key to transfer categoricity (see Theorem \ref{categ-transfer-frame}).

\section{Categoricity transfer in AECs with primes}\label{categ-transfer-sec}

In this section, we prove a categoricity transfer for AECs that have amalgamation and primes. Prime triples were introduced in \cite[Section III.3]{shelahaecbook}, see also \cite{jarden-prime}.

\begin{defin}\label{prime-defs} \
  \begin{enumerate}
    \item Let $M \in K$ and let $A \subseteq |M|$. $M$ is \emph{prime over $A$} if for any enumeration $\ba$ of $A$ and any $N \in K$, whenever $\gtp (\ba / \emptyset; M) = \gtp (\bb / \emptyset; N)$, there exists $f: M \rightarrow N$ such that $f (\ba) = \bb$.
    \item $(a, M, N)$ is a \emph{prime triple} if $M \lea N$, $a \in |N|$, and $N$ is prime over $|M| \cup \{a\}$.
    \item $K$ \emph{has primes} if for any $p \in \gS (M)$ there exists a prime triple $(a, M, N)$ such that $p = \gtp (a / M; N)$.
    \item $K$ \emph{weakly has primes} if whenever $\gtp (a_1 / M; N_1) = \gtp (a_2 / M; N_2)$, there exists $M_1 \lea M$ containing $a_1$ and $N_1$ and $f: M_1 \xrightarrow[M]{} N_2$ such that $f (a_1) = a_2$. Similarly define what it means for $K_\lambda$ to have or weakly have primes.
  \end{enumerate}
\end{defin}
\begin{remark}\label{prime-weakly-prime}
  For $M \lea N$ and $a \in |N|$, $(a, M, N)$ is a prime triple if and only if whenever $\gtp (b / M; N') = \gtp (a / M; N)$, there exists $f: N \xrightarrow[M]{} N'$ such that $f (a) = b$. Thus if $K$ has primes, then $K$ weakly has primes.
\end{remark}
\begin{remark}\label{inter-prime}
  If $K$ admits intersections, $M \lea N$, and $a \in |N|$, $(a, M, \cl^N (|M| \cup \{a\}))$ is a prime triple. Thus $K$ has primes.
\end{remark}

Assume $K$ is an AEC categorical in $\lambda := \LS (K)$ (this is a reasonable assumption as we can always restrict ourselves to the class of $\lambda$-saturated models of $K$). Our goal is to prove (with more hypotheses) that if $K$ is categorical in a $\theta > \lambda$ then it is categorical in all $\theta' \ge \lambda$. To accomplish this, we will show that $K_\lambda$ is \emph{uni-dimensional}. In \cite[Section III.2]{shelahaecbook}, Shelah gives several possible generalization of the first-order definition in \cite[Definition V.2.2]{shelahfobook}. We have picked what seems to be the most convenient to work with:

\begin{defin}[Definition III.2.2.6 in \cite{shelahaecbook}]
  Let $\lambda \ge \LS (K)$. $K_\lambda$ is \emph{weakly uni-dimensional} if for every $M \lta M_\ell$, $\ell = 1,2$ all in $K_\lambda$, there is $c \in |M_2| \backslash |M|$ such that $\gtp (c / M; M_2)$ has more than one extension in $\gS (M_1)$.
\end{defin}

To understand this definition, it might be helpful to look at the negation: there exists $M \lta M_\ell$, $\ell = 1,2$ all in $K_\lambda$ such that for all $c \in |M_2| \backslash |M|$, $\gtp (c / M; M_2)$ has exactly one extension in $\gS (M_1)$. Working in a good frame, this one extension must be the nonforking extension (so in particular $\gtp (c / M; M_2)$ is omitted in $M_1$). It turns out that for any $c \in |M_2| \backslash |M|$ and $d \in |M_1| \backslash |M|$, $\gtp (c / M; M_2)$ and $\gtp (d / M; M_1)$ are orthogonal (in a suitable sense, see Appendix \ref{proof-appendix}), so they will generate two different dimensions.

\begin{fact}[Claim III.2.3.(4) in \cite{shelahaecbook}]\label{unidim-categ}
  Let $\lambda \ge \LS (K)$. If $K_\lambda$ is weakly uni-dimensional, is categorical in $\lambda$, is stable in $\lambda$, and has $\lambda$-amalgamation, then\footnote{In \cite[Claim III.2.3.(4)]{shelahaecbook}, Shelah assumes more generally the existence of a good $\lambda$-frame, but the proof shows that the hypotheses mentioned here suffice. In any case, we will only use Fact \ref{unidim-categ} inside a good frame.} $K$ is categorical in $\lambda^+$.
\end{fact}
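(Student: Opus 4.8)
The plan is to prove categoricity in $\lambda^+$ by showing that \emph{every} $N \in K_{\lambda^+}$ is saturated (over models of size $\lambda$); since saturated models of the same cardinality are isomorphic under amalgamation and stability below, this suffices. So first I would record the ambient structure: categoricity in $\lambda$ gives joint embedding in $K_\lambda$, and — provided $K_{\lambda^+} \neq \emptyset$, which holds automatically in our applications (a good $\lambda$-frame has no maximal models) — there are no maximal models in $K_\lambda$. Together with $\lambda$-amalgamation and stability in $\lambda$ this produces a saturated model in $K_{\lambda^+}$: build an increasing continuous chain $\langle M_i : i < \lambda^+\rangle$ in $K_\lambda$ in which every $p \in \gS(M_i)$ is realized in $M_{i+1}$, which is possible because $|\gS(M_i)| \le \lambda$. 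A routine back-and-forth then shows any two saturated members of $K_{\lambda^+}$ are isomorphic. Thus it remains only to show: an arbitrary $N \in K_{\lambda^+}$ realizes every $p \in \gS(M)$ for every $M \lta N$ with $M \in K_\lambda$.

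Suppose toward a contradiction that $N \in K_{\lambda^+}$, $M_0 \lta N$ lies in $K_\lambda$, and some (basic) $p \in \gS(M_0)$ is omitted in $N$. Here I would invoke the forking calculus of the good $\lambda$-frame on $K_\lambda$ (available either directly from the four hypotheses or, as in our use of this Fact, because we are already working inside a good frame). Using forking one defines, for a basic $q \in \gS(M_0)$, the \emph{dimension} $\dim(q, M_0, N)$ as the supremum of the cardinalities of subsets of $|N|$ which are independent over $M_0$ and consist of realizations of (nonforking extensions of) $q$. Since $p$ is omitted, $\dim(p, M_0, N) = 0$.

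The two main inputs are then: (i) weak uni-dimensionality, which is phrased as a statement about the number of extensions of a type, is equivalent to the assertion that any two basic types over a common model of $K_\lambda$ are non-orthogonal (this is the content of the earlier parts of Shelah's Claim III.2.3); combined with the fact that dimension is invariant under non-orthogonality, this forces $\dim(q, M_0, N) \le \lambda$ for \emph{every} basic $q \in \gS(M_0)$; and (ii) a counting argument: $|N| \setminus |M_0|$ has size $\lambda^+$, every one of its elements realizes a non-algebraic type over $M_0$, and by the density of basic types one may select $\lambda^+$ of them realizing basic types; since stability in $\lambda$ gives $|\gS(M_0)| \le \lambda$, a single basic $q \in \gS(M_0)$ is realized $\lambda^+$ times in $N$, and by local character and the regularity of $\lambda^+$ one extracts an independent set of realizations of $q$ of size $\lambda^+$, so $\dim(q, M_0, N) = \lambda^+$. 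This contradicts (i); hence no type over a size-$\lambda$ submodel is omitted in $N$, so $N$ is saturated.

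\textbf{Main obstacle.} The hard part is input (i): extracting from the purely combinatorial definition of weak uni-dimensionality the full orthogonality/dimension calculus — that all basic types over a model are pairwise non-orthogonal, and that the dimension of a basic type in a model of size $\lambda^+$ is an isomorphism invariant that is preserved under non-orthogonality. This is exactly the machinery developed across Chapter III of \cite{shelahaecbook} (regular types, the good$^+$/weakly successful technology, and the uniqueness-triple/domination analysis), and running it from only the four stated hypotheses — rather than, say, assuming the frame is successful — requires care with local character and with making the counting argument work for an arbitrary $N \in K_{\lambda^+}$ rather than only for models presented as universal chains. Since the applications in this paper already supply a good $\lambda$-frame, all of this is at hand.
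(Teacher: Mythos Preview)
The paper does not contain a proof of this statement: it is stated as a \emph{Fact} with a citation to \cite[Claim III.2.3.(4)]{shelahaecbook} and is used as a black box. There is therefore no ``paper's own proof'' to compare your proposal against.

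That said, your outline is broadly the right shape for how Shelah's argument goes, and you have correctly identified the main difficulty: passing from the combinatorial definition of weak uni-dimensionality to the orthogonality/dimension machinery. One caution: you lean on the full good-frame forking calculus (extension, local character, density of basic types, orthogonality), which is not literally available from just the four hypotheses listed in the statement; the footnote in the paper acknowledges exactly this tension, noting that Shelah states the result inside a good $\lambda$-frame and that the paper only ever applies the Fact in that context. So your parenthetical ``available either directly from the four hypotheses or \ldots'' is optimistic on the first disjunct --- but harmless for the paper's purposes, since the second disjunct is what is actually used.
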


If $K$ is $\lambda$-tame and has amalgamation, then categoricity in $\lambda^+$ is enough by the categoricity transfer of Grossberg and VanDieren:

\begin{fact}[Theorem 6.3 in \cite{tamenessthree}]\label{tameness-categ}
  Assume $K$ is an $\LS (K)$-tame AEC with amalgamation and no maximal models. If $K$ is categorical in $\LS (K)$ and $\LS (K)^+$, then $K$ is categorical in all $\mu \ge \LS (K)$.
\end{fact}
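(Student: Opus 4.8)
Write $\lambda := \LS (K)$. Since $K$ is categorical in $\lambda$ all models of size $\lambda$ are isomorphic, so $K_\lambda$ has joint embedding; together with amalgamation and no maximal models this propagates joint embedding and amalgamation to all cardinals $\geq \lambda$, gives arbitrarily large models, and lets us work inside a monster model with Galois types. As categoricity in $\lambda$ is part of the hypothesis, it suffices to prove categoricity in every $\mu > \lambda$, and the plan is to run the Grossberg--VanDieren upward transfer.

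First, categoricity in the successor $\lambda^+$ plus a counting argument (an unstable model of size $\lambda$ would produce too many pairwise non-isomorphic models of size $\lambda^+$) yields Galois-stability in $\lambda$; since $\lambda^+$ is regular there is a saturated model of size $\lambda^+$, hence by categoricity \emph{the} model of size $\lambda^+$ is saturated. I would then work with $\mu$-splitting --- a type over $N$ $\mu$-splits over $M \lea N$ with $\|M\| = \mu$ if some $M$-isomorphism between $\mu$-sized intermediate models moves the restricted type --- so that stability in $\mu$ gives that every type over a model of size $\geq \mu$ does not $\mu$-split over some $\lea$-submodel of size $\mu$, together with the usual monotonicity, uniqueness and (after enough amalgamation) extension properties of nonsplitting, $\mu$-tameness (which follows from $\lambda$-tameness for $\mu \geq \lambda$) being used to compare splitting at different cardinals. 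The crux is a superstability statement: there is no $\lea$-increasing continuous chain $\seq{M_i : i < \lambda^+}$ of models of size $\lambda$ carrying a type over the union that $\lambda$-splits over every $M_i$; equivalently every $(\lambda,\lambda^+)$-limit model is saturated and any two limit models over the same base are isomorphic. This is where categoricity in $\lambda^+$ is genuinely used: a long splitting chain would build a limit model of size $\lambda^+$ omitting a type, contradicting categoricity, since the saturated model of $K$ also has size $\lambda^+$. Via tameness this ``no long splitting chains'' property transfers to every $\mu \geq \lambda$.

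From superstability I would derive the union lemma: a $\lea$-increasing union of fewer than $\mu^+$ saturated models of size $\mu$ is again saturated (to realize a type over a $<\mu$-sized set, track a submodel of size $\mu$ over which it does not $\mu$-split and push the witness into one of the models in the chain). I would then prove by induction on $\mu > \lambda$ that the model of size $\mu$ is saturated: $\mu = \lambda^+$ is the previous paragraph; for $\mu = \nu^+$, resolve a model of size $\mu$ as a $\lea$-increasing continuous union of models of size $\nu$, use $\nu$-tameness and the induction hypothesis (all models of size $\nu$ are saturated) to find a club of indices at which the partial union is saturated, then apply the union lemma; the limit case is analogous, writing the model as a union of smaller saturated pieces along a cofinal chain and again invoking the union lemma, with extra care when $\mu$ is singular.

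Finally, uniqueness of saturated models of a fixed cardinality (from amalgamation) shows that for every $\mu > \lambda$ all models of size $\mu$ are isomorphic, i.e.\ $K$ is categorical in $\mu$; together with the assumed categoricity in $\lambda = \LS (K)$, $K$ is categorical in all $\mu \geq \LS (K)$. The main obstacle is the superstability step --- extracting a usable ``no long splitting chains'' principle from a \emph{single} successor categoricity cardinal --- closely followed by making the induction on $\mu$ work at singular cardinals; tameness is the mechanism that exports the structure established at $\lambda$ and $\lambda^+$ to all larger $\mu$.
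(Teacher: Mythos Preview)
The paper does not give a proof of this statement at all: it is recorded as a \emph{Fact} with a citation to Grossberg--VanDieren \cite{tamenessthree}, and is used as a black box in the proof of Theorem \ref{categ-transfer-frame}. So there is no ``paper's own proof'' to compare against.

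That said, your sketch is a faithful outline of the Grossberg--VanDieren argument: stability in $\lambda$ from categoricity in $\lambda^+$, superstability (no long splitting chains) from categoricity in $\lambda^+$, a union-of-saturated-is-saturated lemma, and an induction on $\mu$ showing every model of size $\mu$ is saturated. One small caution: the way you phrase the stability step (``an unstable model of size $\lambda$ would produce too many pairwise non-isomorphic models of size $\lambda^+$'') is slightly off---the standard argument is that instability in $\lambda$ gives a model of size $\lambda^+$ that is not $\lambda^+$-saturated, while a separate construction (Ehrenfeucht--Mostowski or limit models) gives a saturated one, contradicting categoricity in $\lambda^+$. Also, in the Grossberg--VanDieren paper the induction is really on successor cardinals (transferring categoricity from $\mu^+$ to $\mu^{++}$) rather than handling singular $\mu$ directly; your ``extra care when $\mu$ is singular'' is where the actual work lies and is not trivial. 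But for a sketch at this level of detail, your outline is sound.
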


 Thus the hard part is showing that $K_{\LS (K)}$ is weakly uni-dimensional. We proceed by contradiction.

 \begin{defin}[III.12.39.(d) in \cite{shelahaecbook}]\label{kneg-def}
    Let $M \in K$ and let $p \in \gS (M)$. We define\footnote{Shelah calls the class $K^\ast$.} $K_{\neg^\ast p}$ to be the class of $N \in K_M$ (recall Definition \ref{k-m-def}) such that $f(p)$ has a unique extension to $\gS(N \rest L (K))$. Here $f: M \rightarrow N$ is given by $f (a) := c_a^N$. We order $K_{\neg^\ast p}$ with the strong substructure relation induced from $K_M$.
\end{defin}
\begin{remark}
  Let $p \in \gS (M)$ be nonalgebraic and let $M \lea N$. If we are working in a good frame and $p$ has a unique extension to $\gS (N)$, then it must be the nonforking extension. Thus $p$ is omitted in $N$. However even if $p$ is omitted in $N$, $p$ could have two nonalgebraic extensions to $\gS (N)$, so $K_{\neg^\ast p}$ need \emph{not} be the same as the class $\K_{\neg p}$ of models omitting $p$.
\end{remark}

In general, we do \emph{not} claim that $K_{\neg^\ast p}$ is an AEC. Nevertheless it is an abstract class in the sense introduced by Grossberg in \cite{grossbergbook}, see \cite[Definition 2.7]{sv-infinitary-stability-afml}. Thus we can define notions such as amalgamation, Galois types, and tameness there just as in AECs. The following gives an easy criterion for when $K_{\neg^\ast p}$ \emph{is} an AEC:

\begin{prop}\label{kneg-aec}
  Let $\s = (K, \nf)$ be a type-full good $(\ge \lambda)$-frame (so $\lambda = \LS (K)$ and $K_{<\lambda} = \emptyset$). Let $M \in K$ and let $p \in \gS (M)$. Then $K_{\neg^\ast p}$ is an AEC.
\end{prop}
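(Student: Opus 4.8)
The plan is to verify the AEC axioms for $K_{\neg^\ast p}$ one by one, leaning on the fact that $K_M$ is already an AEC (Definition \ref{k-m-def}) and that $K_{\neg^\ast p}$ is a subclass of $K_M$ carrying the \emph{induced} order. Write $\mu := \LS (K_M)$; since $K_{<\lambda} = \emptyset$ we have $\|M\| \gea \lambda = \LS (K)$, so $\mu = \|M\|$. First I would dispose of a trivial case: if $p$ is algebraic then $f(p)$ is algebraic for every embedding $f$, an algebraic type has a unique extension to $\gS (N)$ for all $N$, and hence $K_{\neg^\ast p} = K_M$ is an AEC. So assume from now on that $p$ is nonalgebraic; since $\s$ is type-full, $f(p)$ is then a basic type, so it has a unique nonforking extension to $\gS (N)$ for every $N \gea f[M]$.

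The structural axioms — $\lea$ being a partial order, respect for isomorphisms, $M \lea N \Rightarrow M \subseteq N$, and coherence — transfer verbatim from $K_M$, because the order on $K_{\neg^\ast p}$ is the restriction of the order on $K_M$ and membership in $K_{\neg^\ast p}$ is isomorphism-invariant. Likewise the first Tarski--Vaught axiom ($M_0 \lea M_\delta$) and smoothness follow at once from the corresponding axioms in $K_M$ \emph{once} we know $K_{\neg^\ast p}$ is closed under unions of increasing chains. Thus the two points requiring real work are closure under unions of chains and the Löwenheim--Skolem--Tarski axiom.

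\emph{Unions of chains.} Let $\seq{N_i : i < \delta}$ be increasing in $K_{\neg^\ast p}$, put $N_\delta := \bigcup_{i < \delta} N_i$, which lies in $K_M$, and let $g : M \rightarrow N_\delta$ be the constant embedding, so $g[M] \lea N_0 \lea N_\delta$. I must show $g(p)$ has a unique extension to $\gS (N_\delta)$. Take any $q \in \gS (N_\delta)$ extending $g(p)$. For each $i < \delta$, $q \rest N_i$ extends $g(p)$, and as $N_i \in K_{\neg^\ast p}$ the only such extension is the nonforking extension of $g(p)$ to $\gS (N_i)$ (which exists since $g(p)$ is basic); hence $q \rest N_i$ is nonforking over $g[M]$, in particular over $N_0$. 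By the continuity axiom of the good frame, applied to $\seq{N_i : 0 \lea i \lea \delta}$ with base $N_0$, $q$ is nonforking over $N_0$; since $q \rest N_0$ is nonforking over $g[M]$, transitivity of nonforking yields that $q$ is nonforking over $g[M]$. By uniqueness of the nonforking extension, $q$ is that extension, so $g(p)$ has a unique extension to $\gS (N_\delta)$ and $N_\delta \in K_{\neg^\ast p}$. I expect this to be the main obstacle, as it is the only place where the good-frame machinery (type-fullness to furnish nonforking extensions of the nonalgebraic $p$, together with continuity and transitivity of nonforking) is genuinely invoked; everything else is bookkeeping.

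\emph{LST axiom.} Given $N \in K_{\neg^\ast p}$ and $A \subseteq |N|$, apply the Löwenheim--Skolem--Tarski axiom in $K_M$ to get $N_0 \lea N$ in $K_M$ with $A \subseteq |N_0|$ and $\|N_0\| \lea |A| + \mu$; it remains to see $N_0 \in K_{\neg^\ast p}$. Suppose not: then, writing $g : M \rightarrow N_0$ for the constant embedding, $g(p)$ has two distinct extensions $q_1 \neq q_2$ in $\gS (N_0)$. Since $\s$ is a good frame, $K$ has amalgamation, so every Galois type over a model extends to any larger model; pick extensions $r_\ell \in \gS (N)$ of $q_\ell$. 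Both $r_\ell$ extend $g(p)$, and $r_1 \rest N_0 = q_1 \neq q_2 = r_2 \rest N_0$ forces $r_1 \neq r_2$, contradicting $N \in K_{\neg^\ast p}$. Hence $K_{\neg^\ast p}$ satisfies the LST axiom with $\LS (K_{\neg^\ast p}) \lea \mu$, and all the AEC axioms are verified.
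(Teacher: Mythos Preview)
Your proof is correct and covers the same ground as the paper's, with one genuine difference in the argument for closure under chains. The paper takes two extensions $p_1, p_2 \in \gS(N_\delta)$ of $p$, uses \emph{local character} to find $i < \delta$ with both $p_1, p_2$ nonforking over $N_i$, then concludes $p_1 \rest N_i = p_2 \rest N_i$ (as $N_i \in K_{\neg^\ast p}$) and applies uniqueness. You instead show any single extension $q$ must be the nonforking one, via \emph{continuity} and \emph{transitivity}. Both routes work; yours is slightly longer but makes the role of the nonforking extension more explicit, while the paper's is a touch more direct since it avoids invoking transitivity.

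One small technical remark: your application of the continuity axiom requires the chain $\seq{N_i : i \le \delta}$ to be \emph{continuous} in $K$, which you do not assume. This is harmless — one proceeds by transfinite induction on $\delta$, so intermediate unions at limit stages below $\delta$ may be assumed to lie in $K_{\neg^\ast p}$ already, giving a continuous chain. (The paper silently makes the same reduction, writing ``increasing continuous''.) Your treatment of the LST axiom and the algebraic case, which the paper dismisses as ``easy'', is correct and a useful addition.
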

\begin{proof}
  All the axioms are easy except closure under chains. So let $\delta$ be a limit ordinal and let $\seq{N_i : i < \delta}$ be increasing continuous in $K_{\neg^\ast p}$. Identify models in $K$ with their expansions in $K_M$, assuming without loss of generality that $M \lea N_0$, i.e.\ the map $a \mapsto c_a^{N_0}$ for $a \in M$ is the identity. Let $N_\delta := \bigcup_{i < \delta} M_i$. We have that $N_\delta \rest L(K) \in K$. Now if $p_1, p_2 \in \gS (N_\delta \rest L(K))$ are two extensions of $p$, by local character there exists $i < \delta$ such that $p_1$ and $p_2$ do not fork over $N_i$. Since $p$ has a unique extension to $N_i$, $p_1 \rest N_i = p_2 \rest N_i$. By uniqueness, $p_1 \rest N_\delta = p_2 \rest N_\delta$.
\end{proof}

In fact, Shelah gave a criterion for when $K_{\neg^\ast p}$ has a good $\lambda$-frame:

\begin{fact}[Claim III.12.39 in \cite{shelahaecbook}]\label{not-unidim-frame}
  Let $\s$ be a good $\lambda$-frame with underlying class $K_\lambda$. Assume $\s$ is type-full, $\goodp$, successful (see appendix \ref{good-frame-appendix} for the definitions of these terms), and $K_\lambda$ has primes. Assume further that $K$ is categorical in $\lambda$.
  
  If $K_\lambda$ is not weakly uni-dimensional, then there exists $M \in K_\lambda$ and $p \in \gS (M)$ such that $\s \rest K_{\neg^\ast p}$ (the restriction of $\s$ to models in $K_{\neg^\ast p}$) is a type-full good $\lambda$-frame.
\end{fact}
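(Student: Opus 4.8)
The plan is to read off a nonalgebraic type $p$ from the failure of weak uni-dimensionality and then check that $\s \rest K_{\neg^\ast p}$, with underlying class $(K_{\neg^\ast p})_\lambda$ and nonforking inherited from $\s$, satisfies all the axioms of a type-full good $\lambda$-frame. The substance is that $K_{\neg^\ast p}$ is \emph{closed enough} as a class, and this is where the orthogonality calculus of Appendix \ref{proof-appendix} (hence the hypotheses $\goodp$ and successful) enters.

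First I would use the negation of weak uni-dimensionality to fix $M \lta M_1$ and $M \lta M_2$, all in $K_\lambda$, such that every $\gtp (c / M; M_2)$ with $c \in |M_2| \backslash |M|$ has a unique extension in $\gS (M_1)$. Since $K_\lambda$ has primes, I may shrink $M_1$ and assume it is prime over $|M| \cup \{d\}$ for some $d \in |M_1| \backslash |M|$: replace $M_1$ by the image (inside $M_1$, over $M$) of a prime model over $|M| \cup \{d\}$, noting that the witnessing property passes to any submodel of $M_1$ containing $M$ by uniqueness of nonforking extensions. Set $p := \gtp (d / M; M_1) \in \gS (M)$. The orthogonality theory for good frames (recall the remark preceding the statement) now converts this configuration into two facts: every type over $M$ realized in $|M_2| \backslash |M|$ is orthogonal to $p$, and, symmetrically, the nonforking extension of $p$ to $\gS (M_2)$ is its \emph{only} extension, i.e.\ $M_2 \in K_{\neg^\ast p}$. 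In particular $(K_{\neg^\ast p})_\lambda \neq \emptyset$, and by categoricity of $K$ in $\lambda$ this pins down $(K_{\neg^\ast p})_\lambda$ up to $L(K)$-isomorphism.

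Next I would establish that $(K_{\neg^\ast p})_\lambda$ is a well-behaved AEC in $\lambda$. That it is an AEC uses Proposition \ref{kneg-aec} (or the argument in its proof: closure under chains follows from local character together with uniqueness of nonforking extensions). Joint embedding follows from categoricity of $K$ in $\lambda$ and amalgamation; there are no maximal models because one can take a prime extension realizing the nonforking extension of a type orthogonal to $p$ — such a type exists over every model of $K_{\neg^\ast p}$ by the first fact above and preservation of orthogonality under nonforking extensions, and the prime extension stays in $K_{\neg^\ast p}$ again by orthogonality; and $(K_{\neg^\ast p})_\lambda$ is stable in $\lambda$ because the Galois types over a model $N$ of $K_{\neg^\ast p}$ that are realized \emph{within the class} are precisely those orthogonal to the nonforking extension of $p$ to $N$, and for those, amalgamation inside $K_{\neg^\ast p}$ respects identifications, so they inject into the Galois type space of $K$ over $N$. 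The delicate point is amalgamation: given $N_0 \lea N_1, N_2$ in $(K_{\neg^\ast p})_\lambda$ one amalgamates in $K$ and must argue the amalgam can be chosen so that $p$ is still omitted and has a unique extension — concretely, one forms the nonforking amalgam and invokes that $p$ is orthogonal to each of $N_1$, $N_2$ over $N_0$, hence to the amalgam.

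Finally, with the class in hand, the frame axioms for $\s \rest K_{\neg^\ast p}$ fall out: invariance, monotonicity, transitivity, local character, continuity, uniqueness, symmetry, and existence of the nonforking extension are all inherited from $\s$ once one knows the models witnessing them can be kept inside $K_{\neg^\ast p}$ (the closure facts just proved); stability was handled above; and density of basic types is immediate since $\s$ is type-full — for $N \lta N'$ in $(K_{\neg^\ast p})_\lambda$ any $a \in |N'| \backslash |N|$ has nonalgebraic, hence basic, type — so the restricted frame is again type-full. I expect the main obstacle to be the orthogonality bookkeeping of the third paragraph: verifying $M_2 \in K_{\neg^\ast p}$ and that $K_{\neg^\ast p}$ is closed under amalgamation and prime extensions. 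This is exactly the nontrivial theory of orthogonal and prime types in $\goodp$ successful good frames, which is why the full proof is deferred to Appendix \ref{proof-appendix}.
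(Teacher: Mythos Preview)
Your overall strategy is correct and matches the paper's: extract from the failure of weak uni-dimensionality a type $p$ together with a type $q \perp p$, then verify the good-frame axioms for $\s \rest K_{\neg^\ast p}$, using orthogonality plus primes to keep the relevant witness models inside $K_{\neg^\ast p}$. Your choice of $p$ (from the prime side $M_1$) is the mirror image of the paper's (which takes $p$ from $M_2$); either works, though your side claim that $M_2 \in K_{\neg^\ast p}$ is not immediate from ``$p$ is orthogonal to every $\gtp(c/M;M_2)$'' and in fact is unnecessary, since $M$ itself trivially lies in $K_{\neg^\ast p}$.

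The substantive divergence is in how you obtain amalgamation. You propose to form the nonforking amalgam $N_3$ of $N_1, N_2$ over $N_0$ and argue that $p$ remains orthogonal to it, hence $N_3 \in K_{\neg^\ast p}$. This step is a gap as written: orthogonality of $p$ to each type realized in $N_1$ and $N_2$ over $N_0$ does not by itself yield that $p$ has a \emph{unique} extension to $\gS(N_3)$; you would still need to resolve $N_3$ as a tower of prime (uniqueness-triple) extensions and run an induction, which is exactly the bookkeeping you are trying to avoid. The paper sidesteps this entirely: it first proves the \emph{extension property} for forking in $K_{\neg^\ast p}$ (this only requires showing that a single prime extension $(a, N', N'')$ realizing a nonforking extension stays in $K_{\neg^\ast p}$, which is precisely what weak orthogonality delivers), observes that $K_{\neg^\ast p}$ inherits primes and hence weak amalgamation, and then invokes Theorem~\ref{thm-closure} to get amalgamation from the type extension property plus weak amalgamation. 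This route is both shorter and avoids any global statement about nonforking amalgams.

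A smaller point: full symmetry does not simply ``fall out.'' The paper's argument replaces the witnesses $N_0', N'$ coming from symmetry in $\s$ by prime submodels $N_0'', N''$ (using that $\gtp(a/N_0)$ and $\gtp(b/N_0'')$ are orthogonal to $p$) to force them into $K_{\neg^\ast p}$; you should expect to do the same.
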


Since this result is crucial to our argument and Shelah's proof is only implicit, we have included a proof in Appendix \ref{proof-appendix}.

Note that the hypotheses of Fact \ref{not-unidim-frame} are reasonable. In fact, it is known that they follow from categoricity in fully tame and short AECs with amalgamation:

\begin{fact}[Theorem 15.6 in \cite{indep-aec-apal}]\label{known-good-frame}
  Let $K$ be a fully $(<\kappa)$-tame and short AEC with amalgamation. Let $\lambda$, $\mu$ be cardinals such that:

  $$
  \LS (K) < \kappa = \beth_\kappa < \lambda = \beth_\lambda \le \mu
  $$

  Assume further that $\cf{\lambda} \ge \kappa$. If $K$ is categorical in $\mu$, then $K$ is categorical in $\lambda$ and there exists a type-full successful good $\lambda$-frame $\s$ with underlying class $K_\lambda$.
\end{fact}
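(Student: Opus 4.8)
The plan is to reconstruct the argument of \cite{indep-aec-apal}: from categoricity in $\mu$, together with full $(<\kappa)$-tameness and shortness and amalgamation, one builds a well-behaved \emph{global} independence relation on $K_{\ge \kappa}$ and then restricts it to $K_\lambda$. First I would handle categoricity in $\lambda$: since $\LS (K) < \kappa = \beth_\kappa < \lambda = \beth_\lambda$, the cardinal $\lambda$ exceeds the Hanf number $\hanf{\LS (K)}$, is a $\beth$-fixed point, and satisfies $\cf{\lambda} \ge \kappa > \LS (K)$, so Shelah's downward categoricity transfer for AECs with amalgamation \cite{sh394} (or its sharpening under tameness) yields categoricity in $\lambda$ from categoricity in $\mu \ge \lambda$. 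Next, categoricity in $\lambda$ together with $\cf{\lambda} \ge \kappa$ rules out long $\lambda$-splitting chains — such a chain would either produce too many Galois types over a model of size $\lambda$, contradicting stability in $\lambda$, or witness that the model in the categoricity cardinal is not saturated — so one extracts $\LS (K)$-superstability and hence stability in every cardinal $\ge \lambda$. This is precisely where the cofinality hypothesis on $\lambda$ is used.

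I would then build the independence relation as in \cite{ss-tame-jsl, indep-aec-apal}, working with a coheir-type relation $\nf$ (finite satisfiability over small sets, the bookkeeping on length and domain being handled by full $(<\kappa)$-tameness and shortness). The properties to verify are: existence and the monotonicity properties, which are immediate; \emph{uniqueness} of nonforking extensions, which is exactly where full $(<\kappa)$-tameness and shortness enter, via the Boney--Grossberg argument; transitivity, which is standard for coheir; the \emph{extension} property, by a directed-system argument using amalgamation; \emph{local character}, from superstability; and \emph{symmetry}, obtained from categoricity (equivalently, from failure of the order property) by ruling out asymmetric configurations. Restricting $\nf$ to $K_\lambda$ and noting that every $1$-type over a model is basic yields a type-full good $\lambda$-frame $\s$ with $K_\s = K_\lambda$.

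Finally I would show that $\s$ is successful. By Shelah's criterion (see \cite[Chapter III]{shelahaecbook} and Appendix \ref{good-frame-appendix}), this reduces to (i) density of uniqueness (domination) triples in $K_\lambda^{3,1}$ and (ii) goodness of the derived $\lambda^+$-frame $\s^+$. For (i), using the global relation $\nf$ and tameness, one shows that every $1$-type over a model can be dominated by a single element in a way that makes the resulting triple a uniqueness triple. For (ii), $\s^+$ can be recovered from $\nf$ restricted to the $\lambda^+$-saturated models of size $\lambda^+$, so its goodness — uniqueness from tameness, local character and symmetry from superstability, extension from amalgamation — is inherited from the global relation.

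I expect the main obstacle to be twofold: proving symmetry of $\nf$, which requires a genuinely non-trivial use of categoricity; and verifying successfulness — especially that $\s^+$ is good — which is where the full strength of ``fully tame and short'' together with the global independence relation is really needed. The density of uniqueness triples is itself delicate and relies on the prime-model-like behaviour that categoricity and tameness provide.
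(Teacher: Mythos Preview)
The paper does not prove this statement: it is stated as a \emph{Fact}, cited verbatim as Theorem 15.6 of \cite{indep-aec-apal}, with no argument given here. So there is nothing in the present paper to compare your proposal against. What the paper \emph{does} do is prove an improvement (Theorem \ref{get-good-frame}) in Appendix \ref{good-frame-appendix}, via Lemma \ref{omega-succ-constr} and Corollary \ref{fully-good-cor}, replacing $(<\kappa)$-satisfiability by $\mu$-nonsplitting so as to drop the hypothesis $\kappa = \beth_\kappa$.

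That said, your sketch is a fair high-level reconstruction of the argument in \cite{indep-aec-apal}: coheir ($(<\kappa)$-satisfiability) is indeed the independence relation used there, and the properties you list (uniqueness from shortness, local character from superstability, symmetry from no-order-property, extension via directed systems) are the right ingredients. One ordering quibble: in the actual development, superstability is extracted \emph{first}, from categoricity in $\mu$ via the Shelah--Villaveces argument (Fact \ref{shelah-vi} here), and categoricity in $\lambda$ is then a \emph{consequence} (every model of size $\lambda$ is saturated), not the other way around. Your successfulness sketch is also somewhat soft: in \cite{indep-aec-apal} one does not verify density of uniqueness triples directly but rather builds the global independence relation and then invokes a criterion like Fact \ref{weakly-succ-build} to get $\omega$-successfulness in one stroke; the ``prime-model-like behaviour'' you allude to is not actually available at that stage.
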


From Proposition \ref{indep-goodp}, it will follow that the frame given by Fact \ref{known-good-frame} is also $\goodp$. If in addition the AEC has primes (e.g. if it is universal), then the hypotheses are satisfied. Of course, the Hanf numbers in Fact \ref{known-good-frame} are not optimal. We give the following improvement in Appendix \ref{good-frame-appendix}:

\begin{thm}\label{get-good-frame}
  Let $K$ be a fully $\LS (K)$-tame and short AEC with amalgamation and no maximal models. If $K$ is categorical in a $\mu > \LS (K)$, then there exists $\lambda_0 < \hanf{\LS (\K)}$ such that for all $\lambda \ge \lambda_0$ where $K$ is categorical in $\lambda$, there exists a type-full successful $\goodp$ $\lambda$-frame with underlying class $K_\lambda$.
\end{thm}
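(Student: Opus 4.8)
The plan is to re-run the proof of Fact \ref{known-good-frame} with the tameness and shortness parameter pushed down to its minimal value (essentially $\LS(K)^+$) and to harvest the extra conclusions ($\goodp$ and successful) from the tame frame transfer machinery. \textbf{Step 1 (stability and superstability).} First I would use categoricity in $\mu > \LS(K)$, together with amalgamation and no maximal models, to get stability: if $K$ were unstable in some $\lambda$ with $\LS(K) \le \lambda < \mu$, the usual counting argument produces many pairwise non-isomorphic models of size $\lambda^+ \le \mu$, contradicting categoricity. Full $\LS(K)$-tameness then transfers stability upward, so $K$ is stable in every $\lambda \ge \LS(K)$. Next I would upgrade to superstability in the sense relevant to good frames (no long splitting chains, unions of chains of saturated models are saturated): a $\LS(K)$-tame stable AEC with amalgamation, no maximal models, and categorical in some $\mu > \LS(K)$ is $\chi$-superstable for all $\chi$ above a threshold $\lambda_1$, and the point — this is where the improvement over Fact \ref{known-good-frame} lives — is that with tameness already at the level of $\LS(K)$ one may take $\lambda_1 < \hanf{\LS(K)}$, rather than the cruder beth-fixed-point bounds forced by treating the tameness cardinal as a separate parameter (cf.\ \cite{bv-sat-v3, indep-aec-apal}).

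\textbf{Step 2 (building the frame).} For $\lambda \ge \lambda_1$ with $K$ categorical in $\lambda$: categoricity together with stability below $\lambda$ implies that the model of size $\lambda$ is saturated, so $K_\lambda$ is exactly the class of saturated models in $K_\lambda$. I would then invoke the construction of a type-full good $\lambda$-frame from superstability, amalgamation and tameness (as in \cite{ss-tame-jsl, indep-aec-apal} and \cite{tame-frames-revisited-v6-toappear}) to obtain a type-full good $\lambda$-frame $\s$ with underlying class $K_\lambda$. That $\s$ is $\goodp$ then follows from Proposition \ref{indep-goodp}: the global independence relation coming from full tameness and shortness witnesses the $\goodp$ property exactly as in the discussion preceding this theorem.

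\textbf{Step 3 (successfulness).} Finally I would show $\s$ is successful. The two ingredients are: (i) density of uniqueness triples among the basic types, obtained from the domination properties of the global independence relation of \cite{indep-aec-apal} (or directly from \cite{tame-frames-revisited-v6-toappear}); and (ii) that the induced $\lambda^+$-frame on $\lambda^+$-saturated models is good, which follows from Boney's extension theorem \cite{ext-frame-jml} together with $\LS(K)$-tameness (the directed-system argument only ever uses amalgamation below the cardinal at hand, which we now have). Combining (i) and (ii) yields that $\s$ is successful (in fact $\omega$-successful, though only successfulness is claimed).

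\textbf{Main obstacle.} The real work — and the reason this is relegated to the appendix — is the Hanf-number bookkeeping in Step 1: verifying that the onset of stability, the onset of superstability, the least cardinal at which the good-frame construction goes through, and the ``categorical implies saturated'' threshold can all be bounded \emph{simultaneously and strictly} below $\hanf{\LS(K)} = \beth_{(2^{\LS(K)})^+}$ once full $\LS(K)$-tameness and shortness are available. This amounts to inspecting the proofs of the cited superstability and independence results and checking that, with tameness already at the base level, they never invoke a larger auxiliary cardinal, so that the Hanf numbers appearing there are the ones attached to $\LS(K)$ itself rather than to the beth fixed point $\kappa$ of Fact \ref{known-good-frame}.
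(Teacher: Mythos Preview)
Your overall architecture is right and matches the paper's: obtain superstability, build a global (almost good) independence relation, and read off that the induced $\lambda$-frame at any categoricity cardinal is type-full, successful, and $\goodp$. The paper's proof is literally ``Combine Corollaries \ref{fully-good-cor} and \ref{get-good-frame-cor}'', which packages exactly these steps and yields the explicit threshold $\lambda_0 = \left(2^{\LS(K)}\right)^{+5}$.

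Where your proposal diverges is in your diagnosis of the ``main obstacle''. You frame it as Hanf-number bookkeeping: inspecting the existing proofs and checking that with tameness at $\LS(K)$ the thresholds happen to fall below $\hanf{\LS(K)}$. That is not quite what is going on. The reason Fact \ref{known-good-frame} needs a beth fixed point $\kappa = \beth_\kappa$ is not that the tameness cardinal is treated as a loose parameter; it is that the global independence relation used there is $(<\kappa)$-coheir (satisfiability), and several of its key properties genuinely require $\kappa = \beth_\kappa$. No amount of bookkeeping removes that. The substantive content of the appendix --- specifically Lemma \ref{omega-succ-constr} and Corollary \ref{fully-good-cor} --- is to \emph{replace} coheir by the independence relation generated from $\mu$-nonsplitting (using Fact \ref{ns-lc} for local character under $\mu = \mu^{<\kappa}$). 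With nonsplitting one only needs $\kappa \le \LS(K)^+$, and this is why the threshold collapses to $\left(\LS(K)^{<\kappa}\right)^{+5}$. Once that almost fully good relation exists on $\Ksatp{\lambda_0}$, Proposition \ref{indep-goodp} gives $\goodp$ and Fact \ref{weakly-succ-build} gives $\omega$-successful, exactly as you say in Steps 2--3; categoricity in $\lambda \ge \lambda_0$ then identifies $K_\lambda$ with $\Ksatp{\lambda}_\lambda$.

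So your Steps 2 and 3 are fine, but your Step 1 as written would stall: you would re-read the coheir-based proofs, find that $\kappa = \beth_\kappa$ is actually used, and not get below $\hanf{\LS(K)}$. The missing idea is the switch to nonsplitting as the generating relation.
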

\begin{proof}
  Combine Corollaries \ref{fully-good-cor} and \ref{get-good-frame-cor}.
\end{proof}

Now we reach a crucial point. For the purpose of a categoricity transfer, it would be enough to show that $K_{\neg^\ast p}$ above has arbitrarily large models, since this means that there are non-saturated models in every cardinal above $\lambda$. Unfortunately, even if $K$ is fully tame and short and has amalgamation, it is not easy to get a handle on $K_{\neg^\ast p}$. For example, it is not clear if it has amalgamation or even if it is tame. In \cite[Discussion III.12.40]{shelahaecbook} Shelah claims to be able to show using enough instances of the weak  generalized continuum hypothesis that $\s \rest K_{\neg^\ast p}$ above has arbitrarily large models (this is probably how Claim \ref{shelah-xxx} is proven) and this is the key to the proof of Fact \ref{ap-categ}.

We make the situation where $K_{\neg^\ast p}$ is well-behaved into a definition:

\begin{defin}\label{nice}
  $K$ is \emph{nice} if:
  \begin{enumerate}
    \item $K$ has weak amalgamation. 
    \item For any $M \in K$ and any $p \in \gS (M)$, $K_{\neg^\ast p}$ has weak amalgamation and if $K$ is $\|M\|$-tame, then so is $K_{\neg^\ast p}$.
  \end{enumerate}
\end{defin} 

Note that if $K$ is a universal class, then $K_{\neg^\ast p}$ also is universal (using that $K$ is fully $(<\aleph_0)$-tame and short, we can prove as in Proposition \ref{kneg-aec} that it is an AEC), hence $K$ is nice! More generally:

\begin{prop}\label{weakly-prime-nice}
  If $K$ weakly has primes, then $K$ is nice.
\end{prop}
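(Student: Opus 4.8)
The plan is to verify the two clauses of Definition \ref{nice}, the first directly and the second by a study of the class $K_{\neg^\ast p}$.

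Clause (1) is immediate: if $\gtp(a_1/M;N_1)=\gtp(a_2/M;N_2)$, then "weakly has primes" produces $M_1\lea N_1$ containing $a_1$ and $M$ together with $f\colon M_1\xrightarrow[M]{}N_2$ with $f(a_1)=a_2$, and taking $N:=N_2$ in the characterization of weak amalgamation given after Definition \ref{weak-ap-def} shows that $K$ has weak amalgamation. Exactly the same one-line argument, carried out inside an arbitrary abstract class, shows that any abstract class which weakly has primes has weak amalgamation, and I will reuse this below. I would also record that $K_M$ weakly has primes whenever $K$ does, since Galois types over a model $M'$ with $M\lea M'$ agree whether computed in $K$ or in $K_M$, and a witnessing embedding that fixes $M'\supseteq M$ automatically fixes the constants.

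For clause (2), fix $M\in K$ and $p\in\gS(M)$. If $p$ is algebraic then $K_{\neg^\ast p}=K_M$, which is an AEC, so weak amalgamation and tameness of $K_{\neg^\ast p}$ follow from those of $K$ via the previous paragraph; thus assume $p$ nonalgebraic. The heart of the matter is to show that $K_{\neg^\ast p}$ \emph{weakly has primes}: granted this, weak amalgamation follows at once, and I expect tameness of $K_{\neg^\ast p}$ to follow from $\|M\|$-tameness of $K_M$ (which itself follows from $\|M\|$-tameness of $K$, since adjoining $|M|$ to a parameter set of size $\le\|M\|$ does not increase its size) once one checks that, because everything in sight weakly has primes, equality of Galois types in $K_{\neg^\ast p}$ coincides with equality of Galois types in $K_M$ on triples whose models lie in $K_{\neg^\ast p}$, so that the locality test is literally the same in both classes. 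To prove that $K_{\neg^\ast p}$ weakly has primes, take $M_0,N_1,N_2\in K_{\neg^\ast p}$ with $M_0\lea N_\ell$ and $\gtp(a_1/M_0;N_1)=\gtp(a_2/M_0;N_2)$ computed in $K_{\neg^\ast p}$; since any chain of atomic equivalences through $K_{\neg^\ast p}$ is also one through $K_M$, these types agree in $K_M$, hence in $K$ over $M_0\supseteq M$. Then "weakly has primes" for $K$ gives $M_1\lea N_1$ containing $a_1$ and $M_0$ and $f\colon M_1\xrightarrow[M_0]{}N_2$ with $f(a_1)=a_2$, and since $f[M_1]\lea N_2$ and $M\lea f[M_1]$, it suffices to know that $K_{\neg^\ast p}$ is closed under strong substructures containing $M$.

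This closure of $K_{\neg^\ast p}$ is the step I expect to be the main obstacle. My plan for it is: given $M\lea M''\lea N$ with $N\in K_{\neg^\ast p}$, let $p_N\in\gS(N)$ be the unique extension of $p$, and show that any $q\in\gS(M'')$ extending $p$ satisfies $q=p_N\rest M''$. The idea is to realize $q$ in some extension of $N$ — using "weakly has primes" to transport a realization of $q$, which has the same Galois type over $M$ as a realization of $p_N$, into a model sitting above $N$ — so that the resulting extension of $q$ to $\gS(N)$, being an extension of $p$, must equal $p_N$, whence $q=p_N\rest M''$ and uniqueness follows. Carrying out this transport without the benefit of full amalgamation is the delicate point of the whole proposition, and is precisely where the strength of the hypothesis "weakly has primes" has to be exploited; everything else in the argument is formal manipulation of the definitions of $K_M$, $K_{\neg^\ast p}$, weak atomic equivalence, and tameness.
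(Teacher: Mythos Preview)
Your overall structure is exactly the paper's: reduce clause (2) to showing that $K_{\neg^\ast p}$ weakly has primes, and reduce that in turn to the downward-closure statement that if $N \in K_{\neg^\ast p}$ and $M \lea N_0 \lea N$ then the natural expansion of $N_0$ lies in $K_{\neg^\ast p}$. The paper gives precisely this one-line justification and then derives weak amalgamation and tameness of $K_{\neg^\ast p}$ from it in the same way you do.

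Where your proposal breaks is the closure step itself. You take $q = \gtp(a/M''; N')$ with $q \rest M = p$, pick $b$ realizing $p_N$ in some $N'' \gea N$, note $\gtp(a/M;N') = p = \gtp(b/M;N'')$, and apply ``weakly has primes'' to transport $a$ into a model above $N$. But ``weakly has primes'' applied over $M$ yields only $M_a \lea N'$ with $a \in M_a$, $M \lea M_a$, and $f: M_a \xrightarrow[M]{} N''$ with $f(a) = b$. Nothing forces $M'' \subseteq M_a$, and even if it were, $f$ fixes $M$ but need not fix $M''$. Hence $b = f(a)$ realizes $p$ over $M$, but there is no reason it realizes $q$ over $M''$; you have \emph{not} produced an extension of $q$ to $\gS(N)$. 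The phrase ``the resulting extension of $q$ to $\gS(N)$'' is exactly the missing amalgamation of $N'$ and $N$ over $M''$, and ``weakly has primes'' only buys you amalgamation over the smaller base $M$.

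The paper does not argue this closure either; it simply asserts it. In every place the proposition is invoked (Theorem \ref{categ-transfer-frame}, and the analogous step in the proof of Fact \ref{not-unidim-frame-1}), $K$ already has amalgamation --- derived from the good frame via Theorem \ref{ap-frame-transfer} --- and with amalgamation the closure is trivial: any $q \in \gS(M'')$ extends to some $q' \in \gS(N)$, which must equal $p_N$ by uniqueness, whence $q = p_N \rest M''$. So you have correctly located the only nontrivial point, your high-level plan matches the paper's, but your proposed fix does not close the gap; the clean argument uses amalgamation, which is available in all the paper's applications.
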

\begin{proof}
  Weak amalgamation follows from the definition of weakly having primes. Now let $M \in K$ and $p \in \gS (M)$. Observe that $K_{\neg^\ast p}$ weakly has primes, because if $N \in K_{\neg^\ast p}$, $N_0 \lea N \rest L (K)$ is in $K$, and $M \lea N_0$, then the natural expansion of $N_0$ is in $K_{\neg^\ast p}$. Therefore $K_{\neg^\ast p}$ also has weak amalgamation. If in addition $K$ is $\|M\|$-tame, then so is $K_{\neg^\ast p}$: indeed if $N \in K_{\neg^\ast p}$, $q_1, q_2 \in \gS (N)$, and the two types are equal in $K$, then since $K_{\neg^\ast p}$ weakly has primes there is a map witnessing equality of the types in $K_{\neg^\ast p}$ also.
\end{proof}

The following fact is the key to our argument. It was first proven under slightly stronger hypotheses by Will Boney \cite{ext-frame-jml}. The interesting consequence to us is that it gives a local criterion for a tame AEC to have arbitrarily large models.

\begin{fact}[Corollary 6.10 in \cite{tame-frames-revisited-v6-toappear}]\label{nmm-fact}
  If $\s$ is a good $\lambda$-frame on $K_\lambda$, $K$ is $\lambda$-tame and has amalgamation, then $\s$ extends to a good $(\ge \lambda)$-frame on $K_{\ge \lambda}$. In particular, $K_{\ge \lambda}$ has no maximal models and is stable in every cardinal above $\lambda$.
\end{fact}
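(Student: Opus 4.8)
The plan is to define the extended forking relation $\ge \s$ over larger models by the standard recipe of \cite[Chapter II]{shelahaecbook} (see also \cite[Definition 2.7]{ext-frame-jml}): for $M_0 \lea M \lea N$ in $K_{\ge \lambda}$ and $a \in N$, say that $\gtp (a / M; N)$ \emph{does not fork over $M_0$} (in the sense of $\ge \s$) if there exists $M_0' \in K_\lambda$ with $M_0' \lea M_0$ such that for every $M' \in K_\lambda$ with $M_0' \lea M' \lea M$, the restriction $\gtp (a / M'; N)$ is an $\s$-nonforking extension of $\gtp (a / M_0'; N)$. Then one verifies that $\ge \s = (K_{\ge \lambda}, \nf)$ satisfies all the axioms of a good $(\ge \lambda)$-frame, organizing the verification (in particular the extension axiom) by induction on the cardinal $\mu \gea \lambda$; the ``no maximal models'' and stability statements are then read off at the end.

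The axioms that do not speak about existence of extensions — invariance, monotonicity, the relevant local character, density of basic types, and transitivity — transfer with no appeal to tameness (only chasing of $\lambda$-sized restrictions is needed), and are already due to Shelah; see the proof of Theorem \ref{ap-frame-transfer}. Tameness enters for uniqueness and, with more care, symmetry: if $q_1, q_2 \in \gS (M)$ both fail to fork over some $M_0 \in K_\lambda$ and $q_1 \rest M_0 = q_2 \rest M_0$, then $\s$-uniqueness gives $q_1 \rest M' = q_2 \rest M'$ for every $M' \in K_\lambda$ between $M_0$ and $M$, and $\lambda$-tameness upgrades this to $q_1 = q_2$. Stability of $K$ in every $\mu > \lambda$ then follows by the usual counting: by extension plus uniqueness, a type over $M \in K_\mu$ is pinned down by a size-$\lambda$ submodel $M_0 \lea M$ over which it does not fork, together with its restriction to $M_0$, and stability of $\s$ in $\lambda$ controls the latter.

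I expect the extension axiom to be the main obstacle. Given a basic $p \in \gS (M_0)$ and $M_0 \lea M$, I would write $M$ as an increasing continuous union $M = \bigcup_{i < \theta} M_i$ of models of size $< \|M\|$ with $M_0$ at the bottom, and build by induction on $i$ a coherent directed system of models $N_i \gea M_i$ carrying $\ge \s$-nonforking extensions $q_i \in \gS (M_i)$ of $p$: at successor steps one uses the extension axiom in the strictly smaller cardinal $\|M_{i+1}\|$ (available by the induction on $\mu$) together with amalgamation to amalgamate the new pieces and keep the system directed, and at limits one invokes continuity of $\ge \s$. The colimit of the system then realizes the required nonforking $q \in \gS (M)$. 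The crucial observation — exactly as in the proof of Theorem \ref{ap-frame-transfer} — is that the directed-system step at level $\mu$ only ever uses amalgamation strictly below $\mu$, so the induction closes; it is precisely this observation (together with tameness for uniqueness and symmetry) that lets one drop the extra hypotheses under which \cite{ext-frame-jml} first proved the result, and the symmetry axiom itself transfers by a related but more intricate tameness argument. Finally, ``no maximal models'' falls out for free: for $M \in K_{\ge \lambda}$ choose a nonalgebraic basic type over some small $M_0 \lea M$ by density of basic types, extend it without forking to a nonalgebraic $q \in \gS (M)$, and realize $q$ in some $N \gea M$; then $N$ properly extends $M$. Combined with stability this yields arbitrarily large models and the remaining conclusions.
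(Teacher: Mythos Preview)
The paper does not give its own proof of this statement: it is stated as a \emph{Fact} and cited to \cite{tame-frames-revisited-v6-toappear} (Corollary 6.10 there), with the earlier version under stronger hypotheses credited to \cite{ext-frame-jml}. So there is nothing in this paper to compare your sketch against directly.

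That said, your outline is essentially the approach of the cited literature: define $\ge \s$ by the Shelah recipe, transfer the ``easy'' axioms without tameness, use $\lambda$-tameness for uniqueness, and build extensions via directed systems. Two small clarifications are worth making. First, in this Fact amalgamation is a \emph{hypothesis}, so your remark that ``the directed-system step at level $\mu$ only ever uses amalgamation strictly below $\mu$'' is not what drives the argument here; that observation is the engine of Theorem \ref{ap-frame-transfer}, where amalgamation is being proven simultaneously, but for Fact \ref{nmm-fact} amalgamation is simply available everywhere. Second, and more importantly, the genuine advance of \cite{tame-frames-revisited-v6-toappear} over \cite{ext-frame-jml} is not in the extension argument but in \emph{symmetry}: Boney's original transfer needed an additional ``tameness for two types'' hypothesis precisely to push symmetry through, and the contribution of the cited corollary is a new argument deriving full symmetry from ordinary $\lambda$-tameness alone. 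Your proposal waves at this (``a related but more intricate tameness argument''), which is fine for a sketch, but if you were to actually write out a proof, symmetry is where the real work lies and deserves more than a parenthetical.
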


\begin{thm}\label{categ-transfer-frame} 
  Let $\s$ be a good $\lambda$-frame with underlying AEC $K$. Assume $\s$ is type-full, $\goodp$, successful, and $K_\lambda$ has primes. Assume also that $K$ is categorical in $\lambda$, $\lambda$-tame, and nice. The following are equivalent.

  \begin{enumerate}
    \item\label{categ-transfer-1} $K_\lambda$ is weakly uni-dimensional.
    \item\label{categ-transfer-2} $K$ is categorical in all $\mu \ge \lambda$.
    \item\label{categ-transfer-3} $K$ is categorical in some $\theta > \lambda$.
  \end{enumerate}
\end{thm}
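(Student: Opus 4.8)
The plan is to establish the cycle $(1) \Rightarrow (2) \Rightarrow (3) \Rightarrow (1)$, where $(2) \Rightarrow (3)$ is immediate since $\lambda^+ > \lambda$. Before the two nontrivial directions, I would record a preliminary observation used in both. Since $\s$ is a good $\lambda$-frame with underlying class $K_\lambda$ (so $\LS (K) = \lambda$), $K$ is $\lambda$-tame, and $K$ has weak amalgamation (as $K$ is nice, recall Definition \ref{nice}), Theorem \ref{ap-frame-transfer} gives that $K_{\ge \lambda}$ has amalgamation. Fact \ref{nmm-fact} then applies and shows that $\s$ extends to a good $(\ge \lambda)$-frame on $K_{\ge \lambda}$; in particular $K$ has no maximal models and is stable in every cardinal $\ge \lambda$. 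Combining this with Proposition \ref{kneg-aec}, for every $M \in K_\lambda$ and $p \in \gS (M)$ the class $K_{\neg^\ast p}$ (recall Definition \ref{kneg-def}) is an AEC.

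For $(1) \Rightarrow (2)$: since $K_\lambda$ is weakly uni-dimensional, categorical in $\lambda$, stable in $\lambda$, and has $\lambda$-amalgamation, Fact \ref{unidim-categ} yields that $K$ is categorical in $\lambda^+$. Now $K$ is $\LS (K)$-tame, has amalgamation and no maximal models (by the preliminary observation), and is categorical in $\LS (K) = \lambda$ and in $\LS (K)^+ = \lambda^+$, so Fact \ref{tameness-categ} gives that $K$ is categorical in all $\mu \ge \lambda$, which is $(2)$.

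For $(3) \Rightarrow (1)$ I would argue the contrapositive. Suppose $K_\lambda$ is not weakly uni-dimensional. All hypotheses of Fact \ref{not-unidim-frame} hold ($\s$ is type-full, $\goodp$, successful, $K_\lambda$ has primes, $K$ is categorical in $\lambda$), so there are $M \in K_\lambda$ and a nonalgebraic $p \in \gS (M)$ with $\s \rest K_{\neg^\ast p}$ a type-full good $\lambda$-frame. Since $K$ is nice, $K_{\neg^\ast p}$ has weak amalgamation and, because $K$ is $\|M\|$-tame, $K_{\neg^\ast p}$ is $\lambda$-tame. Applying Theorem \ref{ap-frame-transfer} to $K_{\neg^\ast p}$ gives that it has amalgamation above $\lambda$, and then Fact \ref{nmm-fact} applied to $K_{\neg^\ast p}$ shows it has no maximal models, hence arbitrarily large models. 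By Definition \ref{kneg-def}, any $N \in K_{\neg^\ast p}$ satisfies that $p$ has a unique extension in $\gS (N \rest L (K))$ (identifying $M$ with a submodel); since the nonforking extension of the nonalgebraic $p$ is nonalgebraic while any realization of $p$ inside $N \rest L (K)$ would produce a distinct algebraic extension of $p$, the reduct $N \rest L (K)$ must omit $p$. Now fix $\theta > \lambda$ with $K$ categorical in $\theta$. On one hand, $K_{\neg^\ast p}$ has a model of size $\theta$, whose $L (K)$-reduct is a model of $K$ of size $\theta$ omitting $p$. On the other hand, $p$ is realized in some model of $K$ of size $\lambda$ (every type is realized in an extension; apply Löwenheim--Skolem), which extends to a model of $K$ of size $\theta$ realizing $p$ since $K$ has no maximal models. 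These are non-isomorphic models of size $\theta$, contradicting categoricity in $\theta$; so $(3)$ fails, as desired.

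I expect the main obstacle to be the handling of the auxiliary class $K_{\neg^\ast p}$ in the last implication: checking that it is an AEC, transferring amalgamation (hence ``no maximal models'', hence existence of arbitrarily large models) up inside it, and confirming that its members genuinely omit $p$. Niceness is exactly the hypothesis designed to make the amalgamation and tameness transfer for $K_{\neg^\ast p}$ go through, and Fact \ref{not-unidim-frame} is the deep input that produces the good $\lambda$-frame on $K_{\neg^\ast p}$ in the first place.
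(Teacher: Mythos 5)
Your overall architecture ((1)$\Rightarrow$(2)$\Rightarrow$(3)$\Rightarrow$(1), the preliminary amalgamation/no-maximal-models observations via Theorem \ref{ap-frame-transfer} and Fact \ref{nmm-fact}, the use of Facts \ref{unidim-categ}, \ref{tameness-categ}, and \ref{not-unidim-frame}, and the transfer of amalgamation inside $K_{\neg^\ast p}$ using niceness) coincides with the paper's proof. The one place you diverge is the very last step of (3)$\Rightarrow$(1), and there is a genuine gap there. You produce a model $N_2 := \bigN \rest L (K)$ of size $\theta$ that omits $f (p)$ over the \emph{distinguished} copy $f[M]$ of $M$ (the one coded by the constants $c_a^{\bigN}$), and a model $N_1$ of size $\theta$ realizing $p$ over $M$, and you conclude these are non-isomorphic. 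That does not follow: an isomorphism $h : N_1 \cong N_2$ would only carry the realization of $p$ to a realization of $h (p)$ over $h[M]$, and nothing forces $h[M] = f[M]$ or $h \rest M = f$. Indeed, by categoricity in $\lambda$ every $\lambda$-sized $K$-substructure of $N_2$ is isomorphic to $M$, and the definition of $K_{\neg^\ast p}$ only controls extensions of $f (p)$ over the designated copy; conjugates of $p$ over other copies of $M$ inside $N_2$ may well be realized. So ``realizes $p$ over some copy of $M$'' versus ``omits $p$ over one particular copy of $M$'' is not an isomorphism-invariant distinction.

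The paper closes this gap by comparing $\bigN \rest L (K)$ not with an arbitrary model realizing $p$ but with the \emph{saturated} model of size $\theta$, which exists by stability in every cardinal (a consequence of the good $(\ge \lambda)$-frame from Fact \ref{nmm-fact}). Saturation is isomorphism-invariant, and $\bigN \rest L (K)$ is not saturated because it omits a type over a submodel of size $\lambda < \theta$; hence $K$ has two non-isomorphic models of size $\theta$. If you replace your final paragraph's non-isomorphism claim with this saturation argument, the proof matches the paper's and is complete. (A minor additional point: you should justify that the $p$ given by Fact \ref{not-unidim-frame} is nonalgebraic --- this is clear from its construction via Lemma \ref{p-q-lem}, since orthogonality is only defined for nonalgebraic types, but it is not literally part of the statement of the Fact.)
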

\begin{proof} 
  Replacing $K$ with $K_{\ge \lambda}$, assume without loss of generality that $\lambda = \LS (K)$ and $K_{<\LS (K)} = \emptyset$. First note that $K$ has amalgamation by Theorem \ref{ap-frame-transfer}. By Fact \ref{nmm-fact}, $\s$ extends to a good $(\ge \lambda)$-frame on $K$. In particular, $K$ has no maximal models and is stable in every cardinal. Moreover by Proposition \ref{kneg-aec}, $K_{\neg^\ast p}$ is an AEC for all $p \in \gS (M)$ and $M \in K$.

  If $K_\lambda$ is weakly uni-dimensional, then by Fact \ref{unidim-categ}, $K$ is categorical in $\lambda^+$. By Fact \ref{tameness-categ}, $K$ is categorical in all $\mu \ge \lambda$. So (\ref{categ-transfer-1}) implies (\ref{categ-transfer-2}). Of course, (\ref{categ-transfer-2}) implies (\ref{categ-transfer-3}). It remains to show (\ref{categ-transfer-3}) implies (\ref{categ-transfer-1}). We show the contrapositive. 

  Assume that $K$ is \emph{not} weakly uni-dimensional. Let $M \in K_\lambda$ and $p \in \gS (M)$ be as given by Fact \ref{not-unidim-frame}. Let $\s_{\neg^\ast p} := \s \rest K_{\neg^\ast p}$, the restriction of $\s$ to models in $K_{\neg^\ast p}$. Since $K$ is nice, $K_{\neg^\ast p}$ has weak amalgamation and since $K$ is also $\lambda$-tame, $K_{\neg^\ast p}$ is $\lambda$-tame. Since $\s_{\neg^\ast p}$ is a good $\lambda$-frame, Theorem \ref{ap-frame-transfer} gives that $K_{\neg^\ast p}$ has amalgamation. By Fact \ref{nmm-fact}, $K_{\neg^\ast p}$ has no maximal models and is stable in every cardinals. Now let $\theta > \lambda$. By stability, $K$ has a saturated model of size $\theta$. Moreover since $K_{\neg^\ast p}$ has arbitrarily large models there must exist $\bigN \in K_{\neg^\ast p}$ of size $\theta$. By construction, $\bigN \rest L (K)$ is not saturated of size $\theta$. Therefore $K$ is not categorical in $\theta$.
\end{proof}

We are now ready to prove a categoricity transfer in fully tame and short AECs with amalgamation (Theorem \ref{prime-caract-0} from the abstract). We state one more fact:

\begin{fact}\label{baldwin-omit}
  If $K$ is a $\LS (K)$-tame AEC with amalgamation and no maximal models which is categorical in a $\lambda \ge H_2$ (recall Notation \ref{hanf-notation}) and the model of size $\lambda$ is saturated, then $K$ is categorical in $H_2$.
\end{fact}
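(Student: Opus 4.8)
The plan is to reduce Fact \ref{baldwin-omit} to Shelah's downward categoricity transfer for AECs with amalgamation \cite{sh394} (see also \cite[Chapter 14]{baldwinbook09}): the hypothesis that \emph{the model of size $\lambda$ is saturated} plays exactly the role that ``$\lambda$ is a successor cardinal'' plays in Shelah's statement.

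First I would dispose of the easy ingredients. Since $K$ is categorical in $\lambda \geq H_2$ and has amalgamation and $\LS (K)$-tameness, $K$ is stable in every $\chi$ with $\LS (K) \leq \chi < \lambda$; in particular $K$ is stable in all $\chi < H_2$, so there is a saturated model $\bar M$ of size $H_2$, and saturated models of a fixed size are unique (using amalgamation and stability below that size). Hence it suffices to show that every $N \in K_{H_2}$ is saturated: categoricity in $H_2$ follows immediately.

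So suppose toward a contradiction that some $N \in K_{H_2}$ is not saturated, say $N_0 \lea N$ with $\|N_0\| = \chi < H_2$ and $p \in \gS (N_0)$ omitted in $N$. Working in $K_{N_0}$ and applying Morley's method to first bring the parameter model down to size $\leq H_1$ and then to size $\leq \LS (K)$ — this is where the Hanf-number bookkeeping forces the threshold $H_2 = \hanf{\hanf{\LS (K)}}$ — one extracts a Galois type $q$ over a model of size $\leq \LS (K)$ that is omitted in some model of size $\geq H_1$. Now use that $K$ has a presentation with Skolem functions (Shelah's presentation theorem, \cite[Lemma I.1.7]{shelahaecbook}) and no maximal models, so $K$ has Ehrenfeucht--Mostowski models $\text{EM} (I, \Phi)$ functorial in the linear order $I$; the usual Ehrenfeucht--Mostowski argument produces a blueprint $\Psi$ such that $\text{EM} (I, \Psi)$ omits $q$ for every $I$. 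Taking $|I| = \lambda$ gives a model of size $\lambda$ which is not saturated, contradicting categoricity in $\lambda$ together with the hypothesis that the model of size $\lambda$ is saturated.

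The main obstacle is precisely the middle step: turning a failure of saturation witnessed over an \emph{arbitrary} submodel of size $< H_2$ into one witnessed over a submodel of size $\leq \LS (K)$, and then feeding it through the EM machinery so that the omission survives up to size $\lambda$. This is the heart of Shelah's downward transfer and the reason the bound is $H_2$ rather than $H_1$; the remaining ingredients — stability below $\lambda$, existence and uniqueness of saturated models, and the EM functor — are routine given amalgamation, no maximal models, and $\LS (K)$-tameness.
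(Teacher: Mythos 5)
Your proposal is correct and is essentially the paper's own proof: the paper establishes this fact simply by invoking the proof of Shelah's downward categoricity transfer (\cite[Theorem II.1.6]{sh394}, or \cite[Theorem 14.8]{baldwinbook09}), which is exactly the argument you sketch, with the hypothesis that the model of size $\lambda$ is saturated standing in for the successor assumption. The ingredients you list (stability below $\lambda$, uniqueness of the saturated model of size $H_2$, the two-step Hanf-number reduction explaining the threshold $H_2$, and the Ehrenfeucht--Mostowski transfer of the omitted type up to $\lambda$) are precisely those of the cited proof.
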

\begin{proof}
  By the proof of \cite[Theorem II.1.6]{sh394} (or see \cite[Theorem 14.8]{baldwinbook09}).
\end{proof}

\begin{thm}\label{categ-nice-ap}
  Let $K$ be a fully $\LS (K)$-tame and short AEC with amalgamation such that $K_{\ge H_2}$ has primes. If $K$ is categorical in a $\lambda > H_2$, then $K$ is categorical in all $\lambda' \ge H_2$.
\end{thm}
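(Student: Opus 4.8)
The plan is to run the categoricity transfer Theorem~\ref{categ-transfer-frame} with a good frame whose base cardinal is $H_2$ itself; the two things that need to be fed in are that $K$ becomes categorical in $H_2$ (via the downward transfer Fact~\ref{baldwin-omit}) and that Theorem~\ref{get-good-frame} supplies a suitable frame at the level $H_2$.

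First I would dispatch the bookkeeping hypotheses. Since $K$ has a model of size $\lambda > H_2 \ge H_1$, it has arbitrarily large models (the Hanf number computation for AECs), and then categoricity in $\lambda$ --- which gives joint embedding --- together with amalgamation yields that $K$ has no maximal models. Next, because $K$ is categorical in $\lambda$, is $\LS(K)$-tame, and has amalgamation, $K$ is stable in every $\mu$ with $\LS(K) \le \mu < \lambda$ (a standard consequence of categoricity above the Hanf number), so $K$ has a saturated model of size $\lambda$, which by categoricity is \emph{the} model of size $\lambda$. Fact~\ref{baldwin-omit} then applies and gives that $K$ is categorical in $H_2$.

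Second, I would build the frame. Theorem~\ref{get-good-frame} applies to $K$ (it is fully $\LS(K)$-tame and short, has amalgamation and no maximal models, and is categorical in $\lambda > \LS(K)$), producing $\lambda_0 < H_1$ such that at every categoricity cardinal $\ge \lambda_0$ there is a type-full successful $\goodp$ good frame on that level. Since $\lambda_0 < H_1 < H_2$ and $K$ is categorical in $H_2$, we obtain a type-full successful $\goodp$ good $H_2$-frame $\s$ with underlying class $K_{H_2}$. To invoke Theorem~\ref{categ-transfer-frame} I would pass to $K' := K_{\ge H_2}$: it has amalgamation and no maximal models, is $H_2$-tame (since $K$ is $\LS(K)$-tame), has primes (this is exactly the hypothesis ``$K_{\ge H_2}$ has primes''), is categorical both in $H_2 = \LS(K')$ and in $\lambda$, and carries $\s$ as a good $\LS(K')$-frame on its bottom level. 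Moreover $K'$ is nice: it has primes, hence weakly has primes by Remark~\ref{prime-weakly-prime}, hence is nice by Proposition~\ref{weakly-prime-nice}.

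Thus all hypotheses of Theorem~\ref{categ-transfer-frame} are met for $\s$ and $K'$. Since $K'$ is categorical in $\lambda > H_2$, clause~(\ref{categ-transfer-3}) of that theorem holds, hence so does clause~(\ref{categ-transfer-2}): $K'$, and therefore $K$, is categorical in all $\mu \ge H_2$, as desired. The step I expect to be the main obstacle is the saturation claim of the second paragraph: one needs that the categoricity model of size $\lambda$ is saturated, which comes down to the standard (but slightly delicate, particularly when $\lambda$ is singular) fact that categoricity in a sufficiently high cardinal forces stability in all smaller cardinals $\ge \LS(K)$. Everything else amounts to checking that the hypotheses of the imported results \ref{get-good-frame}, \ref{baldwin-omit}, \ref{prime-weakly-prime}, \ref{weakly-prime-nice}, and \ref{categ-transfer-frame} fit together.
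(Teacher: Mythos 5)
Your proposal is correct and follows essentially the same route as the paper: reduce to joint embedding and no maximal models, get categoricity in $H_2$ from Fact \ref{baldwin-omit} (with saturation of the model of size $\lambda$ supplied by superstability from categoricity, Facts \ref{shelah-vi} and \ref{frame-construct} --- exactly the ``stability in all smaller cardinals'' point you flag), then feed the frame from Theorem \ref{get-good-frame} into Theorem \ref{categ-transfer-frame} applied to $K_{\ge H_2}$. The only cosmetic difference is that the paper applies Theorem \ref{get-good-frame} to $K_{\ge \LS(K)^+}$ rather than to $K$ itself, and leaves the niceness check (via Remark \ref{prime-weakly-prime} and Proposition \ref{weakly-prime-nice}) implicit.
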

\begin{proof}
  Without loss of generality, $K$ has joint embedding and no maximal models: we can start by splitting $K$ into disjoint parts, each of which has joint embedding, and then work with the unique part which has arbitrarily large models.

  We start by observing that $K$ is categorical in $H_2$ by Fact \ref{baldwin-omit} (note that the model of size $\lambda$ is saturated by Facts \ref{frame-construct} and \ref{shelah-vi}). Now apply Theorem \ref{get-good-frame} (to $K_{\ge \LS (K)^+}$) and Theorem \ref{categ-transfer-frame}.
\end{proof}

The only place where shortness is used above is to get the existence property for uniqueness triples (i.e.\ that the good frame is successful). The proof shows that it is enough to assume that for some $\lambda$, $K_{\ge \lambda}$ is almost fully good, i.e.\ it has a nice-enough global independence relation (see \ref{fully-good-def} for a more precise definition). One can ask:

\begin{question}\label{tame-shortness-q}
  Can the full tameness and shortness hypothesis be weakened to just being $\LS (K)$-tame? 
\end{question}

We obtain a categoricity transfer for universal classes with amalgamation.

\begin{cor}\label{categ-univ-ap}
  Let $K$ be a locally pseudo-universal AEC with amalgamation. If $K$ is categorical in a $\lambda > H_2$, then $K$ is categorical in all $\lambda' \ge H_2$.
\end{cor}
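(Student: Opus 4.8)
The plan is to deduce this directly from Theorem \ref{categ-nice-ap}: it suffices to check that a locally pseudo-universal AEC with amalgamation satisfies the hypotheses of that theorem, namely that it is fully $\LS (K)$-tame and short and that $K_{\ge H_2}$ has primes (amalgamation being assumed).

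Full tameness and shortness is immediate: by Corollary \ref{loc-pseudo-univ-tame}, every locally pseudo-universal AEC is fully $\LS (K)$-tame and short. For primes, I would first recall that a pseudo-universal class admits intersections by definition, so $K$ locally admits intersections; by Remark \ref{local-rmk} the class $K_{\ge H_2}$ is again locally pseudo-universal, hence again locally admits intersections. Now for any $M \lea N$ in $K_{\ge H_2}$ and any $a \in |N|$, the set $A := |M| \cup \{a\}$ contains $M$, so $\cl^N (A) \lea N$; the argument of Remark \ref{inter-prime} then shows that $(a, M, \cl^N (A))$ is a prime triple, so $K_{\ge H_2}$ has primes. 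With all the hypotheses in hand, Theorem \ref{categ-nice-ap} gives that categoricity of $K$ in some $\lambda > H_2$ implies categoricity in all $\lambda' \ge H_2$.

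I do not anticipate any real difficulty here; the only point requiring a moment's care is that the local intersection property (and with it the existence of primes) must be available not just in $K$ but in the tail $K_{\ge H_2}$, which is exactly what Remark \ref{local-rmk} supplies. Everything else is a direct invocation of results already established, principally Theorem \ref{categ-nice-ap} and Corollary \ref{loc-pseudo-univ-tame}.
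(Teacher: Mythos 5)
Your proof is correct and follows essentially the same route as the paper's: cite Corollary \ref{loc-pseudo-univ-tame} for full $\LS(K)$-tameness and shortness, get primes from Remark \ref{inter-prime}, and apply Theorem \ref{categ-nice-ap}. Your extra care in checking that the prime triples exist in the tail $K_{\ge H_2}$ (via Remark \ref{local-rmk} and the fact that $|M| \cup \{a\}$ contains $M$, so local admission of intersections suffices) is a point the paper glosses over, but it is exactly the right justification.
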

\begin{proof}
  By Corollary \ref{loc-pseudo-univ-tame}, $K$ is fully $\LS (K)$-tame and short. By Remark \ref{inter-prime}, $K$ has primes. Now apply Theorem \ref{categ-nice-ap}.
\end{proof}

In view of Theorem \ref{categ-nice-ap}, a natural question is whether the existence of primes follows from the other hypotheses: 

\begin{question}\label{weakly-primes}
  If $K$ is fully tame and short, has amalgamation, and is categorical in unboundedly many cardinals, does there exists $\lambda$ such that $K_{\ge \lambda}$ has primes?
\end{question}

Note that by \cite{tamelc-jsl}, a positive answer would imply that Shelah's categoricity conjecture follows from the existence of a proper class of strongly compact cardinals. Moreover, it turns out that a converse is true. This was conjectured in earlier versions of this paper, and the missing piece was proven in \cite{prime-categ-v6-toappear}:

\begin{fact}\label{building-prime-fact}
  Let $K$ be an almost fully good AEC (see Definition \ref{almost-fully-good-def}). For any $\lambda > \LS (K)^+$, $\Ksatp{\lambda}_\lambda$ has primes.
\end{fact}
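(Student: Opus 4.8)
The plan is to deduce the statement from the good independence structure on $K$ by passing to the class of saturated models. Write $K^{*} := \Ksatpp{K}{\lambda}$. I first record, as bookkeeping from the definition of ``almost fully good'' together with the basic theory of saturated models (here $\lambda > \LS(K)^{+}$ is used so that $\lambda$-saturated models of size $\lambda$ exist and are closed under the relevant unions of chains): $K$ is stable in every $\mu \in [\LS(K), \lambda)$, so $K^{*}_{\lambda} \neq \emptyset$; $K^{*}$ is an AEC with $\LS(K^{*}) = \lambda$, amalgamation, joint embedding and no maximal models; and the restriction to $K^{*}_{\lambda}$ of the global independence relation $\nf$ of $K$ is the forking notion of a type-full, $\goodp$, weakly successful good $\lambda$-frame $\s^{*}$ on $K^{*}_{\lambda}$ (as elsewhere in the paper, one first restricts to a sub-AEC with joint embedding and no maximal models). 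Fix $M \in K^{*}_{\lambda}$ and $p \in \gS(M)$; if $p$ is algebraic then $(a,M,M)$ for any $a \in |M|$ realizing $p$ is a prime triple, so from now on assume $p$ is nonalgebraic and fix $a$ realizing $p$ inside a large $\lambda^{+}$-saturated model.

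The heart of the argument is to build $N$ with $M \lea N$, $a \in |N|$, $\|N\| = \lambda$, $N$ a $\lambda$-saturated model, and $(a, M, N)$ a uniqueness triple for $\s^{*}$ — equivalently, by the domination characterization (Fact \ref{uq-domin}), such that $a$ dominates $N$ over $M$ with respect to $\nf$. Since $\s^{*}$ is weakly successful and type-full, there is \emph{some} $N_{0} \gea M$ with $a \in |N_{0}|$, $\|N_{0}\| = \lambda$, $\gtp(a/M;N_{0}) = p$, and $(a, M, N_{0})$ a uniqueness triple; the problem is that $N_{0}$ need not be $\lambda$-saturated, and naively enlarging it to a saturated model destroys domination. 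Instead one runs a transfinite construction of length exactly $\lambda$ producing an increasing continuous chain $\seq{N_{i} : i < \lambda}$ of models of size $<\lambda$, starting from a small model over a small $M_{0} \lea M$ containing $a$, where at successor steps one alternates between (i) $\nf$-free joins over $M_{0} \cup \{a\}$, which propagate domination, and (ii) realizing all Galois types over the models built so far, which forces $N := \bigcup_{i<\lambda} N_{i}$ to be $\lambda$-saturated; local character of $\nf$ bounds the work in (i), and the continuity supplied by $\goodp$ is what lets domination survive the limit stages, so that $(a, M, N)$ is again a uniqueness triple. The hypothesis $\lambda > \LS(K)^{+}$ is exactly what gives this construction enough room to keep $\|N\| = \lambda$.

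It remains to verify that the triple $(a, M, N)$ so obtained is prime in $K^{*}_{\lambda}$; by (the obvious analog of) Remark \ref{prime-weakly-prime} it suffices to show: given any $N' \in K^{*}_{\lambda}$ and $b \in |N'|$ with $\gtp(b/M; N') = p$, there is $f : N \xrightarrow[M]{} N'$ with $f(a) = b$. This is a back-and-forth along a resolution $N = \bigcup_{i < \lambda} N_{i}$ into small pieces: one extends $f$ one $N_{i}$ at a time, using that $a$ dominates $N$ over $M$ together with uniqueness of nonforking extensions to see that the Galois type $N_{i+1}$ must have over $f[N_{i}]$ is \emph{determined} once $a \mapsto b$ is fixed, and using $\lambda$-saturation of $N'$ to realize that determined type inside $N'$; continuity ($\goodp$) ensures the union of the partial maps is again a $K^{*}$-embedding. (The same argument applied with $N'$ a saturated extension of $N$ itself re-proves that $(a,M,N)$ is a uniqueness triple, closing the loop.)

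The step I expect to be the main obstacle is the construction of $N$ in the second paragraph: producing a single model of size \emph{exactly} $\lambda$ that is simultaneously $\lambda$-saturated and a domination triple over $M \cup \{a\}$. Either property in isolation is routine, but interleaving them without blowing the cardinality up to $\lambda^{+}$ — and, above all, checking that domination is preserved through the $<\lambda$ many limit stages of the interleaved construction, which is where $\goodp$ is genuinely needed — is the delicate part; the back-and-forth of the third paragraph is then comparatively soft, modulo having set up the independence calculus on $K^{*}$ correctly.
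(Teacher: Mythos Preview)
The paper does not contain a proof of this statement. Fact~\ref{building-prime-fact} is explicitly blackboxed: the text immediately preceding it says ``the missing piece was proven in \cite{prime-categ-v6-toappear}'', and the text immediately following it says ``Blackboxing Fact~\ref{building-prime-fact}, we can give a proof of the converse of Theorem~\ref{categ-nice-ap}''. So there is nothing in the paper to compare your proposal against; the result is imported wholesale from an external reference.

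As for your sketch on its own merits: the overall architecture --- pass to the saturated class, build a domination triple $(a,M,N)$ with $N$ itself $\lambda$-saturated, then run a back-and-forth using domination plus uniqueness to embed $N$ into any other realization --- is indeed the shape of the argument in the cited paper. You have also correctly located the real difficulty: interleaving saturation with domination without losing either at limit stages. Two cautions, though. First, your description of the construction (``alternates between $\nf$-free joins over $M_0 \cup \{a\}$ \ldots and realizing all Galois types'') is too informal to be checkable; the actual argument requires a careful bookkeeping with towers or a resolution of $M$ into a continuous chain of smaller saturated pieces, together with the symmetry and full model continuity of the almost fully good relation, not merely $\goodp$. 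Second, in the back-and-forth you appeal to $\lambda$-saturation of $N'$ to realize the determined type of $N_{i+1}$, but $N_{i+1}$ has size $<\lambda$ and the type you need realized is a type of a \emph{model}, not a single element --- so you need saturation for long types, which in turn rests on the global independence relation (uniqueness and extension for types of length $<\lambda$), not just the good $\lambda$-frame on $K^\ast_\lambda$. Neither point is fatal, but both are places where the sketch hides genuine work.
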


Blackboxing Fact \ref{building-prime-fact}, we can give a proof of the converse of Theorem \ref{categ-nice-ap}:

\begin{thm}\label{prime-equiv}
  Let $K$ be a fully $\LS (\K)$-tame and short AEC with amalgamation. The following are equivalent:

  \begin{enumerate}
  \item\label{prime-equiv-2} $K_{\ge H_2}$ has primes and is categorical in \emph{some} $\lambda > H_2$
  \item\label{prime-equiv-1} $K$ is categorical in \emph{all} $\lambda' \ge H_2$.
  \end{enumerate}
\end{thm}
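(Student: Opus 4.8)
The plan is to prove the two implications separately; one of them requires no new work. For (\ref{prime-equiv-2}) $\Rightarrow$ (\ref{prime-equiv-1}) there is nothing to do: this is exactly Theorem \ref{categ-nice-ap}, applied with the categoricity cardinal $\lambda > H_2$ furnished by (\ref{prime-equiv-2}). So all the content is in (\ref{prime-equiv-1}) $\Rightarrow$ (\ref{prime-equiv-2}), where ``$K$ is categorical in some $\lambda > H_2$'' is immediate from categoricity in all $\lambda' \ge H_2$ and the task reduces to showing that $K_{\ge H_2}$ has primes. As a first step I would split $K$ into its disjoint pieces and pass to the unique piece with arbitrarily large models, exactly as in the proof of Theorem \ref{categ-nice-ap}: since $K$ is categorical above $H_2$ this leaves $K_{\ge H_2}$ unchanged, and the resulting class has joint embedding and no maximal models.

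Next I would re-run the portion of the argument for Theorem \ref{categ-nice-ap} that does \emph{not} invoke the existence of primes: from full tameness and shortness, amalgamation, and categoricity in a cardinal $> \LS (K)$, the appendix results underlying Theorem \ref{get-good-frame} produce a threshold $\lambda_0$, which the Hanf-number bounds there keep strictly below $H_2$ (and then $\lambda_0^+ < H_2$ as well, since $H_2$ is a limit cardinal), such that $K_{\ge \lambda_0}$ is an \emph{almost fully good} AEC in the sense needed by Fact \ref{building-prime-fact}. I expect this to be the delicate point of the proof: not the existence of an almost-fully-good tail, which is essentially already extracted in the proof of Theorem \ref{categ-nice-ap}, but checking that the threshold $\lambda_0$ can be taken below $H_2$, so that Fact \ref{building-prime-fact} applies uniformly to all $\lambda \ge H_2$. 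Everything after this is assembly and bookkeeping.

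With $K_{\ge \lambda_0}$ almost fully good (Löwenheim--Skolem number $\lambda_0$), Fact \ref{building-prime-fact} gives, for every $\lambda > \lambda_0^+$ --- hence for every $\lambda \ge H_2$ --- that the class $(K_{\ge \lambda_0})^{\lambda\text{-sat}}_\lambda$ has primes. To convert this into a statement about $K$ itself, I would use that $K$ is categorical in every $\lambda \ge H_2 \ge H_1$, so the model of size $\lambda$ is saturated (the standard consequence of categoricity above $H_1$, as in Fact \ref{baldwin-omit}), hence coincides with the unique $\lambda$-saturated model of size $\lambda$; therefore $(K_{\ge \lambda_0})^{\lambda\text{-sat}}_\lambda = K_\lambda$, and so $K_\lambda$ has primes for every $\lambda \ge H_2$. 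A routine Löwenheim--Skolem argument then upgrades this to $K_{\ge H_2}$: given $M \in K_{\ge H_2}$ of size $\lambda$ and $p \in \gS (M)$, realize $p$ by an element $a$ inside a model of size $\lambda$, take the prime triple $(a, M, N_0)$ over $|M| \cup \{a\}$ with $N_0 \in K_\lambda$ supplied by ``$K_\lambda$ has primes'', and check --- pulling realizations in larger models back into $\lea$-submodels of size $\lambda$ --- that $(a, M, N_0)$ remains prime against every model of $K_{\ge H_2}$. The one genuinely deep ingredient in all of this is Fact \ref{building-prime-fact}, the construction of prime models from a sufficiently strong global independence relation, which we are using as a black box from \cite{prime-categ-v6-toappear}.
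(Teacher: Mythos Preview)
Your approach is the paper's: invoke Theorem \ref{categ-nice-ap} for one direction, and for the other reduce to joint embedding and no maximal models, pass to an almost fully good tail via Corollary \ref{fully-good-cor}, apply Fact \ref{building-prime-fact}, and use categoricity above $H_2$ to identify $\Ksatp{\lambda}_\lambda$ with $K_\lambda$. One slip worth fixing: the appendix does \emph{not} yield that $K_{\ge \lambda_0}$ itself is almost fully good --- Corollary \ref{fully-good-cor} only gives this for $\Ksatp{\mu}$ with $\mu = \left(2^{\LS(K)}\right)^{+5}$, and it is to this class (with L\"owenheim--Skolem number $\mu$) that Fact \ref{building-prime-fact} must be applied; this is exactly what the paper does. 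Your subsequent computation is unaffected by the slip, since for $\lambda > \mu$ one has $(\Ksatp{\mu})^{\lambda\text{-sat}}_\lambda = \Ksatp{\lambda}_\lambda = (K_{\ge \lambda_0})^{\lambda\text{-sat}}_\lambda$, and categoricity in every $\lambda \ge H_2$ collapses these to $K_\lambda$ as you argue; your final L\"owenheim--Skolem step from ``$K_\lambda$ has primes for each $\lambda \ge H_2$'' to ``$K_{\ge H_2}$ has primes'' is correct and simply spells out what the paper leaves implicit.
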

\begin{proof}
  (\ref{prime-equiv-2}) implies (\ref{prime-equiv-1}) is Theorem \ref{categ-nice-ap}. We show (\ref{prime-equiv-1}) implies (\ref{prime-equiv-2}). As in the proof of Theorem \ref{categ-nice-ap}, we assume without loss of generality that $\K$ has joint embedding and no maximal models. By Corollary \ref{fully-good-cor} (with $\kappa, \theta$ there standing for $\LS (K)^+, \lambda$ here), $K^\ast := \Ksatp{\mu}$ is almost fully good, where $\mu := \left(2^{\LS (K)}\right)^{+5}$. Now apply Fact \ref{building-prime-fact} to the AEC $K^\ast$ and use categoricity in all $\lambda' \ge H_2$.
\end{proof}
\begin{remark}
  Using the threshold improvements of \cite{downward-categ-tame-apal}, we can replace $H_2$ by $H_1$ (and allow $\lambda = H_1$ in (\ref{prime-equiv-2})) in Theorem \ref{prime-equiv}.
\end{remark}

There is still an assumption of amalgamation in Theorem \ref{categ-nice-ap}. Assuming the categoricity cardinals are sufficiently nice, this can be removed using the results of Section \ref{ap-sec}:

\begin{thm}\label{categ-primes-no-ap}
  Let $\K$ be a fully tame and short AEC with primes. If $\K$ is categorical in cardinals of arbitrarily high cofinality, then $\K$ is categorical on a tail of cardinals.
\end{thm}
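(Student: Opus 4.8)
The plan is to bootstrap this from the amalgamation case, Theorem~\ref{categ-nice-ap}: all of its hypotheses hold here \emph{except} amalgamation, which I will obtain from Corollary~\ref{tame-ap-0}. As a preliminary reduction, if $K$ is fully $(<\kappa)$-tame and short I would replace $K$ by $K_{\ge \kappa}$: this has $\LS (K_{\ge\kappa}) = \kappa$, inherits full $(<\kappa)$-tameness and shortness (hence is fully $\LS$-tame and short), still has primes, and is still categorical in cardinals of arbitrarily high cofinality, since categoricity above $\kappa$ is unaffected and there are categoricity cardinals of arbitrarily high cofinality above $\kappa$. So from now on I may take $K$ to be fully $\LS (K)$-tame and short (in particular tame), and in particular categorical in unboundedly many cardinals.

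Next I would verify the hypotheses of Corollary~\ref{tame-ap-0} for $K$. It is tame; it is categorical in unboundedly many cardinals; by Fact~\ref{event-synt-fact} it is eventually syntactically characterizable (this is the one place the cofinality hypothesis genuinely enters); and it has weak amalgamation, because, having primes, it weakly has primes by Remark~\ref{prime-weakly-prime}, hence is nice by Proposition~\ref{weakly-prime-nice}, and niceness includes weak amalgamation. Corollary~\ref{tame-ap-0} then produces $\lambda \ge \LS (K)$ such that $K_{\ge \lambda}$ has amalgamation.

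Finally I would apply Theorem~\ref{categ-nice-ap} to $K_{\ge \lambda}$. That class has amalgamation; it is fully $\LS (K_{\ge\lambda})$-tame and short, inherited from $K$ (with $\LS (K_{\ge\lambda}) = \lambda$); its models of size $\ge H_2$ have primes, inherited from $K$; and it is categorical in some $\mu > H_2 (K_{\ge\lambda})$, since $K$ is categorical in unboundedly many cardinals. Theorem~\ref{categ-nice-ap} then gives that $K_{\ge \lambda}$ is categorical in all $\mu' \ge H_2 (K_{\ge\lambda})$, and hence $K$ is categorical on a tail of cardinals.

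The conceptual weight of the argument sits entirely inside the cited results: Shelah's construction of a good frame from categoricity together with eventual syntactic characterizability (underlying Corollary~\ref{tame-ap-0} via Fact~\ref{good-frame-fact} and the frame transfer Theorem~\ref{ap-frame-transfer}), and the analysis of $K_{\neg^\ast p}$ and weak uni-dimensionality behind Theorem~\ref{categ-nice-ap}. The only delicate point in the present proof is the bookkeeping around passing to tails --- checking that full $\LS$-tameness and shortness, the existence of primes, weak amalgamation, and the supply of categoricity cardinals of high cofinality all survive restriction to $K_{\ge\kappa}$ and $K_{\ge\lambda}$ --- and this is routine.
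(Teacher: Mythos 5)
Your proof is correct and follows essentially the same route as the paper's: obtain eventual syntactic characterizability from Fact \ref{event-synt-fact}, derive weak amalgamation from the existence of primes, invoke Corollary \ref{tame-ap-0} to get amalgamation on a tail, and finish with Theorem \ref{categ-nice-ap} applied to $\K_{\ge\lambda}$. The extra bookkeeping you supply (the reduction to $K_{\ge\kappa}$ so that full $\LS$-tameness and shortness holds, and checking that the hypotheses survive restriction to tails) is left implicit in the paper but is exactly right.
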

\begin{proof}
  By Fact \ref{event-synt-fact}, $\K$ is eventually syntactically characterizable. By the definition of having primes, $\K$ has weak amalgamation. By Corollary \ref{tame-ap-0}, there exists $\lambda$ such that $\K_{\ge \lambda}$ has amalgamation. Now apply Theorem \ref{categ-nice-ap} to $\K_{\ge \lambda}$.
\end{proof}
\begin{remark}
  Instead of categoricity in cardinals of arbitrarily high cofinality, it suffices to assume that $\K$ is eventually syntactically characterizable and categorical in unboundedly many of cardinals.
\end{remark}

We can replace the assumption on the categoricity cardinal by large cardinals. As pointed out in the introduction (Theorem \ref{strongly-compact-thm}), a strongly compact would be enough. Here we improve this to a measurable (but assume full tameness and shortness). This only gives amalgamation below the categoricity cardinal but we can then transfer amalgamation upward using the arguments in Section \ref{ap-sec}.

\begin{thm}\label{measurable-thm}
  Let $\K$ be a fully $\LS (\K)$-tame and short AEC with primes. Let $\kappa > \LS (\K)$ be a measurable cardinal. If $\K$ is categorical in \emph{some} $\lambda > \hanf{\hanf{\kappa}}$, then $\K$ is categorical in \emph{all} $\lambda' \ge \hanf{\hanf{\kappa}}$.
\end{thm}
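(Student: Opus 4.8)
The plan is to use the measurable cardinal $\kappa$ to build, \emph{below} the categoricity cardinal $\lambda$, a good frame with enough structure to apply Theorem \ref{categ-transfer-frame}, and then let that theorem produce the categoricity transfer. First, as in the proof of Theorem \ref{categ-nice-ap}, reduce to the case where $\K$ has joint embedding and no maximal models, by splitting $\K$ into its joint embedding classes and keeping the unique one with arbitrarily large models; this changes neither $\LS (\K)$ nor full tameness and shortness nor the existence of primes, and preserves categoricity in sufficiently large cardinals. Observe also that, since $\LS (\K) < \kappa$, full $\LS (\K)$-tameness and shortness yields full $(<\kappa)$-tameness and shortness, and that for every $\mu \gea \LS (\K)$ the AEC $\K_{\gea \mu}$ is still fully $\mu$-tame and short, has no maximal models, and has primes (a prime triple over a model of size $\gea \mu$ has top model of size $\gea \mu$, hence lies in $\K_{\gea \mu}$).

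The crux is to produce a cardinal $\mu < \hanf{\hanf{\kappa}}$ such that $\K$ is categorical in $\mu$ and $\K_\mu$ carries a type-full, successful, $\goodp$ good $\mu$-frame. This is where measurability is used: a measurable $\kappa$ supplies a uniform $\kappa$-complete ultrafilter, and full $(<\kappa)$-tameness and shortness let one transport Galois types through ultraproducts built from such an ultrafilter by a {\L}o\'s-style argument --- this locality hypothesis plays the role that the fineness of the ultrafilter plays in the strongly compact case of Theorem \ref{strongly-compact-thm}. From this one extracts amalgamation for the models of $\K$ of each size in a set of cardinals cofinal below $\lambda$ (amalgamation below the categoricity cardinal); since the construction of a good frame from categoricity in Fact \ref{known-good-frame} (or the sharper Theorem \ref{get-good-frame}) only uses amalgamation in a bounded range of cardinals around the categoricity cardinal, it goes through and yields a type-full successful good $\mu$-frame on $\K_\mu$ with $\K$ categorical in $\mu$ and $\mu < \hanf{\hanf{\kappa}}$ --- the Hanf-number bookkeeping running as in the strongly compact case, the extra Hanf step relative to $\hanf{\LS (\K)}$ in Theorem \ref{get-good-frame} being what is lost in passing through the measurable. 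That the frame is moreover $\goodp$ follows from Proposition \ref{indep-goodp}, as in the discussion after Fact \ref{known-good-frame}.

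Granting this, the rest is bookkeeping. Since $\K$ has primes it weakly has primes (Remark \ref{prime-weakly-prime}), hence is nice (Proposition \ref{weakly-prime-nice}); the same holds for $\K_{\gea \mu}$. Apply Theorem \ref{categ-transfer-frame} to the good $\mu$-frame above, with the AEC $\K_{\gea \mu}$ and the cardinal $\mu$ in the roles of ``$K$'' and ``$\lambda$'' there: the frame is type-full, $\goodp$, and successful, $\K_\mu$ has primes, and $\K_{\gea \mu}$ is categorical in $\mu$, is $\mu$-tame, and is nice, so all hypotheses are met (amalgamation for $\K_{\gea \mu}$ is not assumed --- Theorem \ref{categ-transfer-frame} itself derives it from the frame via Theorem \ref{ap-frame-transfer}, using the weak amalgamation that comes from primes). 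Since $\K$ is categorical in $\lambda > \hanf{\hanf{\kappa}} > \mu$, condition (\ref{categ-transfer-3}) of Theorem \ref{categ-transfer-frame} holds, hence so does condition (\ref{categ-transfer-2}): $\K_{\gea \mu}$, and therefore $\K$, is categorical in every cardinal $\gea \mu$, in particular in every $\lambda' \gea \hanf{\hanf{\kappa}}$.

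The main obstacle is the second step: squeezing amalgamation below $\lambda$ --- and then a type-full successful $\goodp$ good frame --- out of the single $\kappa$-complete ultrafilter that measurability provides, in place of the genuinely global amalgamation that a strongly compact cardinal would hand us. This is precisely the point at which full tameness and shortness is needed (rather than, as in Theorem \ref{strongly-compact-thm}, derived), and it is why the threshold here is $\hanf{\hanf{\kappa}}$.
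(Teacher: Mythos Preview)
Your overall strategy is right, but the crucial middle step is hand-waved where a real theorem is needed, and your detour through Theorem \ref{categ-transfer-frame} is unnecessarily heavy compared to the paper's route.

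The gap is in your second paragraph. You write that a $\kappa$-complete ultrafilter together with full $(<\kappa)$-tameness and shortness lets you ``transport Galois types through ultraproducts \ldots\ by a \L o\'s-style argument'' and thereby extract amalgamation below $\lambda$. That is not a proof; it is a slogan. Amalgamation below the categoricity cardinal when $\kappa$ is measurable is the main theorem of Kolman--Shelah \cite{kosh362} (adapted to AECs as in the paper's footnote), and its proof is substantial --- it does not follow from tameness/shortness plus a \L o\'s lemma. In fact the paper uses this result as a black box precisely because it does not follow from the locality hypotheses; tameness and shortness are used \emph{later}, not here. Your further claim that Theorem \ref{get-good-frame}/Fact \ref{known-good-frame} ``only uses amalgamation in a bounded range of cardinals'' is also not established: those results are stated and proved under global amalgamation, and checking that partial amalgamation suffices to build a \emph{successful} frame is nontrivial and not done in the paper.

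The paper's argument is shorter and sidesteps both issues. It cites \cite{kosh362} to get amalgamation in $[\kappa,\lambda)$ (and no maximal models in $\K_{\ge\kappa}$). From this, Facts \ref{frame-construct} and \ref{shelah-vi} produce merely a good $\kappa^+$-frame on $\K_{\kappa^+}$ --- no successfulness or $\goodp$ needed. Then Theorem \ref{ap-frame-transfer} (using only $\kappa^+$-tameness and weak amalgamation, which primes give) upgrades this to amalgamation in all of $\K_{\ge\kappa}$. Now one is squarely in the hypotheses of Theorem \ref{categ-nice-ap} applied to $\K_{\ge\kappa}$, and that theorem does all the frame-building internally. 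So the paper never has to build a successful $\goodp$ frame under only partial amalgamation; it first secures global amalgamation above $\kappa$ with a plain good frame, then invokes the packaged transfer.
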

\begin{proof}
  By the main result of \cite{kosh362}\footnote{The result there is stated in terms of the class of models of an $L_{\kappa, \omega}$ sentence. However, Boney \cite{tamelc-jsl} has pointed out that this applies as well when $\K$ is an AEC and $\kappa > \LS (\K)$, see in particular the discussion around Theorem 7.6 there.} $\K_{[\kappa, \lambda)}$ has amalgamation (and $\K_{\ge \kappa}$ has no maximal models, using ultraproducts). Combining (the proofs of) Facts \ref{frame-construct} and \ref{shelah-vi}, there is a good $\kappa^+$-frame with underlying class $\K_{\kappa^+}$. By Theorem \ref{ap-frame-transfer}, $\K_{\ge \kappa}$ has amalgamation. Now apply Theorem \ref{categ-nice-ap} to $\K_{\ge \kappa}$.
\end{proof}

We can now prove Theorem \ref{abstract-thm-0} from the abstract.

\begin{cor}\label{abstract-thm-0-proof}
  Let $\K$ be a universal class (or just a locally pseudo-universal AEC, see Example \ref{pseudouniv-example}.(\ref{pseudouniv-1}) and Remark \ref{loc-pseudouniv-rmk}). 
  \begin{enumerate}
    \item If $\K$ is categorical in cardinals of arbitrarily high cofinality, then $\K$ is categorical on a tail of cardinals.
    \item If $\kappa > \LS (\K)$ is a measurable cardinal and $\K$ is categorical in some $\lambda > \hanf{\hanf{\kappa}}$, then $\K$ is categorical in all $\lambda' \ge \hanf{\hanf{\kappa}}$.
  \end{enumerate}
\end{cor}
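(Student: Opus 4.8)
The statement follows by assembling results already in hand, so the plan is essentially an orchestration rather than a new argument.

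First I would reduce to the case of a locally pseudo-universal AEC. By Example \ref{pseudouniv-example}.(\ref{pseudouniv-1}) every universal class is pseudo-universal, and by Remark \ref{loc-pseudouniv-rmk} it is in fact locally pseudo-universal; so it suffices to prove both parts under the weaker hypothesis that $\K$ is a locally pseudo-universal AEC.

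Second, I would record the structural properties of $\K$ that feed the categoricity-transfer machinery of Sections \ref{ap-sec} and \ref{categ-transfer-sec}: (a) $\K$ is fully $\LS (\K)$-tame and short, by Corollary \ref{loc-pseudo-univ-tame}; (b) $\K$ locally admits intersections, hence by Remark \ref{inter-prime} (applied inside each $K_M$) $\K$ has primes, and in particular has weak amalgamation; and, for part (1) only, (c) by Fact \ref{event-synt-fact} a class categorical in cardinals of arbitrarily high cofinality is eventually syntactically characterizable. The one mildly delicate point is verifying that ``locally admits intersections'' gives \emph{primes} and not merely ``weakly has primes'': this works because for $M \lea N$ and $a \in |N|$ the set $|M| \cup \{a\}$ contains $M$, so $\cl^N(|M| \cup \{a\}) \lea N$ and the conclusion of Remark \ref{inter-prime} applies verbatim.

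Finally, with (a)--(c) available, part (1) is immediate from Theorem \ref{categ-primes-no-ap}, and part (2) is immediate from Theorem \ref{measurable-thm} (taking the given measurable $\kappa > \LS (\K)$ and the categoricity cardinal $\lambda > \hanf{\hanf{\kappa}}$; note that for part (2) the cofinality/eventual-syntactic-characterizability input is replaced by the large-cardinal hypothesis, which yields amalgamation below the categoricity cardinal directly). I do not anticipate a genuine obstacle here: all of the real work --- tameness and shortness of pseudo-universal classes (Theorem \ref{pseudo-univ-tame}), the amalgamation transfer (Theorem \ref{ap-frame-transfer}), the construction of good frames from categoricity (Fact \ref{good-frame-fact}, Theorem \ref{get-good-frame}), and the uni-dimensionality dichotomy (Theorem \ref{categ-transfer-frame}) --- has already been carried out upstream, and this corollary is purely the specialization of those inputs to universal classes.
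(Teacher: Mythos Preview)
Your proposal is correct and follows essentially the same approach as the paper: the paper's proof is the one-line ``Follow the proof of Corollary \ref{categ-univ-ap} to see that the assumptions of Theorems \ref{categ-primes-no-ap} and \ref{measurable-thm} respectively are satisfied,'' and you have simply unpacked that sentence (invoking Corollary \ref{loc-pseudo-univ-tame} for tameness/shortness and Remark \ref{inter-prime} for primes, exactly as in the proof of Corollary \ref{categ-univ-ap}). Your extra care in checking that \emph{locally} admitting intersections suffices for Remark \ref{inter-prime} is apt and correct, since $|M|\cup\{a\}$ contains $M$.
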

\begin{proof}
  Follow the proof of Corollary \ref{categ-univ-ap} to see that the assumptions of Theorems \ref{categ-primes-no-ap} and \ref{measurable-thm} respectively are satisfied.
\end{proof}

\appendix

\section{Independence below the Hanf number}\label{good-frame-appendix}

In this appendix, we give all the results needed for the proof of Theorem \ref{get-good-frame}. We also define all the technical terms related to good frames used there. Good frames were introduced by Shelah in \cite[Chapter II]{shelahaecbook} but we use the notation and definitions in \cite{indep-aec-apal} (we also extensively use its results). The reader is invited to consult this paper for more motivation and background on the concepts used here.

The first definition is that of a \emph{global} forking-like notion:

\begin{defin}[Definition 8.1 in \cite{indep-aec-apal}]\label{fully-good-def}
  $\is = (K, \nf)$ is a \emph{fully good independence relation} if:

  \begin{enumerate}
    \item $K$ is an AEC with $K_{<\LS (K)} = \emptyset$ and $K \neq \emptyset$.
    \item $K$ has amalgamation, joint embedding, and no maximal models.
    \item $K$ is stable in all cardinals.
    \item $\is$ is a $(<\infty, \ge \LS (K))$-independence relation (see \cite[Definition 3.6]{indep-aec-apal}). That is, $\nf$ is a relation on quadruples $(M, A, B, N)$ with $M \lea N$ and $A, B \subseteq |N|$ satisfying invariance, monotonicity, and normality. We write $\nfs{M}{A}{B}{N}$ instead of $\nf (M, A, B, N)$, and we also say $\gtp (\ba / B; N)$ does not fork over $M$ for $\nfs{M}{\ran{\ba}}{B}{N}$.
    \item $\is$ has base monotonicity, disjointness ($\nfs{M}{A}{B}{N}$ implies $A \cap B \subseteq |M|$), symmetry, uniqueness, extension, and the local character properties:
      \begin{enumerate}
        \item If $p \in \gS^\alpha (M)$, there exists $M_0 \lea M$ with $\|M_0\| \le |\alpha| + \LS (K)$ such that $p$ does not fork over $M_0$.
        \item If $\seq{M_i : i \le \delta}$ is increasing continuous, $p \in \gS^\alpha (M_\delta)$ and $\cf{\delta} > \alpha$, then there exists $i < \delta$ such that $p$ does not fork over $M_i$.
      \end{enumerate}
    \item $\is$ has the left and right $(\le \LS (K))$-witness properties: $\nfs{M}{A}{B}{N}$ if and only if for all $A_0 \subseteq A$ and $B_0 \subseteq B$ with $|A_0| + |B_0| \le \LS (K)$, we have that $\nfs{M}{A_0}{B_0}{N}$.
    \item $\is$ has full model continuity: if for $\ell < 4$, $\seq{M_i^\ell : i \le \delta}$ are increasing continuous such that for all $i < \delta$, $M_i^0 \lea M_i^\ell \lea M_i^3$ for $\ell = 1,2$ and $\nfs{M_i^0}{M_i^1}{M_i^2}{M_i^3}$, then $\nfs{M_\delta^0}{M_\delta^1}{M_\delta^2}{M_\delta^3}$.
  \end{enumerate}

  We say that $\is$ is \emph{good} if it has all the properties above except full model continuity. We say that $K$ is \emph{[fully] good} if there exists $\nf$ such that $(K, \nf)$ is [fully] good.
\end{defin}

We will use the following variation:

\begin{defin}\label{almost-fully-good-def}
  $\is = (K, \nf)$ is \emph{almost fully good} if it satisfies Definition \ref{fully-good-def} except that only the following types are required to have a nonforking extension:

  \begin{enumerate}
    \item Types that do not fork over \emph{saturated} models.
    \item Type that do not fork over models of size $\LS (K)$.
    \item Types of length at most $\LS (K)$.
  \end{enumerate}

  As before, we say that $K$ is \emph{almost fully good} if there exists $\nf$ such that $(K, \nf)$ is almost fully good. If we drop ``fully'' we mean that full model continuity need not hold.
\end{defin}

In this terminology, we have:

\begin{fact}[Theorem 15.1.(3) in \cite{indep-aec-apal}]\label{good-frame-succ}
  Let $K$ be a fully $(<\kappa)$-tame and short AEC with amalgamation. 

  If $\kappa = \beth_{\kappa} > \LS (K)$, and $K$ is categorical in a $\mu > \lambda_0 := \left(2^\kappa\right)^{+5}$, then $K_{\ge \lambda}$ is almost fully good, where we have set $\lambda := \min (\mu, \hanf{\lambda_0})$.
\end{fact}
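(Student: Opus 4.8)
The statement is quoted verbatim from \cite{indep-aec-apal}, so the plan is to reconstruct the argument there. It has three movements: extract structure from categoricity, build a global independence relation out of coheir, and then upgrade the last few axioms by passing to saturated models (this is where the bound $\lambda_0 = \left(2^\kappa\right)^{+5}$ is spent). First I would record the consequences of categoricity in $\mu$: after splitting $K$ into pieces with joint embedding and keeping the unique one with arbitrarily large models, Shelah's Ehrenfeucht--Mostowski analysis of categorical AECs with amalgamation (as in \cite{shelahaecbook}) yields that $K$ is stable in every infinite cardinal in $[\lambda_0, \mu)$, that the model of every size in $[\lambda, \mu)$ is saturated, and that $K$ is \emph{superstable} (no splitting chain has length $\LS(K)^+$). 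Replacing $K$ by $K_{\ge \lambda}$, whose Löwenheim-Skolem number is $\lambda$ since $\lambda > \LS(K)$, I then have an AEC with amalgamation, joint embedding, no maximal models, stability in all cardinals, and the no-long-splitting-chains property.

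Second I would manufacture the independence relation from $(<\kappa)$-coheir: declare $\nfs{M}{A}{B}{N}$ to hold exactly when, for every finite tuple $\ba$ from $A$ and every $B_0 \subseteq B$ with $|B_0| < \kappa$, the Galois type $\gtp(\ba / B_0; N)$ is realized in $M$. Because $\kappa = \beth_\kappa$ and $K$ is fully $(<\kappa)$-tame and short and stable, this relation is automatically a $(<\infty, \ge \LS(K))$-independence relation with invariance, monotonicity, normality, base monotonicity, disjointness and the $(<\kappa)$-witness properties; full shortness lets one handle types of arbitrary length, full tameness gives uniqueness, superstability gives the two local-character properties (and, on the relativized class, lets one push the witness property down from $\kappa$ to $\LS$), and stability gives a nonforking extension for exactly the three families of types listed in Definition \ref{almost-fully-good-def}. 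This already produces almost everything in sight; the word ``almost'' is precisely the restriction on which types are required to extend.

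Third --- and I expect this to be the main obstacle --- one must still obtain symmetry and full model continuity. The route is to restrict to the class of $\lambda$-saturated models, observe that there $(<\kappa)$-coheir coincides with the canonical nonforking of the good frame one builds on that class from superstability and tameness (essentially the frame furnished by the proof of Fact \ref{known-good-frame}), transport symmetry, the full extension property and full model continuity from that frame, and then transfer the conclusions back down to $K_{\ge \lambda}$ using tameness. The delicate point is the bookkeeping of Hanf numbers: each re-run of the construction over a class of saturated models --- needed to reapply uniqueness of saturated models, to rebuild the frame, or to re-derive stability of the saturated class --- costs a successor, and five such steps starting from $2^\kappa$ produce exactly the threshold $\lambda_0 = \left(2^\kappa\right)^{+5}$. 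So the technical heart is not any single axiom but the interlocking of the coheir calculus with the good-frame and saturated-model machinery of \cite{shelahaecbook,indep-aec-apal}.
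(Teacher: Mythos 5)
First, a caveat about the ground truth: the paper does not prove this statement at all --- it is imported verbatim as Theorem 15.1.(3) of \cite{indep-aec-apal} --- and the closest thing to a proof inside the paper is the parallel Corollary \ref{fully-good-cor} of Appendix \ref{good-frame-appendix}, which reaches an almost fully good relation via $\mu$-nonsplitting rather than coheir, precisely in order to drop the hypothesis $\kappa = \beth_\kappa$. That said, your reconstruction of the cited proof has the right architecture, and it agrees with what the paper itself says about \cite{indep-aec-apal}: the relation is generated by $(<\kappa)$-satisfiability, most axioms come from full tameness and shortness together with the no-weak-order-property consequences of stability, and the remaining ones (the restricted extension property of Definition \ref{almost-fully-good-def}, full model continuity) are recovered by passing through an $\omega$-successful good frame on a class of saturated models (Facts \ref{weakly-succ-build} and \ref{fully-good-constr}). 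Your accounting of the ``$+5$'' is also essentially right: one successor to reach stability and limit models, one to place the frame, and three for the successor frames $\s^{+}, \s^{+2}, \s^{+3}$ used in Shelah's analysis --- compare the proof of Corollary \ref{fully-good-cor}, where $\left(\LS(\K)^{<\kappa}\right)^{+1+1+3}$ appears for exactly these reasons. One correction of emphasis: the long-type relation on saturated models is not obtained by first checking that coheir has the missing properties there; it is \emph{constructed} from the frame via NF and domination, and canonicity (\cite{bgkv-apal}) is what identifies the output with coheir afterwards. Symmetry, incidentally, is available for coheir directly and does not have to wait for the frame.

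There are two genuine gaps. First, your opening claim that categoricity in $\mu$ yields that ``the model of every size in $[\lambda,\mu)$ is saturated'' is not available --- that is categoricity on an interval, which is essentially what the whole enterprise is trying to prove. What one actually extracts is stability in the relevant cardinals, superstability, and saturation of the model of size $\mu$; consequently the construction lives on the classes $\Ksatp{\chi}$ of saturated models, not on $K$ itself. Second, and relatedly, you never explain why the conclusion holds for all of $K_{\ge \lambda}$ with $\lambda = \min\left(\mu, \hanf{\lambda_0}\right)$ rather than merely for the saturated models, and this is exactly where $\hanf{\lambda_0}$ is spent: one needs the omitting-types/Ehrenfeucht--Mostowski argument behind Fact \ref{baldwin-omit} to show that every model of size at least $\hanf{\lambda_0}$ is sufficiently saturated (and, in the case $\mu < \hanf{\lambda_0}$, to use saturation of the categoricity model to the same effect above $\mu$), so that $K_{\ge \lambda}$ coincides with the saturated class on which the relation was built. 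Without that bridge the theorem as stated --- about $K_{\ge\lambda}$ for that particular $\lambda$ --- is not reached.
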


A localization of fully good independence relation are Shelah's good $\lambda$-frames. Roughly speaking, we simply require the types to have length one and the models to have a fixed size $\lambda$. We only give the definition of a \emph{type-full} good $\lambda$-frame here, since this is the one that we can build here. In \cite[Section II.2]{shelahaecbook}, Shelah has a more general definition where he only requires a dense class of basic types to satisfy the properties of forking: this is also what we call a good $\lambda$-frame (without the ``type-full'') in this paper, e.g.\ in Theorem \ref{ap-frame-transfer}. We use the definition in \cite[Definition 8.1.(2)]{indep-aec-apal} and refer to Remark 3.5 there for why this is equivalent (in the type-full case) to Shelah's definition in \cite[Section II.2]{shelahaecbook}. 

\begin{defin}
  $\s = (K_{\s}, \nf)$ is a \emph{type-full good $\lambda$-frame} if:

  \begin{enumerate}
    \item There exists an AEC $K$ with $\lambda = \LS (K)$, $K_\lambda = K_{\s}$. Below, we require that all the models be in $K_{\s}$.
    \item $K_{\s} \neq \emptyset$.
    \item $K_{\s}$ has amalgamation, joint embedding, and no maximal models.
    \item $K_{\s}$ is stable in $\lambda$.
    \item $\nf$ is a relation on quadruples $(M_0, a, M, N)$ with $M_0 \lea M \lea N$ and $a \in |N|$ satisfying invariance, monotonicity, and normality. As before, we write $\nfs{M_0}{a}{M}{N}$ instead of $\nf (M_0, a, M, N)$, and we also say $\gtp (a / M; N)$ does not fork over $M_0$ for $\nfs{M_0}{a}{M}{N}$.
    \item $\s$ has base monotonicity, disjointness, full symmetry (if $\nfs{M_0}{a}{M}{N}$, $b \in |M|$, then there exists $N' \gea N$ and $M_0' \gea M_0$ with $M_0' \lea N'$, $a \in |M_0'|$, and $\nfs{M_0}{b}{M_0'}{N'}$), uniqueness, extension, and the local character property: If $\seq{M_i : i \le \delta}$ is increasing continuous, $p \in \gS (M_\delta)$, then there exists $i < \delta$ such that $p$ does not fork over $M_i$.
  \end{enumerate}

  We define similarly ``type-full good $(\ge \lambda)$-frame'', where we allow the models in $K_{\s}$ to have sizes in $K_{\ge \lambda}$ (but still work with types of length one).
\end{defin}
\begin{notation}
When $\is = (K, \nf)$ is an almost good independence relation and $\lambda \ge \LS (K)$, we write $\pre (\is^{\le 1}) \rest K_\lambda$ for the type-full good $\lambda$-frame obtained by restricting $\nf$ to types of length one and models in $K_\lambda$. Similarly for $\pre (\is^{\le 1}) \rest K_{\ge \lambda}$.
\end{notation}

Assuming tameness and amalgamation, good frames can be built from a superstability-like condition (the superstability condition already appears implicitly in \cite{shvi635} and is developed further in \cite{vandierennomax, nomaxerrata, gvv-mlq, ss-tame-jsl, indep-aec-apal, bv-sat-v3, vandieren-symmetry-apal, vv-symmetry-transfer-v4}). The construction of a good frame appears implicitly already in \cite{ss-tame-jsl}:

\begin{defin}[Superstability, see Definition 10.1 in \cite{indep-aec-apal}]\label{ss-def} \
  \begin{enumerate}
    \item For $M, N \in K$, say $M \ltu N$ ($N$ is \emph{universal over $M$}) if and only if $M \lta N$ and whenever we have $M' \gea M$ such that $\|M'\| \le \|N\|$, then there exists $f: M' \xrightarrow[M]{} N$. Say $M \leu N$ if and only if $M = N$ or $M \ltu N$.
    \item $p \in \gS (N)$ \emph{$\mu$-splits over $M$} if $M \lea N$, $M \in K_\mu$, and there exists $N_1, N_2 \in K_\mu$ with $M \lea N_\ell \lea N$, $\ell = 1,2$, and an isomorphism $f : N_1 \cong_M N_2$, such that $f (p \rest N_1) \neq p \rest N_2$.
    \item $K$ is \emph{$\mu$-superstable} if:
      \begin{enumerate}
      \item $\LS (K) \le \mu$.
      \item\label{cond-two-mods} There exists $M \in K_\mu$ such that for any $M' \in K_\mu$ there is $f: M' \rightarrow M$ with $f[M'] \ltu M$.
      \item If $\seq{M_i : i < \delta}$ is increasing in $K_\mu$ such that $i < \delta$ implies $M_i \ltu M_{i + 1}$ and $p \in \gS (\bigcup_{i < \delta} M_i)$, then there exists $i < \delta$ such that $p$ does not $\mu$-split over $M_i$.
      \end{enumerate}
  \end{enumerate}
\end{defin}

\begin{defin}
  For $\lambda$ a cardinal, let $\Ksatp{\lambda}$ be the class of $\lambda$-saturated models in $K_{\ge \lambda}$.
\end{defin}

\begin{fact}\label{frame-construct}
  Assume $K$ is $\mu$-superstable, $\mu$-tame, and has amalgamation. Then:
  
  \begin{enumerate}
    \item\cite[Proposition 10.10]{indep-aec-apal}] $K$ is $\mu'$-superstable for all $\mu' \ge \mu$. In particular, $K_{\ge \mu}$ has joint embedding, no maximal models, and is stable in all cardinals.
    \item \cite[Corollary 6.10]{vv-symmetry-transfer-v4} If $\lambda > \mu$, then $\Ksatp{\lambda}$ is an AEC with $\LS (\Ksatp{\lambda}) = \lambda$.
    \item (\cite[Corollary 6.14]{vv-symmetry-transfer-v4} and \cite[Corollary 6.10]{tame-frames-revisited-v6-toappear}). For any $\lambda > \mu$, there exists a type-full good $(\ge \lambda)$-frame with underlying AEC $\Ksatp{\lambda}$.
  \end{enumerate}
\end{fact}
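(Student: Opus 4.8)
The statement collects three results from the superstability machinery of \cite{indep-aec-apal, vv-symmetry-transfer-v4, tame-frames-revisited-v6-toappear}, so the plan is to indicate the conceptual skeleton of each rather than to reprove the deep inputs. For item (1), I would unpack $\mu$-superstability as the conjunction of the existence of a $\ltu$-largest model in $K_\mu$ (which, with amalgamation, yields stability in $\mu$ and joint embedding in $K_\mu$) and the absence of long $\mu$-splitting chains along $\leu$-increasing towers of $\mu$-sized models. The engine of the transfer to $\mu' > \mu$ is $\mu$-tameness: a $\mu'$-splitting of a type over a model of size $\mu'$ localizes, via tameness, to a $\mu$-splitting over a submodel of size $\mu$, so a long $\mu'$-splitting chain would produce a long $\mu$-splitting chain, a contradiction; this is the standard Grossberg--VanDieren-style argument. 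Stability in all cardinals $\ge \mu$ then follows by counting Galois types along a $\ltu$-tower, using that a type is determined by a non-$\mu$-splitting extension of a type over a small model, and no maximal models and joint embedding above $\mu$ follow from amalgamation together with stability and the $\ltu$-largest model of $K_\mu$.

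For item (2), coherence and the isomorphism axiom for $\Ksatp{\lambda}$ are inherited from $K$, and the only substantial chain axiom is that a $\lea$-increasing union of $\lambda$-saturated models is $\lambda$-saturated. This is precisely the ``unions of saturated models are saturated'' phenomenon, a genuinely deep consequence of superstability that I would cite from \cite{vv-symmetry-transfer-v4}. For the L\"owenheim--Skolem computation, every model of $\Ksatp{\lambda}$ has size $\ge \lambda$; conversely, given $N \in \Ksatp{\lambda}$ and $A \subseteq |N|$ with $|A| \le \lambda$, stability in $\lambda$ (from item (1)) lets one build a $\lea$-increasing continuous chain of $\lambda$-sized models below $N$ containing $A$ with each step realizing all Galois types over the previous model, whose union is a $\lambda$-saturated submodel of size $\lambda$; hence $\LS(\Ksatp{\lambda}) = \lambda$.

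For item (3), I would define forking by declaring that $\gtp(a/N;N')$ does not fork over a saturated $M \lea N$ iff it does not $\mu$-split over some sufficiently saturated $\mu$-sized submodel of $M$. Invariance, monotonicity, normality, base monotonicity and disjointness are routine; local character is immediate from the no-long-$\mu$-splitting-chains clause of superstability; uniqueness is where $\mu$-tameness enters; extension uses amalgamation together with the saturation of the models of $\Ksatp{\lambda}$. The one hard axiom is full symmetry of non-splitting over saturated models, which is the main theorem of \cite{vv-symmetry-transfer-v4} and rests on the same technical core as item (2). To upgrade the resulting good $\lambda$-frame to a good $(\ge \lambda)$-frame on all $\lambda$-saturated models of size $\ge \lambda$, I would invoke the localization/extension result of \cite{tame-frames-revisited-v6-toappear}, which pushes a good frame up one cardinal at a time using tameness and amalgamation.

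The main obstacle throughout is the superstability-driven symmetry of non-$\mu$-splitting over saturated models --- equivalently, that unions of $\lambda$-saturated models are $\lambda$-saturated. These are the technically heaviest inputs; everything else is bookkeeping layered on top of tameness and amalgamation.
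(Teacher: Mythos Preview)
The paper gives no proof of this Fact at all: it is stated purely as a citation to \cite{indep-aec-apal}, \cite{vv-symmetry-transfer-v4}, and \cite{tame-frames-revisited-v6-toappear}, with no argument sketched. Your proposal therefore goes well beyond what the paper does, supplying a conceptual outline of the cited results rather than merely invoking them.

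As an outline your sketch is broadly accurate and identifies the right pressure points: the tameness-driven transfer of no-long-splitting-chains for (1), the ``unions of saturated are saturated'' theorem for (2), and the symmetry of non-$\mu$-splitting over saturated models as the deep input for (3). One small imprecision: in (1), stability in all cardinals above $\mu$ and joint embedding/no maximal models in $K_{\ge \mu}$ are not quite immediate consequences of the $\ltu$-largest model of $K_\mu$ alone; the actual argument in \cite{indep-aec-apal} first transfers the full superstability package (including the existence of the $\ltu$-largest model) to each $\mu' \ge \mu$, and the structural consequences are read off at each $\mu'$ separately. Also, in (3) the nonforking relation in the cited works is defined in terms of non-$\mu$-splitting over a $\ltu$-submodel of size $\mu$ (not merely a ``sufficiently saturated'' one); the universal-over condition is what makes uniqueness and extension go through. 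These are refinements rather than gaps, and for the purposes of this paper simply citing the external results, as the author does, is entirely appropriate.
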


From the analysis of Shelah and Villaveces in \cite[Theorem 2.2.1]{shvi635}, we obtain that superstability follows from categoricity (if the cofinality of the categoricity cardinal is high-enough, this appears as \cite[Lemma 6.3]{sh394}). The version that we state here assumes amalgamation instead of GCH and appears in \cite{shvi-notes-v3-toappear}.

\begin{fact}\label{shelah-vi}
  Assume $K$ has amalgamation and no maximal models. If $K$ is categorical in a $\theta > \LS (K)$, then $K$ is $\LS (K)$-superstable.
\end{fact}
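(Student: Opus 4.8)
Set $\mu := \LS(K)$; since $K$ has no maximal models it has models of every cardinality $\geq \mu$, so in particular there is a model of size $\theta$. We must verify the three clauses of Definition \ref{ss-def}(3); clause (a) is immediate, so the work lies in clauses (b) and (c), the latter --- the local character of $\mu$-splitting, i.e.\ the Shelah--Villaveces theorem --- being the real content.

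First I would establish that $K$ is stable in $\mu$, indeed in every $\mu' < \theta$: a standard argument (via the order property and its non-structure consequences) shows that instability in any such $\mu'$ would contradict categoricity in $\theta$. Stability in $\mu$ yields universal extensions inside $K_\mu$: every $M \in K_\mu$ has some $N \in K_\mu$ with $M \ltu N$. Also, categoricity and no maximal models give joint embedding in $K_{[\mu,\theta]}$: any two models of size $< \theta$ extend, using no maximal models and L\"owenheim--Skolem, to models of size $\theta$, which are isomorphic, hence embed into a common model. Iterating universal extensions $\omega$ times over an arbitrary base then produces a $(\mu,\omega)$-limit model $M^\ast \in K_\mu$, and one checks directly (using joint embedding and that universality is inherited by larger extensions) that every $M' \in K_\mu$ maps into $M^\ast$ with $\ltu$-image; this is clause (b).

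For clause (c) I would argue by contradiction. Suppose there is a $\ltu$-increasing sequence $\langle M_i : i < \delta \rangle$ in $K_\mu$ with $\delta$ limit and a type $p \in \gS(M_\delta)$, $M_\delta := \bigcup_{i < \delta} M_i$, such that $p$ $\mu$-splits over every $M_i$. First reduce to $\delta = \omega$: pass to an increasing subsequence $\langle i_n : n < \omega \rangle$ with $i_n + 1 \leq i_{n+1}$; then $M_{i_n} \ltu M_{i_{n+1}}$, and since the witnesses $N_1^{i_n}, N_2^{i_n}$ to the splitting of $p$ over $M_{i_n}$ lie inside $M_{i_n+1} \lea M_{i_{n+1}}$, the type $p \rest \bigcup_n M_{i_n}$ still $\mu$-splits over every $M_{i_n}$. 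Fixing such an $\omega$-chain together with splitting witnesses $f_n : N_1^n \cong_{M_n} N_2^n$ inside $M_{n+1}$ with $f_n(p \rest N_1^n) \neq p \rest N_2^n$, one runs the Shelah--Villaveces construction: iterating this splitting step along a tree, one builds a $\lea$-increasing continuous chain of length $\theta$ whose union $M^\dagger$ has cardinality $\theta$ and is \emph{not} $\mu^+$-saturated --- concretely, there are a fixed $N_0 \in K_\mu$ with $N_0 \lea M^\dagger$ and a Galois type over $N_0$ realized by no element of $M^\dagger$, since every element of $M^\dagger$ entered the construction at some stage and was thereby forced, via the copies of the maps $f_n$, to disagree with that type.

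On the other hand, categoricity in $\theta$ together with amalgamation, no maximal models, and stability in every $\mu' \in [\mu,\theta)$ forces the unique model of size $\theta$ to be $\mu^+$-saturated: using stability one builds a $\mu^+$-saturated model of size $\theta$ by a continuous chain construction (realizing all Galois types over $\lea$-submodels of size $\leq \mu$ at successor stages, and re-closing at limit stages of small cofinality), and by categoricity this is the model of size $\theta$. Since $M^\dagger$ has size $\theta$ and is not $\mu^+$-saturated, this is a contradiction, which proves clause (c) and hence $\mu$-superstability. The main obstacle is exactly this last pair of constructions: turning one infinite splitting chain into a genuine size-$\theta$ model omitting a type, and dually producing a $\mu^+$-saturated model of size $\theta$ when $\theta$ is singular of cofinality $\leq \mu$ --- where a naive union of saturated models fails and a sharper closing-off argument (or a reduction to a regular cardinal) is needed. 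Both are carried out in \cite{shvi635} and in the refinements recorded in \cite{shvi-notes-v3-toappear}, whose exposition I would follow.
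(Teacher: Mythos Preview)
The paper does not give a proof of this fact at all: it is stated as a \textbf{Fact} with no proof environment, the surrounding text simply attributing it to the Shelah--Villaveces analysis \cite[Theorem 2.2.1]{shvi635} and, for the amalgamation-instead-of-GCH version, to \cite{shvi-notes-v3-toappear}. Your sketch is a faithful outline of exactly that argument (stability below $\theta$, existence of a universal model in $K_\mu$, and the Shelah--Villaveces tree construction contradicting $\mu^+$-saturation of the categoricity model), and you correctly flag the singular-cofinality case as the delicate point handled in those references --- so your proposal is correct and aligned with what the paper does, just more explicit.
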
 

\begin{cor}\label{frame-exist}
  Assume $K$ is $\LS (K)$-tame and has amalgamation and no maximal models. If $K$ is categorical in a $\theta > \LS (K)$, then there exists a type-full good $(\ge \LS (K)^+)$-frame with underlying class $\Ksatp{\theta^+}$.
\end{cor}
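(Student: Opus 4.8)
The plan is to combine the two facts just established, with essentially no extra work. First I would invoke Fact \ref{shelah-vi}: since $K$ has amalgamation, has no maximal models, and is categorical in some $\theta > \LS (K)$, it follows that $K$ is $\LS (K)$-superstable. The point of citing this version (rather than \cite[Lemma 6.3]{sh394}) is that it needs no restriction on the cofinality of the categoricity cardinal, only amalgamation and no maximal models, both of which we are assuming outright.

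Next, with $\LS (K)$-superstability in hand, and using the standing hypotheses that $K$ is $\LS (K)$-tame and has amalgamation, I would apply Fact \ref{frame-construct} with $\mu := \LS (K)$. Part (1) upgrades this to $\mu'$-superstability for every $\mu' \ge \LS (K)$ (and in particular hands us joint embedding, no maximal models, and stability in all cardinals on $K_{\ge \LS (K)}$, which are needed as background hypotheses of the frame). Then I would set $\lambda := \theta^+$, which satisfies $\lambda > \LS (K) = \mu$ because $\theta > \LS (K)$. Part (2) of Fact \ref{frame-construct} gives that $\Ksatp{\theta^+}$ is an AEC with $\LS (\Ksatp{\theta^+}) = \theta^+$, and part (3) gives a type-full good $(\ge \theta^+)$-frame with underlying AEC $\Ksatp{\theta^+}$, which is the desired conclusion.

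I do not expect any genuine obstacle here: the argument is purely a matter of checking that the hypotheses of Facts \ref{shelah-vi} and \ref{frame-construct} line up. The only two points worth a moment's attention are that $\LS (K)$-tameness is exactly the tameness level required by Fact \ref{frame-construct} once we take $\mu = \LS (K)$, and that the strict inequality $\lambda > \mu$ needed to apply parts (2) and (3) is guaranteed by $\theta^+ > \LS (K)$, which follows from $\theta > \LS (K)$.
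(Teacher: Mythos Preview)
Your proposal is correct and matches the paper's own proof exactly: the paper simply says ``Combine Facts \ref{shelah-vi} and \ref{frame-construct}'', and you have spelled out precisely how those two facts are combined. One small observation: applying Fact \ref{frame-construct}(3) with $\lambda = \theta^+$ yields a good $(\ge \theta^+)$-frame (as you correctly state), so the ``$(\ge \LS(K)^+)$'' in the corollary's statement appears to be a typo for ``$(\ge \theta^+)$'' --- your reading of the facts is the right one.
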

\begin{proof}
  Combine Facts \ref{shelah-vi}, and \ref{frame-construct}.
\end{proof}

It remains to see how to build a fully good (i.e.\ global) independence relation from just a local good frame. This is done using shortness, together with a property Shelah calls \emph{successfulness} (we do not give the exact definition of uniqueness triple, the relation $\lea_{\lambda^+}^{\text{NF}}$, or the successor frame, as we have no use for it).

\begin{defin}[Definition III.1.1 in \cite{shelahaecbook}]\label{successful-def}
  Let $\s$ be a type-full good $\lambda$-frame.

  \begin{enumerate}
    \item $\s$ is \emph{weakly successful} if for any $M \in K_\lambda$ and any nonalgebraic $p \in \gS (M)$, there exists $N \gea M$ and $a \in |N|$ such that $p = \gtp (a / M; N)$ and $(a, M, N)$ is a uniqueness triple (see \cite[Definition II.5.3]{shelahaecbook}).
    \item $\s$ is \emph{successful} if in addition the class $(\Ksatp{\lambda^+}_{\lambda^+}, \lea_{\lambda^+}^{\text{NF}})$ (see \cite[Definition 10.1.1]{jrsh875}) is an AEC.
    \item \cite[Definition III.1.12]{shelahaecbook} $\s$ is \emph{$\omega$-successful} if for all $n < \omega$, the $n$th successor frame $\s^{+n}$ (see \cite[Definition III.1.12]{shelahaecbook}) is a type-full successful good $\lambda$-frame.
  \end{enumerate}
\end{defin}

We can obtain an $\omega$-successful frame using existence of a sufficiently well-behaved global independence relation:

\begin{fact}[Theorem 11.21 in \cite{indep-aec-apal}]\label{weakly-succ-build}
  Assume $\is$ is a $(<\infty, \ge \LS (K))$-independence relation on $K$ and $\lambda > \LS (\K)$ is a cardinal such that\footnote{In \cite{indep-aec-apal}, it is also assumed that $\Ksatp{\lambda^{+n}}$ is an AEC for all $n < \omega$. However Fact \ref{frame-construct} shows that this follows from the rest.}:

  \begin{enumerate}
    \item $\s := \pre (\is^{\le 1})$ is a type-full good $(\ge \LS (K))$-frame.
    \item $\is$ has base monotonicity, uniqueness for types over models, and the left and right $(\le \LS (K))$-witness properties.
    \item $\is$ has the following local character property: for every $n < \omega$, if $\mu := \lambda^{+(n + 1)}$, then for every increasing continuous $\seq{M_i : i \le \mu}$ and every $p \in \gS^{< \mu} (M_\mu)$, there exists $i < \mu$ such that $p$ does not fork (in the sense of $\is$) over $M_i$.
  \end{enumerate}

  Then $\s \rest \Ksatp{\lambda}_{\lambda}$ (the restriction of $\s$ to the class $\Ksatp{\lambda}_{\lambda}$) is an $\omega$-successful type-full good $\lambda$-frame.
\end{fact}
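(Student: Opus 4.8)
The plan is to verify the two halves of the conclusion separately: first, that $\s \rest \Ksatp{\lambda}_{\lambda}$ is a type-full good $\lambda$-frame; second, that it is $\omega$-successful, which by Definition~\ref{successful-def} means that each of its iterated successor frames $\s^{+n}$ is a type-full successful good $\lambda^{+n}$-frame. The organizing idea is that the hypotheses on $\is$ are \emph{preserved under passing to a higher saturation level}: restricting $\is$ to the class of $\lambda^{+(n+1)}$-saturated models again yields a global relation with base monotonicity, uniqueness for types over models, the witness properties, and — now invoking exactly the instance of hypothesis~(3) at that level — the local character needed for its length-one part to be a good $(\ge \lambda^{+(n+1)})$-frame. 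So the full statement reduces, by induction on $n$, to a single step: a global $\is$ of this kind, restricted to the $\lambda$-saturated models of size $\lambda$, is a \emph{successful} good $\lambda$-frame.

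For the good-frame part: since $\s = \pre(\is^{\le 1})$ is a good $(\ge \LS(K))$-frame, $K$ has amalgamation, is stable in every cardinal, is $\LS(K)$-tame (by uniqueness and local character of $\s$), and is superstable, so Fact~\ref{frame-construct} applies and each $\Ksatp{\lambda^{+n}}$ is an AEC with Löwenheim--Skolem number $\lambda^{+n}$; this also legitimizes the classes appearing in the definition of $\omega$-successfulness, and is the content of the footnote to the statement. The forking relation of $\s \rest \Ksatp{\lambda}_{\lambda}$ inherits invariance, monotonicity, normality, base monotonicity, disjointness, symmetry, uniqueness, and extension directly from $\is$ and $\s$ — for extension one uses that a length-one type over a saturated model of size $\lambda$ has a nonforking extension realized in another such model — and its local character follows from local character of $\is$ on length-one types together with the fact that $\lambda$-saturated models are closed under increasing unions (a consequence of superstability). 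The same reasoning applies at every level, so once $\s^{+n}$ is defined it is a type-full good $\lambda^{+n}$-frame.

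The heart of the proof is \emph{weak successfulness} of each $\s^{+n}$: given a nonalgebraic $p = \gtp(a/M) \in \gS(M)$ with $M$ in the relevant saturated class, I would build $N \gea M$ in that class realizing $p$ via $a$ and \emph{$\is$-minimal over $M a$} — every type over $M a$ realized in $N$ is $\is$-free over $M a$ — by iterating the extension and uniqueness properties of $\is$ and closing under unions. One then checks that $(a, M, N)$ is a uniqueness triple in Shelah's sense: for any $M' \gea M$ the extension of $p$ to $M'$ is the unique nonforking one, and $\is$-minimality together with transitivity and uniqueness of $\is$ forces any two nonforking amalgams of $N$ and $M'$ over $M$ to coincide; here the witness properties are what let one reduce the ambient forking computations to subsets of size $\le \LS(K)$, and since $(\le \LS(K))$-witness trivially implies $(\le \mu)$-witness for every $\mu \ge \LS(K)$, this step is uniform in $n$. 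Successfulness then follows because $\s^{+(n+1)}$ is already known to be a good $\lambda^{+(n+1)}$-frame, which forces $(\Ksatp{\lambda^{+(n+1)}}_{\lambda^{+(n+1)}}, \lea^{\text{NF}}_{\lambda^{+(n+1)}})$ to be an AEC — alternatively, one checks the AEC axioms for the NF-order directly from monotonicity, transitivity, and the chain properties of $\is$. Running this for all $n < \omega$ gives $\omega$-successfulness.

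I expect the main obstacle to be exactly the construction and verification of uniqueness triples: one must iterate extension and uniqueness of $\is$ carefully enough to produce a genuinely $\is$-minimal model over $M a$ that stays inside the saturated class, and then convert that minimality — via the witness properties — into the amalgamation-uniqueness demanded by Shelah's definition. The secondary technical burden is bookkeeping in the induction: keeping track of which instance of hypothesis~(3) is used at level $\lambda^{+n}$, so that $\Ksatp{\lambda^{+(n+1)}}$ is an AEC and the recursion defining the successor frames never stalls.
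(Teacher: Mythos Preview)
The paper does not give its own proof of this statement: it is quoted as a Fact from \cite{indep-aec-apal} (Theorem~11.21 there), so there is nothing in the present paper to compare your proposal against directly. The footnote only remarks that one hypothesis of the original statement (that each $\Ksatp{\lambda^{+n}}$ is an AEC) now follows from Fact~\ref{frame-construct}.

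That said, your proposal has a genuine gap in the one place that matters. Your construction of uniqueness triples does not parse as written: $\is$ is an independence relation with \emph{model} bases, so ``$\is$-free over $Ma$'' is undefined, and ``iterating extension and uniqueness and closing under unions'' does not by itself produce anything with the required amalgamation-uniqueness property. More seriously, you have misidentified the role of hypothesis~(3). You treat it as supplying local character for the length-one good frame at each level and as input to the AEC-ness of $\Ksatp{\lambda^{+(n+1)}}$; but length-one local character already comes from hypothesis~(1), and the AEC-ness comes from Fact~\ref{frame-construct} via superstability, exactly as the footnote says. Hypothesis~(3) is not bookkeeping --- it is the engine for weak successfulness. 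The route in \cite{indep-aec-apal}, visible in this paper through Definition~\ref{dom-def} and Fact~\ref{uq-domin}, is: first show that a \emph{domination triple} $(a,M,N)$ (meaning $\nfs{M}{a}{B}{N'}$ implies $\nfs{M}{N}{B}{N'}$) is automatically a uniqueness triple; then \emph{build} domination triples by applying the long local character of hypothesis~(3) to a type of length $\lambda^{+n} < \lambda^{+(n+1)} = \mu$ coding a candidate $N$ over a suitable resolution. That is precisely why~(3) is stated for types of length $<\mu$ rather than length one. Your sketch never invokes~(3) in the uniqueness-triple construction, and without it the argument does not go through.
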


In \cite{indep-aec-apal}, we used $(<\kappa)$-satisfiability as $\is$ above. The downside is that we used that $\kappa = \beth_\kappa > \LS (K)$. Now we show we can use an independence relation induced by $\mu$-nonsplitting instead of $(<\kappa)$-satisfiability. We need one more fact:

\begin{fact}\label{ns-lc}
  Assume $K$ has amalgamation and is stable in $\mu \ge \LS (K)$. Let $M \in K_{\ge \mu}$ and let $p \in \gS^{<\kappa} (M)$. If $\mu = \mu^{<\kappa}$, then there exists $M_0 \in K_\mu$ with $M_0 \lea M$ such that $p$ does not $\mu$-split over $M_0$.
\end{fact}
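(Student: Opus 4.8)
The plan is the standard ``chain of length $\mu^+$'' argument for the local character of $\mu$-nonsplitting, adapted to types of length $<\kappa$; the hypothesis $\mu = \mu^{<\kappa}$ will enter only through stability for short types.

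First I would dispose of the case $\|M\| = \mu$: take $M_0 := M$. Any $N_1, N_2 \in K_\mu$ with $M \lea N_\ell \lea M$ must equal $M$, so the only candidate isomorphism $N_1 \cong_M N_2$ is $\id_M$, whence $p$ does not $\mu$-split over $M$. So assume $\|M\| > \mu$. As a preliminary I would record the standard fact that $|\gS^{<\kappa} (M')| \le \mu$ for every $M' \in K_\mu$: this follows from stability in $\mu$ by reducing a type of length $\alpha < \kappa$ to the $\alpha$-indexed coherent sequence of the length-one types obtained by adjoining boundedly many of its realized coordinates (using amalgamation), giving the bound $\mu^{|\alpha|} \le \mu^{<\kappa} = \mu$; this is the only place the hypothesis $\mu = \mu^{<\kappa}$ is used (cf.\ \cite{sv-infinitary-stability-afml}).

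Now suppose for contradiction that $p$ $\mu$-splits over every $M_0 \in K_\mu$ with $M_0 \lea M$. I would build an increasing continuous chain $\langle M_i : i < \mu^+ \rangle$ in $K_\mu$ with each $M_i \lea M$: at stage $i$, since $M_i \in K_\mu$ and $M_i \lea M$, the type $p$ $\mu$-splits over $M_i$, so fix $N_1^i, N_2^i \in K_\mu$ with $M_i \lea N_\ell^i \lea M$ and an isomorphism $f_i : N_1^i \cong_{M_i} N_2^i$ with $f_i (p \rest N_1^i) \ne p \rest N_2^i$; then pick $M_{i + 1} \in K_\mu$ with $M_i \lea M_{i + 1} \lea M$ and $|N_1^i| \cup |N_2^i| \subseteq |M_{i + 1}|$ (Löwenheim--Skolem together with coherence), taking unions at limits, which remain in $K_\mu$ and below $M$ by smoothness. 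Working inside a common extension $N^\ast$ of all the $M_i$ obtained by amalgamation, at each stage one extracts from the splitting witness $f_i$ a tuple $\bar a_i \in |N^\ast|$ with $\gtp (\bar a_i / M_i; N^\ast) = p \rest M_i$ but $\gtp (\bar a_i / M_{i + 1}; N^\ast) \ne p \rest M_{i + 1}$ (for instance $\bar a_i$ realizing the image under $f_i^{-1}$ of $p \rest N_2^i$, which already disagrees with $p \rest N_1^i$ over $N_1^i$). The standard bookkeeping from the length-one case (\cite{shvi635, vandierennomax}) then organizes these realizations so as to contradict stability in $\mu$ — by exhibiting $\mu^+$ pairwise distinct Galois types of length $<\kappa$ over a suitable object of size $\mu$, which is impossible by the preliminary bound.

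The main obstacle is precisely this last organizational step. In an AEC without a monster model one cannot extend the $f_i$ to automorphisms, so the argument has to be carried out inside an explicit amalgam, and the delicate point is to arrange the chain and the realizations so that the $\mu^+$ resulting types genuinely live over, and are pairwise distinct over, one common object of size $\mu$; this is where amalgamation does the real work. Everything else (the trivial case, the construction of the chain, and the short-type stability bound) is routine.
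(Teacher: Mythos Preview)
Your proposal is correct and follows essentially the same route as the paper: the paper's proof simply cites \cite[Claim 3.3]{sh394} (or \cite[Fact 4.6]{tamenessone}) for the chain argument you sketch, after observing (via \cite[Theorem 3.1]{longtypes-ndjfml}) that stability in $\mu$ together with $\mu = \mu^{<\kappa}$ gives $|\gS^{<\kappa}(N)| \le \mu$ for $N \in K_\mu$ --- exactly your preliminary bound. The ``organizational step'' you flag as delicate is precisely the content of those cited results, so you have correctly isolated both ingredients and their roles.
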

\begin{proof}
  By \cite[Claim 3.3]{sh394} (or \cite[Fact 4.6]{tamenessone}), it is enough to show that $|\gS^{<\kappa} (N)| = \mu$ for every $N \in K_\mu$. This holds by stability in $\mu$ and \cite[Theorem 3.1]{longtypes-ndjfml}.
\end{proof}

\begin{lem}\label{omega-succ-constr}
  Let $\K$ be an AEC with amalgamation. Assume that $\K$ is fully $(<\kappa)$-tame and short with $\kappa \le \LS (\K)^+$. Assume further that $\K$ is $\LS (\K)$-superstable. Let $\lambda > \LS (\K)$ be such that $\lambda = \lambda^{<\kappa}$. Then there exists an $\omega$-successful good $\lambda^+$-frame with underlying class $\Ksatp{\lambda^+}_{\lambda^+}$.
\end{lem}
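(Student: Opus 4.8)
The plan is to apply Fact \ref{weakly-succ-build} with its cardinal parameter ``$\lambda$'' taken to be $\lambda^+$, to the AEC $K^\ast := \Ksatp{\lambda}$ equipped with the independence relation induced by $\lambda$-nonsplitting. First I would record the structural consequences of superstability. Since $\kappa \le \LS(\K)^+$, the class $\K$ is $\LS(\K)$-tame; being also $\LS(\K)$-superstable with amalgamation, Fact \ref{frame-construct} applies and yields that $\K$ is $\mu$-superstable for every $\mu \ge \LS(\K)$, that $\K_{\ge\LS(\K)}$ has joint embedding, no maximal models, and is stable in all cardinals, that $K^\ast = \Ksatp{\lambda}$ is an AEC with $\LS(K^\ast) = \lambda$ and amalgamation, and that there is a type-full good $(\ge\lambda)$-frame $\s_0$ with underlying AEC $K^\ast$. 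Two bookkeeping remarks: the $\lambda^+$-saturated models of $\K$ are exactly the $\lambda^+$-saturated models of $K^\ast$, so $\Ksatpp{(K^\ast)}{\lambda^+}_{\lambda^+} = \Ksatp{\lambda^+}_{\lambda^+}$, which is the class on which we want the frame; and since $\K$ has amalgamation, Galois types over members of $K^\ast$ are computed the same way in $\K$ as in $K^\ast$, so $K^\ast$ inherits full $(<\kappa)$-tameness and shortness from $\K$.

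Next I would introduce the global independence relation $\is = (K^\ast, \nf)$, defined by $\lambda$-nonsplitting over saturated submodels of size $\lambda$ --- that is, the same construction as for the $\mu$-nonsplitting independence relations of \cite{indep-aec-apal}, now with $\mu = \lambda$ playing the role of $(<\kappa)$-satisfiability there. Just as in \cite{indep-aec-apal}, superstability (via Fact \ref{frame-construct}, which also takes care of the requirement there that the relevant saturated classes be AECs) together with full $(<\kappa)$-tameness and shortness of $K^\ast$ show that $\is$ is a $(<\infty, \ge\lambda)$-independence relation with base monotonicity, uniqueness for types over models, and the left and right $(\le\lambda)$-witness properties, and that $\pre(\is^{\le 1})$ is precisely the type-full good $(\ge\lambda)$-frame $\s_0$. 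Since $\LS(K^\ast) = \lambda < \lambda^+$, this verifies hypotheses (1) and (2) of Fact \ref{weakly-succ-build} with parameter $\lambda^+$.

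It remains to check hypothesis (3) of Fact \ref{weakly-succ-build}, and this is the one place where the assumption $\lambda = \lambda^{<\kappa}$ is used. Fix $n < \omega$ and put $\mu := (\lambda^+)^{+(n+1)} = \lambda^{+(n+2)}$; let $\seq{M_i : i \le \mu}$ be increasing continuous in $K^\ast$ and $p \in \gS^{<\mu}(M_\mu)$. Using the left $(<\kappa)$-witness property of $\is$ I would reduce to the case $\ell(p) < \kappa$. Then, as $\K$ is stable in $\lambda$, $M_\mu \in K^\ast_{\ge\lambda}$, and $\lambda = \lambda^{<\kappa}$, Fact \ref{ns-lc} (with its ``$\mu$'' equal to $\lambda$) produces $M_0 \in K^\ast_\lambda$ with $M_0 \lea M_\mu$ over which $p$ does not $\lambda$-split. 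Since $\mu = \lambda^{+(n+2)}$ is regular and larger than $\lambda = \|M_0\|$ and the chain is continuous, there is $i < \mu$ with $|M_0| \subseteq |M_i|$, hence $M_0 \lea M_i$ by coherence; passing to a model of size $\lambda$ between $M_0$ and $M_i$ that is universal over $M_0$ (possible since $M_i$ is $\lambda$-saturated, and harmless by monotonicity of non-$\lambda$-splitting) exhibits $p$ as not forking over $M_i$ in the sense of $\is$. This gives hypothesis (3), so Fact \ref{weakly-succ-build} applies: $\pre(\is^{\le 1}) \rest \Ksatpp{(K^\ast)}{\lambda^+}_{\lambda^+}$ is an $\omega$-successful type-full good $\lambda^+$-frame, and by the identification in the first paragraph its underlying class is $\Ksatp{\lambda^+}_{\lambda^+}$.

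The main obstacle is the second step: one has to check carefully that the $\lambda$-nonsplitting relation really does satisfy the whole list of axioms of a global $(<\infty,\ge\lambda)$-independence relation --- uniqueness coming from tameness, and extension and the witness properties from shortness --- so that it can genuinely substitute for the $(<\kappa)$-satisfiability relation used in \cite{indep-aec-apal} and thereby allow us to drop the hypothesis $\kappa = \beth_\kappa$. By contrast the verification of hypothesis (3) is short, and it is precisely there (through Fact \ref{ns-lc}) that the arithmetic assumption $\lambda = \lambda^{<\kappa}$ is consumed.
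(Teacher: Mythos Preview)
Your proposal is correct and follows essentially the same strategy as the paper's proof sketch: define a global $(<\infty,\ge\lambda)$-independence relation on $\Ksatp{\lambda}$ via nonsplitting, invoke the machinery of \cite{indep-aec-apal} (what the paper phrases as ``$\is$ induces a $\LS(K)$-generator for a weakly good $(<\kappa)$-independence relation \ldots\ and for a good $(\le 1)$-independence relation'') to verify hypotheses (1) and (2) of Fact \ref{weakly-succ-build}, and consume the arithmetic assumption $\lambda=\lambda^{<\kappa}$ through Fact \ref{ns-lc} for the local character needed in hypothesis (3). One small point: your ``reduction to the case $\ell(p)<\kappa$'' is not literally a reduction, since after finding for each $I\in[\ell(p)]^{<\kappa}$ a witness $M_0^I\in K_\lambda$ you must still observe that there are fewer than $\mu$ such $I$ (this uses $\lambda=\lambda^{<\kappa}$ and Hausdorff to get $(\lambda^{+m})^{<\kappa}=\lambda^{+m}$) so that a single $i<\mu$ absorbs all of them --- but this is routine, and the paper's sketch is no more explicit on this step than you are.
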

\begin{proof}[Proof sketch]
  Define a $(<\infty, \ge \lambda)$ independence relation $\is = (K_{\is}, \nf)$ as follows:

  \begin{itemize}
  \item $K_{\is} = \Ksatp{\lambda}$.
  \item $p \in \gS^{\alpha} (M)$ does not fork (in the sense of $\is$) over $M_0 \lea M$ if and only if:

    \begin{itemize}
        \item $M_0, M \in \Ksatp{\lambda}$.
        \item For every $I \subseteq \alpha$ with $|I| < \kappa$, there exists $M_0' \lea M_0$ in $K_\mu$ such that $p^I$ does not $\mu$-split over $M_0'$.
    \end{itemize}
  \end{itemize}  

  We claim that $\is$ satisfies the hypotheses of Fact \ref{weakly-succ-build} (where $K$ there is $K_{\ge \lambda}$ here and $\lambda$ there is $\lambda^+$ here). By Fact \ref{ns-lc} and superstability, we have that $\is$ induces a $\LS (K)$-generator for a weakly good $(<\kappa)$-independence relation (in the sense of \cite[Definition 7.3]{indep-aec-apal}), as well as a $\LS (K)$-generator for a good $(\le 1)$-independence relation (see \cite[Definition 8.5]{indep-aec-apal}). It follows from \cite[Theorems 7.5, 8.9]{indep-aec-apal} that $\s := \pre (\is^{\le 1})$ is a type-full good $(\ge \lambda)$-frame and $\is^{<\kappa}$ (the restriction of $\is$ to types of length less than $\kappa$) has base monotonicity, uniqueness for types over models, transitivity, and so that any type does not fork over a model of size $\LS (K)$.

  Now, it is easy to see using shortness that $\is$ also has uniqueness for types over models. By definition, it also has base monotonicity, transitivity, and the left $(<\kappa)$-witness property. Now from transitivity and the local character property mentioned in the previous paragraph, we get (\cite[Proposition 4.3.(6)]{indep-aec-apal}) that $\is$ has the right $(\le \LS (K))$-witness property. Thus all the hypotheses of Fact \ref{weakly-succ-build} are satisfied, so $\s$ is $\omega$-successful.
\end{proof}

From an $\omega$-successful good $\lambda$-frame, we obtain the desired global independence relation:

\begin{fact}\label{fully-good-constr}
  Let $\s = (K_\lambda, \nf)$ be an $\omega$-successful good $\lambda$-frame which is categorical in $\lambda$. If $K$ is fully $(<\cf{\lambda})$-tame and short and has amalgamation, then $\Ksatp{\lambda^{+3}}$ is almost fully good.
\end{fact}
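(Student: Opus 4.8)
The plan is to build the global (almost fully good) independence relation on $K^\ast := \Ksatp{\lambda^{+3}}$ by climbing the tower of successor frames supplied by $\omega$-successfulness, extending each level to arbitrarily large models using tameness, gluing the resulting frames by canonicity of forking, and finally stretching the length-one relation to all lengths using shortness. The class $\Ksatp{\lambda^{+3}}$ — rather than $\Ksatp{\lambda}$ or $\Ksatp{\lambda^+}$ — appears because several successors of saturation are needed before the transfer and continuity properties of the frames become simultaneously available; this is the bookkeeping of \cite[Chapter III]{shelahaecbook}.

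First I would recall Shelah's analysis of successor frames: since $\s$ is $\omega$-successful, for each $n<\omega$ the $n$-th successor frame $\s^{+n}$ is a type-full successful good $\lambda^{+n}$-frame whose underlying class is $\Ksatp{\lambda^{+n}}_{\lambda^{+n}}$, and, using categoricity in $\lambda$ (which propagates to each frame's underlying class having a single model up to isomorphism), one may moreover assume each $\s^{+n}$ is $\goodp$ — this being either preserved along the successor operation in Shelah's Chapter III or, failing that, the one additional property that must be extracted there. Next, since $K$ has a good $\lambda$-frame it is $\lambda$-superstable, hence $\mu$-superstable for all $\mu\ge\lambda$ by Fact \ref{frame-construct}, so each $\Ksatp{\lambda^{+n}}$ is an AEC with Löwenheim–Skolem number $\lambda^{+n}$. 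Because $K$ is fully $(<\cf{\lambda})$-tame and short it is in particular $(<\lambda^{+n})$-tame for every $n$ (enlarging the witnessing domains only weakens the property), and $\Ksatp{\lambda^{+n}}$ inherits this tameness; combined with amalgamation, Fact \ref{nmm-fact} applies to each $\s^{+n}$ and yields a good $(\ge\lambda^{+n})$-frame $\s^{(n)}$ on $\Ksatp{\lambda^{+n}}$, so that each of these classes has no maximal models and is stable in every cardinal. By canonicity of forking in good frames under tameness (see \cite{tame-frames-revisited-v6-toappear}) the relations $\s^{(n)}$ agree wherever two of them are defined; in particular $\s^{(3)}$ is an unambiguous good $(\ge\lambda^{+3})$-frame on $K^\ast$, and $K^\ast$ has $\LS(K^\ast)=\lambda^{+3}$, amalgamation, joint embedding, no maximal models, and is stable in all cardinals.

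Then I would extend $\s^{(3)}$ from length-one types to arbitrary length using full shortness, exactly as in the passage from a good $(\le 1)$-relation to a $(<\infty)$-relation in \cite[Sections 7, 8]{indep-aec-apal}: declare $\gtp(\ba/B;N)$ not to fork over $M$ if and only if $\gtp(\ba\rest I/B_0;N)$ does not fork over $M$ in $\s^{(3)}$ for all $I\subseteq\ell(\ba)$ and $B_0\subseteq B$ with $|I|+|B_0|\le\LS(K^\ast)$. Invariance, monotonicity, normality, base monotonicity, disjointness, symmetry, uniqueness for types over models, and the left and right $(\le\LS(K^\ast))$-witness properties then transfer from $\s^{(3)}$ by shortness together with the local-character properties of the frame, while the \emph{restricted} extension property — nonforking extensions for types over saturated models, types over models of size $\LS(K^\ast)$, and types of length at most $\LS(K^\ast)$ — follows from the extension property of $\s^{(3)}$ and the extra saturation of $K^\ast$. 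This is exactly why the relation is only \emph{almost} fully good and why passing to $\Ksatp{\lambda^{+3}}$ is forced.

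The main obstacle I expect is full model continuity of the resulting relation $\is$: given increasing continuous chains $\seq{M_i^\ell : i\le\delta}$ for $\ell<4$ with $\nfs{M_i^0}{M_i^1}{M_i^2}{M_i^3}$ for all $i<\delta$, one must conclude $\nfs{M_\delta^0}{M_\delta^1}{M_\delta^2}{M_\delta^3}$. This is where $\goodp$-ness of the tower $\s^{+n}$ is indispensable — it supplies the length-one continuity at each level — and the delicate part is to reduce the continuity statement at arbitrary length to that length-one statement via shortness, keeping careful track of which short restrictions to test and checking that the intermediate models produced in the reduction are $\lambda^{+3}$-saturated, hence lie in $K^\ast$. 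Pinning down how much saturation this reduction consumes is also what fixes the number of successors at three; everything else is routine assembly of the cited results.
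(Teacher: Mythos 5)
The paper blackboxes this statement, citing Theorems 12.16, 13.6 and 14.15 of \cite{indep-aec-apal}, and those theorems do divide the labour roughly as you describe: build a long-type independence relation at a fixed cardinal from the $\omega$-successful frame, then use tameness to raise the size of the base and shortness to raise the length. Your skeleton is therefore right, and your use of Fact \ref{nmm-fact} plus canonicity for the base-extension step is legitimate. But there is a genuine gap at the step you present as a definition: you ``declare $\gtp(\ba/B;N)$ not to fork over $M$ iff $\gtp(\ba\rest I/B_0;N)$ does not fork over $M$ in $\s^{(3)}$ for all $I,B_0$ with $|I|+|B_0|\le\LS(K^\ast)$.'' The frame $\s^{(3)}$ only defines nonforking for types of \emph{length one} over models, so for $|I|>1$ the right-hand side is undefined and the definition is circular. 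If you instead test only the length-one restrictions, the resulting relation has no reason to satisfy extension, symmetry, uniqueness, or full model continuity: shortness says that long types are \emph{determined} by their short restrictions, not that their \emph{nonforking} reduces to nonforking of length-one restrictions.

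This is exactly where $\omega$-successfulness earns its keep, whereas your proposal uses it only to produce a $\goodp$ good frame at each $\lambda^{+n}$ and then sets it aside. The actual mechanism in \cite{indep-aec-apal} (following \cite[Chapter III]{shelahaecbook}) is that successfulness supplies the existence property for uniqueness triples, hence Shelah's nonforking-amalgamation relation $\text{NF}$ on models and a domination-based independence notion for types of \emph{models} over models; it is from this layer, not from shortness, that one defines forking for types of length up to $\lambda^{+n}$ and proves symmetry and full model continuity (the latter being part of ``almost fully good'' per Definition \ref{almost-fully-good-def}, as you correctly note --- the ``almost'' weakens only extension). Shortness and the witness properties then only bridge from length at most $\LS(K^\ast)$ to arbitrary length. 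Without rebuilding that $\text{NF}$/domination layer your argument does not go through, and it also cannot explain why the hypothesis is $\omega$-successfulness rather than the mere existence of a good $\lambda$-frame, which together with tameness would already yield everything your proof actually uses.
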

\begin{proof}
  By \cite[Theorems 12.16, 13.6, 14.15]{indep-aec-apal}
\end{proof}
\begin{cor}\label{fully-good-cor}
  Assume that $K$ has amalgamation, no maximal models, and is fully $(<\kappa)$-tame and short, with $\kappa \le \LS (\K)^+$ a regular cardinal. If $K$ is categorical in a $\theta > \LS (K)$, then $\Ksatp{\lambda}$ is almost fully good, where $\lambda := \left(\LS (\K)^{<\kappa}\right)^{+5}$.
\end{cor}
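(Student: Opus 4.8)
The plan is to chain together Fact~\ref{shelah-vi}, Lemma~\ref{omega-succ-constr}, and Fact~\ref{fully-good-constr}, choosing the cardinal parameters so that the levels line up. Write $\chi := \LS (\K)^{<\kappa}$, so that the conclusion is precisely the assertion that $\Ksatp{\chi^{+5}}$ is almost fully good (note $\chi^{+5} = \lambda$).

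First I would apply Fact~\ref{shelah-vi}: since $K$ has amalgamation, no maximal models, and is categorical in $\theta > \LS (K)$, $K$ is $\LS (\K)$-superstable. I then want to feed this into Lemma~\ref{omega-succ-constr} at level $\chi^{+}$, so I must check that $\chi^{+} > \LS (\K)$ and that $\chi^{+} = (\chi^{+})^{<\kappa}$. The first is immediate since $\chi \geq \LS (\K)$. For the second, $\kappa$ is regular, so $\chi^{<\kappa} = \left(\LS (\K)^{<\kappa}\right)^{<\kappa} = \LS (\K)^{<\kappa} = \chi$; hence for each $\nu < \kappa$ we have $\chi^{\nu} \leq \chi$, and Hausdorff's formula gives $(\chi^{+})^{\nu} = \chi^{+} \cdot \chi^{\nu} = \chi^{+}$, so that $(\chi^{+})^{<\kappa} = \chi^{+}$ (this handles uniformly the extreme case $\kappa = \LS (\K)^{+}$, where $\chi = 2^{\LS (\K)}$). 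Applying Lemma~\ref{omega-succ-constr} with its ``$\lambda$'' equal to $\chi^{+}$ then produces an $\omega$-successful good $\chi^{+2}$-frame $\s$ with underlying class $\Ksatp{\chi^{+2}}_{\chi^{+2}}$.

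Finally I would apply Fact~\ref{fully-good-constr} to this frame, taking its ambient AEC to be $\K' := \Ksatp{\chi^{+2}}$, which satisfies $\K'_{\chi^{+2}} = \Ksatp{\chi^{+2}}_{\chi^{+2}}$. By Fact~\ref{frame-construct} (using $\LS (\K)$-superstability together with $\LS (\K)$-tameness and amalgamation), $\K$ is stable in every cardinal $\geq \LS (\K)$ and $\K'$ is an AEC with $\LS (\K') = \chi^{+2}$ carrying a good $(\geq \chi^{+2})$-frame; in particular $\K'$ has amalgamation and no maximal models, and the (unique) saturated model of $\K$ of size $\chi^{+2}$ shows that $\s$ is categorical in $\chi^{+2}$. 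Moreover $\K'$ inherits full $(<\kappa)$-tameness and shortness from $\K$, hence is fully $(<\chi^{+2})$-tame and short, since $\kappa \leq \LS (\K)^{+} < \chi^{+2} = \cf (\chi^{+2})$. Thus Fact~\ref{fully-good-constr} yields that $\left(\K'\right)^{(\chi^{+2})^{+3}\text{-sat}} = \left(\K'\right)^{\chi^{+5}\text{-sat}}$ is almost fully good; and since $\chi^{+2}$-saturated models are $\leq_{\K}$-cofinal and $\chi^{+5} > \chi^{+2}$, a model is $\chi^{+5}$-saturated in $\K'$ exactly when it is $\chi^{+5}$-saturated in $\K$, so this class equals $\Ksatp{\chi^{+5}} = \Ksatp{\lambda}$, as desired.

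The substantive content here is thin; I expect the only real friction to be bookkeeping: getting the arithmetic $(\chi^{+})^{<\kappa} = \chi^{+}$ right (in particular the case $\kappa = \LS (\K)^{+}$), and keeping straight which AEC each cited statement refers to, so that the various $\Ksatp{-}$ notations genuinely match along the chain.
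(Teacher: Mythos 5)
Your proposal is correct and follows essentially the same route as the paper: Fact~\ref{shelah-vi} for $\LS(\K)$-superstability, Lemma~\ref{omega-succ-constr} applied at $\chi^{+}$ (the paper's $\mu$) to get an $\omega$-successful good $\chi^{+2}$-frame on $\Ksatp{\chi^{+2}}_{\chi^{+2}}$, then Fact~\ref{fully-good-constr} to conclude that $\Ksatp{\chi^{+5}}=\Ksatp{\lambda}$ is almost fully good. The only difference is that you spell out the cardinal arithmetic $(\chi^{+})^{<\kappa}=\chi^{+}$ and the hypothesis checks that the paper leaves implicit.
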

\begin{proof}
  By Fact \ref{shelah-vi}, $K$ is $\LS (\K)$-superstable. Let $\mu := \left(\LS (\K)^{<\kappa}\right)^{+}$. By Fact \ref{omega-succ-constr} (with $\lambda$ there standing for $\mu$ here), there is an $\omega$-successful good $\mu^+$-frame with underlying class $\Ksatp{\mu^+}_{\mu^+}$. By Fact \ref{fully-good-constr} (with $\lambda$ there standing for $\mu^+$ here), $\Ksatp{\mu^{+4}}$ is almost fully good.
\end{proof}

Note for future reference that in almost good AECs, uniqueness triples have an easier definition.

\begin{defin}\label{dom-def}
  Let $\is = (K, \nf)$ be an almost good independence relation. $(a, M, N)$ is a \emph{domination triple} if $M \lea N$, $a \in |N| \backslash |M|$, and for any $N' \gea N$ and any $B \subseteq |N'|$, if $\nfs{M}{a}{B}{N'}$, then $\nfs{M}{N}{B}{N'}$.
\end{defin}

\begin{fact}[Lemma 11.7 in \cite{indep-aec-apal}]\label{uq-domin}
  Let $\is = (K, \nf)$ be an almost good independence relation. Let $\mu \ge \LS (K)$ and let $\s := \pre (\is^{\le 1}) \rest K_\mu$.

  For $M, N \in K_\mu$, $(a, M, N)$ is a domination triple if and only it is a uniqueness triple in $\s$.
\end{fact}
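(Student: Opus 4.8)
The plan is to prove the two implications separately. Throughout, I rely only on the axioms of an almost good independence relation (invariance, monotonicity, normality, base monotonicity, disjointness, symmetry, uniqueness, extension, transitivity, local character, and the $(\le \LS(K))$-witness properties), on the amalgamation property built into $\is$, and on one structural fact about the frame's nonforking amalgamation relation $\mathrm{NF}_\s$ (the relation on quadruples of models of $K_\mu$ underlying the definition of a uniqueness triple in $\s = \pre(\is^{\le 1})\rest K_\mu$): namely that $\mathrm{NF}_\s$ \emph{respects basic types}, i.e. if $\mathrm{NF}_\s(M, N_0, M_1, N_1)$ and $b \in |N_0|$ has nonalgebraic type over $M$, then $\gtp(b/M_1; N_1)$ does not fork over $M$; equivalently $\nfs{M}{b}{M_1}{N_1}$.

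$(\Rightarrow)$ Suppose $(a, M, N)$ is a domination triple; I want it to be a uniqueness triple. Since $a \in |N| \setminus |M|$, the type $\gtp(a/M;N)$ is nonalgebraic. Fix $M_1 \gea M$ in $K_\mu$ and let $N_1, N_2$ be two $\mathrm{NF}_\s$-amalgams of $N$ and $M_1$ over $M$. Applying the ``respects basic types'' fact with $b := a$ gives $\nfs{M}{a}{M_1}{N_\ell}$ for $\ell = 1,2$, and then the defining property of a domination triple (with $N' := N_\ell$ and $B := |M_1|$) upgrades this to $\nfs{M}{N}{M_1}{N_\ell}$. Writing $\bar c$ for an enumeration of $|N|$, both $\gtp(\bar c/M_1; N_1)$ and $\gtp(\bar c/M_1; N_2)$ are nonforking extensions of $\gtp(\bar c/M; N)$, so by the uniqueness axiom of $\is$ (valid for types of all lengths) they are equal. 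Using amalgamation to pass from equality of Galois types to an isomorphism, I conclude that $N$ sits the same way over $M_1$ in $N_1$ and in $N_2$, which is exactly the condition demanded of a uniqueness triple.

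$(\Leftarrow)$ Suppose $(a, M, N)$ is a uniqueness triple; I want it to be a domination triple. Let $N' \gea N$ and $B \subseteq |N'|$ with $\nfs{M}{a}{B}{N'}$; I must show $\nfs{M}{N}{B}{N'}$. By the right $(\le\LS(K))$-witness property it suffices to treat each small $B$, and then by normality and the extension property I may enlarge $B$ (possibly passing to an extension of $N'$ and restricting back at the end) so that $B = |M_1|$ for a model $M \lea M_1 \lea N'$ in $K_\mu$ with $\nfs{M}{a}{M_1}{N'}$; so I must show $\nfs{M}{N}{M_1}{N'}$. Let $q$ be the $\is$-nonforking extension of $\gtp(\bar c/M; N)$ to $M_1$, where $\bar c$ enumerates $|N|$, and realize $q$ inside some $N_1 \gea M_1$ by a copy $N^\circ \cong_M N$ with $\nfs{M}{N^\circ}{M_1}{N_1}$; let $a^\circ \in |N^\circ|$ correspond to $a$. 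Then (using that genuine $\is$-independence refines $\mathrm{NF}_\s$) $\mathrm{NF}_\s(M, N^\circ, M_1, N_1)$ holds, and $\gtp(a^\circ/M_1; N_1) = \gtp(a/M_1; N')$ by uniqueness of length-one nonforking extensions. Amalgamating $N'$ with $N_1$ over $M_1$ and identifying $a$ with $a^\circ$ yields $N_2 \gea N_1$ and $g : N' \xrightarrow[M_1]{} N_2$ with $g(a) = a^\circ$, so that $g[N]$ and $N^\circ$ are two isomorphic copies of $N$ over $M \cup \{a^\circ\}$ inside $N_2$. The plan is then to recognize both $g[N]$ and $N^\circ$ as $\mathrm{NF}_\s$-amalgams of a common copy of $N$ with $M_1$ over $M$ and invoke the uniqueness-triple hypothesis, concluding that $g[\bar c]$ and the enumeration of $N^\circ$ realize the same type over $M_1$ in $N_2$, namely $q$; pulling this back along $g$ (which fixes $M_1$) gives $\gtp(\bar c/M_1; N') = q$, i.e. $\nfs{M}{N}{M_1}{N'}$.

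I expect the crux to be the last step of $(\Leftarrow)$: the hypothesis delivers $\nfs{M}{a}{M_1}{N'}$ only for the single element $a$, not $\mathrm{NF}_\s(M, N, M_1, N')$ for all of $N$, so one cannot directly present $N'$ as an $\mathrm{NF}_\s$-amalgam and must instead transport the configuration through isomorphisms---extending partial isomorphisms that fix $M_1$ pointwise---until it sits inside a genuine $\mathrm{NF}_\s$-amalgam where the uniqueness-triple hypothesis applies, all while tracking carefully which submodels are fixed. Everything else reduces to routine manipulations with the independence axioms and to the passage, via amalgamation, between equality of Galois types and the existence of isomorphisms.
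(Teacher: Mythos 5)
This statement is imported verbatim from Lemma 11.7 of \cite{indep-aec-apal}; the present paper gives no proof of it, so I am assessing your argument on its own terms. Your direction $(\Rightarrow)$ is essentially correct and is the standard argument: for two amalgams of $N$ and $M_1$ over $M$ in which the image of $a$ has nonforking type over $M_1$, domination upgrades $\nfs{M}{a}{M_1}{N_\ell}$ to $\nfs{M}{N}{M_1}{N_\ell}$, and uniqueness for types of length $\mu$ (which, unlike extension, is unrestricted in an almost good independence relation) together with amalgamation yields the required equivalence of the two amalgams. One caveat: your reading of ``uniqueness triple'' through an auxiliary relation $\mathrm{NF}_{\s}$ on quadruples of models is not Shelah's Definition II.5.3 --- there the competing amalgams are only required to satisfy the \emph{length-one} condition that $\gtp(f_\ell(a)/M_1;N_\ell)$ does not fork over $M$; no nonforking of the whole of $N$ enters the definition (uniqueness triples are what one uses to \emph{build} such an NF relation, not the other way around). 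For $(\Rightarrow)$ this misreading is harmless.

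For $(\Leftarrow)$ it is not. Your competitor amalgam is obtained by realizing the nonforking extension $q$ of $\gtp(\bc/M;N)$ to $M_1$, where $\bc$ enumerates all of $N$. That is an instance of the extension property for a type of length $\mu$ over an arbitrary, possibly non-saturated, $M\in K_\mu$, which an \emph{almost} good independence relation is precisely not required to have when $\mu>\LS(K)$ (Definition \ref{almost-fully-good-def} guarantees extension only over saturated models, over models of size $\LS(K)$, and for types of length at most $\LS(K)$). The step is load-bearing: your final move needs $\gtp(\bc^\circ/M_1;N_1)=q$ to be the nonforking extension in order to transfer nonforking back to $\gtp(\bc/M_1;N')$. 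The repair is to localize: by the left $(\le\LS(K))$-witness property it suffices to show $\nfs{M}{A_0}{M_1}{N'}$ for each $A_0\subseteq|N|$ of size at most $\LS(K)$ containing $a$; extend only the short type $\gtp(\ba_0/M;N)$ nonforkingly to $M_1$ (an available instance of extension), realize it by $\ba_0^\circ$, and complete $\ba_0^\circ$ to a full copy $N^\circ$ of $N$ over $M$ by ordinary amalgamation --- this is a legitimate competitor precisely because the uniqueness-triple definition constrains only the type of $a^\circ$ over $M_1$, not that of all of $N^\circ$. Applying the uniqueness-triple hypothesis to $g[N]$ and $N^\circ$ then identifies $g(\ba_0)$ with $\ba_0^\circ$ over $M_1$ and gives $\nfs{M}{A_0}{M_1}{N'}$. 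With that modification your outline goes through.
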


We continue the proof of Theorem \ref{get-good-frame} by showing that the frame induced by an almost good independence relation is $\goodp$, a technical property of frames:

\begin{defin}[Definition III.1.3 in \cite{shelahaecbook}]
  Let $\s = (K_\lambda, \nf)$ be a type-full good $\lambda$-frame. $\s$ is $\goodp$ if the following is \emph{impossible}: There exists increasing continuous chains $\seq{M_i : i \le \lambda^+}$, $\seq{N_i : i \le \lambda^+}$, a type $p^\ast \in \gS (M_0)$, and a sequence $\seq{a_i : i < \lambda^+}$ such that for all $i < \lambda^+$:

  \begin{enumerate}
    \item $M_{\lambda^+}$ is $\lambda^+$-saturated.
    \item $M_i \lea N_i$ and they are both in $K_\lambda$.
    \item $a_{i + 1} \in |M_{i + 2}|$.
    \item $\gtp (a_{i + 1} / M_{i + 1}; M_{i + 2})$ is a nonforking extension of $p^\ast$.
    \item $\gtp (a_{i + 1} / N_0; N_{i + 2})$ forks over $M_0$.
  \end{enumerate}
\end{defin}

\begin{prop}\label{indep-goodp}
  If $\is = (K, \nf)$ is an almost good independence relation, then $\pre (\is^{\le 1}) \rest K_{\LS (K)}$ is $\goodp$.
\end{prop}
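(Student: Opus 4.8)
The plan is to contradict the configuration in the definition of $\goodp$ directly, using only the global relation $\is$ (in fact the saturation hypothesis on $M_{\lambda^+}$ will not be needed). So suppose $\seq{M_i : i \le \lambda^+}$, $\seq{N_i : i \le \lambda^+}$, $p^\ast \in \gS (M_0)$ and $\seq{a_i : i < \lambda^+}$ witness that $\pre (\is^{\le 1}) \rest K_{\LS (K)}$ is \emph{not} $\goodp$, where $\lambda := \LS (K)$. By smoothness $M_{\lambda^+} \lea N_{\lambda^+}$, and since $M_{i + 2} \lea N_{\lambda^+}$ and $N_{i + 2} \lea N_{\lambda^+}$ all the relevant types may be computed inside $N_{\lambda^+}$. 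Translating the frame-forking of conditions (4) and (5) into $\is$: for every $i < \lambda^+$ we have $\nfs{M_0}{a_{i + 1}}{M_{i + 1}}{N_{\lambda^+}}$ but $\neg \nfs{M_0}{a_{i + 1}}{N_0}{N_{\lambda^+}}$.

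First I would fix an enumeration $\bb$ of $|N_0|$; it has length $\lambda$, so $\gtp (\bb / M_{\lambda^+}; N_{\lambda^+}) \in \gS^\lambda (M_{\lambda^+})$. Since $\cf{\lambda^+} = \lambda^+ > \lambda$, local character of $\is$ applied to the chain $\seq{M_i : i \le \lambda^+}$ produces $j < \lambda^+$ with $\nfs{M_j}{N_0}{M_{\lambda^+}}{N_{\lambda^+}}$. By base monotonicity $\nfs{M_{j + 1}}{N_0}{M_{\lambda^+}}{N_{\lambda^+}}$, and since $a_{j + 1} \in |M_{j + 2}| \subseteq |M_{\lambda^+}|$, monotonicity and symmetry give $\nfs{M_{j + 1}}{a_{j + 1}}{N_0}{N_{\lambda^+}}$; by normality this is the same as $\nfs{M_{j + 1}}{a_{j + 1}}{N_0 \cup |M_{j + 1}|}{N_{\lambda^+}}$.

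Now I would combine this with $\nfs{M_0}{a_{j + 1}}{M_{j + 1}}{N_{\lambda^+}}$ (the instance of condition (4) at $i = j$) via transitivity of $\is$: here $M_0 \lea M_{j + 1}$ and $|M_{j+1}| \subseteq N_0 \cup |M_{j + 1}|$, so the intermediate base is the honest model $M_{j + 1}$, and the only instance of extension entering the (standard) derivation of transitivity is that of a nonforking extension of $p^\ast \in \gS (M_0)$ with $\|M_0\| = \LS (K)$, which is available even when $\is$ is only \emph{almost} fully good. This yields $\nfs{M_0}{a_{j + 1}}{N_0 \cup |M_{j + 1}|}{N_{\lambda^+}}$, hence $\nfs{M_0}{a_{j + 1}}{N_0}{N_{\lambda^+}}$ by monotonicity, contradicting $\neg \nfs{M_0}{a_{j + 1}}{N_0}{N_{\lambda^+}}$ from the first paragraph. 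So no such configuration exists and the frame is $\goodp$.

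The one place where the hypotheses genuinely bite is the local character step, which works precisely because the chain has length $\lambda^+ > \lambda = \|N_0\|$. The main thing to be careful about is the bookkeeping of which model plays the role of the base throughout the base-monotonicity / monotonicity / normality / symmetry manipulations, and making sure the form of transitivity invoked is among those valid for an almost fully good independence relation.
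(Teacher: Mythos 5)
Your argument is correct and follows essentially the same route as the paper's proof: apply local character to $\gtp (\bb / M_{\lambda^+}; N_{\lambda^+})$ for $\bb$ an enumeration of $N_0$, then use symmetry, monotonicity, base monotonicity and transitivity against clause (4) to contradict clause (5). The extra bookkeeping you supply --- the normality step, taking $M_{j+1}$ rather than $M_j$ as the intermediate base, and checking that the instance of extension hidden in the derivation of transitivity is available when $\is$ is only \emph{almost} fully good --- just makes explicit what the paper leaves implicit.
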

\begin{proof}
  Suppose $\seq{M_i : i \le \lambda^+}$, $\seq{N_i : i \le \lambda^+}$, $\seq{a_i : i < \lambda^+}$, and $p^\ast$ witness the failure of being $\goodp$. By local character, there exists $i < \lambda^+$ such that $\nfs{M_i}{N_0}{M_{\lambda^+}}{N_{\lambda^+}}$. By symmetry and monotonicity, we must have that $\nfs{M_i}{a_{i + 1}}{N_0}{N_{\lambda^+}}$, i.e.\ $\gtp (a_{i + 1} / N_0; N_{i + 2})$ does not fork over $M_i$. By transitivity and base monotonicity, $\gtp (a_{i + 1} / N_0; N_{i + 2})$ does not fork over $M_0$, contradiction.
\end{proof}

\begin{cor}\label{get-good-frame-cor}
  Assume $\is = (K, \nf)$ is an almost good independence relation. Let $\lambda > \LS (K)$ and let $\s := \pre (\is^{\le 1}) \rest \Ksatp{\lambda}$. Then $\s$ is $\omega$-successful and $\goodp$.
\end{cor}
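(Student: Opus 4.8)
The plan is to read the corollary as a direct combination of Fact~\ref{weakly-succ-build} (which yields $\omega$-successfulness) and Proposition~\ref{indep-goodp} (which yields $\goodp$), the only genuine work being to check that an \emph{almost} good independence relation $\is$ supplies every hypothesis those two results demand.

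For $\omega$-successfulness I would invoke Fact~\ref{weakly-succ-build} with the independence relation there taken to be $\is$ and the cardinal there taken to be our $\lambda$. One must verify its three hypotheses. First, $\s_0 := \pre(\is^{\le 1})$ is a type-full good $(\ge \LS(K))$-frame: all the good-frame axioms for length-one types are read off from Definition~\ref{almost-fully-good-def}, the only subtlety being that the extension property is available for such types because length one is $\le \LS(K)$, so these types are among those required to have nonforking extensions. Second, $\is$ has base monotonicity, uniqueness for types over models, and the left and right $(\le \LS(K))$-witness properties --- none of these clauses is relaxed in passing from ``fully good'' to ``almost fully good''. Third, for each $n<\omega$ and $\mu := \lambda^{+(n+1)}$, any $p \in \gS^{<\mu}(M_\mu)$ over an increasing continuous chain $\seq{M_i : i \le \mu}$ does not fork over some $M_i$; this is precisely local character property (5)(b) of Definition~\ref{fully-good-def} applied with $\ell(p) < \mu = \cf{\mu}$, and that property too is untouched by Definition~\ref{almost-fully-good-def}. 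Fact~\ref{weakly-succ-build} then gives that $\s_0 \rest (\Ksatp{\lambda})_\lambda$, the $\lambda$-part of $\s$, is an $\omega$-successful type-full good $\lambda$-frame, which is what ``$\s$ is $\omega$-successful'' should mean here.

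For $\goodp$ I would simply re-run the short argument of Proposition~\ref{indep-goodp} inside the class $\Ksatp{\lambda}$ with base cardinal $\lambda$. Suppose increasing continuous chains $\seq{M_i : i \le \lambda^+}$, $\seq{N_i : i \le \lambda^+}$, a type $p^\ast \in \gS(M_0)$ and a sequence $\seq{a_i : i < \lambda^+}$ witnessed a failure of $\goodp$ for $\s$. All the models involved lie in $K$, so every property of $\is$ applies to them; since $\|N_0\| = \lambda < \lambda^+ = \cf{\lambda^+}$, local character yields $i < \lambda^+$ with $\nfs{M_i}{N_0}{M_{\lambda^+}}{N_{\lambda^+}}$, whence by symmetry and monotonicity $\nfs{M_i}{a_{i+1}}{N_0}{N_{\lambda^+}}$, and then transitivity and base monotonicity give $\nfs{M_0}{a_{i+1}}{N_0}{N_{\lambda^+}}$, i.e.\ $\gtp(a_{i+1}/N_0; N_{i+2})$ does not fork over $M_0$, contradicting clause~(5). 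Alternatively one can observe that $\is \rest \Ksatp{\lambda}$ is again an almost good independence relation --- using Fact~\ref{frame-construct} to know that $\Ksatp{\lambda}$ is an AEC with $\LS = \lambda$ --- and quote Proposition~\ref{indep-goodp} verbatim.

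I expect the main obstacle to be bookkeeping rather than mathematics: confirming that each clause weakened in going from ``fully good'' to ``almost fully good'' is one that neither Fact~\ref{weakly-succ-build} nor Proposition~\ref{indep-goodp} actually uses --- in particular that the restricted extension property still produces a genuine good $(\ge\LS(K))$-frame and still supports the uniqueness and local character inputs --- and that restricting all the data to the saturated-model class $\Ksatp{\lambda}$ preserves exactly the structure the two cited arguments rely on. Once these verifications are in place, the corollary follows by citation.
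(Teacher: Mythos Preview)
Your proposal is correct and takes exactly the same approach as the paper: the paper's proof is the single sentence ``By Fact~\ref{weakly-succ-build} and Proposition~\ref{indep-goodp} (applied to the restriction of $\is$ to $\lambda$-saturated models)'', and you have simply unpacked the hypothesis-checking that this citation leaves implicit. Your alternative of restricting $\is$ to $\Ksatp{\lambda}$ and then quoting Proposition~\ref{indep-goodp} verbatim is precisely what the paper's parenthetical suggests.
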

\begin{proof}
  By Fact \ref{weakly-succ-build} and Proposition \ref{indep-goodp} (applied to the restriction of $\is$ to $\lambda$-saturated models).
\end{proof}

\begin{remark}
  In Section \ref{categ-transfer-sec}, we only need a (type-full) successful $\goodp$ frame. Moreover Shelah proves in \cite[Claim III.1.9]{shelahaecbook} that if $\s$ is successful, then the successor frame $\s^+$ is $\goodp$, so why do we bother building an almost good independence relation? The reason is that we want a successful $\goodp$ $\lambda$-frame when $\lambda$ is a \emph{limit} cardinal. Then if $\K$ is categorical in $\lambda$ and has primes, the frame will have primes (no need to restrict to saturated models, where it is not clear whether primes exist even if the original class has primes), so the hypotheses of Theorem \ref{categ-transfer-frame} will be satisfied
\end{remark}

\section{Frames that are not weakly uni-dimensional}\label{proof-appendix}

In this appendix, we give a proof of Fact \ref{not-unidim-frame}. We work with the following hypotheses:

\begin{hypothesis} \
  \begin{enumerate}
    \item $\s = (K_\lambda, \nf)$ is a type-full successful $\goodp$ $\lambda$-frame.
    \item $K_\lambda$ has primes.
    \item $K$ is categorical in $\lambda$.
  \end{enumerate}
\end{hypothesis}

We will use the orthogonality calculus developed in \cite[Chapter III]{shelahaecbook}.

\begin{defin}[Definition III.6.2 in \cite{shelahaecbook}] \
  \begin{enumerate}
    \item Let $M \in K_\lambda$ and let $p, q \in \gS (M)$ be nonalgebraic. We say that $p$ and $q$ are \emph{weakly orthogonal} if whenever $(a, M, N)$ is a uniqueness triple with $\gtp (a / M; N) = q$, then $p$ has a unique extension to $\gS (N)$. We say that $p$ and $q$ are \emph{orthogonal}, written $p \perp q$ if for every $N \gea M$, the nonforking extensions to $N$ $p'$, $q'$ of $p$ and $q$ respectively are weakly orthogonal.
    \item Let $M_\ell \in K_\lambda$ and $p_\ell \in \gS (M_\ell)$ be nonalgebraic, $\ell = 1,2$. We say that $p_1$ and $p_2$ are \emph{orthogonal} if there exists $N \gea M_\ell$ such that the nonforking extensions to $N$ $p_1'$, $p_2'$ of $p_1$ and $p_2$ respectively are orthogonal.
  \end{enumerate}
\end{defin}

\begin{fact}[Claims III.6.7, III.6.8 in \cite{shelahaecbook}]\label{orthog-facts}
  Let $M \in K_\lambda$ and $p, q \in \gS (M)$ be nonalgebraic.
  \begin{enumerate}
    \item \cite[Claim III.6.3]{shelahaecbook} $p$ is weakly orthogonal to $q$ if and only if there exists a uniqueness triple $(a, M, N)$ such that $\gtp (a / M; N) = q$ and $p$ has a unique extension to $\gS (N)$.
    \item \cite[Claim III.6.7.2]{shelahaecbook} $p \perp q$ if and only if $q \perp p$.
    \item \cite[Claim III.6.8.5]{shelahaecbook} $p$ and $q$ are orthogonal if and only if they are weakly orthogonal.
  \end{enumerate}
\end{fact}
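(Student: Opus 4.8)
The three parts are Claims III.6.3, III.6.7.2, and III.6.8.5 of \cite{shelahaecbook}, so the shortest plan is simply to quote them; but since all the machinery is at hand (uniqueness triples exist by weak successfulness, and by Fact \ref{uq-domin} they coincide with domination triples), let me indicate how each is obtained. For (1), the forward direction I would get directly from the definition of weak orthogonality together with weak successfulness: the latter produces some uniqueness triple $(a, M, N)$ with $\gtp(a / M; N) = q$, and then $p$ has a unique extension to $\gS(N)$ by hypothesis. For the converse, the point to establish is that any two uniqueness triples over $M$ realizing $q$ are interchangeable: if $(a, M, N)$ and $(a', M, N')$ both realize $q$, then (reading them as domination triples via Fact \ref{uq-domin}, or from the uniqueness property packaged into successfulness) in a common extension there is an isomorphism fixing $M$ and matching $a$ with $a'$, and uniqueness of nonforking extensions transports ``$p$ has a unique extension'' from $\gS(N)$ to $\gS(N')$; thus the existence of one good triple upgrades to the universal statement defining weak orthogonality.

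The real work is in (2), and I would route it through symmetry of weak orthogonality, which is not visible from the definition as stated. The plan: amalgamate an element $a$ realizing $q$ and an element $b$ realizing $p$ into a single $N \gea M$, extract uniqueness triples $(a, M, N_a)$ and $(b, M, N_b)$ with $N_a, N_b \lea N$, and use weak orthogonality of $p$ to $q$ (applied to $(a, M, N_a)$) to see that $\gtp(b / N_a; N)$ is the nonforking extension of $p$; then the domination property of $(b, M, N_b)$ together with symmetry of independence forces $\gtp(a / N_b; N)$ to be the nonforking --- and then, by an argument of the same kind, the unique --- extension of $q$, which is weak orthogonality of $q$ to $p$. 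Once weak orthogonality is symmetric, $\perp$ is symmetric because by definition $p \perp q$ asks weak orthogonality of the nonforking extensions over \emph{every} larger base, a condition symmetric in $p$ and $q$. For (3), one direction is immediate (take the base $N = M$ in the definition of $\perp$); for the converse I would show that weak orthogonality of $p, q \in \gS(M)$ passes to their nonforking extensions $p', q'$ over any $N \gea M$, by analyzing a uniqueness triple over $N$ realizing $q'$ against one over $M$ realizing $q$ using transitivity of nonforking and Fact \ref{uq-domin}; combined with (2) this gives $p \perp q$.

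I expect the main obstacle to be (2): symmetry of (weak) orthogonality is the genuinely nontrivial core of this fragment of the calculus, and it is exactly where successfulness --- the existence and good behaviour of uniqueness/domination triples --- is indispensable. The remaining steps reduce to careful bookkeeping with the uniqueness, extension, transitivity, base-monotonicity, and symmetry properties of the good $\lambda$-frame, following \cite[Chapter III]{shelahaecbook}.
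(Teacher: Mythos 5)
This statement is a \emph{Fact} in the paper: it is not proved there but simply cited from \cite[Claims III.6.3, III.6.7.2, III.6.8.5]{shelahaecbook}, which is exactly the ``shortest plan'' you identify, so your proposal takes the same approach as the paper. Your supplementary sketch is a reasonable outline of Shelah's actual arguments (existence of uniqueness triples from weak successfulness for the forward direction of (1), symmetry of weak orthogonality via uniqueness/domination triples and symmetry of nonforking for (2), and transfer of weak orthogonality along nonforking extensions for (3)), with the genuinely delicate point --- upgrading ``the nonforking extension'' to ``the unique extension'' in (2) --- correctly flagged as the core difficulty rather than glossed over.
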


We will also use the following without comments:

\begin{fact}[Claim III.3.7 in \cite{shelahaecbook}]\label{uq-prime}
  If $(a, M, N)$ is a prime triple, then it is a uniqueness triple.
\end{fact}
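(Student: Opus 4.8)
Write $p := \gtp(a/M; N)$. Since uniqueness triples are only of interest for nonalgebraic types, I will assume $a \notin |M|$. Recall that, as $\s$ is weakly successful, nonforking extends to a relation $\text{NF}$ on quadruples of models of $K_\lambda$, and that $(a, M, N)$ being a uniqueness triple in the sense of \cite[Definition II.5.3]{shelahaecbook} amounts to the $\text{NF}$-amalgam of $N$ with any $M_1 \gea M$ over $M$ being unique; by a frame-level version of Fact \ref{uq-domin} this is equivalent to $(a, M, N)$ being a domination triple (Definition \ref{dom-def}). The plan is to establish this using the mapping characterization of prime triples in Remark \ref{prime-weakly-prime}.

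Given $M_1 \gea M$ in $K_\lambda$, I would start from an $\text{NF}$-amalgam $N_1$ (that is, $N \lea N_1$, $M_1 \lea N_1$, and $\text{NF}(M, N, M_1, N_1)$) and show it is essentially unique. By monotonicity $\gtp(a/M_1; N_1)$ does not fork over $M$, so by uniqueness in the good $\lambda$-frame $\s$ its value $q$ is determined by $M_1$ alone. Since $K_\lambda$ has primes, I would then fix a prime triple $(a, M_1, P)$ realizing $q$. As $\gtp(a/M; P) = q \rest M = p = \gtp(a/M; N)$, primality of $(a, M, N)$ over $M$ (Remark \ref{prime-weakly-prime}) yields a $\lea$-embedding of $N$ into $P$ over $M$ fixing $a$; and as $\gtp(a/M_1; N_1) = q = \gtp(a/M_1; P)$, primality of $(a, M_1, P)$ over $M_1$ yields a $\lea$-embedding of $P$ into $N_1$ over $M_1$ fixing $a$. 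So the copy of $N$ inside $N_1$ factors through the prime model $P$ over $|M_1| \cup \{a\}$.

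The hard part — the main obstacle — is to upgrade these embeddings into the two conclusions one actually needs: that $\text{NF}(M, N, M_1, N_1)$ is forced, and that any two $\text{NF}$-amalgams are isomorphic over $N \cup M_1$. Primality produces embeddings, not a single map coherent on $N$ and $M_1$ at once, so I expect to have to feed the facts that $P$ is prime over $|M_1| \cup \{a\}$ and that $\gtp(a/M_1; P)$ does not fork over $M$ into the bookkeeping of $\text{NF}$ — using that $\s$ is successful and $\goodp$, so that $\text{NF}$ enjoys uniqueness, long transitivity, and monotonicity — exactly along the lines of the orthogonality and uniqueness-triple development in \cite[Chapter III]{shelahaecbook}. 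Concretely, I would argue that $P$ is $\text{NF}$-free from $M_1$ over $M$ inside any amalgam, deduce $\text{NF}(M, N, M_1, N_1)$ by monotonicity of $\text{NF}$ down to $N \lea P$, and obtain uniqueness because, after aligning two $\text{NF}$-amalgams $N_1, N_2$ along the common nonforking extension $q$, each contains a copy of $P$ over $|M_1| \cup \{a\}$ and primality of $P$ matches these copies, producing the required isomorphism. Carrying this through faithfully reproduces a nontrivial portion of \cite[Chapter III]{shelahaecbook}, which is why the statement is quoted here as a fact rather than proved from scratch.
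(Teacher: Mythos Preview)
The paper gives no proof of this statement — it is quoted as a black-box citation to \cite[Claim III.3.7]{shelahaecbook}. So there is nothing in the paper to compare your proposal against directly; I can only comment on how your outline relates to Shelah's actual argument.

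Your route through $\text{NF}$ and domination is backward relative to Shelah's development and uses more hypotheses than the original. In \cite[Section III.3]{shelahaecbook}, Claim III.3.7 is proven under the sole hypothesis that $\s$ is a good $\lambda$-frame — no weak successfulness, no $\text{NF}$, no $\goodp$. Indeed, a main purpose of that section is to \emph{derive} weak successfulness (the existence property for uniqueness triples) from the existence of primes, so $\text{NF}$ is not yet available when III.3.7 is proved. Your appeal to ``a frame-level version of Fact \ref{uq-domin}'' therefore imports machinery that Shelah does not use and that, in the logical order of his book, does not yet exist. In the specific context of Appendix \ref{proof-appendix} successfulness \emph{is} assumed, so your approach is not literally circular here; but it is a detour, and Fact \ref{uq-domin} as stated in the paper is for global almost good independence relations, not for a weakly successful $\lambda$-frame, so the ``frame-level version'' you invoke is itself a separate citation into Chapter II or III.

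Shelah's own argument is short and stays entirely at the level of the good $\lambda$-frame: given two nonforking amalgams of $N$ and $M_1$ over $M$, one uses uniqueness of the nonforking extension to match the realizations of $\gtp(a/M_1)$, amalgamates over $M_1$, and then uses the mapping property of primality directly to obtain the required amalgam over $N \cup M_1$. The ``hard part'' you flag — coherence of the embeddings on $N$ versus on $M_1$ — is exactly what primality handles, without any $\text{NF}$ bookkeeping, long transitivity, or $\goodp$. Your sketch is honest about its own gaps, but those gaps arise from the choice of framework; the direct frame-level proof is both simpler and strictly more general.
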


Some orthogonality calculus gives us a useful description of the types in $K_{\neg^\ast p}$ (recall Definition \ref{kneg-def}).

\begin{lem}\label{kneg-orth}
  Let $M \in K_\lambda$ and let $p \in \gS (M)$ be nonalgebraic. Let $N \in K_{\neg^\ast p}$ be of size $\lambda$ such that the map $a \mapsto c_a^N$ is the identity (so $M \lea N \rest L(K)$). For any $N_0 \lea N \rest L(K)$ with $M \lea N_0$ and any $q \in \gS (N_0; N)$, $p \perp q$.
\end{lem}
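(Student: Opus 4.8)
The plan is to reduce the claim to a statement about a single uniqueness triple and then cash in the defining property of $K_{\neg^\ast p}$.

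First I would set up the reduction. We may assume $q$ is nonalgebraic (otherwise there is nothing to prove). Let $p' \in \gS(N_0)$ denote the nonforking extension of $p$, which exists, is unique, and is nonalgebraic by the good $\lambda$-frame axioms. Since $M \lea N_0$ and $q$ is its own nonforking extension to $N_0$, the definition of orthogonality for types over different models reduces $p \perp q$ to $p' \perp q$ (orthogonality of two types over the common model $N_0$). By Fact \ref{orthog-facts}.(3) this is equivalent to $p'$ being \emph{weakly} orthogonal to $q$, and by Fact \ref{orthog-facts}.(1) it suffices to exhibit one uniqueness triple $(a, N_0, N_1)$ with $\gtp(a/N_0; N_1) = q$ for which $p'$ has a unique extension to $\gS(N_1)$.

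Second I would build that triple inside $N \rest L(K)$. Since $K_\lambda$ has primes, pick a prime triple $(b, N_0, N_1)$ with $\gtp(b/N_0; N_1) = q$; it is a uniqueness triple by Fact \ref{uq-prime}. As $q \in \gS(N_0; N)$, choose $a \in |N \rest L(K)|$ realizing $q$ over $N_0$; by Remark \ref{prime-weakly-prime} there is $f : N_1 \xrightarrow[N_0]{} N \rest L(K)$ with $f(b) = a$. Replacing $(b, N_0, N_1)$ by its image under $f$ — still a uniqueness triple realizing $q$, and with $p'$ unchanged since $f$ fixes $N_0$ — we may assume $N_0 \lea N_1 \lea N \rest L(K)$, all models in $K_\lambda$.

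Third comes the actual use of the hypothesis. Suppose $r_1, r_2 \in \gS(N_1)$ both extend $p'$. By the extension property of the good $\lambda$-frame each $r_i$ extends to some $\hat r_i \in \gS(N \rest L(K))$; both $\hat r_1, \hat r_2$ then extend $p' \rest M = p$. Since $N \in K_{\neg^\ast p}$ and $a \mapsto c_a^N$ is the identity, $p$ has a unique extension to $\gS(N \rest L(K))$, so $\hat r_1 = \hat r_2$, whence $r_1 = \hat r_1 \rest N_1 = \hat r_2 \rest N_1 = r_2$. This produces the required uniqueness triple, so $p'$ is weakly orthogonal to $q$, hence $p' \perp q$, hence $p \perp q$.

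I expect the only real friction to be bookkeeping: checking that transporting the prime triple into $N \rest L(K)$ via Remark \ref{prime-weakly-prime} is legitimate and leaves $p'$ untouched, and being careful that the reduction of $p \perp q$ to weak orthogonality of $p'$ and $q$ respects the precise definitions of (weak) orthogonality used in the excerpt. The genuinely content-bearing step is the third one, where membership in $K_{\neg^\ast p}$ is used.
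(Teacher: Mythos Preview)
Your proof is correct and follows essentially the same route as the paper's: reduce to showing $p'$ is weakly orthogonal to $q$ via Fact~\ref{orthog-facts}, build a prime (hence uniqueness) triple for $q$ inside $N \rest L(K)$, and use membership in $K_{\neg^\ast p}$ to force uniqueness of the extension. The paper's proof is more terse---it asserts in one line that a prime triple $(a, N_0, N')$ with $N' \lea N$ exists and that $p$ has a unique extension to $N'$ because it has a unique extension to $N$---but you have correctly unpacked both of these steps (the embedding of the prime triple into $N$ via primality, and the extend-then-restrict argument for uniqueness).
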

\begin{proof}
  Let $p'$ be the nonforking extension of $p$ to $N_0$. By Fact \ref{orthog-facts}, it is enough to show that $p'$ is weakly orthogonal to $q$. Let $(a, N_0, N')$ be a prime triple such that $\gtp (a / N_0; N') = q$ and $N' \lea N$ (exists since we are assuming that $K_\lambda$ has primes). Then since $p$ has a unique extension to $N$ it has a unique extension to $N'$, which must be the nonforking extension so $p'$ also has a unique extension to $N'$. By Fact \ref{uq-prime}, ($a, N_0, N')$ is a uniqueness triple and by Fact \ref{orthog-facts} again, this suffices to conclude that $p'$ and $q$ are weakly orthogonal.
\end{proof}

The next lemma justifies the ``uni-dimensional'' terminology: if the class is \emph{not} uni-dimensional, then there are two orthogonal types.

\begin{lem}\label{p-q-lem}
  If $K_\lambda$ is not weakly uni-dimensional, there exists $M \in K_\lambda$ and types $p, q \in \gS (M)$ such that $p \perp q$.
\end{lem}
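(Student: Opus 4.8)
The plan is to take two strict extensions $M \lta M_1$, $M \lta M_2$ in $K_\lambda$ witnessing that $K_\lambda$ is not weakly uni-dimensional, so that every $\gtp (c / M; M_2)$ with $c \in |M_2| \setminus |M|$ has at most one extension in $\gS (M_1)$. Since $M \lta M_1$ and $M \lta M_2$ are strict, there are $d \in |M_1| \setminus |M|$ and $c \in |M_2| \setminus |M|$; set $p := \gtp (d / M; M_1)$ and $q := \gtp (c / M; M_2)$, both of which lie in $\gS (M)$ and are nonalgebraic. I will show $p \perp q$. By Fact \ref{orthog-facts}(2) and (3) it suffices to show that $q$ is weakly orthogonal to $p$, and by the reformulation of weak orthogonality in Fact \ref{orthog-facts}(1) (applied with the roles of $p$ and $q$ exchanged) it is enough to exhibit \emph{one} uniqueness triple $(a, M, N)$ with $\gtp (a / M; N) = p$ such that $q$ has a unique extension to $\gS (N)$.

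For this I would use that $K_\lambda$ has primes: pick a prime triple $(a, M, N)$ with $\gtp (a / M; N) = p$; by Fact \ref{uq-prime} it is a uniqueness triple. Since $N$ is prime over $|M| \cup \{a\}$ and $\gtp (a / M; N) = \gtp (d / M; M_1)$, primeness (Remark \ref{prime-weakly-prime}) gives an embedding $f : N \xrightarrow[M]{} M_1$ with $f (a) = d$; as $f$ fixes $M$ and is an isomorphism onto $f[N] \lea M_1$, extensions of $q$ to $\gS (N)$ correspond bijectively to extensions of $q$ to $\gS (f[N])$, so it is enough to bound the latter by one. Given any $q_1 \in \gS (f[N])$ extending $q$, I would produce an extension $\hat q_1 \in \gS (M_1)$ with $\hat q_1 \rest f[N] = q_1$: take the nonforking extension if $q_1$ is nonalgebraic (it exists and is unique by the good $\lambda$-frame axioms), and take $\gtp (c_0 / M_1; M_1)$ if $q_1$ is algebraic with witness $c_0 \in |f[N]| \subseteq |M_1|$. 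The assignment $q_1 \mapsto \hat q_1$ is injective (recover $q_1$ as $\hat q_1 \rest f[N]$) and satisfies $\hat q_1 \rest M = q_1 \rest M = q$. Hence two distinct extensions of $q$ to $\gS (f[N])$ would give two distinct extensions of $q$ to $\gS (M_1)$, contradicting the choice of $M_1, M_2$ (recall $c \notin |M|$). So $q$ has at most one extension to $\gS (f[N])$, and at least one by the extension property, hence exactly one, as needed; therefore $q$ is weakly orthogonal to $p$, and $p \perp q$ follows.

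The substance of the argument is the injection-by-nonforking-extension step; the rest is bookkeeping: checking that transport along $f$ really preserves ``extends $q$'' and restriction to $M$, keeping all models inside $K_\lambda$ so the good $\lambda$-frame axioms apply, and separating out the (degenerate but logically possible) case of an algebraic extension of $q$ over $f[N]$, since the good-frame extension/uniqueness axioms are phrased for nonalgebraic types. I expect the only real pitfall to be getting the orientation of the weak orthogonality characterization in Fact \ref{orthog-facts}(1) right — it must be invoked with $(a,M,N)$ realizing $p$ and the uniqueness clause applied to $q$, which is the ``other'' direction from the one that the uni-dimensionality hypothesis directly addresses.
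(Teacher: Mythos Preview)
Your proof is correct and follows essentially the same approach as the paper. The paper swaps the roles of $p$ and $q$ (its $p$ is your $q$ and vice versa) and packages the core step as an application of Lemma \ref{kneg-orth} via the class $K_{\neg^\ast p}$, but unwinding that lemma in the special case $N_0 = M$, $N = M_1$ gives exactly your direct argument: take a prime (hence uniqueness) triple realizing the $M_1$-based type inside $M_1$, and use that the $M_2$-based type has a unique extension to $M_1$ to force a unique extension to the smaller model. Your careful case split between algebraic and nonalgebraic extensions is more than needed---amalgamation alone gives the injective ``extend to $\gS(M_1)$'' map---but it does no harm.
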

\begin{proof}
  Assume $K_\lambda$ is not weakly uni-dimensional. This means that there exists $M \lta M_\ell$, $\ell = 1,2$, all in $K_\lambda$ such that for any $c \in |M_2| \backslash |M|$, $\gtp (c / M; M_2)$ has a unique extension to $\gS (M_1)$. Pick any $c \in |M_2| \backslash |M|$ and let $p := \gtp (c / M; M_2)$. Then there is a natural expansion of $M_1$ to $K_{\neg^\ast p}$. So pick any $d \in |M_1| \backslash |M|$ and let $q := \gtp (d / M; M_1)$. By Lemma \ref{kneg-orth}, $p \perp q$, as desired.
\end{proof}

We can now prove Fact \ref{not-unidim-frame}. We restate it here for convenience:

\begin{fact}\label{not-unidim-frame-1}
  If $K_\lambda$ is not weakly uni-dimensional, then there exists $M \in K_\lambda$ and $p \in \gS (M)$ such that $\s \rest K_{\neg^\ast p}$ (the restriction of $\s$ to the models in $K_{\neg^\ast p}$) is a type-full good $\lambda$-frame.
\end{fact}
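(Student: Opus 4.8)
The goal is to verify the good $\lambda$-frame axioms for $\s \rest K_{\neg^\ast p}$ where $M \in K_\lambda$ and $p \in \gS (M)$ are chosen via Lemma \ref{p-q-lem} (so there is $q \in \gS (M)$ with $p \perp q$), and $K_{\neg^\ast p}$ is taken with the base $M$ built into the vocabulary as in Definition \ref{kneg-def}. Most axioms will be inherited more or less directly from $\s$ once we know that $K_{\neg^\ast p}$ is a reasonable AEC-like class sitting inside $K_\lambda$; the content is in (1) checking $K_{\neg^\ast p}$ is non-empty, has arbitrarily rich structure (amalgamation, JEP, no maximal models), and (2) checking that forking restricted to $K_{\neg^\ast p}$ still has extension and local character — i.e.\ that staying inside $K_{\neg^\ast p}$ does not destroy the ability to extend types.

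First I would record that $K_{\neg^\ast p}$ is an AEC: closure under chains follows exactly as in Proposition \ref{kneg-aec} (using local character of $\nf$ plus uniqueness to propagate "$p$ has a unique nonforking extension" through a union), and the remaining AEC axioms are immediate from those of $K_M$. Next, non-emptiness and arbitrarily large models: here is where $p \perp q$ is used. Starting from $M$, build an increasing chain by repeatedly realizing the nonforking extension of $q$ (using that $\s$ has extension and primes): since $p \perp q$, each such one-step extension $(d, N_0, N_1)$ with $\gtp(d/N_0;N_1)$ a nonforking extension of $q$ keeps $p$ weakly orthogonal to the new type, hence (by Facts \ref{orthog-facts} and \ref{uq-prime} and the argument of Lemma \ref{kneg-orth}) preserves that $p$ has a unique extension upstairs; taking unions and using closure under chains, we stay in $K_{\neg^\ast p}$ and get models of every size $\ge \lambda$, in particular a nontrivial model of size $\lambda$. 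Amalgamation and JEP in $K_{\neg^\ast p}_\lambda$: given $N_0 \lea N_1, N_2$ in $K_{\neg^\ast p}$, amalgamate them in $K_\lambda$ over $N_0$ to get $N$; the issue is that $p$ may fail to have a unique extension to $N$. But using primes we can build inside $N$ a model $N'$ generated over $N_0$ by (copies of) $N_1$ and $N_2$ via a chain of prime triples whose types are all nonforking extensions of types already realized — and by repeated weak orthogonality, $p$ keeps its unique extension, so $N' \in K_{\neg^\ast p}$; alternatively one shows directly that any type realized in $N_1$ or $N_2$ over $N_0$ is orthogonal to $p$ (as in Lemma \ref{kneg-orth}), so the amalgam can be pruned. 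No maximal models then follows from the arbitrarily-large-models construction above.

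Finally, the forking axioms. Invariance, monotonicity, normality, base monotonicity, disjointness, full symmetry, uniqueness, and local character all restrict without change from $\s$ to $\s \rest K_{\neg^\ast p}$ (the witnessing models in the symmetry and local-character axioms can be taken inside $K_{\neg^\ast p}$ by the closure argument of Proposition \ref{kneg-aec} together with the weak-orthogonality preservation above). The one axiom needing real care is the \emph{extension} property: given $N_0 \lea N$ in $K_{\neg^\ast p}$ and a nonalgebraic $r \in \gS(N_0)$ (in the sense of $K_{\neg^\ast p}$, so $r$ extends the fixed reading of $p$'s base data, but $r$ need not be $p'$ itself), we must find $N' \gea N$ in $K_{\neg^\ast p}$ and $a \in |N'|$ with $\gtp(a/N;N')$ the nonforking extension of $r$. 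Realize the nonforking extension of $r$ using a prime triple $(a, N, N'')$ in $K_\lambda$; the danger is that $p$ loses its unique extension to $N''$. Here the key point is that $r$, being realized over $N_0$ inside a model of $K_{\neg^\ast p}$, is (by Lemma \ref{kneg-orth}, applied to any $N_1 \in K_{\neg^\ast p}$ realizing $r$) orthogonal to $p$; hence its nonforking extension to $N$ is weakly orthogonal to the unique extension of $p$, and the prime/uniqueness triple $(a,N,N'')$ therefore preserves that $p$ has a unique extension to $N''$, so $N'' \in K_{\neg^\ast p}$. \textbf{The main obstacle} is exactly this: showing that \emph{every} nonforking extension one needs to realize inside $K_{\neg^\ast p}$ is orthogonal to $p$, so that realizing it via a prime triple does not push us out of $K_{\neg^\ast p}$. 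This is where $K_\lambda$ having primes, the successful $\goodp$ hypotheses, and the orthogonality calculus of Facts \ref{orthog-facts}--\ref{uq-prime} all get used together; once extension is secured, stability in $\lambda$ follows from uniqueness plus extension as usual, completing the verification that $\s \rest K_{\neg^\ast p}$ is a type-full good $\lambda$-frame.
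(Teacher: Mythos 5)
Your overall architecture matches the paper's proof closely: choose $p\perp q$ via Lemma \ref{p-q-lem}, get extension and no maximal models by realizing nonforking extensions through prime (hence uniqueness) triples and using Lemma \ref{kneg-orth} plus weak orthogonality to stay inside $K_{\neg^\ast p}$, and get uniqueness by passing to the prime submodel witnessing equality of types (which lies in $K_{\neg^\ast p}$ automatically). Those parts are correct and are exactly what the paper does.

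The genuine gap is in your amalgamation step. Your plan is to amalgamate $N_1,N_2$ over $N_0$ in $K_\lambda$ first, obtaining some $N$, and then recover a copy of the amalgam inside $N$ ``via a chain of prime triples whose types are all nonforking extensions of types already realized,'' or alternatively to ``prune'' $N$ using the fact that types realized in $N_1$ or $N_2$ over $N_0$ are orthogonal to $p$. Neither version goes through as stated: in an \emph{arbitrary} $K$-amalgam $N$, the type of an element $a\in f_1[N_1]$ over an intermediate model $N^\ast$ containing elements of $f_2[N_2]$ need not be a \emph{nonforking} extension of any type over a model of $K_{\neg^\ast p}$, and orthogonality $p\perp r$ only controls realizations of the \emph{nonforking} extensions of $r$ via uniqueness triples (that is the definition of weak orthogonality). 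So the hypothesis ``whose types are all nonforking extensions of types already realized'' is not something you get to assume about $N$; it is precisely what fails in a badly chosen amalgam, and Lemma \ref{kneg-orth} cannot be applied to $\gtp(a/N^\ast;N)$ because $N$ is not known to be in $K_{\neg^\ast p}$. The paper avoids this by not amalgamating in $K$ at all: it first establishes the extension property of $\s\rest K_{\neg^\ast p}$ (which you did, correctly), notes that $K_{\neg^\ast p}$ has primes and hence weak amalgamation (proof of Proposition \ref{weakly-prime-nice}), and then invokes Theorem \ref{thm-closure}, whose proof builds the amalgam from scratch by iteratively choosing, inside $K_{\neg^\ast p}$, an extension of each relevant type via the type extension property and realizing it by weak amalgamation. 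You already have every ingredient needed to cite Theorem \ref{thm-closure}; the step-by-step construction there is the correct replacement for ``amalgamate then prune.'' A similar (smaller) caveat applies to your claim that full symmetry ``restricts without change'': the witnessing models provided by symmetry in $\s$ need not lie in $K_{\neg^\ast p}$ and must be replaced by prime submodels, with orthogonality used to check membership, as in the paper's last bullet.
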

\begin{proof}
  Assume $K_\lambda$ is not weakly uni-dimensional. By Lemma \ref{p-q-lem}, there exists $M \in K_\lambda$ and types $p, q \in \gS (M)$ such that $p \perp q$.

  Let $\s_{\neg^\ast p} := \s \rest K_{\neg^\ast p}$. We check that it is a type-full good $\lambda$-frame. For ease of notation, we identify a model $N \in K_{\neg^\ast p}$ and its reduct to $K$. For $N \gea M$, we write $p_N$ for the nonforking extension of $p$ to $\gS (N)$, and similarly for $q_N$. 

  \begin{itemize}
    \item $K_{\neg^\ast p}$ is not empty, since (the natural expansion of) $M$ is in it.
    \item $(K_{\neg^\ast p})_\lambda$ is an AEC in $\lambda$ (that is, its models of size $\lambda$ behave like an AEC, see \cite[Definition II.1.18]{shelahaecbook}) by the proof of Proposition \ref{kneg-aec}.
    \item Forking has many of the usual properties: monotonicity, invariance, disjointness, local character, continuity, and transitivity all trivially follow from the definition of $K_{\neg^\ast p}$.
    \item Forking has the uniqueness property: Let $N \in K_{\neg^\ast p}$ have size $\lambda$. Without loss of generality $M \lea N$. Let $N' \gea N$ be in $K_{\neg p}$ of size $\lambda$ and let $r_1, r_2 \in \gS (N')$ be nonforking over $N$ and such that $r_1 \rest N = r_2 \rest N$. Say $r_\ell = \gtp (a_\ell / N'; N_\ell)$. Now in $K$, $r_1 = r_2$, and since $K_\lambda$ has primes, the equality is witnessed by an embedding $f: M_1 \xrightarrow[N]{} N_2$, with $M_1 \lea N_1$. Since $N_1 \in K_{\neg^\ast p}$, $M_1 \in K_{\neg^\ast p}$, and so $r_1 = r_2$ also in $K_{\neg^\ast p}$ (this is similar to the proof of Proposition \ref{weakly-prime-nice}).
    \item Forking has the extension property. Let $N \in K_{\neg^\ast p}$ have size $\lambda$. Without loss of generality, $M \lea N$. Let $r \in \gS (N)$ be nonalgebraic and let $N' \gea N$ be in $K_{\neg^\ast p}$ of size $\lambda$. Let $r' \in \gS (N')$ be the nonforking extension of $r$ to $N'$ (in $K$). Let $(a, N', N'')$ be a prime triple such that $\gtp (a / N'; N'') = r'$. By Lemma \ref{kneg-orth}, $r \perp p$. Thus $r'$ is weakly orthogonal to $p_{N'}$ and hence $p_{N''}$ is the unique extension of $p_{N'}$ to $N''$. Now if $p'$ is an extension of $p$ to $\gS (N'')$, then $p' \rest N' = p_{N'}$ as $N' \in K_{\neg^\ast p}$, so $p' = p_{N''}$ by the previous sentence. This shows that $N'' \in K_{\neg^\ast p}$, so as $r' \in \gS (N'; N'')$, $r'$ is a Galois type in $K_{\neg^\ast p}$, as desired.
    \item $K_{\neg^\ast p}$ has $\lambda$-amalgamation: because $(K_{\neg^\ast p})_\lambda$ has the type extension property and weak $\lambda$-amalgamation (as $K_\lambda$, and hence $(K_{\neg^\ast p})_\lambda$, has primes, see the proof of Proposition \ref{weakly-prime-nice}), thus one can apply Theorem \ref{thm-closure}.
    \item $K_{\neg^\ast p}$ has $\lambda$-joint embedding: since any model contains a copy of $M$, this is a consequence of $\lambda$-amalgamation over $M$.
    \item $K_{\neg^\ast p}$ is stable in $\lambda$: because $K_{\neg^\ast p}$ has ``fewer'' Galois types than $K$, and $K$ is stable in $\lambda$.
    \item $(K_{\neg^\ast p})_\lambda$ has no maximal models: This is where we use the negation of weakly uni-dimensional. Let $N \in K_{\neg^\ast p}$ be of size $\lambda$ and without loss of generality assume $M \lea N$. Recall from above that there is a nonalgebraic type $q \in \gS (M)$ such that $p \perp q$. Let $q_N$ be the nonforking extension of $q$ to $N$ and let $(a, N, N')$ be a prime triple such that $q = \gtp (a / N; N')$. As in the proof of the extension property, $N' \in K_{\neg^\ast p}$. Moreover as $a \in |N'| \backslash |N|$, $N \lta N'$, as needed.
    \item $\s_{\neg^\ast p}$ is type-full: because $\s$ is.
    \item $\s_{\neg^\ast p}$ has full symmetry: Assume $\nfs{N_0}{a}{N_1}{N}$, for $N_0, N_1, N \in K_{\neg^\ast p}$, $M \lea N_0 \lea N_1 \lea N$, and $a \in |N|$. Let $b \in |N_1|$. Without loss of generality, $a \notin |N_1|$ (if $a \in |N_1|$, then $a \in |N_0|$ by disjointness and as $\nfs{N_0}{b}{N_0}{N}$, $N_0$ and $N$ witness the full symmetry). By full symmetry in $\s$, there exists $N_0', N' \in K$ such that $N \lea N'$, $N_0 \lea N_0' \lea N'$, and $\nfs{N_0}{b}{N_0'}{N'}$ (note that the first use of $\nf$ was in $\s_{\neg^\ast p}$ and the second in $\s$, but since the first is just the restriction of the first to models in $K_{\neg^\ast p}$, we do not make the difference). Now let $N_0''$ be such that $N_0 \lea N_0'' \lea N_0'$ and $(a, N_0, N_0'')$ is a prime triple. Since $r = \gtp (a/ N_0; N_0'') = \gtp (a / N_0; N)$ is orthogonal to $p$ (by Lemma \ref{kneg-orth}), we have that $N_0'' \in K_{\neg^\ast p}$. By monotonicity, $\nfs{N_0}{b}{N_0''}{N'}$. Now let $(b, N_0'', N'')$ be a prime triple with $N'' \lea N'$. By the same argument as before, $N'' \in K_{\neg^\ast p}$ and by monotonicity, $\nfs{N_0}{b}{N_0''}{N''}$. Since all the models are in $K_{\neg^\ast p}$, this shows that the nonforking happens in $\s_{\neg^\ast p}$, as needed.
  \end{itemize}
  
  We have checked all the properties and therefore $\s_{\neg^\ast p}$ is a type-full good $\lambda$-frame.
\end{proof}

\section{Independence in universal classes}\label{indep-sec}

We investigate the properties of independence in universal classes (more generally in AECs admitting intersections). Recall that \cite[Theorem 15.6]{indep-aec-apal} showed that a fully tame and short AEC with amalgamation categorical in unboundedly many cardinals eventually admits a well-behaved independence notion. We want to specialize this result to AECs admitting intersections and prove more properties of forking there. Here, we prove that the independence relation satisfies the axioms of \cite{bgkv-apal} (partially answering Question 7.1 there). Moreover it has a finite character property (Theorem \ref{fin-char-thm}) and can be extended to an independence relation over sets (Theorem \ref{set-base-thm}). A simple corollary is the disjoint amalgamation property (Corollary \ref{dap-cor}).

While none of the results are used in this paper, we believe they shed further light on how the existence a closure operator helps in the structural analysis of an AEC. Since several classes of interests to algebraists admit intersections, we believe the existence of a well-behaved independence notion there is likely to have further applications.

By Fact \ref{good-frame-succ} or Corollary \ref{fully-good-cor}, it is reasonable to assume:

\begin{hypothesis} \
  \begin{enumerate}
    \item $K$ locally admits intersections.
    \item $\is = (K, \nf)$ is an almost fully good independence relation (see Definition \ref{fully-good-def}).
  \end{enumerate}
\end{hypothesis}

Our goal is to prove that $\is$ is actually fully good, i.e.\ extension holds. Note that if we knew that $K$ was categorical above the Löwenheim-Skolem-Tarski number, we could use the categoricity transfer of Section \ref{categ-transfer-sec}. However here we do not make any categoricity assumption and our approach is easier: we study how the closure operator interacts with independence. The key lemma is:

\begin{lem}\label{cl-indep}
  If $\nfs{M_0}{A}{B}{N}$, then $\nfs{M_0}{\cl^N (A)}{\cl^N (B)}{N}$.
\end{lem}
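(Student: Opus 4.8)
The plan is to pull $\cl^N$ into the nonforking relation one element at a time, along a resolution of $\cl^N(A)$ as an increasing continuous chain of models, using the extension and uniqueness properties of $\is$ together with the fact that equality of Galois types is witnessed by isomorphisms of closures (Proposition \ref{isom-eq-gtp}). Morally, since $\cl^N(M)=M$ for submodels (Proposition \ref{cl-props}), ``the closure is independent'' should follow from local character plus uniqueness, and that is essentially the mechanism.

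\emph{Reductions.} By symmetry of $\is$ it suffices to prove that $\nfs{M_0}{A}{B}{N}$ implies $\nfs{M_0}{\cl^N(A)}{B}{N}$ (apply this, symmetry, this again with $B$ for $A$, symmetry once more). By the left and right witness properties, monotonicity, and the $\le\LS(K)$-character of $\cl^N$ (Proposition \ref{cl-props}), we may assume $|A|\le\LS(K)$ and $|B|\le\LS(K)$, which keeps all the closures and types short; this matters because the extension property is available for types of length $\le\LS(K)$ even in an \emph{almost} (as opposed to fully) good relation. It is also harmless to work inside $K_{M_0^-}$ for a small submodel $M_0^-\lea M_0$: this relativised AEC genuinely admits intersections (since $K$ locally admits intersections), it has the same Löwenheim--Skolem number as $K$, and in it $\cl^N$ sends every set to a genuine submodel $\gea M_0^-$, so Proposition \ref{isom-eq-gtp} applies; since $\cl^N(A)\subseteq\cl^{N,K_{M_0^-}}(A)$ and nonforking is unchanged, proving the statement there suffices.

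\emph{The chain and the induction.} Enumerate $A=\{a_j:j<k\}$ with $k\le\LS(K)$ and set $P_i:=\cl^N(\{a_j:j<i\})$ for $i\le k$. Then $\langle P_i:i\le k\rangle$ is increasing and continuous (continuity at limits is finite character of $\cl^N$), $P_0=M_0^-$, $P_k=\cl^N(A)$, each $P_i\lea N$, and $P_{i+1}=\cl^N(|P_i|\cup\{a_i\})$. I would prove $\nfs{M_0}{P_i}{B}{N}$ by induction on $i$. The base case is immediate ($P_0\lea M_0$, so use normality), and limit stages follow from the continuity properties of $\is$ (reducing $B$ to a model if necessary so that full model continuity applies). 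The successor step is the crux: to pass from $\nfs{M_0}{P_i}{B}{N}$ to $\nfs{M_0}{P_{i+1}}{B}{N}$ with $P_{i+1}=\cl^N(|P_i|\cup\{a_i\})$, pick a short tuple $\bar a$ enumerating a set that generates $P_{i+1}$ over $M_0$ and extend it to an enumeration $\bar e$ of $P_{i+1}$. By extension, $\gtp(\bar e/M_0;N)$ has a nonforking extension $q$ to $B$; restricting $q$ to the $\bar a$-coordinates gives a nonforking extension of $\gtp(\bar a/M_0;N)$ to $B$, hence by uniqueness it equals $\gtp(\bar a/B;N)$, which is already the nonforking extension by the hypothesis $\nfs{M_0}{A}{B}{N}$ and monotonicity. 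Feeding this equality of types (over a base containing the model $M_0$) into Proposition \ref{isom-eq-gtp}, the canonical isomorphism of the corresponding closures carries $\bar e$ to the chosen realisation of $q$, so $\gtp(\bar e/B;N)=q$ does not fork over $M_0$; that is, $\nfs{M_0}{P_{i+1}}{B}{N}$.

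The main obstacle I anticipate is precisely this successor step. One must choose the enumerations of the various closures compatibly, so that the isomorphism provided by Proposition \ref{isom-eq-gtp} really sends $\bar e$ to the realisation of $q$ (not merely to some other enumeration of the same set), and one must apply extension only where an almost good relation guarantees it -- which is exactly why the reductions drive everything down to short tuples and to the relativised class $K_{M_0^-}$ before extension is invoked. Once Lemma \ref{cl-indep} is in hand, applying it with $A$ or $B$ a submodel (where $\cl^N$ is the identity) upgrades the restricted forms of extension to full extension, so $\is$ is fully good.
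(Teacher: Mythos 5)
Your reduction via symmetry is legitimate, but the version you reduce to --- closing up the \emph{left}-hand side, i.e.\ $\nfs{M_0}{A}{B}{N}$ implies $\nfs{M_0}{\cl^N(A)}{B}{N}$ --- is the hard direction, and your successor step for it has a genuine gap. You produce a nonforking extension $q$ of $\gtp(\bar e/M_0;N)$ to $B$, check that its $\bar a$-part agrees with $\gtp(\bar a/B;N)$, and then assert that ``the canonical isomorphism of the corresponding closures carries $\bar e$ to the chosen realisation of $q$.'' There is no canonical isomorphism here: Proposition \ref{isom-eq-gtp} gives \emph{some} isomorphism of closures witnessing equality of the types of the generators, and that isomorphism is unique only in a \emph{pseudo-universal} class. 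Appendix \ref{indep-sec} assumes only that $K$ (locally) admits intersections, and then the Galois type over $B$ of an enumeration of $\cl^N(|M_0|\cup\ran{\bar a})$ is genuinely not determined by $\gtp(\bar a/B;N)$ together with $\gtp(\bar e/M_0;N)$ (compare the algebraically closed fields item of Example \ref{pseudouniv-example}: there are two distinct automorphisms of the closure fixing the generators). So you cannot conclude $\gtp(\bar e/B;N)=q$, and the passage from $\nfs{M_0}{P_i}{B}{N}$ to $\nfs{M_0}{P_{i+1}}{B}{N}$ does not close; you have correctly identified the obstacle but not removed it. (The limit stages also need more care than an appeal to full model continuity: $B$ is a set, the chain may have cofinality $\le\LS(K)$, and finite character of $\nf$ is only proved \emph{after} this lemma, in Theorem \ref{fin-char-thm}, using it.)

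The paper's proof sidesteps all of this by closing up the \emph{right}-hand side instead: after normality, it suffices (by symmetry, applied twice) to show that $\nfs{M_0}{A}{B}{N}$ implies $\nfs{M_0}{A}{\cl^N(B)}{N}$. Shrinking $A$ to size $\le\LS(K)$ by the witness property and finite character of $\cl^N$ (so that extension is available even in an \emph{almost} fully good relation), extension yields $N'\gea N$ and a \emph{model} $M\gea M_0$ with $B\subseteq|M|$, $M\lea N'$, and $\nfs{M_0}{A}{M}{N'}$; since $M$ is a model containing $B$, automatically $\cl^N(B)=\cl^{N'}(B)\subseteq|M|$, and monotonicity finishes. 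No control of the Galois type of an enumeration of the closure is ever needed, because any model containing $B$ contains $\cl^N(B)$ for free. Your left-side induction recreates exactly the difficulty that this symmetric reduction is designed to avoid; I would restructure the argument along the paper's lines.
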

\begin{proof}
  By normality, without loss of generality $|M_0| \subseteq A, B$. Using symmetry, it is enough to show that $\nfs{M_0}{A}{\cl^N (B)}{N}$. By the witness property and finite character of the closure operator, we can assume without loss of generality that $|A| \le \LS (K)$. Therefore by extension there exists $N' \gea N$ and $M \gea M_0$ such that $M \lea N'$, $M$ contains $B$, and $\nfs{M_0}{A}{M}{N'}$. By definition, $\cl^N (B) = \cl^{N'} (B)$ is contained in $M$, so $\nfs{M_0}{A}{\cl^N (B)}{N'}$, so $\nfs{M_0}{A}{\cl^N (B)}{N}$.
\end{proof}

An abstract way of stating Lemma \ref{cl-indep} is via domination triples (recall Definition \ref{dom-def}). 

\begin{lem}\label{uq-triple-mu}
  Let $M \lea N$ and let $a \in |N| \backslash |M|$. Then $(a, M, \cl^N (\{a\} \cup |M|))$ is a domination triple.
\end{lem}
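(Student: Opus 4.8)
The plan is to deduce Lemma~\ref{uq-triple-mu} from Lemma~\ref{cl-indep} together with the characterization of domination triples. Let $N_0 := \cl^N(\{a\} \cup |M|)$; since $K$ locally admits intersections and $|M| \subseteq \{a\} \cup |M|$, we have $N_0 \lea N$, and $M \lea N_0$ by coherence (as $M = \cl^N(|M|) \subseteq N_0$). So $(a, M, N_0)$ is a legitimate triple in $K^{3,1}$, and $a \in |N_0| \backslash |M|$ by hypothesis. It remains to verify the domination property: if $N' \gea N_0$ and $B \subseteq |N'|$ satisfy $\nfs{M}{a}{B}{N'}$, then $\nfs{M}{N_0}{B}{N'}$.

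First I would apply Lemma~\ref{cl-indep} to the independence $\nfs{M}{a}{B}{N'}$ (after noting $\nf$ is symmetric and using normality to absorb $|M|$ if needed), obtaining $\nfs{M}{\cl^{N'}(\{a\} \cup |M|)}{\cl^{N'}(B)}{N'}$. The key point is then that $\cl^{N'}(\{a\} \cup |M|) = \cl^{N}(\{a\} \cup |M|) = |N_0|$: this is exactly Monotonicity~3 from Proposition~\ref{cl-props}, using that $N_0 \lea N \lea N'$ all admit intersections over the set $\{a\} \cup |M|$ (this follows since $K$ locally admits intersections and the set contains $M$). By monotonicity of $\nf$ (shrinking $\cl^{N'}(B)$ back down to $B$, which is legitimate since $B \subseteq \cl^{N'}(B)$), we conclude $\nfs{M}{N_0}{B}{N'}$, as required.

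I expect the main obstacle to be purely bookkeeping: making sure the closure operators are computed in the right models and that the hypotheses of Monotonicity~3 (namely that each ambient model admits intersections over the relevant set) are genuinely available from ``$K$ locally admits intersections.'' By the remark following Definition~\ref{local-def}, $K$ locally admitting intersections says precisely that $\cl^N(A) \lea N$ whenever $M \lea N$ and $M \subseteq A \subseteq |N|$; since our set $\{a\} \cup |M|$ contains $M$, this applies uniformly to $N_0$, $N$, and $N'$. There is a minor subtlety that $\cl^{N'}(B)$ need not be an $L(K)$-structure-worth-caring-about unless $B$ contains a model, but this does not matter: we only use normality and monotonicity of $\nf$ as a relation on sets, never that $\cl^{N'}(B) \lea N'$. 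So the argument is essentially immediate once Lemma~\ref{cl-indep} and Proposition~\ref{cl-props} are in hand, and the proof should be two or three lines.

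\begin{proof}
  Let $N_0 := \cl^N (\{a\} \cup |M|)$. Since $K$ locally admits intersections and $\{a\} \cup |M|$ contains $M$, we have $N_0 \lea N$; also $M = \cl^N (|M|) \subseteq N_0$, so $M \lea N_0$ by coherence. Thus $(a, M, N_0) \in K^{3,1}$ with $a \in |N_0| \backslash |M|$. Now suppose $N' \gea N_0$ and $B \subseteq |N'|$ satisfy $\nfs{M}{a}{B}{N'}$. By Lemma \ref{cl-indep} (and normality), $\nfs{M}{\cl^{N'} (\{a\} \cup |M|)}{\cl^{N'} (B)}{N'}$. By Monotonicity 3 (Proposition \ref{cl-props}, applicable since $N_0 \lea N \lea N'$ all admit intersections over $\{a\} \cup |M|$), $\cl^{N'} (\{a\} \cup |M|) = \cl^N (\{a\} \cup |M|) = |N_0|$. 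Hence by monotonicity of $\nf$ (shrinking $\cl^{N'}(B)$ to $B$), $\nfs{M}{N_0}{B}{N'}$. Therefore $(a, M, N_0)$ is a domination triple.
\end{proof}
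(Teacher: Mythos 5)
Your proof is correct and follows exactly the route the paper intends: the paper's own proof is simply ``Directly from Lemma~\ref{cl-indep},'' and your argument fills in precisely the details that phrase elides (normality to absorb $|M|$, Lemma~\ref{cl-indep}, the identification $\cl^{N'}(\{a\}\cup|M|)=|N_0|$ via Monotonicity~3, and right monotonicity of $\nf$). Nothing to add.
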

\begin{proof}
  Directly from Lemma \ref{cl-indep}.
\end{proof}

In our framework, domination triples are the same as the uniqueness triples of \cite[Definition II.5.3]{shelahaecbook} by Fact \ref{uq-domin}, thus we get:

\begin{thm}\label{fully-good-ir}
  $\is$ has extension. Hence it is a fully good independence relation.
\end{thm}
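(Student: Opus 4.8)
The plan is to derive full extension from the restricted extension property included in the definition of ``almost fully good'', using as the key leverage the fact isolated in Lemma~\ref{uq-triple-mu} that \emph{every} one-element extension of a model is a domination triple. There are two stages: first establish extension for types of length one, then bootstrap to arbitrary length by copying, one point at a time, the intersection-generated submodel that witnesses a given type.

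For a length-one type $p = \gtp(a / M; N_0)$ and $M \lea N$, extension is almost immediate: local character produces $M_0 \lea M$ with $\|M_0\| = \LS(K)$ over which $p$ does not fork, and since $p \rest M_0$ is then a type over a model of size $\LS(K)$, clause (2) of Definition~\ref{almost-fully-good-def} supplies a nonforking extension $q_0$ of $p \rest M_0$ to $N$. Its restriction $q_0 \rest M$ is a nonforking extension of $p \rest M_0$ to $M$, hence equals $p$ by uniqueness; so $q_0$ extends $p$ and, by base monotonicity, does not fork over $M$.

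For the general case, let $p = \gtp(\ba / M; N_0) \in \gS^\alpha(M)$ and $M \lea N$. After replacing $N_0$ with $\cl^{N_0}(\ran{\ba} \cup |M|)$ --- a strong submodel, since $K$ locally admits intersections --- assume $N_0$ is generated over $M$ by $\ba$. Enumerate $|N_0| \setminus |M|$ as $\seq{e_i : i < \gamma}$ and let $N_0^0 := M$, $N_0^{i+1} := \cl^{N_0}(|N_0^i| \cup \{e_i\})$, and $N_0^i := \bigcup_{j < i} N_0^j$ at limits; this is $\lea$-increasing continuous with union $N_0$, and by Lemma~\ref{uq-triple-mu} each $(e_i, N_0^i, N_0^{i+1})$ is a domination triple. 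I would then build $\lea$-increasing continuous sequences $\seq{N^i : i \le \gamma}$ and $\seq{g_i : N_0^i \to N^i}$ with $g_i$ fixing $M$, $g_0 = \id_M$, $N^0 = N$, and $\nfs{M}{g_i[N_0^i]}{N}{N^i}$ maintained throughout. At successor stages: transport $(e_i, N_0^i, N_0^{i+1})$ by $g_i$, use the length-one case to extend the (length-one) type of the transported point to a nonforking extension over $N^i$, invoke Proposition~\ref{isom-eq-gtp} to upgrade the realizing point to an extension $g_{i+1}$ of $g_i$, and then combine ``$g_{i+1}[N_0^{i+1}]$ is dominated by the new point over $g_i[N_0^i]$'' with the induction hypothesis, via transitivity and base monotonicity, to recover $\nfs{M}{g_{i+1}[N_0^{i+1}]}{N}{N^{i+1}}$. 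At limit stages the required independence follows from full model continuity. Finally $\gtp(g_\gamma(\ba) / N; N^\gamma)$ is the desired nonforking extension of $p$. (By Fact~\ref{uq-domin} the domination triples used here coincide with Shelah's uniqueness triples, so the argument also records that the induced frame has the existence property for uniqueness triples.)

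The main obstacle is that the coordinates of a long type are generally not mutually independent over $M$, so one cannot extend $\ba$ coordinate by coordinate keeping every step nonforking over $M$; passing to the intersection-generated submodel $N_0$ and to Lemma~\ref{uq-triple-mu} is exactly what lets the construction ignore the internal forking inside $N_0$ and track only the independence of $g_i[N_0^i]$ from $N$ over $M$. Secondary points to check carefully are that each single-point enlargement $\cl^{N_0}(|N_0^i| \cup \{e_i\})$ really is a strong submodel of $N_0$ (here one uses that $K$ \emph{locally} admits intersections, i.e. the argument of $\cl^{N_0}$ contains the model $N_0^i$), that $g_i$-transport preserves domination triples (invariance of $\cl$, Proposition~\ref{cl-props}), and that transitivity of $\is$ over sets is available in this setting.
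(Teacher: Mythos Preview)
Your argument is correct and takes a genuinely different route from the paper's. The paper proceeds indirectly: from Lemma~\ref{uq-triple-mu} and Fact~\ref{uq-domin} it deduces that for each $\mu \ge \LS(K)$ the restricted frame $\pre(\is^{\le 1}) \rest K_\mu$ has the existence property for uniqueness triples, then invokes Shelah's NF machinery (\cite[Section II.6]{shelahaecbook} together with \cite[Section 12]{indep-aec-apal}) to produce a well-behaved longer-type relation $\is'$ on $K_\mu$, appeals to the canonicity theorems of \cite{bgkv-apal} to identify $\is'$ with $\is^{\le \mu} \rest K_\mu$, and finally quotes \cite[Lemma 14.13]{indep-aec-apal} to pass from ``extension at each cardinal'' to full extension. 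Your approach replaces all of this black-boxed machinery by a direct hands-on construction: you copy the closure $\cl^{N_0}(M \cup \ran{\ba})$ into an extension of $N$ one element at a time, using domination (Lemma~\ref{uq-triple-mu}) and symmetry/transitivity at successors and full model continuity at limits to maintain $\nfs{M}{g_i[N_0^i]}{N}{N^i}$.

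What this buys you is a self-contained argument that does not rely on NF or canonicity; what the paper's approach buys is brevity (given the cited literature) and the observation that the result really only depends on the existence property for uniqueness triples, which your parenthetical already records. Two minor remarks: your length-one paragraph is more than you need, since types of length at most $\LS(K)$ already have extension by clause~(3) of Definition~\ref{almost-fully-good-def}; and the transitivity you invoke at the successor step should be justified via the $(\le \LS(K))$-witness property (reduce to small $A_0 \subseteq |N|$, where extension is available, and run the standard extension-plus-uniqueness derivation of transitivity there), since transitivity is not listed among the axioms in Definition~\ref{fully-good-def}.
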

\begin{proof}
  Let $\mu \ge \LS (K)$ and let $\s := \pre (\is^{\le 1} \rest K_\mu)$. By Lemma \ref{uq-triple-mu} and Fact \ref{uq-domin} $\s$ has the so-called existence property for uniqueness triples (see \cite[Definition II.5.3]{shelahaecbook}). By Section II.6 of \cite{shelahaecbook} (and the results of section 12 in \cite{indep-aec-apal}) $\s$ induces an independence relation $\is'$ for types of length at most $\mu$ over models of size $\mu$ that is well-behaved (i.e.\ it has all of the properties of a fully good independence relation except full model continuity and disjointness). By the canonicity of such relations (see the proofs of Corollary 5.19 and Theorem 6.13 in \cite{bgkv-apal}), $\is'$ must be the same as $\is^{\le \mu} \rest K_\mu$, the restriction of $\is$ to size $\mu$. Thus for all $\mu \ge \LS (K)$, $\is$ has extension for types of length at most $\mu$ over models of size $\mu$. By the proof of \cite[Lemma 14.13]{indep-aec-apal}, this suffices to conclude that $\is$ has extension.
\end{proof}
\begin{remark}
  The proof shows that instead of the AEC admitting intersections, it is enough to assume that for each $\mu$, the restriction of $\is$ to a good frame in $\mu$ has the existence property for uniqueness triples. Unfortunately the proof in \cite[Section 11]{indep-aec-apal} only works when the frame is restricted to the saturated models of size $\mu$.
\end{remark}

\begin{cor}\label{dap-cor}
  $K$ has disjoint amalgamation.
\end{cor}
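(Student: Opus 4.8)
The plan is to use the fully good independence relation $\is$ --- which has extension by Theorem \ref{fully-good-ir} --- to build an \emph{independent} amalgam of $M_1$ and $M_2$ over $M_0$, and then to read off disjointness from the disjointness property built into Definition \ref{fully-good-def}. So let $M_0 \lea M_1$ and $M_0 \lea M_2$ be given. First I would use amalgamation (which holds since $K$ carries a good independence relation) to fix $N \in K$ with $M_1 \lea N$ and an embedding $g : M_2 \xrightarrow[M_0]{} N$; write $M_2' := g[M_2]$, so $M_0 \lea M_2' \lea N$. Let $\ba$ enumerate $|M_2'|$ and set $p := \gtp (\ba / M_0; N)$, a Galois type of length $\|M_2\|$.

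Next I would apply extension: since $\is$ is fully good, there are $N' \gea M_1$ and a sequence $\bb \in N'$ such that $q := \gtp (\bb / M_1; N')$ extends $p$ and $\nfs{M_0}{\ran{\bb}}{|M_1|}{N'}$. Because $q \rest M_0 = p$ and $p$ is the type of an enumeration of a model $M_2'$ with $M_0 \lea M_2' \lea N$, and because $K$ has amalgamation (so $E = \Eat$), the set $\ran{\bb}$ is the universe of a model $M_2'' \lea N'$ with $M_0 \lea M_2''$, together with an isomorphism $M_2' \cong_{M_0} M_2''$ sending $\ba$ to $\bb$. Thus $\nfs{M_0}{|M_2''|}{|M_1|}{N'}$, and disjointness of $\is$ yields $|M_2''| \cap |M_1| \subseteq |M_0|$; since conversely $M_0 \lea M_1$ and $M_0 \lea M_2''$, we get $|M_2''| \cap |M_1| = |M_0|$.

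Finally I would assemble the witnesses: let $f_1 : M_1 \to N'$ be the inclusion and let $f_2 : M_2 \to N'$ be the composition $M_2 \xrightarrow{g} M_2' \cong_{M_0} M_2'' \hookrightarrow N'$. Both maps fix $M_0$, and $f_1[M_1] \cap f_2[M_2] = |M_1| \cap |M_2''| = |M_0|$, so $(N', f_1, f_2)$ witnesses disjoint amalgamation of $M_1$ and $M_2$ over $M_0$.

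The only delicate points are bookkeeping rather than substance: one must check that the realizing sequence $\bb$ of the extended type really does enumerate a $\lea$-submodel of $N'$ isomorphic to $M_2$ over $M_0$ (this is where $E = \Eat$, valid under amalgamation, is used, together with coherence), and one must note that extension is being applied to a type whose length $\|M_2\|$ may exceed $\LS (K)$ --- which is precisely why Theorem \ref{fully-good-ir} (full extension), and not merely the ``almost fully good'' hypothesis, is invoked. I do not expect any genuine obstacle beyond these verifications.

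\begin{proof}
  Let $M_0 \lea M_1$, $M_0 \lea M_2$ be given. By amalgamation, fix $N \in K$ with $M_1 \lea N$ and $g : M_2 \xrightarrow[M_0]{} N$, and let $M_2' := g[M_2]$, so that $M_0 \lea M_2' \lea N$. Let $\ba$ enumerate $|M_2'|$ and put $p := \gtp (\ba / M_0; N)$.

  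By Theorem \ref{fully-good-ir}, $\is$ is a fully good independence relation, so it has extension for types of arbitrary length over models. Applying extension to $p$ and $M_0 \lea M_1$, we obtain $N' \gea M_1$ and a sequence $\bb \in N'$ such that $q := \gtp (\bb / M_1; N')$ extends $p$ and $\nfs{M_0}{\ran{\bb}}{|M_1|}{N'}$.

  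Since $K$ has amalgamation, $E = \Eat$, so the equality $\gtp (\bb / M_0; N') = p = \gtp (\ba / M_0; N)$ is witnessed by some $N^\ast \in K$ and embeddings $h_1 : N \to N^\ast$, $h_2 : N' \to N^\ast$ fixing $M_0$ with $h_1 (\ba) = h_2 (\bb)$. Then $h_2[\ran{\bb}] = h_1[M_2']$, which is a $\lea$-submodel of $N^\ast$; since also $h_2[\ran{\bb}] \subseteq h_2[N']$ and $h_2[N'] \lea N^\ast$, coherence gives $h_2[\ran{\bb}] \lea h_2[N']$, hence $\ran{\bb} \lea N'$. Write $M_2'' $ for this model; then $M_0 \lea M_2'' \lea N'$ and $\ba \mapsto \bb$ induces an isomorphism $g' : M_2' \cong_{M_0} M_2''$.

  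Now $\nfs{M_0}{|M_2''|}{|M_1|}{N'}$, so by disjointness of $\is$ we have $|M_2''| \cap |M_1| \subseteq |M_0|$. Conversely $M_0 \lea M_1$ and $M_0 \lea M_2''$ give $|M_0| \subseteq |M_2''| \cap |M_1|$, whence $|M_2''| \cap |M_1| = |M_0|$.

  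Let $f_1 : M_1 \to N'$ be the inclusion and let $f_2 := (M_2'' \hookrightarrow N') \circ g' \circ g : M_2 \to N'$. Both $f_1$ and $f_2$ fix $M_0$, and $f_1[M_1] \cap f_2[M_2] = |M_1| \cap |M_2''| = |M_0|$. Thus $N'$, $f_1$, $f_2$ witness disjoint amalgamation of $M_1$ and $M_2$ over $M_0$.
\end{proof}
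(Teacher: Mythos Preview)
Your proof is correct and is precisely the detailed unpacking of the paper's one-line argument ``Because $\is$ has existence, extension and disjointness'': you amalgamate, apply (existence and) extension to the type of an enumeration of $M_2$ to obtain an independent copy over $M_1$, and read off disjointness. The only remark is that your invocation of extension tacitly uses existence (that $p$ does not fork over its own domain $M_0$), which the paper cites explicitly; otherwise the approaches coincide.
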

\begin{proof}
  Because $\is$ has existence, extension and disjointness.
\end{proof}

Another consequence of having a closure operator is:

\begin{thm}[Finite character of independence]\label{fin-char-thm}
  $\nfs{M_0}{A}{B}{N}$ if and only if for all finite $A_0 \subseteq A$ and $B_0 \subseteq B$, $\nfs{M_0}{A_0}{B_0}{N}$. That is, $\is$ has the $(<\aleph_0)$-witness property.
\end{thm}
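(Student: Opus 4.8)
The plan is to reduce, using the witness properties already in hand and the closure operator, to a transfinite induction built on the domination triples of Lemma \ref{uq-triple-mu}. The left-to-right direction is immediate from monotonicity. For the converse, assume $\nfs{M_0}{\ba}{\bb}{N}$ for all finite $\ba \subseteq A$ and $\bb \subseteq B$; I want $\nfs{M_0}{A}{B}{N}$. Since $\is$ is a good independence relation, it has the left and right $(\le \LS (K))$-witness properties (Definition \ref{fully-good-def}), so it suffices to prove $\nfs{M_0}{A_0}{B_0}{N}$ whenever $A_0 \subseteq A$ and $B_0 \subseteq B$ have size at most $\LS (K)$; as the hypothesis is inherited by every such pair, I may assume from the outset that $|A|, |B| \le \LS (K)$.

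Next I would pass from sets to models via the closure operator. Put $M_A := \cl^N (A \cup |M_0|)$ and $M_B := \cl^N (B \cup |M_0|)$; these are $\lea N$ because $K$ locally admits intersections. By Lemma \ref{cl-indep} together with monotonicity and normality, $\nfs{M_0}{A}{B}{N}$ is equivalent to $\nfs{M_0}{M_A}{M_B}{N}$, and the hypothesis becomes: $\nfs{M_0}{\cl^N (\ba \cup |M_0|)}{\cl^N (\bb \cup |M_0|)}{N}$ holds for all finite $\ba \subseteq A$, $\bb \subseteq B$. Fixing enumerations $A = \seq{a_i : i < \theta_A}$ and $B = \seq{b_j : j < \theta_B}$ and setting $P_\alpha := \cl^N (\{a_i : i < \alpha\} \cup |M_0|)$, $Q_\beta := \cl^N (\{b_j : j < \beta\} \cup |M_0|)$, I would use idempotence (so $P_0 = Q_0 = M_0$) and the finite character of $\cl^N$ from Proposition \ref{cl-props} to see that $\seq{P_\alpha}$ and $\seq{Q_\beta}$ are increasing continuous $\lea$-chains with top models $M_A$ and $M_B$, all links $\lea N$; moreover $P_{\alpha + 1} = \cl^N (\{a_\alpha\} \cup |P_\alpha|)$ and $Q_{\beta + 1} = \cl^N (\{b_\beta\} \cup |Q_\beta|)$, so by Lemma \ref{uq-triple-mu} (and Fact \ref{uq-domin}) $(a_\alpha, P_\alpha, P_{\alpha + 1})$ and $(b_\beta, Q_\beta, Q_{\beta + 1})$ are domination (equivalently uniqueness) triples.

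The core of the proof is then a double induction establishing $\nfs{M_0}{P_\alpha}{Q_\beta}{N}$ for all $\alpha \le \theta_A$, $\beta \le \theta_B$. The cases $\alpha, \beta < \omega$ are exactly the translated hypothesis, the cases $\alpha = 0$ or $\beta = 0$ are reflexivity of nonforking over the base, and at a limit in either coordinate the relevant model is a union of previously treated ones, so continuity of $\is$ applies. The delicate stage — and the one I expect to be the main obstacle — is the successor step: passing, say, from $\nfs{M_0}{P_\alpha}{Q_\gamma}{N}$ to $\nfs{M_0}{P_\alpha}{Q_{\gamma + 1}}{N}$ (the $P$-coordinate being symmetric), where the point is to push the independence just obtained across the single new element $b_\gamma$, using that $(b_\gamma, Q_\gamma, Q_{\gamma + 1})$ is a domination triple together with symmetry, base monotonicity, transitivity of $\is$, and the remaining inductive hypotheses. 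Getting the bookkeeping of this maneuver right — it is the same technique by which independent sequences are built from uniqueness triples — is where the care lies, and one should also verify that the instances of continuity of $\is$ invoked at limit stages are legitimate for the (possibly long) types occurring here. Evaluating the induction at $\alpha = \theta_A$, $\beta = \theta_B$ yields $\nfs{M_0}{M_A}{M_B}{N}$, hence $\nfs{M_0}{A}{B}{N}$, which completes the proof.
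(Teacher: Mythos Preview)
Your reduction to $|A|,|B|\le \LS(K)$ and your passage to the closure models $P_\alpha,Q_\beta$ are fine, and the limit stages of your double induction do follow from full model continuity. The problem is the successor step. To go from $\nfs{M_0}{P_\alpha}{Q_\gamma}{N}$ and $\nfs{M_0}{P_{\alpha-1}}{Q_{\gamma+1}}{N}$ to $\nfs{M_0}{P_\alpha}{Q_{\gamma+1}}{N}$ via the domination triple $(b_\gamma,Q_\gamma,Q_{\gamma+1})$, you would need $\nfs{Q_\gamma}{b_\gamma}{P_\alpha}{N}$, i.e.\ independence over the enlarged base $Q_\gamma$. Nothing in your inductive hypotheses gives this: all the independence you carry is over $M_0$, and base monotonicity cannot move the base to $Q_\gamma$ since $Q_\gamma$ is not contained in the right-hand side $P_\alpha$. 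The same obstruction blocks the symmetric maneuver through $(a_{\alpha-1},P_{\alpha-1},P_\alpha)$. The analogy with building independent sequences from uniqueness triples is misleading here: in that construction one is \emph{producing} amalgams with prescribed nonforking, whereas here everything lives in a fixed $N$ and you must \emph{verify} independence that need not hold over the intermediate bases.

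The paper sidesteps this entirely. By symmetry it suffices to prove the one-sided statement: if $\nfs{M_0}{A_0}{B}{N}$ for every finite $A_0\subseteq A$, then $\nfs{M_0}{A}{B}{N}$. With $B$ fixed, set $M:=\cl^N(|M_0|\cup B)$; Lemma~\ref{cl-indep} gives $\nfs{M_0}{M_{A_0}}{M}{N}$ for each finite $A_0$, and then full model continuity applied to the directed system $\seq{M_{A_0}:A_0\in[A]^{<\aleph_0}}$ (made rigorous by a straightforward induction on $|A|$, using that every proper initial segment has strictly smaller cardinality) yields $\nfs{M_0}{M_A}{M}{N}$. No successor-over-successor step ever arises, and domination triples are not needed at all. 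Applying this twice --- once on each side --- gives the two-sided conclusion.
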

\begin{proof}
  By symmetry it is enough to show that if $\nfs{M_0}{A_0}{B}{N}$ for all finite $A_0 \subseteq A$, then $\nfs{M_0}{A}{B}{N}$. For each finite $A_0 \subseteq A$, let $M_{A_0} := \cl^N (|M_0| \cup A_0)$. Let $M := \cl^N (|M_0| \cup B)$.  By Lemma \ref{cl-indep}, $\nfs{M_0}{M_{A_0}}{M}{N}$ for each finite $A_0 \subseteq A$. Let $M_A := \cl^N (|M_0| \cup A)$. It is easy to see that $\seq{M_{A_0} \mid A_0 \in [A]^{<\aleph_0}}$ is a directed system with union $M_A$. Therefore by full model continuity, $\nfs{M_0}{M_A}{M}{N}$, and so $\nfs{M_0}{A}{B}{N}$.
\end{proof}

\begin{remark}
  One can check that $(K, \lea, \nf, \cl)$ satisfies the axiomatic framework $\Axfr$ from \cite[Chapter V.B]{shelahaecbook2}.
\end{remark}

For the next two results, we drop our hypotheses.

\begin{thm}\label{indep-thm}
  Let $K$ be a fully $(<\LS (K))$-tame and short AEC with amalgamation. Assume further that $K$ locally admits intersections. 

  If $K$ is categorical in a $\mu \ge \hanf{\LS (K)}$, then there exists $\lambda < \hanf{\LS (K)}$ such that $K_{\ge \lambda}$ is fully good. Moreover the independence relation has the $(<\aleph_0)$-witness property.
\end{thm}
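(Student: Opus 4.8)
The plan is to reduce to the setting of this appendix — an AEC that locally admits intersections and carries an almost fully good independence relation — and then apply Theorems~\ref{fully-good-ir} and~\ref{fin-char-thm}.

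\emph{Reductions and superstability.} As in the proof of Theorem~\ref{categ-nice-ap}, since $K$ is categorical in $\mu \ge \hanf{\LS (K)}$, which is above the Hanf number for having arbitrarily large models, I would first pass to the part of $K$ carrying the model of size $\mu$, and so assume without loss of generality that $K$ has joint embedding and no maximal models. By Fact~\ref{shelah-vi}, $K$ is then $\LS (K)$-superstable, so by Fact~\ref{frame-construct} it is $\mu'$-superstable for all $\mu' \ge \LS (K)$ and $K_{\ge \LS (K)}$ is stable in every cardinal. Note also that full $(<\LS (K))$-tameness and shortness implies full $(<\LS (K)^+)$-tameness and shortness, that is, $K$ is fully $\LS (K)$-tame and short.

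\emph{A global independence relation below the Hanf number.} Using $\LS (K)$-superstability together with amalgamation and full $\LS (K)$-tameness and shortness, Corollary~\ref{fully-good-cor} (applied with $\kappa := \LS (K)^+$, a regular cardinal $\le \LS (K)^+$) shows that $\Ksatp{\lambda_0}$ is almost fully good, where $\lambda_0 := \left(\LS (K)^{<\LS (K)^+}\right)^{+5} = \left(2^{\LS (K)}\right)^{+5}$, and one checks $\lambda_0 < \beth_{\left(2^{\LS (K)}\right)^+} = \hanf{\LS (K)}$. Two points then remain: first, to produce an almost fully good independence relation whose underlying class is $K_{\ge \lambda}$ for some $\lambda < \hanf{\LS (K)}$, rather than the class $\Ksatp{\lambda_0}$ of saturated models that the construction naturally outputs; and second, to upgrade ``almost fully good'' to \emph{fully} good and to obtain the $(<\aleph_0)$-witness property. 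The second point is precisely the content of Theorems~\ref{fully-good-ir} and~\ref{fin-char-thm}, since by Remark~\ref{local-rmk} the class $K_{\ge \lambda}$ still locally admits intersections; so what is left is the first point.

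\emph{The main obstacle.} This is where admitting intersections has to be used in an essential way, and where I expect the real difficulty to lie. Since we only assume categoricity in a single cardinal $\ge \hanf{\LS (K)}$ — and not $\ge H_2$, so no categoricity transfer is available within this paper — we cannot first make the large models of $K$ saturated and then replace $\Ksatp{\lambda_0}$ by $K_{\ge \lambda_0}$. Instead, one should build the independence relation directly over $K_{\ge \lambda}$: in place of $(<\kappa)$-satisfiability, which would require $\kappa = \beth_\kappa$ and hence a Hanf number far above $\hanf{\LS (K)}$, one uses a $\mu$-nonsplitting notion, and one leans on the closure operator to establish its good properties — in particular uniqueness and extension — over \emph{arbitrary} models of $K_{\ge \lambda}$ rather than only the saturated ones. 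Concretely, for any $M \lea N$ and $a \in |N|$ the prime triple $\left(a, M, \cl^{N}(|M| \cup \{a\})\right)$ of Remark~\ref{inter-prime} is a uniqueness triple, and this is what supplies the extension property without any restriction to saturated models — the same phenomenon exploited in the proof of Theorem~\ref{fully-good-ir} and discussed in the remark following it. Once an almost fully good relation is in place on $K_{\ge \lambda}$ for a suitable $\lambda < \hanf{\LS (K)}$, Theorems~\ref{fully-good-ir} and~\ref{fin-char-thm} upgrade it to a fully good relation enjoying the $(<\aleph_0)$-witness property, so that $K_{\ge \lambda}$ is fully good, as desired.
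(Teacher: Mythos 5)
Your first two paragraphs, together with the ``second point'' of your third, are already the paper's entire proof: it reads, verbatim, ``Combine Corollary \ref{fully-good-cor}, Theorem \ref{fully-good-ir}, and Theorem \ref{fin-char-thm}.'' So up to that point you have reconstructed the intended argument exactly, including the correct choice $\kappa = \LS(K)^+$ and the computation $\left(2^{\LS(K)}\right)^{+5} < \hanf{\LS(K)}$.

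The problem is the ``main obstacle'' you then isolate and do not actually overcome. You are right that Corollary \ref{fully-good-cor} outputs an almost fully good relation on $\Ksatp{\lambda_0}$ rather than on $K_{\ge\lambda}$, and the paper gives no further argument bridging this (it applies Theorems \ref{fully-good-ir} and \ref{fin-char-thm} to $\Ksatp{\lambda_0}$, which is itself an AEC with Löwenheim--Skolem--Tarski number $\lambda_0$ by Fact \ref{frame-construct}, and states the conclusion for $K_{\ge\lambda}$). But your proposed repair --- rebuilding the relation directly over arbitrary models of $K_{\ge\lambda}$ from $\mu$-nonsplitting, using the $\cl$-generated prime triples as uniqueness triples --- is only a gesture, and the paper itself tells you it is not available with the tools at hand: the remark following Theorem \ref{fully-good-ir} says explicitly that the machinery converting the existence property for uniqueness triples into a global independence relation ``only works when the frame is restricted to the saturated models of size $\mu$.'' Having declared this the essential remaining point, you owe an argument, and none of the cited results supplies one; as written this is a genuine gap. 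Separately, even on the paper's reading you must check a hypothesis you currently get for free only for $K$: Theorems \ref{fully-good-ir} and \ref{fin-char-thm} require the \emph{underlying class of the independence relation} --- here $\Ksatp{\lambda_0}$, not $K$ --- to locally admit intersections, and this does not follow from Remark \ref{local-rmk}, since the $\cl$-closure of a set inside a saturated model need not be saturated. The remark immediately after Theorem \ref{indep-thm} flags precisely this issue, so it should be addressed rather than bypassed via Remark \ref{local-rmk}.
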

\begin{proof}
  Combine Corollary \ref{fully-good-cor}, Theorem \ref{fully-good-ir}, and Theorem \ref{fin-char-thm}.
\end{proof}

\begin{remark}
  If $K$ is not categorical but only superstable (see Definition \ref{ss-def}), then we can generalize the result (using \cite[Theorem 15.1]{indep-aec-apal}) provided that for all $\lambda$, $\Ksatp{\lambda}$ (the class of $\lambda$-saturated models in $K$) locally admits intersections.
\end{remark}

\subsection{Set bases}

We end by showing that it is possible to extend the independence relation to define forking not only over models but also over sets. In the terminology of \cite{hyttinen-lessmann}, $K$ is simple (note  that the paper gives an example due to Shelah of a class that has a fully good independence relation, yet is not simple). 

For our arguments to work, we have to assume that $K$ admits intersections, i.e.\ not just locally. To see that this is not a big loss, recall that if $K$ is categorical in unboundedly many cardinals and has amalgamation, then the models in the categoricity cardinals are saturated, so for $M \in \LS (K)$, $K_M$ will also be categorical in unboundedly many cardinals.

\begin{hypothesis}
  $K$ admits intersections.
\end{hypothesis}

\begin{defin}
  Let $N \in K$ and $A, B, C \subseteq |N|$. Define $\nfs{A}{B}{C}{N}$ to hold if and only if $\nfs{\cl^N (A)}{\cl^N (AB)}{\cl^N (AC)}{N}$.

  We define properties such as invariance, monotonicity, etc.\ just as for the model-based version of independence.
\end{defin}

\begin{remark}
  When $A \lea N$, this agrees with the previous definition of independence.
\end{remark}

\begin{thm}\label{set-base-thm} \
  \begin{enumerate}
    \item $\nf$ has invariance, left and right monotonicity, base monotonicity, and normality.
    \item $\nf$ has symmetry, finite character (i.e.\ the $(<\aleph_0)$-witness property), existence and transitivity.
    \item $\nf$ has extension.
    \item Let $N \in K$ and let $\seq{B_i : i < \delta}$ be an increasing chain of sets. Let $B_\delta := \bigcup_{i < \delta} B_i$ and assume $B_\delta \subseteq |N|$. Let $p \in \gS^\alpha (B; N)$. If $\cf{\delta} > \alpha$, then there exists $i < \delta$ such that $p$ does not fork over $B_i$.
    \item If $p \in \gS^\alpha (B; N)$, there exists $A \subseteq B$ such that $p$ does not fork over $A$ and $|A| < |\alpha|^+ + \aleph_0$.
  \end{enumerate}
\end{thm}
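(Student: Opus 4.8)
The plan is to derive each clause of Theorem~\ref{set-base-thm} by reading it through the defining equivalence $\nfs{A}{B}{C}{N} \Leftrightarrow \nfs{\cl^N(A)}{\cl^N(AB)}{\cl^N(AC)}{N}$ and invoking the corresponding property of the model-based relation $\is$, which is \emph{fully} good by Theorem~\ref{fully-good-ir}. Three elementary facts about the closure operator will be used throughout: $A \subseteq B$ implies $\cl^N(A) \lea \cl^N(B)$ (Monotonicity~2 of Proposition~\ref{cl-props} together with coherence); $\cl^N(\cl^N(A) \cup B) = \cl^N(A \cup B)$ (idempotence and monotonicity); and, decisively for extension, $\cl^{N'}(X) = \cl^N(X)$ whenever $N \lea N'$ and $X \subseteq |N|$ (Monotonicity~3). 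Lemma~\ref{cl-indep} — that independence is insensitive to closing both sides — is what makes the translations go through.

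Clauses (1) and (2) are essentially direct. Invariance comes from invariance of $\is$ and of $\cl^N$; the three monotonicities and normality follow from base monotonicity, monotonicity and normality of $\is$ together with the inclusion facts above; symmetry, existence and transitivity reduce to the same properties of $\is$ over the $\lea$-chain $\cl^N(A) \lea \cl^N(AB) \lea \cl^N(AC)$. The only one needing an extra word is finite character: assuming $\nfs{A}{B_0}{C_0}{N}$ for all finite $B_0 \subseteq B$, $C_0 \subseteq C$, apply the $(<\aleph_0)$-witness property of $\is$ (Theorem~\ref{fin-char-thm}) to $\cl^N(A), \cl^N(AB), \cl^N(AC)$ and use finite character of $\cl^N$ (Proposition~\ref{cl-props}) to place each finite subset of $\cl^N(AB)$ (resp.\ $\cl^N(AC)$) inside some $\cl^N(AB_0)$ (resp.\ $\cl^N(AC_0)$).

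For (3), given $p = \gtp(\ba/B;N)$ not forking over $A$ and $B \subseteq C \subseteq |N|$, note $\gtp(\ba/\cl^N(B);N)$ does not fork over $\cl^N(A)$ by monotonicity, extend it by extension for $\is$ to a nonforking $\gtp(\ba'/\cl^N(C);N')$, observe it does not fork over $\cl^N(A)$ by transitivity, and conclude $\nfs{A}{\ba'}{C}{N'}$ using $\cl^{N'} = \cl^N$ on subsets of $N$, normality, and Lemma~\ref{cl-indep}; restricting the realized type to $B$ shows $\gtp(\ba'/C;N')$ extends $p$. For (4), finite character of $\cl^N$ shows $\langle \cl^N(B_i) : i < \delta \rangle$ is increasing with union $\cl^N(B_\delta)$, so local character of $\is$ applied (along the continuous extension) to $\gtp(\ba/\cl^N(B_\delta);N) \in \gS^\alpha(\cl^N(B_\delta))$ — using $\cf{\delta} > \alpha$ — gives $i < \delta$ over whose $\cl^N(B_i)$ it does not fork; by Lemma~\ref{cl-indep} and monotonicity this is exactly $\nfs{B_i}{\ba}{B_\delta}{N}$.

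The genuinely new point, and the step I expect to be the main obstacle, is (5): local character of $\is$ as stated in Definition~\ref{fully-good-def} only yields a \emph{model} of size $\le |\alpha| + \LS(K)$, which is far larger than the bound $|\alpha|^+ + \aleph_0$ demanded here. I would instead argue by contradiction using (4). Put $\kappa := |\alpha|^+ + \aleph_0$, a regular cardinal with $\cf{\kappa} > \alpha$, and suppose $p = \gtp(\ba/B;N)$ forks over every $A \subseteq B$ of size $< \kappa$. Build an increasing continuous chain $\langle A_i : i \le \kappa\rangle$ of subsets of $B$, each of size $< \kappa$ for $i < \kappa$, such that $\gtp(\ba/\cl^N(A_{i+1});N)$ forks over $\cl^N(A_i)$: at a successor stage, since $p$ forks over $A_i$ we get (Lemma~\ref{cl-indep} and monotonicity) that $\gtp(\ba/\cl^N(B);N)$ forks over $\cl^N(A_i)$, and the $(<\aleph_0)$-witness property of $\is$ (Theorem~\ref{fin-char-thm}) plus finite character of $\cl^N$ produce a finite $B_0 \subseteq B$ witnessing this inside $\cl^N(A_i \cup B_0)$, so set $A_{i+1} := A_i \cup B_0$; at limits take unions, and $|A_i| < \kappa$ is preserved by regularity of $\kappa$. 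Then $\gtp(\ba/\cl^N(A_\kappa);N) \in \gS^\alpha(\cl^N(A_\kappa))$ with $\cf{\kappa} > \alpha$, so local character of $\is$ along $\langle \cl^N(A_i) : i \le \kappa\rangle$ provides $i$ over whose $\cl^N(A_i)$ it does not fork, contradicting the construction at stage $i+1$. Hence $p$ does not fork over some $A \subseteq B$ with $|A| < \kappa$.
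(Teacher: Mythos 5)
Your proposal is correct, and clauses (1)--(4) are carried out exactly as the paper intends (the paper's proof of these is essentially one line each: translate through the closure operator and quote the corresponding property of $\is$, which is fully good by Theorem~\ref{fully-good-ir}). The only genuine divergence is in clause (5). The paper first uses the $(<\aleph_0)$-witness property to reduce to the case $\alpha < \omega$, and then argues by induction on $\lambda := |B|$: for $\lambda < \aleph_0$ take $A = B$ and use existence, and for $\lambda \ge \aleph_0$ write $B$ as an increasing union of $\lambda$ sets of size $<\lambda$, apply clause (4) (legitimate since $\cf{\lambda} \ge \aleph_0 > \alpha$) to drop to some $B_i$, and finish by the induction hypothesis and transitivity. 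You instead run a direct dividing-chain argument: assuming $p$ forks over every subset of size $<\kappa$ with $\kappa := |\alpha|^+ + \aleph_0$, you build an increasing continuous chain $\seq{A_i : i \le \kappa}$ adding a finite witness to forking at each successor step (using Theorem~\ref{fin-char-thm} together with finite character of $\cl^N$ and Lemma~\ref{cl-indep}), and then contradict local character of $\is$ along $\seq{\cl^N(A_i)}$ using $\cf{\kappa} > \alpha$. Both arguments are sound; yours handles all lengths $\alpha$ uniformly without the preliminary reduction and without induction on $|B|$, at the cost of redoing by hand the chain construction that the paper's clause (4) plus transitivity already package, while the paper's version is shorter once (4) is in place but leans on the (slightly delicate) reduction to finite $\alpha$ via the witness property for the set-based relation. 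Your treatment of the limit-stage continuity of $\seq{\cl^N(B_i)}$ in (4) and of the cardinality bookkeeping $|A_i| < \kappa$ in (5) is the right level of care; no gaps.
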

\begin{proof} \
  \begin{enumerate}
    \item Easy. 
    \item Easy.
    \item By transitivity and extension of $\is$.
    \item By local character for $\is$.
    \item By finite character, it is enough to show it when $\alpha < \omega$. Work by induction on $\lambda := |B|$. If $\lambda < \aleph_0$, take $A = B$ and use the existence property. If $\lambda \ge \aleph_0$, write $B = \bigcup_{i < \lambda} B_i$, where $|B_i| < \lambda$ for all $i < \lambda$. By the previous result, there exists $i < \lambda$ such that $p$ does not fork over $B_i$. Now apply the induction hypothesis and transitivity.
  \end{enumerate}
\end{proof}
\begin{remark}
  Thus in this framework types of finite length really do not fork over a finite set. This removes the need for a special chain version of local character (i.e.\ if $\seq{M_i : i \le \delta}$ is increasing continuous, $p^{<\omega} \in \gS (M_\delta)$, there exists $i < \delta$ such that $p$ does not fork over $M_i$).
\end{remark}

\bibliographystyle{amsalpha}
\bibliography{aecs-intersection}

\providecommand{\bysame}{\leavevmode\hbox to3em{\hrulefill}\thinspace}
\providecommand{\MR}{\relax\ifhmode\unskip\space\fi MR }
\providecommand{\MRhref}[2]{%
  \href{http://www.ams.org/mathscinet-getitem?mr=#1}{#2}
}
\providecommand{\href}[2]{#2}
\begin{thebibliography}{BGKV16}

\bibitem[Bal09]{baldwinbook09}
John~T. Baldwin, \emph{Categoricity}, University Lecture Series, vol.~50,
  American Mathematical Society, 2009.

\bibitem[BGKV16]{bgkv-apal}
Will Boney, Rami Grossberg, Alexei Kolesnikov, and Sebastien Vasey,
  \emph{Canonical forking in {A}{E}{C}s}, Annals of Pure and Applied Logic
  \textbf{167} (2016), no.~7, 590--613.

\bibitem[BGVV]{shvi-notes-v3-toappear}
Will Boney, Rami Grossberg, Monica VanDieren, and Sebastien Vasey,
  \emph{Superstability from categoricity in abstract elementary classes},
  Annals of Pure and Applied Logic, To appear. URL:
  \url{http://arxiv.org/abs/1609.07101v3}.

\bibitem[BK09]{bk-hs}
John~T. Baldwin and Alexei Kolesnikov, \emph{Categoricity, amalgamation, and
  tameness}, Israel Journal of Mathematics \textbf{170} (2009), 411--443.

\bibitem[BKL]{locally-finite-aec-toappear}
John~T. Baldwin, Martin Koerwien, and Michael~C. Laskowski, \emph{Disjoint
  amalgamation in locally finite {A}{E}{C}}, The Journal of Symbolic Logic, To
  appear. URL: \url{http://www.math.uic.edu/~jbaldwin/pub/BKLstreamlined1.pdf}.

\bibitem[Bon]{tameness-groups}
Will Boney, \emph{Tameness in groups and similar {A}{E}{C}s}, In preparation.

\bibitem[Bon14a]{ext-frame-jml}
\bysame, \emph{Tameness and extending frames}, Journal of Mathematical Logic
  \textbf{14} (2014), no.~2, 1450007.

\bibitem[Bon14b]{tamelc-jsl}
\bysame, \emph{Tameness from large cardinal axioms}, The Journal of Symbolic
  Logic \textbf{79} (2014), no.~4, 1092--1119.

\bibitem[Bon17]{longtypes-ndjfml}
\bysame, \emph{Computing the number of types of infinite length}, Notre Dame
  Journal of Formal Logic \textbf{58} (2017), no.~1, 133--154.

\bibitem[BS08]{non-locality}
John~T. Baldwin and Saharon Shelah, \emph{Examples of non-locality}, The
  Journal of Symbolic Logic \textbf{73} (2008), 765--782.

\bibitem[BS14]{blsh992}
\bysame, \emph{A {H}anf number for saturation and omission: the superstable
  case}, Mathematical Logic Quarterly \textbf{60} (2014), no.~6, 437--443.

\bibitem[BU]{lc-tame-v3-toappear}
Will Boney and Spencer Unger, \emph{Large cardinal axioms from tameness},
  Proceedings of the American Mathematical Society, To appear. URL:
  \url{http://arxiv.org/abs/1509.01191v3}.

\bibitem[BVa]{categ-infinitary-v2}
Will Boney and Sebastien Vasey, \emph{Categoricity and infinitary logics},
  Unpublished note. URL: \url{http://arxiv.org/abs/1508.03316v2}.

\bibitem[BVb]{bv-sat-v3}
\bysame, \emph{Chains of saturated models in {A}{E}{C}s}, Preprint. URL:
  \url{http://arxiv.org/abs/1503.08781v3}.

\bibitem[BVc]{bv-survey-v4-toappear}
\bysame, \emph{A survey on tame abstract elementary classes}, Beyond first
  order model theory (Jos{\'e} Iovino, ed.), CRC Press, To appear. URL:
  \url{http://arxiv.org/abs/1512.00060v4}.

\bibitem[BVd]{tame-frames-revisited-v6-toappear}
\bysame, \emph{Tameness and frames revisited}, The Journal of Symbolic Logic,
  To appear. URL: \url{http://arxiv.org/abs/1406.5980v6}.

\bibitem[Gro]{grossbergbook}
Rami Grossberg, \emph{A course in model theory {I}}, A book in preparation.

\bibitem[Gro02]{grossberg2002}
\bysame, \emph{Classification theory for abstract elementary classes},
  Contemporary Mathematics \textbf{302} (2002), 165--204.

\bibitem[GV06a]{tamenessthree}
Rami Grossberg and Monica VanDieren, \emph{Categoricity from one successor
  cardinal in tame abstract elementary classes}, Journal of Mathematical Logic
  \textbf{6} (2006), no.~2, 181--201.

\bibitem[GV06b]{tamenessone}
\bysame, \emph{Galois-stability for tame abstract elementary classes}, Journal
  of Mathematical Logic \textbf{6} (2006), no.~1, 25--49.

\bibitem[GV06c]{tamenesstwo}
\bysame, \emph{Shelah's categoricity conjecture from a successor for tame
  abstract elementary classes}, The Journal of Symbolic Logic \textbf{71}
  (2006), no.~2, 553--568.

\bibitem[GVV16]{gvv-mlq}
Rami Grossberg, Monica VanDieren, and Andr{\'e}s Villaveces, \emph{Uniqueness
  of limit models in classes with amalgamation}, Mathematical Logic Quarterly
  \textbf{62} (2016), 367--382.

\bibitem[Han60]{hanf-number}
William~P. Hanf, \emph{Models of languages with infinitely long expressions},
  Abstracts of contributed papers, First International Congress for Logic,
  Methodology, and Philosophy of Science, Stanford University Press, 1960,
  p.~24.

\bibitem[HK06]{finitary-aec}
Tapani Hyttinen and Meeri Kes{\"a}l{\"a}, \emph{Independence in finitary
  abstract elementary classes}, Annals of Pure and Applied Logic \textbf{143}
  (2006), 103--138.

\bibitem[HK11]{categ-finit}
\bysame, \emph{Categoricity transfer in simple finitary abstract elementary
  classes}, The Journal of Symbolic Logic \textbf{76} (2011), no.~3, 759--806.

\bibitem[HL02]{hyttinen-lessmann}
Tapani Hyttinen and Olivier Lessmann, \emph{A rank for the class of elementary
  submodels of a superstable homogeneous model}, The Journal of Symbolic Logic
  \textbf{67} (2002), no.~4, 1469--1482.

\bibitem[HS90]{hs-example}
Bradd Hart and Saharon Shelah, \emph{Categoricity over ${P}$ for first order
  ${T}$ or categoricity for $\phi \in {L}_{\omega_1, \omega}$ can stop at
  $\aleph_k$ while holding for $\aleph_0, \ldots, \aleph_{k - 1}$}, Israel
  Journal of Mathematics \textbf{70} (1990), 219--235.

\bibitem[Jar]{jarden-prime}
Adi Jarden, \emph{Primeness triples in non-forking frames}, Preprint. Available
  online from \url{http://ariel.academia.edu/AdiJarden}.

\bibitem[Jar16]{jarden-tameness-apal}
\bysame, \emph{Tameness, uniqueness triples, and amalgamation}, Annals of Pure
  and Applied Logic \textbf{167} (2016), no.~2, 155--188.

\bibitem[JS13]{jrsh875}
Adi Jarden and Saharon Shelah, \emph{Non-forking frames in abstract elementary
  classes}, Annals of Pure and Applied Logic \textbf{164} (2013), 135--191.

\bibitem[Kei71]{kei71}
H.~Jerome Keisler, \emph{Model theory for infinitary logic}, North-Holland,
  1971.

\bibitem[Kir10]{quasimin}
Jonathan Kirby, \emph{On quasiminimal excellent classes}, The Journal of
  Symbolic Logic \textbf{75} (2010), no.~2, 551--564.

\bibitem[Kir13]{kirby-note-axiom}
\bysame, \emph{A note on the axioms for {Z}ilber's pseudo-exponential fields},
  Notre Dame Journal of Formal Logic \textbf{54} (2013), 509--520.

\bibitem[KLH16]{coloring-classes-jsl}
Alexei Kolesnikov and Christopher Lambie-Hanson, \emph{The {H}anf number for
  amalgamation of coloring classes}, The Journal of Symbolic Logic \textbf{81}
  (2016), no.~2, 570--583.

\bibitem[Les00]{lessmann2000}
Olivier Lessmann, \emph{Ranks and pregeometries in finite diagrams}, Annals of
  Pure and Applied Logic \textbf{106} (2000), 49--83.

\bibitem[LR]{ct-accessible-v4-toappear}
Michael~J. Lieberman and Jiř{\'{\i}} Rosický, \emph{Classification theory for
  accessible categories}, The Journal of Symbolic Logic, To appear. URL:
  \url{http://arxiv.org/abs/1404.2528v4}.

\bibitem[Mar72]{marcus-counterexample}
Leo Marcus, \emph{A minimal prime model with an infinite set of
  indiscernibles}, Israel Journal of Mathematics \textbf{11} (1972), no.~2,
  180--183.

\bibitem[Mor65]{morley-cip}
Michael Morley, \emph{Categoricity in power}, Transactions of the American
  Mathematical Society \textbf{114} (1965), 514--538.

\bibitem[MP89]{makkai-pare}
Michael Makkai and Robert Par{\'e}, \emph{Accessible categories: The
  foundations of categorical model theory}, Contemporary Mathematics, vol. 104,
  American Mathematical Society, 1989.

\bibitem[MS90]{makkaishelah}
Michael Makkai and Saharon Shelah, \emph{Categoricity of theories in
  ${L}_{\kappa,\omega}$, with $\kappa$ a compact cardinal}, Annals of Pure and
  Applied Logic \textbf{47} (1990), 41--97.

\bibitem[Rab62]{rabin-inter}
Michael~O. Rabin, \emph{Classes of models and sets of sentences with the
  intersection property}, Annales de la Facult{\'e} des sciences de
  l'Universit{\'e} de Clermont. S{\'e}rie Math{\'e}matiques \textbf{7} (1962),
  no.~1, 39--53.

\bibitem[She]{sh842}
Saharon Shelah, \emph{Eventual categoricity spectrum and frames}, Paper number
  842 on Shelah's publication list. Preliminary draft from Oct. 3, 2014
  (obtained from the author).

\bibitem[She74]{sh31}
\bysame, \emph{Categoricity of uncountable theories}, Proceedings of the Tarsi
  symposium (Leon Henkin, John Addison, C.~C. Chang, William Craig, Dana Scott,
  and Robert Vaught, eds.), American Mathematical Society, 1974, pp.~187--203.

\bibitem[She75]{sh48}
\bysame, \emph{Categoricity in $\aleph_1$ of sentences in ${L}_{\omega_1,
  \omega}({Q})$}, Israel Journal of Mathematics \textbf{20} (1975), no.~2,
  127--148.

\bibitem[She83a]{sh87a}
\bysame, \emph{Classification theory for non-elementary classes {I}: The number
  of uncountable models of $\psi \in {L}_{\omega_1, \omega}$. {P}art {A}},
  Israel Journal of Mathematics \textbf{46} (1983), no.~3, 214--240.

\bibitem[She83b]{sh87b}
\bysame, \emph{Classification theory for non-elementary classes {I}: The number
  of uncountable models of $\psi \in {L}_{\omega_1, \omega}$. {P}art {B}},
  Israel Journal of Mathematics \textbf{46} (1983), no.~4, 241--273.

\bibitem[She87a]{sh88}
\bysame, \emph{Classification of non elementary classes {II}. {A}bstract
  elementary classes}, Classification Theory (Chicago, IL, 1985) (John~T.
  Baldwin, ed.), Lecture Notes in Mathematics, vol. 1292, Springer-Verlag,
  1987, pp.~419--497.

\bibitem[She87b]{sh300-orig}
\bysame, \emph{Universal classes}, Classification theory (Chicago, IL, 1985)
  (John~T. Baldwin, ed.), Lecture Notes in Mathematics, vol. 1292,
  Springer-Verlag, 1987, pp.~264--418.

\bibitem[She90]{shelahfobook}
\bysame, \emph{Classification theory and the number of non-isomorphic models},
  2nd ed., Studies in logic and the foundations of mathematics, vol.~92,
  North-Holland, 1990.

\bibitem[She99]{sh394}
\bysame, \emph{Categoricity for abstract classes with amalgamation}, Annals of
  Pure and Applied Logic \textbf{98} (1999), no.~1, 261--294.

\bibitem[She00]{sh702}
\bysame, \emph{On what {I} do not understand (and have something to say), model
  theory}, Math. Japonica \textbf{51} (2000), 329--377.

\bibitem[She01]{sh576}
\bysame, \emph{Categoricity of an abstract elementary class in two successive
  cardinals}, Israel Journal of Mathematics \textbf{126} (2001), 29--128.

\bibitem[She09a]{shelahaecbook}
\bysame, \emph{Classification theory for abstract elementary classes}, Studies
  in Logic: Mathematical logic and foundations, vol.~18, College Publications,
  2009.

\bibitem[She09b]{shelahaecbook2}
\bysame, \emph{Classification theory for abstract elementary classes 2},
  Studies in Logic: Mathematical logic and foundations, vol.~20, College
  Publications, 2009.

\bibitem[SK96]{kosh362}
Saharon Shelah and Oren Kolman, \emph{Categoricity of theories in
  $\mathbb{{L}}_{\kappa, \omega}$, when $\kappa$ is a measurable cardinal.
  {P}art {I}}, Fundamentae Mathematica \textbf{151} (1996), 209--240.

\bibitem[SV99]{shvi635}
Saharon Shelah and Andr{\'e}s Villaveces, \emph{Toward categoricity for classes
  with no maximal models}, Annals of Pure and Applied Logic \textbf{97} (1999),
  1--25.

\bibitem[Van06]{vandierennomax}
Monica VanDieren, \emph{Categoricity in abstract elementary classes with no
  maximal models}, Annals of Pure and Applied Logic \textbf{141} (2006),
  108--147.

\bibitem[Van13]{nomaxerrata}
\bysame, \emph{Erratum to "{C}ategoricity in abstract elementary classes with
  no maximal models" [{A}nn. {P}ure {A}ppl. {L}ogic 141 (2006) 108-147]},
  Annals of Pure and Applied Logic \textbf{164} (2013), no.~2, 131--133.

\bibitem[Van16]{vandieren-symmetry-apal}
\bysame, \emph{Superstability and symmetry}, Annals of Pure and Applied Logic
  \textbf{167} (2016), no.~12, 1171--1183.

\bibitem[Vasa]{prime-categ-v6-toappear}
Sebastien Vasey, \emph{Building prime models in fully good abstract elementary
  classes}, Mathematical Logic Quarterly, To appear. URL:
  \url{http://arxiv.org/abs/1509.07024v6}.

\bibitem[Vasb]{lazy-univ-v2}
\bysame, \emph{The lazy model theoretician's guide to {S}helah's eventual
  categoricity conjecture in universal classes}, Expository note. URL:
  \url{arxiv.org/abs/1510.08411v2}.

\bibitem[Vasc]{categ-primes-v6-toappear}
\bysame, \emph{Shelah's eventual categoricity conjecture in tame {A}{E}{C}s
  with primes}, Mathematical Logic Quarterly, To appear. Preprint. URL:
  \url{http://arxiv.org/abs/1509.04102v6}.

\bibitem[Vasd]{categ-universal-2-v3-toappear}
\bysame, \emph{Shelah's eventual categoricity conjecture in universal classes:
  part {I}{I}}, Selecta Mathematica, To appear. URL:
  \url{http://arxiv.org/abs/1602.02633v3}.

\bibitem[Vas16a]{indep-aec-apal}
\bysame, \emph{Building independence relations in abstract elementary classes},
  Annals of Pure and Applied Logic \textbf{167} (2016), no.~11, 1029--1092.

\bibitem[Vas16b]{ss-tame-jsl}
\bysame, \emph{Forking and superstability in tame {A}{E}{C}s}, The Journal of
  Symbolic Logic \textbf{81} (2016), no.~1, 357--383.

\bibitem[Vas16c]{sv-infinitary-stability-afml}
\bysame, \emph{Infinitary stability theory}, Archive for Mathematical Logic
  \textbf{55} (2016), 567--592.

\bibitem[Vas17]{downward-categ-tame-apal}
\bysame, \emph{Downward categoricity from a successor inside a good frame},
  Annals of Pure and Applied Logic \textbf{168} (2017), no.~3, 651--692.

\bibitem[VV]{vv-symmetry-transfer-v4}
Monica VanDieren and Sebastien Vasey, \emph{Symmetry in abstract elementary
  classes with amalgamation}, Preprint. URL:
  \url{http://arxiv.org/abs/1508.03252v4}.

\bibitem[Zil05]{zil05}
Boris Zilber, \emph{A categoricity theorem for quasi-minimal excellent
  classes}, Logic and its applications (Andreas Blass and Yi~Zhang, eds.),
  Contemporary Mathematics, American Mathematical Society, 2005, pp.~297--306.

\end{thebibliography}

\end{document}